\newtheorem{lemma}{Lemma}[section]
\newtheorem{teo}[lemma]{Theorem}
\newtheorem{pro}[lemma]{Proposition}
\newtheorem{cor}[lemma]{Corollary}
\newtheorem{defi}[lemma]{Definition}
\newtheorem{ese}[lemma]{\bf Example}
\DeclareMathSymbol{\nmid}{\mathrel}{AMSb}{"2D}
\def\P{{\mathbb P}}
\def\N{{\mathbb N}}
\def\Z{{\mathbb Z}}
\def\Q{{\mathbb Q}}
\def\H{{\mathcal H}}
\def\A{{\mathcal A}}
\def\C{{\mathcal C}}
\def\L{{\mathcal L}}
\def\l{{\ell}}
\def\={\stackrel{\rm def}{=}}
\begin{document}

\begin{titlepage}

\begin{center}
\Large{UNIVERSITÀ DEGLI STUDI DI ROMA \\ ``TOR VERGATA''}\\
\Large{Facoltà di Scienze Matematiche, Fisiche e Naturali}\\
\Large{Dipartimento di Matematica}

\vspace{0.6cm}
\includegraphics{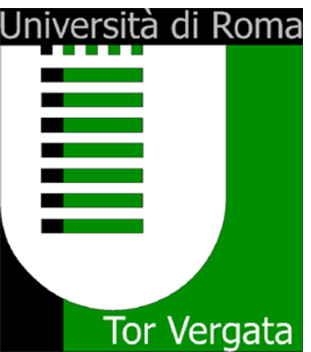}
\vskip 0,3cm
\Large{Dottorato di Ricerca in Matematica - Ciclo XXIV}\\
\vspace{0.4cm}
\huge{\textbf{Combinatorial properties of \\ Temperley--Lieb algebras}}
\vspace{0.4cm}

\huge{Alfonso Pesiri}\\

\vspace{2cm}

\Large{A.A. 2010/2011} 

\end{center}

\vspace{0.3cm}

\Large{Docente Guida: Prof. Francesco Brenti}

\vspace{1.2cm}

\Large{\textit{Coordinatore: Prof. Filippo Bracci}}





\end{titlepage}

\newpage

\pagenumbering{Roman}
\tableofcontents
\pagebreak
\pagenumbering{arabic}

\addcontentsline{toc}{section}{Introduction}
\vskip 2cm
{\bf\LARGE Introduction} 

\vskip 0,7cm

The Temperley--Lieb algebra $TL(X)$ is a quotient of the Hecke algebra $\H(X)$ associated to a Coxeter group $W(X)$, $X$ being an arbitrary Coxeter graph. It first appeared in \cite{tl-pcp}, in the context of statistical mechanics (see, \textrm{e.g.}, \cite{jo-pik}). The case $X=A$ was studied by Jones (see \cite{jo-har}) in connection to knot theory. For an arbitrary Coxeter graph, the Temperley--Lieb algebra was studied by Graham. More precisely, in \cite{gr-phd} Graham showed that $TL(X)$ is finite dimensional whenever $X$ is of type $A, B, D, E, F, H$ and $I$. If $X \not = A$ then $TL(X)$ is usually referred to as the generalized Temperley--Lieb algebra.\\ 
The algebra $TL(X)$ has many properties similar to the Hecke algebra $\H(X)$. In particular, it is shown in \cite{gl-cbh} that $TL(X)$ inherits an involution from $\H(X)$ and that it always has a basis, indexed by the fully commutative elements of $W(X)$, with some remarkable properties,  called an IC basis (see \cite{du-icb} and \cite{gl-cbh}). Thus, one has two families of polynomials, indexed by pairs of fully commutative elements of $W(X)$, which are analogous to the Kazhdan--Lusztig and $R$--polynomials of $\H(X)$. Although the Kazhdan--Lusztig and $R$--polynomials have been extensively studied (see, \textrm{e.g.}, \cite{jb-klp}, \cite{g-lck}, \cite{wa-pkl}, \cite{jb-lck},\cite{bf-pkl}, \cite{if-mci},\cite{de-cik}, \cite{if-cik},\cite{bi-lp}, \cite{bcm-sm},\cite{xi-lcc}, \cite{bf-pkic},\cite{bf-klp}, \cite{dc-ckl},\cite{wg-klp}, \cite{bf-ark}, \cite{dc-sop}, \cite{pp-cak}, \cite{bf-ulb}, \cite{bf-cek}, \cite{dc-sac}, \cite{th-nnf}, \cite{la-pkl}, \cite{bf-klr}, \cite{bf-lpk}, \cite{bb-klp}, \cite{BW01}, \cite{BW02}, \cite{Boe88}, \cite{Bre98}, \cite{BreSim}, \cite{Bre02}, \cite{Car94}, \cite{Cas}, \cite{Deo2}, \cite{Deo3}, \cite{Dy}, \cite{Dye97}, \cite{Ki-La}, \cite{L-S}, \cite{Le-Th}, \cite{Mar}, \cite{Pol99}, \cite{SSV}, \cite{Tag94a}, \cite{Zel}) and $TL(X)$ plays an important role in several areas and has also been extensively studied (see, \textrm{e.g.}, \cite{gr-phd}, \cite{jo-har}, \cite{fg-mtl}, \cite{gr-gtld}, \cite{fan-haq}, \cite{gl-ppk}, \cite{we-rttl}, \cite{fan-phd}, \cite{map}, \cite{levy}, \cite{mapp}, \cite{degu}, \cite{libr}, \cite{nere}, \cite{link}, \cite{caut}, \cite{kauff}, \cite{clai}, \cite{nich}, \cite{yong}, \cite{abra}, \cite{loui}, \cite{fend}, \cite{zhan}), these polynomials have not been investigated very much. Our purpose in this work is to begin the study of these polynomials from a combinatorial point of view. More precisely we obtain recursions, non--recursive formulas, symmetry properties, and expressions for the constant terms, of these polynomials. To do this, we need to study some auxiliary polynomials (which have no analogue in $\H(X)$, and which in some sense express the relationship between $\H(X)$ and $TL(X)$) which were first defined in \cite{gl-cbh}. Most of our results hold for every finite irreducible or affine non--branching Coxeter graph different from $\widetilde{F_4}$, although some hold in full generality. Our results show that there is a close relationship between Kazhdan--Lusztig and $R$--polynomials and their analogues in $TL(X)$.

The organization of the thesis is as follows.\\   
In Chapter \ref{ch nap} we fix the notation and recall the standard definitions and results needed in the sequel. In particular, we introduce the notion of Coxeter group, Bruhat order and present some basic examples. Finally we introduce the concept of pattern--avoiding element. In Chapter \ref{ch klt} we briefly reproduce part of theory developed in the celebrated work \cite{kl-rcg} concerning the Hecke algebra associated to a Coxeter group. In Chapter \ref{ch gtla} we define the generalized Temperley--Lieb algebra $TL(X)$ associated to a Coxeter group $W(X)$ and introduce the $D$--polynomials, which are a central tool in the study of the combinatorial properties of the Temperley--Lieb algebra. Then we define two other families of polynomials, namely $\{a_{x,w}\}$ and $\{L_{x,w}\}$, that arise naturally in $TL(X)$. In fact, they are the analogous of the well--known $R$--polynomials and Kazhdan--Lusztig polynomials of $\H(X)$. Both these polynomials can be studied in terms of $D$--polynomials. In Chapter \ref{comb prop} we prove a recurrence relation for the $D$--polynomials, which holds in general, and we derive some closed formulas in type $A$. Next, we prove our main result on $D$--polynomials, which holds for every finite irreducible or affine non--branching Coxeter graph $X$, except $\widetilde{F_4}$. We do the same for $\{a_{x,w}\}$ and $\{L_{x,w}\}$. For each family of polynomials we give recursive formulas. The rest of the chapter deals with lots of consequences of the main result for these polynomials, including symmetry properties, and some combinatorial non--recursive formulas.
   
\vspace{0.3cm}

{\bf\LARGE Acknowledgements}

\vspace{0.3cm}
I am grateful to my Ph.D. advisor, Francesco Brenti. He introduced me to algebraic combinatorics. I wish to thank him for many fruitful and interesting discussions on research topics. Without his constant encouragement and generous guidance this thesis would not exist. 

\setcounter{chapter}{0}
\setcounter{section}{0}
\setcounter{lemma}{0}



\chapter{Coxeter groups} \label{ch nap} 

\section{Notation}
We collect here some notation that is adhered to throughout the book.

\begin{equation}
\begin{array}{ll} 
\P                      & \mbox{the positive integers}                        \nonumber \\
\N                      & \mbox{the non--negative integers}                   \nonumber \\
\Z                      & \mbox{the integers}                                 \nonumber \\
\left[n\right]          & \mbox{the set $\{1, \,2,\cdots ,n\}$}               \nonumber \\
\left[\pm n \right]     & \mbox{the set $\{\pm1,\,\pm2,\cdots ,\pm n\}$}      \nonumber \\ 
|A|                     & \mbox{the cardinality of a set $A$}                 \nonumber \\
R[q]                    & \mbox{ring of polynomials with coefficients in $R$} \nonumber \\ 
\left[q^i\right]P  & \mbox{the coefficient of $q^i$ in $P$}                   \nonumber \\   
\delta_{i,j}            & \mbox{the Kronecker delta: } 
\delta_{i,j}\stackrel{\rm def}{=} 
\begin{cases}
1 & \mbox{if } i=j;  \\
0 & \mbox{if } i \neq j.
\end{cases}   \nonumber

\end{array}
\end{equation}
We write $\stackrel{\rm def}{=}$ if we are defining the left hand side by the right hand side. The symbol $\qed$ denotes the end of a proof or an example. Moreover, we put a $\qed$ at the end of the statement of a result if the result is obvious at that stage of reading.

\newpage

\section{Definition of Coxeter group} \label{sec cg}

Let $S$ be a finite set and let $m$ be a matrix $m:S \times S \rightarrow \P \cup \{\infty\}$ such that 
\begin{enumerate}
\item $m(s_i,s_j)=m(s_j,s_i)$, for every $s_i,s_j \in S$;
\item $m(s_i,s_j) \geq 2$ if $i \neq j$;
\item $m(s_i,s_i)=1$, for every $s_i \in S$.
\end{enumerate}
Then $m$ is called a \emph{Coxeter matrix}. Equivalently, $m$ can be represented by a \emph{Coxeter graph} whose node set is $S$ and whose edges are the unordered pairs $\{s_i, s_j\}$ such that $m(s_i, s_j) \geq  3$. The edges with $m(s_i, s_j) \geq 4$ are labeled by the number $m(s_i,s_j)$. 

\begin{ese}\label{ese h3}
Let $S=\{s_1,s_2,s_3\}$ and denote by $m$ the Coxeter matrix 
\begin{equation} \nonumber
\begin{bmatrix}
1 & 4 & 2  \\
4 & 1 & 3  \\
2 & 3 & 1  \\
\end{bmatrix}
\end{equation}
Then, $m$ corresponds to the Coxeter graph $B_3$ (cf. Appendix \ref{fiacg}).
\begin{figure}[!hbtp]
\[
\xymatrix @C=3pc { *=0{\bullet} \ar@{-}[r]^{4}_<{s_1} & *=0{\bullet} \ar@{-}[r]^{}_<{s_2}_>{s_3}  & *=0{\bullet} }
\]
\caption{Coxeter graph $B_3$.} \label{cgb3}
\end{figure}
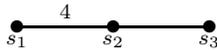
 \qed
\end{ese}

A \emph{Coxeter system} is a pair $(W,S)$ where $W$ is a group with set of generators $S=\{s_1, \cdots , s_n\}$ subject to the relations $$(s_is_j)^{m(s_i,s_j)}=e$$ where $e$ denotes the identity element of $W$. Equvalently, $W$ can be viewed as the quotient $F_S/N$ where $F_S$ is the free group generated by $S$ and $N$ is the normal subgroup generated by $\{(s_is_j)^{m(s_i,s_j)}:\, s_i,s_j \in S,\, m(s_i,s_j)<\infty\}$. The cardinality of $S$ is called the \emph{rank} of $(W,S)$.\\
In the previous example, the group $W$ determined by the Coxeter matrix $m$ has the presentation

\begin{equation}
\left\{ \begin{array}{llrl} 
s_1^2=s_2^2=s_3^2=e,        & \leftrightarrow & m(s_i,s_i)  &=1   \nonumber \\
s_1s_2s_1s_2=s_2s_1s_2s_1,  & \leftrightarrow & m(s_1,s_2)  &=4   \nonumber \\
s_3s_2s_3=s_2s_3s_2,        & \leftrightarrow & m(s_2,s_3)  &=3   \nonumber \\
s_1s_3=s_3s_1,              & \leftrightarrow & m(s_1,s_3)  &=2   \nonumber
\end{array} \right.
\end{equation}

There exists a bijection between Coxeter system $(W,S)$ and the Coxeter graph $X$ (or, equvalently, the Coxeter matrix $m$) associated to $W$ (see, \textrm{e.g.}, \cite[Theorem 1.1.2]{bb-ccg}). Therefore we say that $(W,S)$ is of type $X$. A Coxeter system is said to be \emph{finite} if $|W|< \infty$ and \emph{irreducible} if its Coxeter graph is connected. In Appendix \ref{fiacg} we list all the finite irreducible and affine Coxeter systems. A standard reference for these results is \cite{Hum}.

\section{Length function and Bruhat order} \label{lf bo}

Let $(W,S)$ be a Coxeter system. Each element $v \in W$ can be written as product of generators $s_i \in S$. We denote by $\l(v)$ the minimal integer $k$ such that $v$ can be written as a product of $k$ generators. If $v=s_{i_1}\cdots s_{i_k}$ and $\l(v)=k$ then $k$ is called the \emph{length} of $v$ and $s_{i_1}\cdots s_{i_k}$ is called a reduced expression of $v$.  We list some basic properties on the length function (see, \textrm{e.g.}, \cite[Proposition 1.4.2]{bb-ccg}).
\begin{pro}
Let $u,v \in W$. Then
\begin{itemize}
\item[{\rm (i)}] $\l(e)\stackrel{\rm def}{=}0$;
\item[{\rm (ii)}] $\l(u)=1 \iff u \in S$;
\item[{\rm (iii)}] $\l(u)=\l(u^{-1})$;
\item[{\rm (iv)}] $\l(uv) \equiv l(u)+\l(v) \pmod{2}$;
\item[{\rm (v)}] $\l(us)=\l(u)\pm 1$ for all $s \in S$.  
\end{itemize} 
\end{pro}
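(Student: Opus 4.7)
The plan is to dispatch (i)--(v) in the stated order, since each part after the first feeds on the previous ones, and the main technical input is the existence of a sign homomorphism, which is what I would set up carefully before doing (iv) and (v).

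For (i), I would appeal to the convention that the empty word over $S$ represents $e$, so $e$ is expressible as a product of $0$ generators and no shorter expression exists; this gives $\ell(e)=0$. For (ii), one direction is immediate: if $u\in S$, then $u$ is a one-letter product, so $\ell(u)\le 1$, and $u\ne e$ (because $s$ has order exactly $2$, a fact that strictly speaking requires the standard geometric/reflection representation, which I would cite from Humphreys). Conversely, if $\ell(u)=1$, then $u=s_i$ for some $i$, hence $u\in S$. For (iii), take a reduced expression $u=s_{i_1}\cdots s_{i_k}$ with $k=\ell(u)$. Since every generator is an involution, $u^{-1}=s_{i_k}\cdots s_{i_1}$, so $\ell(u^{-1})\le k=\ell(u)$. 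Applying the same inequality to $u^{-1}$ in place of $u$, and using $(u^{-1})^{-1}=u$, yields the reverse inequality and thus equality.

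The key auxiliary tool for (iv) and (v) is the \emph{sign character}. I would construct a group homomorphism $\varepsilon:W\to\{\pm 1\}$ by declaring $\varepsilon(s)=-1$ for every $s\in S$ and checking that this respects the defining Coxeter relations: indeed $\varepsilon\bigl((s_is_j)^{m(s_i,s_j)}\bigr)=(-1)^{2m(s_i,s_j)}=1$, so by the universal property of the presentation $F_S/N$ described earlier in the excerpt, $\varepsilon$ descends to $W$. Then for any word $u=s_{i_1}\cdots s_{i_r}$ representing $u$, one has $\varepsilon(u)=(-1)^r$; in particular taking a reduced expression gives $\varepsilon(u)=(-1)^{\ell(u)}$, and therefore $r\equiv \ell(u)\pmod 2$ for \emph{every} expression of $u$. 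Applied to a concatenation of reduced expressions for $u$ and $v$, this immediately yields $\ell(uv)\equiv\ell(u)+\ell(v)\pmod 2$, which is (iv).

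Finally (v) combines (iv) with the triangle-type bounds. If $u=s_{i_1}\cdots s_{i_k}$ is reduced with $k=\ell(u)$, then $us=s_{i_1}\cdots s_{i_k}s$ shows $\ell(us)\le\ell(u)+1$; applying the same inequality to $us$ in place of $u$ and multiplying by $s$ on the right gives $\ell(u)\le\ell(us)+1$, i.e.\ $\ell(us)\ge\ell(u)-1$. Thus $\ell(us)\in\{\ell(u)-1,\ell(u),\ell(u)+1\}$. But (iv) forces $\ell(us)\not\equiv\ell(u)\pmod 2$, ruling out the middle value and leaving exactly $\ell(us)=\ell(u)\pm 1$. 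The main (and really only) obstacle is (iv): one must be certain that $\varepsilon$ is well-defined on the quotient $F_S/N$, which is why I would explicitly verify that every relator lies in $\ker\varepsilon$ before using it.
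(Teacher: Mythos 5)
Your proof is correct and is precisely the standard argument (sign character $\varepsilon:W\to\{\pm 1\}$ for the parity statements, word reversal for $\ell(u)=\ell(u^{-1})$, and the two triangle bounds plus parity for $\ell(us)=\ell(u)\pm 1$). The paper itself gives no proof and simply cites \cite[Proposition 1.4.2]{bb-ccg}, where essentially this same argument appears, so there is nothing to compare beyond noting that you have correctly flagged the one genuinely nontrivial input, namely that each $s\in S$ has order exactly $2$ in $W$, which requires the reflection representation.
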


Denote by $T$ the set $\{vsv^{-1},\,v \in W,\,s \in S\}$. An element $t \in T$ is called a \emph{reflection}. Write $u \rightarrow v$ if there exists $t \in T$ such that $ut=v$, with $\l(u)<\l(v)$. The \emph{Bruhat graph} of $(W,S)$ is the directed graph whose nodes are the elements of $W$. An ordered pair $(u,v)$ is an edge if and only if $u \rightarrow v$. 
It is possible to define a partial order relation on $W$.
\begin{defi}
Let $u,v \in W$. Then $u \leq v$ in the Bruhat order if there exist $u_i \in W$ such that $$u=u_0\rightarrow u_1 \rightarrow \cdots \rightarrow u_k=v.$$
\end{defi} 

Observe that the Bruhat order relation is the transitive closure of $\rightarrow$. The set $\{u \leq y \leq v :\, y \in W\}$ is customarily  denoted by $[u,v]$.

\begin{ese}
Denote by $A_2$ the Coxeter graph with node set $S=\{s_1,\,s_2\}$ and unlabelled edge $\{s_1,s_2\}$. Then the group $W(A_2)$ has the diagram shown in Figure \ref{fig a2} under the Bruhat order.
\begin{figure}[!hbtp]
\[
\xymatrix{
                                             & *=0{\bullet}\ar@{-}[dr]\ar@{-}[dl]_<{\; s_1s_2s_1}&                                           \\
*=0{\bullet} \ar@{-}[drr]\ar@{-}[d]_<{s_1s_2}&                                                & *=0{\bullet} \ar@{-}[dll]\ar@{-}[d]^<{s_2s_1}\\
*=0{\bullet} \ar@{-}[dr]_<{s_1}_>{e}         &                                                & *=0{\bullet} \ar@{-}[dl]^<{s_2}               \\
                                             & *=0{\bullet}                                   &                                           }  
\]        

\caption{Bruhat order of $A_2$.} \label{fig a2}
\end{figure}
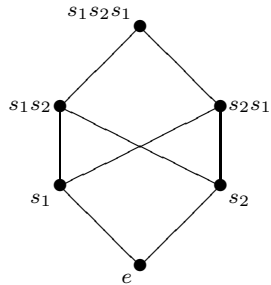
To obtain the Bruhat graph of $A_2$, direct all edges of Figure \ref{fig a2} upward and add the edge $e \longrightarrow s_1s_2s_1$. \qed
\end{ese}

The following fundamental property characterizes the Bruhat order relation (see, \textrm{e.g.}, \cite[Theorem 2.2.2]{bb-ccg}).
\begin{pro}[\textbf{Subword Property}] \label{sub prop}
Let $v=s_1\cdots s_r$ be a reduced expression. Then $u\leq v$ if and only if there exists a reduced expression $s_{i_1}\cdots s_{i_k}$ of $u$ such that $1\leq i_1 \leq \cdots \leq i_k \leq r$. 
\end{pro}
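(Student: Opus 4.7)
The plan is to prove the two implications separately by induction, invoking two standard auxiliary results from the theory of Coxeter groups that follow from the exchange condition: the Strong Exchange Property (if $w \in W$, $t \in T$ and $\ell(wt) < \ell(w)$, then for every reduced expression $w = s_{j_1} \cdots s_{j_r}$ the element $wt$ is obtained by deleting exactly one letter) and the Lifting Property (if $x \leq y$ in $W$, $s \in S$, and $\ell(xs) = \ell(x)+1$, $\ell(ys) = \ell(y)+1$, then $xs \leq ys$).

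For the forward direction, assume $u \leq v$ and fix a chain $u = u_0 \rightarrow u_1 \rightarrow \cdots \rightarrow u_k = v$ witnessing this. I would induct on $k$. The base $k=0$ is trivial. For the inductive step, the last arrow gives a reflection $t \in T$ with $u_{k-1} t = u_k = v$ and $\ell(u_{k-1}) < \ell(v)$. Applying the Strong Exchange Property to the given reduced expression $v = s_1 \cdots s_r$ produces a reduced expression for $u_{k-1}$ obtained by omitting one letter, say $u_{k-1} = s_1 \cdots \widehat{s_{j}} \cdots s_r$. The inductive hypothesis, applied to the shorter chain $u_0 \to \cdots \to u_{k-1}$ and to this reduced expression of $u_{k-1}$, yields a reduced subexpression of $u_{k-1}$ spelling out $u$; composing with the omission of $s_j$ gives a reduced subexpression of $v$ spelling out $u$, as required.

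For the reverse direction, suppose $u$ admits a reduced expression $s_{i_1} \cdots s_{i_k}$ with $1 \leq i_1 < \cdots < i_k \leq r$. I would induct on $r$; the case $r = 0$ is immediate. Set $v' = s_1 \cdots s_{r-1}$, which is reduced. There are two cases. If $i_k < r$, then the same subword is a reduced subexpression of $v'$, so the inductive hypothesis gives $u \leq v'$; since $v = v' s_r$ with $\ell(v) = \ell(v') + 1$, the arrow $v' \rightarrow v$ (via the reflection $s_r$) shows $v' \leq v$, and transitivity finishes this case. If $i_k = r$, write $u = u' s_r$ with $u' = s_{i_1} \cdots s_{i_{k-1}}$. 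This $u'$ must be reduced, for otherwise $\ell(u) \leq \ell(u') + 1 < k$, contradicting that $u$ has length $k$. By induction $u' \leq v'$, and both $u' s_r$ and $v' s_r$ have length one more than $u'$ and $v'$ respectively (the latter because $v$ is reduced, the former because $u$ is), so the Lifting Property yields $u = u' s_r \leq v' s_r = v$.

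I expect the backward direction, and specifically the appeal to the Lifting Property in the case $i_k = r$, to be the main technical step; it hides the bulk of the combinatorial content and is where the hypothesis that $v = s_1 \cdots s_r$ is a fixed reduced expression (not just any expression) is really used through the condition $\ell(v' s_r) = \ell(v') + 1$. The forward direction, by contrast, reduces cleanly to the Strong Exchange Property applied one arrow at a time. If the Lifting Property has not been established beforehand, a slightly different inductive setup would be needed, working instead with the first letter $s_1$ and a case split on whether $\ell(s_1 u) < \ell(u)$, and this would be the place where the proof would require the most care.
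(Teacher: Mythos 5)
The paper offers no proof of this proposition at all; it is stated with a pointer to \cite[Theorem 2.2.2]{bb-ccg}, so there is no in-paper argument to compare against. Your proposal follows the standard route (Strong Exchange for one direction, induction on $r$ for the other). The backward direction is correct: the prefix $u'=s_{i_1}\cdots s_{i_{k-1}}$ is reduced for the reason you give, and the version of the Lifting Property you invoke follows from Lemma \ref{lif lemma} applied to the pair $(u',\,v's_r)$, since $u'\leq v'< v's_r$. Your caveat about circularity is apt: in the paper, and in the cited reference, the Lifting Property is stated after the Subword Property and is usually derived from it, so for a self-contained argument you must make sure it is available from the exchange condition alone.

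The forward direction, however, contains a step that fails as written. The paper defines $u\rightarrow v$ by requiring only $\ell(u)<\ell(v)$, not $\ell(v)=\ell(u)+1$, so along the edge $u_{k-1}\rightarrow v$ the length may drop by more than one. The Strong Exchange Property then yields an expression $u_{k-1}=s_1\cdots\widehat{s_j}\cdots s_r$ with $r-1$ letters, and this word is \emph{not} reduced whenever $\ell(u_{k-1})<r-1$. Concretely, in $W(A_2)$ take $v=s_1s_2s_1$ and $t=s_1s_2s_1$: then $vt=e$ and the deletion produces $s_1\widehat{s_2}s_1=s_1s_1$, a non-reduced word for $e$. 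Your inductive hypothesis is formulated for a reduced expression of $u_{k-1}$, so it cannot be applied to this word directly. The repair is standard and short: apply the Deletion Property to $s_1\cdots\widehat{s_j}\cdots s_r$ to extract a reduced subword representing $u_{k-1}$, which is still a subword of $s_1\cdots s_r$, and invoke the inductive hypothesis on that reduced expression; a subword of a subword is a subword. With this one insertion the argument goes through.
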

Another property which characterizes the Bruhat order is the following, which will be very useful in the sequel (see, \textrm{e.g.}, \cite[Proposition 2.2.7]{bb-ccg}).  
\begin{lemma}[\textbf{Lifting Property}] \label{lif lemma}
Let $u,v \in W$ be such that $u<v$ and suppose $s \in S$ such that $vs<v$ and $us>u$. Then $u \leq vs$ and $us \leq v$. 
\end{lemma}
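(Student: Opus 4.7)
The plan is to deduce the Lifting Property directly from the Subword Property (Proposition \ref{sub prop}), together with the elementary fact that $\ell(us) < \ell(u)$ if and only if $u$ admits a reduced expression ending in $s$.

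First I would exploit the hypothesis $vs < v$ to choose a reduced expression for $v$ ending in $s$. Concretely, write $v = s_1 s_2 \cdots s_{r-1} s_r$ with $s_r = s$ and $r = \ell(v)$; then $vs = s_1 \cdots s_{r-1}$ is automatically a reduced expression for $vs$. Next, since $u \leq v$, the Subword Property produces indices $1 \leq i_1 < i_2 < \cdots < i_k \leq r$ such that $s_{i_1} s_{i_2} \cdots s_{i_k}$ is a reduced expression for $u$, where $k = \ell(u)$.

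The key observation is that $i_k < r$. Indeed, if $i_k = r$, then $u$ would have a reduced expression ending in $s_r = s$, forcing $\ell(us) = \ell(u) - 1 < \ell(u)$, which contradicts the assumption $us > u$. Therefore all indices $i_1, \ldots, i_k$ lie in $\{1, \ldots, r-1\}$, so $s_{i_1} \cdots s_{i_k}$ is a reduced subword of the reduced expression $s_1 \cdots s_{r-1}$ for $vs$, and the Subword Property yields $u \leq vs$.

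For the second inequality $us \leq v$, I would then simply append $s_r$ to the subword expression found above: $s_{i_1} \cdots s_{i_k} s_r$ is a subword of $v = s_1 \cdots s_r$, it equals $us$ (because $s_r = s$), and it has length $k+1 = \ell(u) + 1 = \ell(us)$, hence is reduced. Applying the Subword Property a second time gives $us \leq v$. The only non-trivial step is the characterization of $\ell(us) < \ell(u)$ via reduced expressions ending in $s$, which is the standard consequence of the Exchange Property and can be quoted from \cite{bb-ccg}; apart from this, the argument is purely combinatorial manipulation of subwords.
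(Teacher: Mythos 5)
Your argument is correct. Note that the paper does not actually supply a proof of this lemma: it states the Lifting Property and defers to \cite[Proposition 2.2.7]{bb-ccg}, so there is no in-paper proof to compare against. Your deduction from the Subword Property (Proposition \ref{sub prop}) is the standard textbook route and every step checks out: the hypothesis $vs<v$ legitimately yields a reduced expression $v=s_1\cdots s_r$ with $s_r=s$; the crucial claim $i_k<r$ is handled correctly, and in fact only needs the easy direction of the exchange-type fact (if a reduced word for $u$ ends in $s$, then $us$ is represented by a word of length $\ell(u)-1$, so $\ell(us)<\ell(u)$), so you do not even need to invoke the Exchange Property there; and the word $s_{i_1}\cdots s_{i_k}s_r$ is reduced precisely because $\ell(us)=\ell(u)+1$ matches its length, which licenses the final application of the Subword Property to get $us\leq v$. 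The one point worth making explicit is that the Subword Property is being applied to a \emph{fixed} reduced expression of the larger element in each instance (first $s_1\cdots s_{r-1}$ for $vs$, then $s_1\cdots s_r$ for $v$), which is exactly how Proposition \ref{sub prop} is stated, so the argument is self-contained given that proposition.
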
 
\begin{figure}[!hbtp]
\[
\xymatrix@dr@C=5pc{
*=0{\bullet} \ar@{--}[r] \ar@{-}[d]_<{v}   & *=0{\bullet} \ar@{-}[d]^<{us} \\
*=0{\bullet} \ar@{--}[r]_<{vs}             & *=0{\bullet} \ar@{-}[ul]^<{u} }
\]
\caption{The lifting property.}
\end{figure}
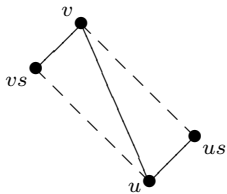
The Bruhat order relation gives $W$ the structure of graded \emph{partially ordered set}, with the length fuction $\l$ as \emph{rank function} (see, \textrm{e.g.}, \cite[\S 2.2]{bb-ccg}).\\
A proof of the next result can be found in \cite[Corollary 2.2.5 and Proposition 2.3.4]{bb-ccg}.
\begin{pro} \label{pro automorph}
Let $W$ be a finite Coxeter group with maximum element $w_0$. Then the following are equivalent: 
\begin{itemize}
\item[{\rm (i)}] $u\leq v$;
\item[{\rm (ii)}] $u^{-1}\leq v^{-1}$;
\item[{\rm (iii)}] $w_0uw_0 \leq w_0vw_0$.
\end{itemize} 
\end{pro}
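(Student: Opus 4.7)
The plan is to get both equivalences from the Subword Property (Proposition \ref{sub prop}), after establishing two basic facts about the maps $w \mapsto w^{-1}$ and $w \mapsto w_0 w w_0$.

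First I would show (i) $\Leftrightarrow$ (ii). Since $\ell(w)=\ell(w^{-1})$ by part (iii) of the Proposition on the length function, if $v = s_{i_1} s_{i_2} \cdots s_{i_r}$ is a reduced expression, then $v^{-1} = s_{i_r} s_{i_{r-1}} \cdots s_{i_1}$ is also reduced. A subword $s_{j_1} \cdots s_{j_k}$ of $s_{i_1} \cdots s_{i_r}$ is reduced and equal to $u$ if and only if the reversed subword is reduced and equal to $u^{-1}$. By the Subword Property applied to both $v$ and $v^{-1}$, this yields $u \leq v \Leftrightarrow u^{-1} \leq v^{-1}$.

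For (i) $\Leftrightarrow$ (iii) I would use the map $\varphi \colon W \to W$ defined by $\varphi(w) = w_0 w w_0$. This is a group automorphism (since $w_0^2 = e$), and the key fact is that $\varphi$ permutes the generating set $S$: for each $s \in S$ one has $w_0 s w_0 \in S$. Assuming this, $\ell(\varphi(w)) = \ell(w)$ for every $w \in W$, and if $v = s_{i_1} \cdots s_{i_r}$ is a reduced expression then $\varphi(v) = \varphi(s_{i_1}) \cdots \varphi(s_{i_r})$ is also a reduced expression (its length is $\ell(v) = r$, and it uses $r$ generators). Subwords correspond under $\varphi$, so the Subword Property immediately gives $u \leq v \Leftrightarrow w_0 u w_0 \leq w_0 v w_0$.

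The main obstacle is the claim that conjugation by $w_0$ stabilizes $S$, which is the only non-formal input needed; it is a classical property of the longest element of a finite Coxeter group, proved for instance by noting that $w_0$ sends the set of positive roots to the set of negative roots, so $w_0 s w_0$ is again a simple reflection for every $s \in S$. I would cite this from \cite{bb-ccg} or \cite{Hum} rather than reprove it. Once it is in hand, both equivalences are essentially a single application of the Subword Property after transporting reduced expressions through the respective bijection of $W$.
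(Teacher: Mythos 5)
Your argument is correct. The paper gives no proof of this proposition, deferring to \cite{bb-ccg}; your derivation of both equivalences from the Subword Property (Proposition \ref{sub prop}), combined with the standard facts that $\ell(u)=\ell(u^{-1})$ and that conjugation by $w_0$ permutes $S$, is exactly the argument of the cited reference, so the approach matches and there is nothing further to compare.
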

In other words, Proposition \ref{pro automorph} states that the mappings $\phi : u \mapsto u^{-1}$ and $\psi: u \mapsto w_0uw_0$ are automorphism of the Bruhat order.

\section{The word property} \label{word prop}

Let $(W,S)$ be a Coxeter system with associated Coxeter matrix $[m(s_i,s_j)]$. Define $$\alpha_{s_i,s_j}\stackrel{\rm def}{=}\underbrace{s_is_js_is_j\cdots}_{m(s_i,s_j) \mbox{ \small{factors}}}.$$
Let $u,v \in W$. We say that $u$ and $v$ are linked by a \emph{braid--move} if there exist $s_i,s_j \in S$ such that $u$ can be obtained from $v$ by substituting $\alpha_{s_i,s_j}$ for $\alpha_{s_j,s_i}$. Moreover we say that $u$ and $v$ are linked by a \emph{nil--move} if $u$ can be obtained from $v$ by deleting a factor of the form $s_is_i$. In both cases we write $u \sim v$.
The problem of deciding whether two expressions in the alphabet $S$ represents the same element in $W$ is not trivial at all. A complete answer to this question is given by the following general result (see, \textrm{e.g.}, \cite[Theorem 3.3.1]{bb-ccg}).  

\begin{teo}[\textbf{Word Property}]
Let $v \in W$. Then any reduced expression for $v$ can be obtained from any other by applying a finite sequence of braid--moves and nil--moves.
\end{teo}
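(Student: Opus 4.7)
I would prove the theorem by strong induction on $\ell(v)$, using the \emph{Lifting Property} (Lemma \ref{lif lemma}) as the main tool. The base case $\ell(v) \leq 1$ is trivial since there is only one reduced expression. For the inductive step, fix two reduced expressions $\mathbf{s} = s_{i_1}\cdots s_{i_k}$ and $\mathbf{t} = s_{j_1}\cdots s_{j_k}$ for $v$.

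If the first letters agree, i.e.\ $s_{i_1} = s_{j_1} = s$, then $s_{i_2}\cdots s_{i_k}$ and $s_{j_2}\cdots s_{j_k}$ are reduced expressions for $sv$, which has length $k-1$. The inductive hypothesis yields a sequence of braid- and nil-moves linking them; prepending $s$ to each expression in the sequence yields the required chain linking $\mathbf{s}$ and $\mathbf{t}$.

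The heart of the proof lies in the case $s_{i_1} = s \neq t = s_{j_1}$. Here I would first establish an auxiliary claim: \emph{if $\ell(sv) < \ell(v)$ and $\ell(tv) < \ell(v)$ with $s \neq t$, then $v$ admits a reduced expression beginning with $\alpha_{s,t}$ (and therefore, via a single braid-move, also one beginning with $\alpha_{t,s}$).} This is a descent-type statement proved by a secondary induction on $\ell(v)$: one applies the Lifting Property to the pair $s v < v$ with the second generator $t$ to extend the alternating prefix one letter at a time, stopping when its length reaches $m(s,t)$; finiteness of $m(s,t)$ forces the word $\alpha_{s,t}$ to appear as a prefix of some reduced expression for $v$. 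Granting this claim, write these two reduced expressions as $\alpha_{s,t}\,w$ and $\alpha_{t,s}\,w$ with a common tail $w$. Both $\mathbf{s}$ and $\alpha_{s,t}\,w$ begin with $s$, so the ``same-first-letter'' case above (i.e.\ the inductive hypothesis applied to $sv$) links them by braid- and nil-moves; symmetrically for $\mathbf{t}$ and $\alpha_{t,s}\,w$. Finally, $\alpha_{s,t}\,w$ and $\alpha_{t,s}\,w$ are linked by a single braid-move, and concatenating the three segments completes the chain.

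The main obstacle is precisely the auxiliary claim that two distinct left descents $s,t$ of $v$ force an alternating prefix of full length $m(s,t)$ in some reduced expression of $v$. In the infinite-order case $m(s,t)=\infty$ one gets a contradiction directly (so the hypothesis can never be satisfied when $m(s,t)=\infty$), while in the finite-order case one must carefully iterate the Lifting Property without accidentally producing an expression of non-minimal length; this is where the Subword Property (Proposition \ref{sub prop}) enters, guaranteeing that each alternating extension remains a reduced expression. Once this lemma is in hand, the rest of the proof is a clean double induction, and in fact the argument shows that nil-moves are not needed at all between two reduced expressions of the same element, giving the classical Matsumoto--Tits form of the theorem.
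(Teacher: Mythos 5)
The paper itself offers no proof of this theorem: it is quoted from \cite[Theorem 3.3.1]{bb-ccg}, so there is no internal argument to compare against. Your skeleton is exactly the standard Tits--Matsumoto proof given in that reference: induction on $\ell(v)$, the trivial reduction to $sv$ when the first letters agree, and, when they disagree, the three--segment chain linking $\mathbf{s}$ to $\alpha_{s,t}w'$ (by the inductive hypothesis applied to $sv$), then to $\alpha_{t,s}w'$ (one braid--move), then to $\mathbf{t}$. That part is correct, as is your closing observation that nil--moves are never needed between two reduced expressions.

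The gap is in the auxiliary claim, and it is concrete: iterating the Lifting Property ``against $v$'' closes the induction only for the first two letters. At the third step, assuming $\ell(stsv)>\ell(tsv)$ and lifting the pair $tsv<v$ with the generator $s$ yields only $stsv\leq v$, i.e.\ $\ell(stsv)\leq\ell(v)$, which is consistent with $\ell(stsv)=\ell(v)-1$ and gives no contradiction. To close the induction you must advance the two alternating prefixes in parallel and lift one against the other: set $x_j=(sts\cdots)^{-1}v$ and $y_j=(tst\cdots)^{-1}v$ (alternating words of length $j$ starting with $s$ and with $t$ respectively), assume inductively that $\ell(x_i)=\ell(y_i)=\ell(v)-i$ for $i\leq j<m(s,t)$, and suppose $\ell(rx_j)>\ell(x_j)$ for the next letter $r$. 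The elements $x_j$ and $y_{j-1}$ differ by a reflection and their lengths differ by one, so $x_j<y_{j-1}$; moreover $ry_{j-1}=y_j<y_{j-1}$. The left--handed version of Lemma \ref{lif lemma} then gives $x_j\leq y_j$, and equality of lengths forces $x_j=y_j$, hence equality of the two alternating words of length $j$, which is impossible in the dihedral subgroup $\langle s,t\rangle$ for $j<m(s,t)$. This also shows $m(s,t)<\infty$, since otherwise the lengths $\ell(v)-j$ would decrease forever. (Equivalently, the auxiliary claim is the parabolic factorization $v=w_0(s,t)\cdot u$ with $\ell(v)=m(s,t)+\ell(u)$, which Björner--Brenti establish via the exchange condition before proving the word property.) With that step repaired your proof is complete; note that the Subword Property is not actually what is needed there, since reducedness of each extension is precisely the length statement $\ell(x_{j+1})=\ell(x_j)-1$ that the lifting argument delivers.
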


\begin{cor}
Let $s_1 \cdots s_r$ be an expression of $v \in W$. Then any reduced expression for $v$ can be obtained from $s_1 \cdots s_r$ by applying a finite sequence of braid--moves and nil--moves. \qed
\end{cor}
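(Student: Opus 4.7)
The plan is to induct on the excess $r - \l(v) \geq 0$. When this excess vanishes the expression $s_1 \cdots s_r$ is already reduced and the claim is immediate from the Word Property. For the inductive step, I would show that whenever $s_1 \cdots s_r$ fails to be reduced, a finite sequence of braid- and nil-moves transforms it into an equivalent expression of length $r-2$; applying the inductive hypothesis to that shorter expression then completes the argument.

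To locate the pair of letters that a nil-move will annihilate, let $k$ be the smallest index for which $s_1 \cdots s_k$ is not reduced. Then $s_1 \cdots s_{k-1}$ is reduced and appending $s_k$ strictly decreases the length. The Exchange Condition (see, e.g., \cite[Ch.~1]{bb-ccg}) then produces an index $i \in \{1,\ldots,k-1\}$ with
$$s_1 \cdots s_{k-1} s_k \;=\; s_1 \cdots \widehat{s_i} \cdots s_{k-1}$$
in $W$. Multiplying on the right by $s_k$ and on the left by $s_{i-1} s_{i-2} \cdots s_1$, then cancelling, yields
$$s_i s_{i+1} \cdots s_{k-1} \;=\; s_{i+1} \cdots s_{k-1} s_k$$
as elements of $W$. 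The left-hand side, being a contiguous subword of the reduced expression $s_1 \cdots s_{k-1}$, is itself reduced (else the Deletion Property applied to that subword would contradict the reducedness of the enclosing expression); since both sides have $k-i$ letters, the right-hand side is reduced as well.

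I would now invoke the Word Property on these two reduced expressions: it supplies a finite sequence of braid- and nil-moves converting $s_{i+1}\cdots s_{k-1} s_k$ into $s_i s_{i+1}\cdots s_{k-1}$. Executing this very sequence on the contiguous factors in positions $i+1$ through $k$ of $s_1 \cdots s_r$ transforms the whole expression into
$$s_1 \cdots s_{i-1} \cdot s_i \cdot s_i s_{i+1} \cdots s_{k-1} \cdot s_{k+1} \cdots s_r,$$
whose positions $i$ and $i+1$ now both carry $s_i$. A single nil-move removes this pair and produces an expression for $v$ of length $r-2$. By the inductive hypothesis this shorter expression can be converted into any reduced expression for $v$ via braid- and nil-moves, finishing the proof.

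The principal technical input is the Exchange Condition, which I would cite from \cite{bb-ccg}; everything else is combinatorial bookkeeping carried out on top of the Word Property. The only subtlety worth flagging is that the intermediate expressions arising during the subword substitution need not be reduced, but this is harmless: braid- and nil-moves are purely formal operations on words over $S$ and are permitted regardless of whether the intermediate expressions are reduced.
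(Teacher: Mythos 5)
Your proof is correct. The paper offers no argument for this corollary at all --- the \qed\ in the statement marks it as immediate from the Word Property --- so there is no proof to compare against; your write-up supplies the standard justification that is being elided. The induction on the excess $r-\l(v)$, the use of the Exchange Condition to locate a deletable pair, the observation that $s_i\cdots s_{k-1}$ and $s_{i+1}\cdots s_k$ are two reduced expressions of the same element (so the Word Property lets you slide one letter next to its partner by moves applied to a contiguous factor), and the final nil-move are exactly the steps needed. The one hypothesis you use that the paper never states is the Exchange Condition, but you cite it correctly from \cite{bb-ccg}, which the paper relies on throughout, so this is not a gap.
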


\begin{ese} \label{ese dpol}
Let us consider the Coxeter graph drawn in Example \ref{ese h3}. Then, we have nil--relations $$s_1^2=s_2^2=s_3^2=e$$ and braid--relations $$s_1s_2s_1s_2=s_2s_1s_2s_1, \, s_3s_2s_3=s_2s_3s_2, \, s_1s_3=s_3s_1.$$
Hence, a reduced expression for $v=s_2s_3s_2s_1s_1s_3s_2s_1$ can be obtained by a sequence of three nil-moves and one braid-move, as shown below:
$$s_2s_3s_2\underline{s_1s_1}s_3s_2s_1 \sim \underline{s_2s_3s_2}s_3s_2s_1 \sim s_3s_2\underline{s_3s_3}s_2s_1 \sim s_3\underline{s_2s_2}s_1 \sim s_3s_1.$$ \qed
\end{ese}

\section{The symmetric group} \label{sec sym}
For a proof of the results given in this section we refer to \cite[\S 1.5]{bb-ccg}.\\
The symmetric group $S_n$ is the group of bijections of $[n]$ to itself. An element $\tau \in S_n$ is called a permutation. We consider different standard notations for a permutation $\tau$. The first one is the \emph{matrix notation}
\begin{equation} \nonumber
\begin{pmatrix}
1      & 2      & 3      & \cdots & n      \\
\tau_1 & \tau_2 & \tau_3 & \cdots & \tau_n \\
\end{pmatrix}
\end{equation}
and it means that $\tau: i \mapsto \tau_i$.
By taking only the second row of the above matrix we get the \emph{one--line notation} and we write $\tau=\tau_1\cdots \tau_n$.
Another notation is the \emph{disjoint--cylces notation}. For instance, if $\tau=432516$ then we write $\tau=(1,4,5)(2,3)$ and omit the $1$--cycles of $\tau$. The product $\sigma \tau$ is defined as the function composition $\sigma \circ \tau$. For example, $(1,3,2)(3,2)=(1,3)$.\\ 
Consider as set of generators for $S_n$ the set of the adjacent transpositions $S=\{(i,i+1),\,i\in [n-1]\}$, and set $s_i\=(i,i+1)$, for every $i \in [n-1]$. 
Denote by $A_{n-1}$ the Coxeter graph having nodes $\{s_1, \cdots , s_{n-1}\}$ and unlabelled edges $\{s_i,\,s_{i+1}\}$, for all $i \in [n-1]$ (cf. Appendix \ref{fiacg}). 

\begin{figure}[!hbtp]
\[
\xymatrix @C=3pc { *=0{\bullet} \ar@{-}[r]^{}_<{s_1} & *=0{\bullet} \ar@{-}[r]^{}_<{s_2}& *=0{\bullet} \ar@{--}[r]^{}_<{s_3} & *=0{\bullet} \ar@{-}[r]^{}_<{s_{n-2}}_>{s_{n-1}}  & *=0{\bullet} }
\]
\caption{Coxeter graph $A_{n-1}$.} 
\end{figure}
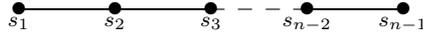

\begin{pro}
$(S_n,S)$ is a Coxeter system of type $A_{n-1}$. 
\end{pro}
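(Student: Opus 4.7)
The plan is to verify that the map $\varphi:W(A_{n-1})\to S_n$ sending the abstract generator $s_i$ to the adjacent transposition $(i,i+1)$ is a well-defined group isomorphism. Since the Coxeter presentation of $W(A_{n-1})$ imposes exactly the relations $s_i^2=e$, $s_is_{i+1}s_i=s_{i+1}s_is_{i+1}$ and $s_is_j=s_js_i$ for $|i-j|\geq 2$, I would first check that the transpositions $(i,i+1)$ satisfy each of these: involutivity is obvious, the braid relation follows from the direct computation that $(i,i+1)(i+1,i+2)(i,i+1)$ and $(i+1,i+2)(i,i+1)(i+1,i+2)$ both equal $(i,i+2)$, and the commutation relation follows from the disjointness of the cycles $(i,i+1)$ and $(j,j+1)$ when $|i-j|\geq 2$. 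By the universal property of the Coxeter presentation this yields a well-defined homomorphism $\varphi$.

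Next I would argue surjectivity. Every permutation in $S_n$ can be sorted into the identity by a sequence of adjacent swaps (bubble sort); reading this sequence backwards expresses any $\tau\in S_n$ as a product of elements of $\{(i,i+1):i\in[n-1]\}$, so $\varphi$ is onto. It remains to show injectivity, equivalently that $|W(A_{n-1})|\leq n!$.

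For the cardinality bound I would proceed by induction on $n$, which I expect to be the main obstacle. Let $W_n\= W(A_{n-1})$ and let $W_{n-1}$ denote the subgroup generated by $s_1,\ldots,s_{n-2}$; by the inductive hypothesis $|W_{n-1}|\leq (n-1)!$. I would then prove the key claim that every element of $W_n$ admits a representative of the form $w\cdot u$ with $w\in W_{n-1}$ and
\[
u\in\bigl\{e,\;s_{n-1},\;s_{n-1}s_{n-2},\;\ldots,\;s_{n-1}s_{n-2}\cdots s_1\bigr\},
\]
a set of $n$ candidates. This is done by taking an arbitrary word in $s_1,\ldots,s_{n-1}$ and using the braid and commutation relations to push every occurrence of $s_{n-1}$ to the right; whenever two $s_{n-1}$'s end up adjacent, they cancel via $s_{n-1}^2=e$, strictly reducing the number of $s_{n-1}$-letters, so the process terminates with at most one $s_{n-1}$ and everything to its left living in $W_{n-1}$. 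The resulting suffix $s_{n-1}s_{n-2}\cdots s_k$ is one of the $n$ listed. This yields $|W_n|\leq n\cdot(n-1)!=n!$, so $\varphi$ is forced to be injective and hence an isomorphism, proving that $(S_n,S)$ is a Coxeter system of type $A_{n-1}$.

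The delicate point is the coset-representative claim: care is required to show that the rewriting terminates and really produces a suffix of the prescribed form, and this is where any reference to \cite[\S 1.5]{bb-ccg} can be invoked to streamline the argument.
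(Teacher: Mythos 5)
Your argument is correct, but it does considerably more than the paper does: the paper's entire treatment of this proposition consists of checking that the adjacent transpositions satisfy the relations of type $A_{n-1}$ (the easy half, which you also carry out) and then deferring the substantive content --- that no further relations hold --- to the reference \cite[\S 1.5]{bb-ccg}. You instead supply the standard self-contained proof: the relation check gives a well-defined homomorphism $\varphi\colon W(A_{n-1})\to S_n$ via the universal property of the presentation, bubble sort gives surjectivity, and the inductive coset decomposition $W_n\subseteq W_{n-1}\cdot\{e,\,s_{n-1},\,s_{n-1}s_{n-2},\,\dots,\,s_{n-1}\cdots s_1\}$ forces $|W(A_{n-1})|\le n!$ and hence injectivity. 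This buys independence from the cited reference, at the cost of the rewriting lemma you yourself flag as delicate. One small imprecision there: two occurrences of $s_{n-1}$ separated by a word in $s_1,\dots,s_{n-2}$ do not in general become adjacent by commutations alone, since $s_{n-1}$ fails to commute with $s_{n-2}$; the count of $s_{n-1}$-letters drops either by cancellation or by the braid move $s_{n-1}s_{n-2}s_{n-1}=s_{n-2}s_{n-1}s_{n-2}$, applied after first putting the intervening word into its own inductive normal form $v'u'$ with $v'\in W_{n-2}$ so that $s_{n-1}$ commutes past $v'$ and meets at most one letter $s_{n-2}$. With that adjustment --- or, as you note, by simply invoking the reference at this point --- the termination argument is standard and your proof is complete.
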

In fact, it is straightforward to verify that the generators $s_1,\,s_2, \cdots, s_{n-1}$ satisfy the Coxeter relations 
\begin{equation}
\left\{ \begin{array}{ll} 
s_i^2=e,              &                            \nonumber \\
s_is_js_i=s_js_is_j   & \mbox{if } | i-j | =1,     \nonumber \\
s_is_j=s_js_i         & \mbox{if } | i-j | \geq 2. \nonumber
\end{array} \right.
\end{equation}
Let $\tau \in S_n$ and define $$inv(\tau)\stackrel{\rm def}{=} | \{(i,j) \in [n]^2: \, i<j,\, \tau_i > \tau_j\} |,$$ called the number of inversions of $\tau$. The length and the number of inversions of a permutation in the symmetric group are closely related, as explained by the following result. 
\begin{pro} \label{linv}
Let $\tau \in S_n$. Then $\l(\tau)=inv(\tau)$. 
\end{pro}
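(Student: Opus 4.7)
The plan is to prove both inequalities $\mathrm{inv}(\tau) \le \ell(\tau)$ and $\ell(\tau) \le \mathrm{inv}(\tau)$ by exploiting how the statistic $\mathrm{inv}$ interacts with right multiplication by the Coxeter generators $s_i = (i, i+1)$.

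The starting observation is that right multiplication by $s_i$ swaps the entries in positions $i$ and $i+1$ of the one--line notation of $\tau$: all other pairs $(j,k)$ with $j<k$ have their relative order unaffected, so
\[
\mathrm{inv}(\tau s_i) = \mathrm{inv}(\tau) + 1 \ \text{ if } \tau_i < \tau_{i+1}, \qquad \mathrm{inv}(\tau s_i) = \mathrm{inv}(\tau) - 1 \ \text{ if } \tau_i > \tau_{i+1}.
\]
Hence $|\mathrm{inv}(\tau s_i) - \mathrm{inv}(\tau)| = 1$ for every $i \in [n-1]$.

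For the upper bound on $\mathrm{inv}$, suppose $\tau = s_{i_1} \cdots s_{i_k}$ is any reduced expression of length $k = \ell(\tau)$. Starting from $e$, which has zero inversions, and building up $\tau$ by right multiplying the prefixes by one generator at a time, the number of inversions changes by exactly $\pm 1$ at each step, so after $k$ steps it is at most $k$. Therefore $\mathrm{inv}(\tau) \le \ell(\tau)$.

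For the reverse inequality I would induct on $\mathrm{inv}(\tau)$. The base case $\mathrm{inv}(\tau) = 0$ forces $\tau_1 < \tau_2 < \cdots < \tau_n$, hence $\tau = e$ and $\ell(\tau) = 0$. For the inductive step, if $\tau \ne e$ then $\tau$ has at least one descent, i.e.\ an index $i$ with $\tau_i > \tau_{i+1}$; by the observation above $\mathrm{inv}(\tau s_i) = \mathrm{inv}(\tau) - 1$, and by the inductive hypothesis $\ell(\tau s_i) = \mathrm{inv}(\tau s_i)$, so
\[
\ell(\tau) \le \ell(\tau s_i) + 1 = \mathrm{inv}(\tau s_i) + 1 = \mathrm{inv}(\tau).
\]
Combining the two inequalities yields $\ell(\tau) = \mathrm{inv}(\tau)$.

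The only delicate point is the very first observation, since it requires checking that swapping the values in two adjacent positions alters exactly one pair's relative order; this is a short case analysis on whether $\tau_i < \tau_{i+1}$ or $\tau_i > \tau_{i+1}$ (the case $\tau_i = \tau_{i+1}$ is excluded since $\tau$ is a bijection). I expect this to be the technical heart of the argument; everything else is bookkeeping and induction.
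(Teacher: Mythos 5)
Your proof is correct. The paper itself gives no proof of this proposition, deferring to \cite[\S 1.5]{bb-ccg}, and your argument --- showing $inv(\tau s_i)=inv(\tau)\pm 1$, deducing $inv(\tau)\leq \l(\tau)$ from a reduced word, and proving $\l(\tau)\leq inv(\tau)$ by induction on $inv(\tau)$ via a descent --- is exactly the standard argument found there, with the one genuinely delicate point (the effect of an adjacent transposition on the inversion set) correctly identified and handled.
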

Hence, the longest element, with respect to the length function, of $S_n$ is $w_0=n n-1 \cdots 3 2 1$.

\section{The hyperoctahedral group}
For a proof of the results given in this section we refer to \cite[\S 8.1]{bb-ccg}.\\
The \emph{hyperoctahedral group} $S_n^B$ is the group of bijections $\sigma$ of $[\pm n]$ to itself such that $\sigma(-i)=- \sigma(i)$, for all $i \in [n]$. An element in $S_n^B$ is called a \emph{signed permutation}. We consider different standard notations for an element $\sigma \in S_n^B$. The first one is the \emph{window notation}: $\sigma=[\sigma_1,\sigma_2,\cdots ,\sigma_n]$ means that $\sigma(i)=\sigma_i$ for every $i \in [n]$. A more compact notation is often used: we write $\sigma=|\sigma_1| \, |\sigma_2| \, \cdots |\sigma_n|$ and put bar over an element with a negative sign. For instance, $3\overline{5}41\overline{2}$ corresponds to the signed permutation $[3,-5,4,1,-2]$.
Another notation is the \emph{disjoint--cylces notation}. For instance, if $\tau=[4,-3,-2,5,1,6]$ then we write $\tau=(1,4,5)(2,-3)$, and we omit the $1$--cycles of $\tau$. The product $\sigma \tau$ is defined as the function composition $\sigma \circ \tau$. Consider as set of generators for $S_n^B$ the set $S=\{(1,-1)\} \cup \{(i,i+1)(-i,-i-1),\, i \in [n-1]\}$, and set $s_0\=(1,-1)$, $s_i\=(i,i+1)(-i,-i-1)$, for every $i \in [n-1]$. Denote by $B_n$ the Coxeter graph having nodes $\{s_0, \cdots , s_{n-1}\}$, unlabelled edges $\{s_i,\,s_{i+1}\}$ for all $i \in [n-2]$ and the edge $\{s_0,s_1\}$ labelled by $4$ (cf. Appendix \ref{fiacg}).
\begin{figure}[!hbtp]
\[
\xymatrix @C=3pc { *=0{\bullet} \ar@{-}[r]^{4}_<{s_0} & *=0{\bullet} \ar@{-}[r]^{}_<{s_1}& *=0{\bullet} \ar@{--}[r]^{}_<{s_2} & *=0{\bullet} \ar@{-}[r]^{}_<{s_{n-2}}_>{s_{n-1}}  & *=0{\bullet} }
\]
\caption{Coxeter graph $B_n$, $n \geq 2$.}
\end{figure}
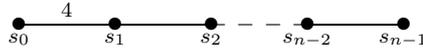

\begin{pro}
$(S_n^B,S)$ is a Coxeter system of type $B_n$.
\end{pro}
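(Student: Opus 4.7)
The plan is to show that the universal map from the abstract Coxeter group $W(B_n)$ of type $B_n$ into $S_n^B$, sending the abstract generators to $s_0, s_1, \ldots, s_{n-1}$, is an isomorphism. This requires three steps: (i) $S$ generates $S_n^B$; (ii) the $B_n$ Coxeter relations hold in $S_n^B$, which together with (i) yields a well-defined surjective homomorphism $\phi \colon W(B_n) \to S_n^B$; (iii) $|W(B_n)| \leq |S_n^B| = 2^n n!$.

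Step (i) uses the $A_{n-1}$ case already handled: the subgroup $\langle s_1, \ldots, s_{n-1}\rangle$ is the unsigned copy of $S_n$ inside $S_n^B$, and conjugating $s_0 = (1,-1)$ by its elements produces every sign-change $(i,-i)$; since every signed permutation factors as a product of sign changes followed by an unsigned permutation, $S$ generates. For step (ii), the identities $s_i^2=e$ are immediate; if $|i-j|\geq 2$ then the supports of $s_i$ and $s_j$ are disjoint and they commute; the braid relations $s_i s_{i+1} s_i=s_{i+1} s_i s_{i+1}$ for $i\geq 1$ descend from the $A_{n-1}$ case; and $s_0 s_1=(1,2,-1,-2)$ is a $4$--cycle, whence $(s_0 s_1)^4=e$.

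The main obstacle is step (iii), which I would handle by induction on $n$, with base $n=2$ trivial (the dihedral group of order $8=|S_2^B|$). For the inductive step, the parabolic subgroup $W_0=\langle s_0,\ldots,s_{n-2}\rangle\leq W(B_n)$ is a quotient of the abstract $W(B_{n-1})$, so $|W_0|\leq 2^{n-1}(n-1)!$ by induction. It then suffices to bound the left-coset space $W(B_n)/W_0$ by $2n$; since in $S_n^B$ the cosets are indexed by $w(n)\in\{\pm 1,\ldots,\pm n\}$, a natural set of $2n$ representatives is
\[
u_k=\begin{cases} s_k s_{k+1}\cdots s_{n-1} & \text{if } 1\leq k\leq n,\\ s_{-k-1}s_{-k-2}\cdots s_1 s_0 s_1 s_2\cdots s_{n-1} & \text{if }-n\leq k\leq -1,\end{cases}
\]
with the convention $u_n=e$. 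A routine case analysis, using only the $B_n$ Coxeter relations, verifies that for every $s\in S$ and every $u_k$ one has $s u_k\in u_j W_0$ for some $j$. Hence $\bigcup_k u_k W_0$ is closed under left multiplication by $S$, so it coincides with $W(B_n)$ and has at most $2n\cdot 2^{n-1}(n-1)!=2^n n!$ elements, forcing $\phi$ to be an isomorphism.
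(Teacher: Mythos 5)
Your proposal is correct, and it is genuinely more than what the paper does: the paper offers no proof of this proposition at all, contenting itself with the remark that the generators ``can be easily checked'' to satisfy the $B_n$ Coxeter relations and deferring the rest to \cite[\S 8.1]{bb-ccg}. Checking the relations only establishes your step (ii), i.e.\ that $S_n^B$ is a homomorphic image of the abstract group $W(B_n)$; the substance of the proposition is the injectivity of that map, which is exactly what your steps (i) and (iii) supply. Your route --- surjectivity via conjugating $s_0$ into all sign changes, plus the order bound $|W(B_n)|\leq 2n\cdot|W_0|$ obtained by exhibiting $2n$ coset representatives of the parabolic subgroup $W_0=\langle s_0,\ldots,s_{n-2}\rangle$ and checking closure under left multiplication by generators --- is the standard elementary counting argument and is self-contained, whereas the cited source proceeds through the general exchange-condition machinery. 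The computations you flag as immediate are indeed so (in particular $s_0s_1=(1,2,-1,-2)$ does have order $4$, and $s_0$ commutes with $s_j$ for $j\geq 2$ by disjointness of supports). The one place where care is needed, and which you correctly anticipate, is that the verification $su_k\in u_jW_0$ must be carried out inside the abstract group using only the defining relations, not by computing in $S_n^B$; this is a finite, if tedious, check, and with it your argument is complete.
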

In fact, it can be easily checked that the generators $s_1,\,s_2, \cdots, s_{n-1}$ satisfy the Coxeter relations 
\begin{equation}
\left\{ \begin{array}{ll} 
s_i^2=e,                  &                                                  \nonumber \\
s_0s_1s_0s_1=s_1s_0s_1s_0,&                                                  \nonumber \\
s_is_js_i=s_js_is_j       & \mbox{if } | i-j | =1 \mbox{ and }i,j \in [n-1], \nonumber \\
s_is_j=s_js_i             & \mbox{if } | i-j | \geq 2.                       \nonumber
\end{array} \right.
\end{equation}
Let $\sigma \in S_n^B$ and define $$inv(\sigma)\stackrel{\rm def}{=} | \{(i,j) \in [\pm n]^2: \, i<j,\, \sigma_i > \sigma_j\} |,$$ called the number of inversions of $\sigma$. The length function $\l_B$ of $S_n^B$ may be expressed in many ways. A simple combinatorial description of $\l_B$ is given in \cite[Proposition 3.1]{bre-qep}.
\begin{pro} 
Let $\sigma \in S_n^B$. Then 
$$\l_B(\sigma)=inv(\sigma)-\sum_{j \in [n]:\,\sigma(j)<0} \sigma(j).$$
\end{pro}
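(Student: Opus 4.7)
The plan is to induct on $\l_B(\sigma)$. Write $R(\sigma) := inv(\sigma) - \sum_{j \in [n],\,\sigma(j) < 0} \sigma(j)$, and aim for $R = \l_B$. The base case $\sigma = e$ is immediate: $e$ is order-preserving on $[\pm n]$ and has no negative entries, so $R(e) = 0 = \l_B(e)$.

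For the inductive step, suppose $\sigma \neq e$. By property (v) of the length function there is $s \in S$ with $\l_B(\sigma s) = \l_B(\sigma) - 1$, and the inductive hypothesis gives $R(\sigma s) = \l_B(\sigma s)$. It therefore suffices to show $R(\sigma) - R(\sigma s) = 1$, and I would check this in the two cases $s = s_i$ for $i \in [n-1]$ and $s = s_0$, using the explicit description of these generators as (products of) transpositions on $[\pm n]$.

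When $s = s_i$ with $i \geq 1$, right-multiplication swaps the values of $\sigma$ at positions $i \leftrightarrow i+1$ and, by antisymmetry, at $-i-1 \leftrightarrow -i$. The multiset $\{\sigma(j) : j \in [n]\}$ is preserved, so the sum term of $R$ does not change. Any pair of positions with at most one entry in $\{\pm i, \pm(i+1)\}$ preserves its combined inversion contribution (swapping two values only alters their mutual order, not their order with third parties), so only inversions within the four-element block $\{\pm i, \pm(i+1)\}$ can change. A short enumeration of these few pairs, using $\sigma(-k) = -\sigma(k)$, pins the change in $inv$ down to the required unit dictated by the sign of $\sigma(i) - \sigma(i+1)$.

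When $s = s_0$, right-multiplication negates $\sigma(1)$ (and hence $\sigma(-1)$) and leaves the other positions fixed. The sum term shifts by $\pm|\sigma(1)|$, with sign determined by the sign of $\sigma(1)$, while only inversion pairs involving position $\pm 1$ are affected in $inv$. A direct count shows that this $inv$-shift is controlled by $|\{k \in [2, n] : |\sigma(k)| < |\sigma(1)|\}|$, and this cardinality equals $|\sigma(1)| - 1$ because the absolute values $|\sigma(1)|, \ldots, |\sigma(n)|$ form a permutation of $[n]$. Substituting back into $R(\sigma) - R(\sigma s_0)$ collapses the two contributions to exactly $1$.

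The main obstacle will be the bookkeeping in the $s_0$ case, where a single sign flip at position $1$ simultaneously alters the sum term and all of the inversion pairs touching $\pm 1$. The key tool for managing this is the antisymmetry $\sigma(-k) = -\sigma(k)$, which pairs positive- and negative-side inversion contributions so that the bulky pair count reduces to a statement about absolute values, after which the permutation identity above telescopes everything against the $\pm|\sigma(1)|$ shift in the sum term.
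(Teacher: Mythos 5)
The thesis itself does not prove this proposition; it simply cites \cite[Proposition 3.1]{bre-qep} (and refers to \cite[\S 8.1]{bb-ccg}), so there is no internal argument to compare yours against. Your overall strategy --- induct on $\ell_B$ and track how the statistic $R$ changes under right multiplication by each generator --- is the standard route, but as written the key numerical claims do not survive, because you are inconsistent about which inversion statistic you are computing. The identity is true when $inv(\sigma)$ is the inversion number of the window $[\sigma(1),\dots,\sigma(n)]$, i.e.\ counted over pairs in $[n]^2$ (the $[\pm n]^2$ in the thesis's displayed definition is an error: for $n=1$ and $\sigma=[-1]$ it would give $inv(\sigma)=1$, hence $R(\sigma)=2$, while $\ell_B(\sigma)=1$). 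Your $s_0$ computation, with the shift $|\{k\in[2,n]:|\sigma(k)|<|\sigma(1)|\}|=|\sigma(1)|-1$ cancelling against $\pm|\sigma(1)|$, is exactly the window-count computation and is correct. But your $s_i$ step enumerates the four-element block $\{\pm i,\pm(i+1)\}$: under the full $[\pm n]$ count the pair $(i,i+1)$ and its mirror $(-i-1,-i)$ flip \emph{together}, so the net change in $inv$ is $\pm 2$, not the ``required unit''. You must commit to the window count throughout; then only the single pair $(i,i+1)$ is relevant in the $s_i$ case and the unit change is immediate.

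There is also a structural gap in the induction. You choose $s$ with $\ell_B(\sigma s)=\ell_B(\sigma)-1$ and then assert that $R$ drops by exactly $1$, ``dictated by the sign of $\sigma(i)-\sigma(i+1)$''. To know that the length-decreasing generator is precisely the one for which your combinatorial computation gives $-1$ rather than $+1$, you need the descent characterizations $\ell_B(\sigma s_i)<\ell_B(\sigma)\iff\sigma(i)>\sigma(i+1)$ and $\ell_B(\sigma s_0)<\ell_B(\sigma)\iff\sigma(1)<0$, which are not available at this stage (they are usually deduced \emph{from} the formula you are proving). The standard repair is a two-inequality argument: first show $|R(\sigma s)-R(\sigma)|=1$ for every generator $s$, which yields $R(\sigma)\le\ell_B(\sigma)$ along any reduced word for $\sigma$; then show that $R(\sigma)>0$ forces either an adjacent window inversion or $\sigma(1)<0$, so some generator decreases $R$ by $1$, and conclude $\ell_B(\sigma)\le R(\sigma)$ by induction on $R$.
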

Hence, the longest element of $S_n^B$ is $w_0=\overline{1\,2 \cdots \,n}$.

\section{Restricted permutations} \label{restr perm}

Two numerical sequences $\alpha=(\alpha_1,\cdots , \alpha_m)$ and $\beta=(\beta_1,\cdots , \beta_m)$ are said to be order--isomorphic if $\alpha_i < \alpha_j \iff \beta_i < \beta_j$, for all $i, j \in [m]$. 
Let $\pi \in S_n$ and $\tau \in S_k$. An occurrence of $\tau$ in $\pi$ is a subsequence $1 \leq i_1 \leq \cdots \leq i_k \leq n$ such that $(\pi_{i_1}, \cdots , \pi_{i_k})$ is order--isomorphic to $(\tau_1,\cdots , \tau_k)$. In this context $\tau$ is usually called a pattern and we say that $\pi$ is $\tau$--avoiding if there is no occurrence of $\tau$ in $\pi$. Denote by $S_n(\tau)$ the set $\{\sigma \in S_n:\, \sigma \mbox{ is $\tau$--avoiding}\}$. For an arbitrary finite collection of patterns $T$, we say that $\pi$ avoids $T$ if $\pi$ avoids any $\tau \in T$ and denote by $S_n(T)$ the corresponding subset of $S_n$.\\
Throughout this work the $321$--avoiding permutations of $S_n$ will be of particular interest.   
\begin{defi}
An element $w \in W$ is fully commutative if any reduced expression for $w$ can be obtained from any other by applying braid--moves that involve only commuting generators. We let $$W_c \stackrel{\rm def}{=} \{w \in W:\, w \, \mbox{ is a fully commutative element}\}.$$  
\end{defi}
We will denote by $[x,w]_c$ the set $\{y \in [x,w]:\, y \in W_c\}$.\\
As we have seen in Section \ref{sec sym}, if  $X=A_{n-1}$ then $W=W(X) \simeq S_n$. In this case $W_c(A_{n-1})$ may be described as the set of elements of $W(A_{n-1})$ all of whose reduced expressions avoid substrings of the form $s_is_{i \pm 1}s_i$, for all $s_i \in S$ (see \cite[Proposition 1.1]{ste-fce}). The notion of fully commutative element in type $A$ can be reformulated in terms of pattern--avoidance. A proof of the next result can be found in \cite[Theorem 2.1]{bjs-cps}).
\begin{pro} \label{321wc}
The sets $S_n(321)$ and $W_c(A_{n-1})$ coincide. Moreover $\vert W_c(A_{n-1}) \vert = C_n$, where $C_n\stackrel{\rm def}{=}\frac{1}{n+1}\binom{2n}{n}$ denotes the $n$--th Catalan number.
\end{pro}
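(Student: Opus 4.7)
The proposition has two parts: the set equality $S_n(321)=W_c(A_{n-1})$, and the enumeration $|W_c(A_{n-1})|=C_n$. My plan is to handle them separately.

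For the set equality I would invoke Stembridge's characterization (which in type $A$ states that $w\in W_c(A_{n-1})$ iff no reduced expression of $w$ contains a consecutive factor of the form $s_is_{i+1}s_i$ or $s_{i+1}s_is_{i+1}$), as already mentioned in the excerpt. The easier inclusion is $S_n(321)\subseteq W_c(A_{n-1})$: suppose some reduced word factors as $\sigma=u\cdot s_is_{i+1}s_i\cdot v$. The hypothesis that $us_i$, $us_is_{i+1}$, and $us_is_{i+1}s_i$ are each longer than the previous one forces, by the length criterion $\l(w s)=\l(w)+1\iff w(j)<w(j+1)$, the ascending condition $u(i)<u(i+1)<u(i+2)$. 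Multiplying by $s_is_{i+1}s_i$ reverses these three values at positions $i,i+1,i+2$. Now every letter of $v$ contributes to a reduced expression, so each acts by swapping two adjacent entries that are currently in ascending order; consequently a descending adjacent pair can never be undone, and in particular the three values coming from $u(i),u(i+1),u(i+2)$ preserve their pairwise relative order during the action of $v$. Hence they appear in decreasing order at three increasing positions of $\sigma$, exhibiting a $321$ pattern. The factor $s_{i+1}s_is_{i+1}$ is handled symmetrically.

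For the harder inclusion $W_c(A_{n-1})\subseteq S_n(321)$, I would argue the contrapositive by induction on $\l(\sigma)$: if $\sigma$ contains a $321$ pattern, some reduced word has a short braid. Pick a descent $\sigma(i)>\sigma(i+1)$ and set $\tau=\sigma s_i$, so $\l(\tau)=\l(\sigma)-1$. If $\tau$ still contains a $321$ pattern, the inductive hypothesis gives a reduced word for $\tau$ with a short braid, and concatenation with $s_i$ yields a reduced word for $\sigma$ with the same short braid. If instead $\tau$ avoids $321$, then every $321$ pattern of $\sigma$ must have used positions $i,i+1$; choosing the pattern so that $\sigma(i),\sigma(i+1)$ are two of the three relevant values and using the Exchange/Lifting property to insert $\sigma(i),\sigma(i+1)$ at adjacent moments in a reduced word for $\sigma$, one exhibits a short braid explicitly. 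This case analysis is the main obstacle, since the pattern-theoretic constraint has to be matched with reduced-word constraints; the cleanest way to carry it out is to write $\sigma$ as $\tau s_i$ with $\tau\in S_n(321)$ and then invoke the known explicit form of reduced words for $321$-avoiding permutations (they can be read off the permutation matrix as a monotone sequence of layers), which pinpoints where the new descent creates the forbidden factor.

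For the count $|S_n(321)|=C_n$, this is a classical theorem and I would invoke the RSK correspondence. By Schensted's theorem, the length of the longest decreasing subsequence of $\sigma$ equals the length of the first column of its RSK shape $\lambda$; hence $\sigma$ is $321$-avoiding iff $\lambda$ has at most two rows. The number of such permutations equals
\begin{equation*}
\sum_{k=0}^{\lfloor n/2\rfloor}\bigl(f^{(n-k,k)}\bigr)^{2},
\end{equation*}
where $f^{\lambda}$ is the number of standard Young tableaux of shape $\lambda$, and this sum is the well-known Catalan identity $C_n=\frac{1}{n+1}\binom{2n}{n}$. Alternatively, one can exhibit a direct bijection between $S_n(321)$ and Dyck paths of length $2n$ using the positions of the left-to-right minima of $\sigma$.
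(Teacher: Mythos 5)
The paper itself does not prove this proposition; it simply cites \cite[Theorem 2.1]{bjs-cps}, so there is no internal argument to compare against --- you are supplying a proof where the author supplies a reference. Two of your three pieces are correct and essentially complete. The inclusion $S_n(321)\subseteq W_c(A_{n-1})$ is fine: the length criterion does force $u(i)<u(i+1)<u(i+2)$ when $u\cdot s_is_{i+1}s_i$ is reduced, the long element of the parabolic on positions $i,i+1,i+2$ reverses those three values, and the observation that a length-increasing right multiplication can never reverse the left-to-right order of an already inverted pair correctly propagates the descending triple to a $321$ occurrence in $\sigma$. The enumeration via RSK and Schensted (shape with at most two rows, $\sum_k\bigl(f^{(n-k,k)}\bigr)^2=C_n$) is also standard and correct.

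The genuine gap is in the inclusion $W_c(A_{n-1})\subseteq S_n(321)$. Your inductive reduction is sound up to the critical case: if $\sigma$ has a descent at $i$ and $\tau=\sigma s_i$ still contains a $321$ pattern, induction does finish the job, and your claim that otherwise every $321$ pattern of $\sigma$ must use both positions $i$ and $i+1$ is correct. But that critical case --- $\tau$ is $321$-avoiding, $\sigma=\tau s_i$ is not, and one must produce a reduced word of $\sigma$ containing a factor $s_js_{j\pm1}s_j$ --- is exactly where the theorem lives, and you do not carry it out. ``Using the Exchange/Lifting property to insert $\sigma(i),\sigma(i+1)$ at adjacent moments in a reduced word'' is not an argument: the Exchange property controls which letters can be deleted, not where a given pair of values is transposed within a reduced word, and nothing you have said explains why the braid must appear as a \emph{consecutive} factor rather than with commuting letters interleaved. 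Your fallback --- invoking ``the known explicit form of reduced words for $321$-avoiding permutations'' --- is an appeal to precisely the Billey--Jockusch--Stanley canonical-form machinery that constitutes the standard proof, so as written the hard direction is deferred rather than proved. To close the gap you would need to either work out that canonical form and exhibit the braid explicitly, or replace the whole direction by a direct construction (e.g.\ choose a $321$ pattern $a<b<c$ minimizing $c-a$, show the minimality forces $b=a+1$ or $c=b+1$ together with value constraints, and build a reduced expression ending, after commutations, in $s_js_{j+1}s_j$).
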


A similar result holds for $W_c(B_n)$. Since the general definition of signed pattern--avoidance in type $B$ is quite complicated, we prefer to explain it by examples. For instance, we say that an element $w=w_1w_2 \cdots w_n \in S_n^B$ avoids the pattern $2 \overline{31}$ if there is no triple $i < j < k$ such that $-w_j > w_i > -w_k > 0$. For a proof of the following result we refer to \cite[Theorem 5.1 and Proposition 5.9]{ste-sca}.
\begin{pro}
The sets $S_n^B(\{ \overline{12};\, 321;\, \overline{3}21;\, \overline{23}1;\, 2\overline{3}1 \})$ and $W_c(B_n)$ coincide. Moreover $\vert W_c(B_n) \vert = (n+2)C_n-1$, where $C_n\stackrel{\rm def}{=}\frac{1}{n+1}\binom{2n}{n}$ denotes the $n$--th Catalan number.  
\end{pro}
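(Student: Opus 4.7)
The plan is to follow the strategy of Stembridge. First I would reformulate full commutativity at the level of reduced expressions: by the Word Property, an element $w\in W(B_n)$ fails to be fully commutative if and only if some reduced expression for $w$ contains one of the substrings $s_is_{i+1}s_i$ (for $1\leq i\leq n-2$), $s_0s_1s_0s_1$, or $s_1s_0s_1s_0$, since these are precisely the braid relations in $B_n$ involving non-commuting generators. Consequently, $w\in W_c(B_n)$ iff every (equivalently, some) reduced expression avoids each of these substrings.

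Next I would translate each reduced-expression obstruction into a signed-pattern obstruction on the window $w=[w_1,\ldots,w_n]$. The type-$A$ braid $s_is_{i+1}s_i$ is already handled by Proposition \ref{321wc}, which identifies its occurrence with the classical $321$ pattern. The new obstruction $s_0s_1s_0s_1$ captures an interaction between the sign change at position $1$ and the transposition of positions $1$ and $2$; unpacking this combinatorially, one finds that $w$ admits such a factor in some reduced expression if and only if $w$ contains at least one of the minimal configurations $\overline{12}$, $\overline{3}21$, $\overline{23}1$, or $2\overline{3}1$. For the forward direction one produces the forbidden substring by running a greedy left-to-right sorting algorithm on the window of $w$; for the reverse direction one argues by induction on $\l_B(w)$, using the Lifting Property to show that if $w$ avoids all five patterns then any two reduced expressions of $w$ are connected using only commutation moves.

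For the enumeration I would exploit Stembridge's heap-theoretic description of fully commutative elements: in type $B_n$, the heaps attached to elements of $W_c(B_n)$ form a combinatorial family that can be encoded by a Catalan-type object (e.g.\ pairs of non-crossing lattice paths with one marked feature), with generating function giving $(n+2)C_n-1$. An alternative route is to partition $W_c(B_n)$ according to whether a reduced expression of $w$ uses the generator $s_0$: the part with no $s_0$ embeds into $W_c(A_{n-1})\simeq S_n(321)$ and contributes $C_n$, while the complementary part can be encoded by marked Dyck-type data and counted separately so that the total is $(n+2)C_n-1$. The main obstacle is the pattern-translation step, specifically the verification that the four signed patterns are both \emph{necessary and sufficient} obstructions to the $s_0s_1s_0s_1$ factor; this requires a delicate case analysis on the relative positions and signs of the entries of $w$, and is the crux of Stembridge's argument.
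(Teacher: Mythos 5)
The paper does not actually prove this proposition: it is quoted verbatim from Stembridge, with the proof deferred to \cite[Theorem 5.1 and Proposition 5.9]{ste-sca}. Your outline is therefore following the same strategy as the cited source rather than a different route, and the first step you describe (full commutativity fails iff some reduced word contains a ``long braid'' substring $s_is_{i+1}s_i$, $s_{i+1}s_is_{i+1}$, $s_0s_1s_0s_1$ or $s_1s_0s_1s_0$) is indeed Stembridge's starting point; note that you should list both orientations of the length-$3$ braids, not only $s_is_{i+1}s_i$, and that even this reformulation is not immediate from the Word Property alone --- one must argue that performing a long braid move produces a word that is \emph{not} commutation-equivalent to the original.

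The genuine gap is that the two steps carrying all the content are asserted rather than proved. First, the equivalence between ``some reduced word of $w$ contains $s_0s_1s_0s_1$ or a length-$3$ braid'' and ``the window of $w$ contains one of $\overline{12},\,321,\,\overline{3}21,\,\overline{23}1,\,2\overline{3}1$'' is exactly the theorem to be established; you correctly identify it as the crux and then defer the case analysis, so nothing is actually demonstrated. (It is also not accurate to split the five patterns so that $321$ alone accounts for the type-$A$ braids and the other four for the $s_0s_1s_0s_1$ factor: since $s_i$ for $i\geq 1$ acts simultaneously on positions $\pm i,\pm(i+1)$, occurrences of $s_is_{i+1}s_i$ can be forced by configurations involving negative entries as well, which is why the barred patterns are needed; any correct proof must handle this interaction.) Second, the enumeration $\vert W_c(B_n)\vert=(n+2)C_n-1$ is only gestured at: neither the heap encoding nor the proposed decomposition by occurrence of $s_0$ is carried out, and the claim that the $s_0$-part contributes $(n+1)C_n-1$ is left unverified. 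As written, the proposal is a correct plan for reproving Stembridge's theorem, but not a proof.
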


Let us now look at the maps $\phi : u \mapsto u^{-1}$ and $\psi: u \mapsto w_0uw_0$ defined in Proposition \ref{pro automorph}.
\begin{lemma} \label{maps}
Let $W(X)$ be a finite Coxeter group. Then $$u \in W_c(X) \Longleftrightarrow \phi(u)\in W_c(X) \Longleftrightarrow \psi(u) \in W_c(X),$$ for all $u \in W(X)$.
\end{lemma}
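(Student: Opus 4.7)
Both $\phi$ and $\psi$ are involutive self--bijections of $W(X)$ (for $\phi$ trivially, for $\psi$ because $w_0^2=e$), so in each case it is enough to prove one implication.

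\emph{The map $\phi$.} Reversal $s_{i_1}\cdots s_{i_k}\mapsto s_{i_k}\cdots s_{i_1}$ is a bijection between the set of reduced expressions of $u$ and that of $u^{-1}$. A commutation braid--move, i.e., a swap of two adjacent generators $s,t$ with $m(s,t)=2$, is carried by reversal to a commutation braid--move at the mirrored position. Hence any finite sequence of commutation moves connecting two reduced expressions of $u$ reverses to such a sequence connecting the corresponding reversed expressions of $u^{-1}$, and vice versa. Therefore $u\in W_c(X) \iff u^{-1}\in W_c(X)$.

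\emph{The map $\psi$.} I first show that $\psi$ restricts to a bijection of $S$. By Proposition \ref{pro automorph}, $\psi$ is an automorphism of the Bruhat poset, which is graded with rank function $\l$; hence $\psi$ preserves length. For $s\in S$ we get $\l(\psi(s))=1$, so by the characterization $\l(u)=1\iff u\in S$ we conclude $\psi(s)\in S$. Moreover $\psi$ is a group automorphism (since $w_0$ is an involution, $\psi(uv)=w_0uw_0\cdot w_0vw_0=\psi(u)\psi(v)$), which gives $(\psi(s)\psi(t))^k=\psi((st)^k)$ and hence $m(\psi(s),\psi(t))=m(s,t)$ for all $s,t\in S$. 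Thus $\psi$ induces a Coxeter diagram automorphism. Applying $\psi$ termwise to a reduced expression of $u$ yields a word of length $\l(u)=\l(\psi(u))$ evaluating to $\psi(u)$, i.e., a reduced expression of $\psi(u)$; this defines a bijection between the two sets of reduced expressions. Since $\psi$ preserves the Coxeter matrix on $S$, commutation moves correspond bijectively under this map, whence $u\in W_c(X)\iff \psi(u)\in W_c(X)$.

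\emph{Main obstacle.} The only delicate point is the passage $\psi(S)\subseteq S$; without extra input this would need an ad hoc argument based on roots, but reducing it to the rank preservation of a poset automorphism, via Proposition \ref{pro automorph}, makes it immediate. The rest is a direct check that reversal (respectively, the diagram automorphism induced by $\psi$) intertwines commutation braid--moves on the two sets of reduced expressions.
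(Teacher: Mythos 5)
Your proof is correct and takes essentially the same route as the paper's: both rest on the facts that $\phi$ and $\psi$ preserve length (hence send Coxeter generators to Coxeter generators, via Proposition \ref{pro automorph}), with $\psi$ acting as a diagram automorphism and $\phi$ by word reversal. The paper's own proof is a two-line sketch citing exactly these facts; you have simply supplied the word-level details (that reversal, respectively termwise application of the diagram automorphism, gives a bijection of reduced expressions intertwining commutation moves) that it leaves implicit.
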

\begin{proof}
The maps $u \mapsto w_0uw_0$ and $u \mapsto u^{-1}$ are Bruhat order automorphisms (see \cite[Proposition 2.3.4 and Corollary 2.3.6]{bb-ccg}). Moreover, both these maps send Coxeter generators to Coxeter generators, since $\ell(u)=\ell(u^{-1})=\ell(w_0uw_0)$ (see \cite[Corollary 2.3.3]{bb-ccg}).
\end{proof}




\chapter{Hecke algebras} \label{ch klt}

\section{Definition of Hecke algebra}
In this section we recall some basic facts about Hecke algebras $\H(X)$, $X$ being any Coxeter graph. A standard reference for this topic is \cite{Hum}. Let $W(X)$ be the Coxeter group having $X$ as Coxeter graph and $S(X)$ as set of generators. Throughout this work we will denote by $\varepsilon_w$ the constant $(-1)^{\l(w)}$, for every $w \in W(X)$. Let $\A$ be the ring of Laurent polynomials $\Z[q^{\frac{1}{2}},q^{-\frac{1}{2}}]$. The Hecke algebra $\H(X)$ associated to $W(X)$ is an $\A$--algebra with linear basis $\{T_w:\,w\in W(X)\}$. For all $w \in W(X)$ and $s \in S(X)$ the multiplication law is determined by
\begin{equation}\label{prod}
T_{w}T_{s} = 
\left\{ \begin{array}{ll} 
T_{ws}             & \mbox{if $\l(ws)>\l(w)$,} \\
q T_{ws}+(q-1)T_w  & \mbox{if $\l(ws)<\l(w)$,}
\end{array} \right.
\end{equation}
where $\l$ denotes the usual length function of $W(X)$ (see Section \ref{lf bo}). We refer to $\{T_w:\,w\in W(X)\}$ as the $T$--basis for $\H(X)$.\\ 

One easily checks that $T_s^2=(q-1)T_s+qT_e$, being $T_e$ the identity element, and so 
\begin{equation} \label{invTs}
T_s^{-1}=q^{-1}(T_s-(q-1)T_e). 
\end{equation}
It follows that all the elements $T_w$ are invertible, since, if $w=s_1 \cdots s_r$ and $\l(w)=r$, then $T_w=T_{s_1} \cdots T_{s_r}$.

Define a map $\iota:\H\rightarrow \H$ such that $\iota(T_w)=(T_{w^{-1}})^{-1}$, $\iota(q)=q^{-1}$ and extend by linearity. 
\begin{pro} \label{mapiota}
The map $\iota$ is a ring homomorphism of order $2$ on $\H(X)$.
\end{pro}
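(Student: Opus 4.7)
The plan is to bypass the basis-level definition and instead construct an auxiliary ring endomorphism $\iota_0$ via the standard presentation of $\H(X)$; the heart of the argument is then to show $\iota_0 = \iota$ on the $T$-basis, after which order two falls out formally.

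Recall that $\H(X)$ is presented as a $\Z$-algebra by the generators $\{T_s : s \in S(X)\} \cup \{q^{\pm 1/2}\}$ subject to the quadratic relations $T_s^2 = (q-1)T_s + q$ and the braid relations $\underbrace{T_s T_t T_s \cdots}_{m(s,t)} = \underbrace{T_t T_s T_t \cdots}_{m(s,t)}$. To define $\iota_0:\H(X) \to \H(X)$ via this presentation by $T_s \mapsto T_s^{-1}$ and $q \mapsto q^{-1}$, I need to check that the prescribed images satisfy the same relations with $q$ replaced by $q^{-1}$. The quadratic check is direct: multiplying $T_s^2 = (q-1)T_s + q$ by $q^{-1}T_s^{-2}$ (or, equivalently, squaring the formula (\ref{invTs})) yields $T_s^{-2} = q^{-1} + (q^{-1}-1)T_s^{-1}$. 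The braid check is equally clean: inverting both sides of $\underbrace{T_s T_t \cdots}_{m(s,t)} = \underbrace{T_t T_s \cdots}_{m(s,t)}$ and invoking $(xy)^{-1}=y^{-1}x^{-1}$ produces a braid identity of the same length $m(s,t)$ in $T_s^{-1}$ and $T_t^{-1}$. Hence $\iota_0$ is a well-defined ring endomorphism of $\H(X)$.

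Next I would verify that $\iota_0$ equals $\iota$ on every basis element. Given $w \in W(X)$, fix a reduced expression $w^{-1} = s_{i_1} \cdots s_{i_r}$, whence $w = s_{i_r} \cdots s_{i_1}$ is also reduced and $T_w = T_{s_{i_r}} \cdots T_{s_{i_1}}$. Applying $\iota_0$ gives
\[
\iota_0(T_w) = T_{s_{i_r}}^{-1} \cdots T_{s_{i_1}}^{-1} = (T_{s_{i_1}} \cdots T_{s_{i_r}})^{-1} = (T_{w^{-1}})^{-1} = \iota(T_w).
\]
Since $\iota_0$ and $\iota$ are both $\Z$-linear and agree on the $\A$-basis $\{T_w\}_{w \in W(X)}$ as well as on $q$, they coincide on all of $\H(X)$. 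Therefore $\iota$ is the ring endomorphism $\iota_0$. Order two is then immediate: $\iota^2$ is again a ring endomorphism, and $\iota^2(q)=q$ together with $\iota^2(T_s) = (T_s^{-1})^{-1} = T_s$ show that $\iota^2$ equals the identity on a generating set, hence everywhere.

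The step I expect to be the main obstacle is the verification of the braid relations for the inverses $T_s^{-1}$: although elementary, it rests on the observation that inverting a length-$m$ braid identity (viewed as a genuine equality in $\H(X)$) produces another length-$m$ braid identity in the inverses, regardless of the parity of $m$. An alternative and perhaps heavier route would be to verify multiplicativity directly on the basis, establishing $\iota(T_u T_s) = \iota(T_u)\iota(T_s)$ by splitting on $\ell(us) \gtrless \ell(u)$ in (\ref{prod}) and then inducting on $\ell(v)$ via a reduced expression; but this requires juggling the left- and right-multiplication forms of (\ref{prod}) with (\ref{invTs}) and inverting the sum $qT_{su^{-1}}+(q-1)T_{u^{-1}}$, which is exactly what the presentation-level argument avoids.
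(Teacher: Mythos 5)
Your proof is correct, but it takes a genuinely different route from the one in the paper. The paper never invokes a presentation of $\H(X)$: it works directly from the definition of $\H(X)$ as the free $\A$--module on $\{T_w\}$ with multiplication (\ref{prod}), first establishing $\iota(T_sT_w)=\iota(T_s)\iota(T_w)$ by splitting on $\l(sw)\gtrless\l(w)$ and then inducting on $\l(x)$ to get full multiplicativity --- exactly the ``heavier route'' you sketch in your last paragraph. Your argument instead routes everything through the Iwahori--Matsumoto presentation of $\H(X)$ by the generators $T_s$, $q^{\pm\frac{1}{2}}$ subject to the quadratic and braid relations. Given that presentation, your verifications are all sound: the quadratic relation for $T_s^{-1}$ with $q$ replaced by $q^{-1}$ follows from (\ref{invTs}), inverting a braid identity of length $m(s,t)$ does yield a braid identity of the same length in the inverses (for either parity of $m$), and the identification $\iota_0(T_w)=(T_{w^{-1}})^{-1}=\iota(T_w)$ via a reduced expression of $w^{-1}$ is correct, as is the formal deduction of $\iota^2=\mathrm{id}$ from agreement on generators. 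What your approach buys is the elimination of all case analysis and induction; what it costs is one substantial external input that the paper neither states nor needs: the fact that the canonical surjection from the abstractly presented algebra onto $\H(X)$ is an isomorphism (equivalently, that every relation among the $T_s$ in $\H(X)$ is a consequence of the quadratic and braid relations). Only this injectivity statement licenses defining a homomorphism \emph{out of} $\H(X)$ by prescribing images of the $T_s$; it is a standard theorem (see, e.g., \cite[\S 7.1--7.4]{Hum}), but it is genuinely a theorem, and you should cite it explicitly rather than fold it into the word ``Recall''.
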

\begin{proof}
It is straightforward to prove that $\iota^2(T_s)=T_s$, for all $s \in S(X)$. We will show that $\iota$ is a ring homomorphism. First, we show that 
\begin{equation} \label{invols}
\iota(T_sT_w)=\iota(T_s)\iota(T_w), \mbox{ for all } w \in W(X) \mbox{ and } s \in S(X). 
\end{equation}
There are two cases to study: if $\l(sw)>\l(w)$ then 
$$\iota(T_sT_w)= \iota(T_{sw})=\left(T_{w^{-1}s} \right)^{-1}=\left(T_{w^{-1}}T_s \right)^{-1}=(T_s)^{-1} \left(T_{w^{-1}}\right)^{-1}=\iota(T_s)\iota(T_w).$$
If $\l(sw)<\l(w)$ then let $v\= sw$, so that $w=sv$. Therefore $T_w=T_sT_v$. By (\ref{prod}), we obtain
$$\iota(T_sT_w)=\iota(qT_{sw}+(q-1)T_w)=q^{-1}\left(T_{v^{-1}}\right)^{-1}+(q^{-1}-1)\left(T_{w^{-1}} \right)^{-1}.$$
On the other hand
\begin{eqnarray} 
\iota(T_s)\iota(T_w) &=& (T_s)^{-1}\left(T_{w^{-1}} \right)^{-1}=q^{-1}(T_s+1-q)\left(T_{w^{-1}}\right)^{-1} \nonumber \\
                     &=& q^{-1}T_s \left(T_{w^{-1}} \right)^{-1}+(q^{-1}-1)\left(T_{w^{-1}} \right)^{-1}. \nonumber
\end{eqnarray}
But $\left(T_{v^{-1}} \right)^{-1}=T_s\left(T_{w^{-1}} \right)^{-1} \Longleftrightarrow T_{w^{-1}}=(T_{v^{-1}})T_s$, which is obviousely true by definition. Hence $\iota(T_sT_w)=\iota(T_s)\iota(T_w)$. Now we ready to prove that $\iota(T_xT_w)=\iota(T_x)\iota(T_w)$ for all $x,w \in W(X)$. Proceed by induction on $\l(x)$. If $\l(x)=1$ then $x=s' \in S(X)$. If $\l(x)>1$ then there exists $s \in S(X)$ such that $\l(xs)<\l(x)$ and, by induction, $$\iota(T_xT_w)=\iota(T_{xs}T_sT_w)=\iota(T_{xs})\iota(T_sT_w).$$ Finally, by applying (\ref{invols}) and the induction hypotesis we get
$$\iota(T_{xs})\iota(T_sT_w)=\iota(T_{xs})\iota(T_s)\iota(T_w)=\iota(T_x)\iota(T_w),$$ as desired. 
\end{proof}

\section{$R$--polynomials}
In Proposition \ref{mapiota} we stated the existence of the involution $\iota$ in $\H(X)$. To express the image of $T_w$ under $\iota$ as a linear combination of elements in the $T$--basis, one defines the so--called \emph{$R$--polynomials}.  
\begin{teo}\label{r-pol}
Let $w \in W(X)$. Then there exists a unique family of polynomials $\{ R_{x,w} \}_{x \in W(X)} \subseteq \Z[q]$ such that 
\begin{equation} \label{rpolinv}
(T_{w^{-1}})^{-1}=\varepsilon_wq^{-\l(w)}\sum_{x\leq w}\varepsilon_x R_{x,w}T_x, 
\end{equation}
where $R_{w,w}=1$ and $R_{x,w}=0$ if $x \not \leq w$. 
\end{teo}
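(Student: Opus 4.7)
The plan is to prove existence and uniqueness by induction on $\ell(w)$, using the fact (Proposition \ref{mapiota}) that $\iota$ is a ring homomorphism together with the explicit inversion formula (\ref{invTs}).

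\textbf{Uniqueness} is immediate from the basis property of $\{T_x\}_{x \in W(X)}$: any element of $\mathcal{H}(X)$ has a unique expansion in the $T$-basis, so the coefficients $\varepsilon_w \varepsilon_x q^{-\ell(w)} R_{x,w}$ in (\ref{rpolinv}) are forced, and the requirement $R_{x,w}=0$ for $x \not\leq w$ fixes the remaining indices. So the only content is to show that a family of \emph{polynomials} in $\mathbb{Z}[q]$ with the stated support property exists.

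\textbf{Existence} proceeds by induction on $\ell(w)$. For $w = e$, formula (\ref{rpolinv}) holds with $R_{e,e}=1$. For the inductive step, pick $s \in S(X)$ with $\ell(ws) < \ell(w)$; then $w^{-1} = s(ws)^{-1}$ with $\ell((ws)^{-1}) < \ell(w^{-1})$, so by (\ref{prod}) we have $T_{w^{-1}} = T_s T_{(ws)^{-1}}$, and hence
$$(T_{w^{-1}})^{-1} = (T_{(ws)^{-1}})^{-1} \, T_s^{-1}.$$
Applying the inductive hypothesis for $ws$, substituting (\ref{invTs}) for $T_s^{-1}$, and expanding each product $T_y T_s$ via (\ref{prod}) according to whether $\ell(ys) > \ell(y)$ or $\ell(ys) < \ell(y)$, I can collect coefficients. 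Using $\varepsilon_w = -\varepsilon_{ws}$ and $\ell(w) = \ell(ws)+1$, the factor $q^{-1}$ from $T_s^{-1}$ is absorbed into the normalization $q^{-\ell(w)}$, and the resulting coefficient of $T_x$ gives the recursion
$$R_{x,w} = \begin{cases} R_{xs,\,ws} & \text{if } \ell(xs) < \ell(x),\\ q\,R_{xs,\,ws} + (q-1)\,R_{x,\,ws} & \text{if } \ell(xs) > \ell(x), \end{cases}$$
which manifestly lies in $\mathbb{Z}[q]$. The normalization $R_{w,w}=1$ follows by taking $x=w$ in the first branch and invoking $R_{ws,ws}=1$ from the inductive hypothesis.

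\textbf{Support property.} The delicate point is verifying that the recursion yields $R_{x,w}=0$ whenever $x \not\leq w$, and here the Lifting Property (Lemma \ref{lif lemma}) and the Subword Property (Proposition \ref{sub prop}) are essential. Assuming $\ell(ws)<\ell(w)$, one uses the Subword Property to show that $x \leq w$ if and only if either $x \leq ws$ or $xs \leq ws$; in the case $\ell(xs)>\ell(x)$, lifting implies that if $x \leq w$ then $x \leq ws$ and $xs \leq w$, while if $x \not\leq w$ then neither $x$ nor $xs$ lies below $ws$, so both $R_{x,ws}$ and $R_{xs,ws}$ vanish by induction. The case $\ell(xs)<\ell(x)$ is handled analogously by replacing $x$ with $xs$.

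\textbf{Main obstacle.} The genuine difficulty is precisely this last step: ensuring that the recursion built from an arbitrary choice of $s$ both (a) produces polynomials rather than Laurent polynomials, which is controlled by the $\varepsilon$-signs and the power of $q$ absorbing the $q^{-1}$ from (\ref{invTs}), and (b) preserves the support condition $R_{x,w}=0$ for $x \not\leq w$. Both require a careful bookkeeping via the Lifting Property; once these are in place the existence statement follows directly from the inductive construction, and the uniqueness statement renders the apparent dependence on the choice of $s$ moot.
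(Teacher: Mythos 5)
Your proposal is correct and follows essentially the same route as the paper: induction on $\ell(w)$, peeling off a generator via (\ref{invTs}) and the multiplication rule (\ref{prod}) to obtain the two-case recursion, with uniqueness immediate from the $T$-basis. The only differences are cosmetic — you multiply by $s$ on the right where the paper writes $w=sv$ and works on the left (the paper itself notes the right-handed version of the recursion exists), and you spell out the support verification via the Lifting Property, which the paper leaves implicit in the phrase "$R_{x,w}$ satisfies the condition of the theorem."
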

\begin{proof}
First, we prove the existence of the $R$--polynomials. 
If $w=s \in S(X)$, then the statement follows by (\ref{invTs}) and so we set $R_{e,s}\=q-1$. Now proceed by induction on $\l(w)$. Set $w=sv$, so that $\varepsilon_w=-\varepsilon_v$ and $q^{-\l(w)}=q^{-\l(v)-1}$.
\begin{eqnarray}
(T_{w^{-1}})^{-1} & = & (T_s)^{-1} (T_{v^{-1}})^{-1}  \nonumber \\
                  & = & \frac{1}{q} (T_s-(q-1)T_e)(\varepsilon_v q^{-\l(v)} \sum_{y \leq v} \varepsilon_y R_{y,v} T_y)  \nonumber \\
                  & = & -\varepsilon_v q^{-\l(v)-1} \left((q-1)\sum_{y \leq v} \varepsilon_y R_{y,v}T_y-\sum_{y \leq v} \varepsilon_y R_{y,v}T_sT_y  \right)  \nonumber \\
                  & = & \varepsilon_w q^{-\l(w)} \left((q-1) \sum_{y \leq v} \varepsilon_y R_{y,v} T_y - \sum_{\substack{y \leq v \\ sy>y}} \varepsilon_y R_{y,v}T_{sy} \right) +   \nonumber \\
                  &   & -\: \varepsilon_w q^{-\l(w)} \left(\sum_{\substack{y \leq v \\ sy<y}} \left(q \varepsilon_y R_{y,v} T_{sy}+(q-1)\varepsilon_y R_{y,v} T_y  \right) \right) \nonumber \\
                  & = & \varepsilon_w q^{-\l(w)} \left((q-1) \sum_{\substack{y \leq v \\ sy>y}} \varepsilon_y R_{y,v} T_y -q \sum_{\substack{y \leq v \\ sy<y}} \varepsilon_y R_{y,v}T_{sy}\right)+  \nonumber \\
                  &   & -\varepsilon_w q^{-\l(w)}\: \left(\sum_{\substack{y \leq v \\ sy>y}} \varepsilon_y R_{y,v} T_{sy}\right) \nonumber \\
                  & = & \varepsilon_w q^{-\l(w)} \left(  \sum_{\substack{x \leq v \\ x<sx}} \varepsilon_x ((q-1)R_{x,v}+qR_{sx,v})T_x + \sum_{\substack{x \leq v \\ sx<x}} \varepsilon_x R_{sx,v}T_{x} \right), \nonumber
\end{eqnarray}   
where we set $sx=y$. If $sx<x$, then we can define $R_{x,w}\=R_{sx,sw}$. Otherwise, we set $R_{x,w}\=qR_{sx,sw}+(q-1)R_{x,sw}$. In both cases, $R_{x,w}$ satisfies the condition of the theorem.\\
The uniqueness of the $R$--polynomials is trivial. 
\end{proof}
We remark that in the proof of Theorem \ref{r-pol} is given an inductive method to compute the $R$--polynomials.
\begin{cor} \label{cor recr}
Let $x,w \in W(X)$ such that $x \leq w$ and $s \in S(X)$ such that $\l(sw)<\l(w)$. Then 
\begin{equation} \label{rec foR}
R_{x,w} = \left\{ 
\begin{array}{ll} 
R_{sx,sw},                   & \mbox{if $\l(sx)<\l(x)$,} \\ 
qR_{sx,sw}+(q-1)R_{x,sw}, & \mbox{otherwise.} 
\end{array} \right.  
\end{equation}  
\qed
\end{cor}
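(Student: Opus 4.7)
The plan is to observe that this recursion is essentially a by-product of the existence proof of Theorem \ref{r-pol}, combined with the uniqueness clause. First I would note that for any $s \in S(X)$ with $\ell(sw) < \ell(w)$, the length-additivity $\ell(w) = 1 + \ell(sw)$ gives $T_w = T_s T_{sw}$, hence $T_{w^{-1}} = T_{(sw)^{-1}} T_s$, and therefore
\[
(T_{w^{-1}})^{-1} = (T_s)^{-1}(T_{(sw)^{-1}})^{-1}.
\]
This factorization holds for \emph{any} such $s$, not only for the one chosen in the inductive construction of the $R$-polynomials.

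Next I would substitute the defining expansion \eqref{rpolinv} for $(T_{(sw)^{-1}})^{-1}$ on the right-hand side, and apply $(T_s)^{-1} = q^{-1}(T_s - (q-1)T_e)$ together with the multiplication rule \eqref{prod}. Splitting the resulting sum according to whether $sy > y$ or $sy < y$ (where $y$ ranges over $y \leq sw$), and then reindexing via $x = sy$, one obtains an expression of the form
\[
\varepsilon_w q^{-\ell(w)}\left( \sum_{\substack{x \leq w\\ sx < x}} \varepsilon_x R_{sx,sw}\, T_x + \sum_{\substack{x \leq w\\ sx > x}} \varepsilon_x \bigl(qR_{sx,sw} + (q-1)R_{x,sw}\bigr) T_x\right).
\]
This is exactly the calculation performed inside the proof of Theorem \ref{r-pol}, and it goes through verbatim since only $\ell(sw) < \ell(w)$ is used, not any particular reduced expression.

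Finally, comparing this to the defining expansion \eqref{rpolinv} of $(T_{w^{-1}})^{-1}$ and invoking the linear independence of the $T$-basis (equivalently, the uniqueness part of Theorem \ref{r-pol}), I read off the coefficient of $T_x$ and obtain the two cases of \eqref{rec foR}. No step is really an obstacle: the whole corollary is just the observation that the computation inside the proof of Theorem \ref{r-pol} depends only on the choice of a left descent $s$ of $w$, so the resulting formula must hold for every such $s$. This is why the statement is tagged with \qed.
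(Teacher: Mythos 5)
Your proposal is correct and follows essentially the same route as the paper: the corollary is precisely the computation carried out in the existence part of Theorem \ref{r-pol}, which uses only that $s$ is a left descent of $w$, combined with the uniqueness of the $R$--polynomials (equivalently, linear independence of the $T$--basis) to identify the coefficients. This is exactly why the paper marks the statement with $\qed$.
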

Obviously, there exists a right version of Corollary \ref{cor recr}, with $s$ occurring on the right instead of the left.\\  
Next, we state some basic facts about the $R$--polynomials.
\begin{cor} \label{val0}
Let $x, w \in W(X)$, $x \leq w$. Then, $R_{x,w}$ is a monic polynomial of degree $\l(w)-\l(x)$ such that $R_{x,w}(0)=\varepsilon_x \varepsilon_w$ and $R_{x,w}(1)=\delta_{x,w}$.
\end{cor}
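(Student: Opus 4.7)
The plan is to prove all three claims simultaneously by induction on $\l(w)$, using the recursion given in Corollary \ref{cor recr}. The base case $\l(w)=0$ forces $x=w=e$, and $R_{e,e}=1$ satisfies everything trivially. For the inductive step, pick $s\in S(X)$ with $\l(sw)<\l(w)$; by the left version of Corollary \ref{cor recr}, I split into the two cases $\l(sx)<\l(x)$ and $\l(sx)>\l(x)$.

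In the first case, $R_{x,w}=R_{sx,sw}$ and the induction hypothesis is immediate: the degree drops by one on both sides (so $\l(sw)-\l(sx)=\l(w)-\l(x)$), monicity is preserved, $\varepsilon_{sx}\varepsilon_{sw}=(-\varepsilon_x)(-\varepsilon_w)=\varepsilon_x\varepsilon_w$, and $\delta_{sx,sw}=\delta_{x,w}$ because $y\mapsto sy$ is a bijection on $W(X)$.

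In the second case, $R_{x,w}=qR_{sx,sw}+(q-1)R_{x,sw}$. Since $\l(sw)<\l(w)$ and $\l(sx)>\l(x)$ force $x\neq w$, Lemma \ref{lif lemma} (lifting property, applied on the left) gives $x\leq sw$, so the second summand is meaningful and, by induction, is monic of degree $\l(sw)-\l(x)=\l(w)-\l(x)-1$; multiplying by $(q-1)$ yields a monic polynomial of degree $\l(w)-\l(x)$. The first summand $qR_{sx,sw}$ either vanishes (if $sx\not\leq sw$) or, by induction, has degree $\l(w)-\l(x)-1$, which is strictly smaller, so it cannot spoil monicity or the degree of the sum. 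Evaluating at $q=0$ gives $R_{x,w}(0)=-R_{x,sw}(0)=-\varepsilon_x\varepsilon_{sw}=\varepsilon_x\varepsilon_w$, and evaluating at $q=1$ gives $R_{x,w}(1)=R_{sx,sw}(1)=\delta_{sx,sw}=0=\delta_{x,w}$, the last two equalities coming from $x<w$ (so $sx\neq sw$, else $x=w$).

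The only mildly delicate point — essentially the one step that is not pure bookkeeping — is verifying in Case 2 that $(q-1)R_{x,sw}$ really is defined and contributes the leading term; this is where the lifting property is used to guarantee $x\leq sw$. Everything else is a routine matching of the three properties through the two branches of the recursion.
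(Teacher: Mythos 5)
Your proof is correct and is exactly the argument the paper has in mind: the paper's own proof of Corollary \ref{val0} consists of the single line ``the statement follows from (\ref{rec foR}), by induction on $\l(w)$,'' and your write-up simply fills in the two branches of that induction, including the correct use of the lifting property to guarantee $x\leq sw$ and the observation that the $qR_{sx,sw}$ term has strictly smaller degree. No gaps.
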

\begin{proof}
The statement follows from (\ref{rec foR}), by induction on $\l(w)$. 
\end{proof}
\begin{pro} \label{rsymm}
Let $x,w \in W(X)$. Then $$R_{x,w}(q^{-1})=\varepsilon_x\varepsilon_wq^{\l(x)-\l(w)}R_{x,w}(q).$$
\end{pro}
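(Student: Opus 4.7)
The most direct route is induction on $\ell(w)$, using the recursion of Corollary \ref{cor recr}. The base case $w=e$ is immediate, since then $x=e$ and $R_{e,e}=1$ satisfies the identity trivially.

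For the inductive step, choose $s\in S(X)$ with $\ell(sw)<\ell(w)$, and set $v\=sw$, so $\varepsilon_v=-\varepsilon_w$ and $\ell(v)=\ell(w)-1$. Split on whether $sx<x$ or $sx>x$. In the first case, $R_{x,w}=R_{sx,v}$ by (\ref{rec foR}), and the inductive hypothesis applied to $R_{sx,v}$ yields
$$R_{x,w}(q^{-1})=R_{sx,v}(q^{-1})=\varepsilon_{sx}\varepsilon_v\,q^{\ell(sx)-\ell(v)}R_{sx,v}(q)=\varepsilon_x\varepsilon_w\,q^{\ell(x)-\ell(w)}R_{x,w}(q),$$
using $\varepsilon_{sx}\varepsilon_v=\varepsilon_x\varepsilon_w$ and $\ell(sx)-\ell(v)=\ell(x)-\ell(w)$.

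In the second case, $sx>x$ (which forces $x<w$, hence by the Lifting Property $x\le v$), and $R_{x,w}=qR_{sx,v}+(q-1)R_{x,v}$. Applying the inductive hypothesis to both $R_{x,v}$ and $R_{sx,v}$ (the latter is either covered by induction if $sx\le v$, or is $0$ if not, and the symmetry holds trivially in the zero case) gives
$$R_{x,v}(q^{-1})=-\varepsilon_x\varepsilon_w\,q^{\ell(x)-\ell(w)+1}R_{x,v}(q),\qquad R_{sx,v}(q^{-1})=\varepsilon_x\varepsilon_w\,q^{\ell(x)-\ell(w)+2}R_{sx,v}(q),$$
so that
\begin{align*}
R_{x,w}(q^{-1}) &= q^{-1}R_{sx,v}(q^{-1})+(q^{-1}-1)R_{x,v}(q^{-1}) \\
&= \varepsilon_x\varepsilon_w\,q^{\ell(x)-\ell(w)+1}R_{sx,v}(q)+\varepsilon_x\varepsilon_w\bigl(q^{\ell(x)-\ell(w)+1}-q^{\ell(x)-\ell(w)}\bigr)R_{x,v}(q)\\
&= \varepsilon_x\varepsilon_w\,q^{\ell(x)-\ell(w)}\bigl[qR_{sx,v}(q)+(q-1)R_{x,v}(q)\bigr]\\
&= \varepsilon_x\varepsilon_w\,q^{\ell(x)-\ell(w)}R_{x,w}(q),
\end{align*}
completing the induction.

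The only subtle step is the bookkeeping of signs and exponents in the second case; once one notes $\varepsilon_{sx}=-\varepsilon_x$ and $\varepsilon_v=-\varepsilon_w$, the two opposite sign flips cancel and the exponent shift from $\ell(sx)=\ell(x)+1$ matches the shift from $\ell(v)=\ell(w)-1$ up to a factor of $q$, so that the recursion for $R_{x,w}$ reconstitutes itself on the right-hand side. An alternative route, which avoids the case split but is less transparent, is to apply $\iota^2=\mathrm{id}$ to $T_w$, expand via (\ref{rpolinv}) twice, and read off the identity $\sum_{y\le x\le w}q^{-\ell(x)}R_{x,w}(q^{-1})R_{y,x}(q)=\delta_{y,w}q^{-\ell(w)}$, from which the symmetry follows by descending induction on $\ell(y)$; the inductive approach above is cleaner.
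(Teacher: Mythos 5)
Your proof is correct and follows essentially the same route as the paper: induction on $\ell(w)$ via the recursion of Corollary \ref{cor recr}, splitting into the cases $sx<x$ and $sx>x$ and tracking the sign and exponent shifts $\varepsilon_{sx}=-\varepsilon_x$, $\varepsilon_{sw}=-\varepsilon_w$, $\ell(sx)-\ell(sw)=\ell(x)-\ell(w)+2$. The only differences are cosmetic (your explicit treatment of the degenerate cases $x\not\le w$ and the closing remark about an alternative via $\iota^2=\mathrm{id}$).
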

\begin{proof}
We proceed by induction on $\l(w)$. Let $\l(w)>0$ and let $s \in S(X)$ be such that $sw<w$. By (\ref{rec foR}), $sx<x$ implies $R_{x,w}(q)=R_{sx,sw}(q)$ and, by induction, we have $$R_{sx,sw}(q^{-1})=\varepsilon_{sx}\varepsilon_{sw}q^{\l(sx)-\l(sw)}R_{sx,sw}(q)=\varepsilon_x\varepsilon_wq^{\l(x)-\l(w)}R_{x,w}(q).$$  
The case $sx>x$ implies $R_{x,w}(q)=qR_{sx,sw}(q)+(q-1)R_{x,sw}(q)$. We show that $$R_{x,w}(q^{-1})=\varepsilon_x\varepsilon_w q^{\l(x)-\l(w)}(qR_{sx,sw}(q)+(q-1) R_{x,sw}(q)).$$
By induction, we achieve 
\begin{eqnarray}
R_{x,w}(q^{-1}) & = & q^{-1}R_{sx,sw}(q^{-1})+(q^{-1}-1)R_{x,sw}(q^{-1}) \nonumber \\
                & = & q^{-1} \varepsilon_{sx} \varepsilon_{sw} q^{\l(sx)-\l(sw)} R_{sx,sw}(q) + (q^{-1}-1)\varepsilon_{x} \varepsilon_{sw} q^{\l(x)-\l(sw)} R_{x,sw}(q)   \nonumber \\
                & = & q^{-1} \varepsilon_{x} \varepsilon_{w} q^{\l(x)-\l(w)+2} R_{sx,sw}(q) - (1-q)\varepsilon_{x} \varepsilon_{w} q^{\l(x)-\l(w)} R_{x,sw}(q), \nonumber \\
                & = & \varepsilon_x\varepsilon_w q^{\l(x)-\l(w)}(qR_{sx,sw}(q)+(q-1) R_{x,sw}(q)), \nonumber
\end{eqnarray}
as desired.
\end{proof}

\begin{pro} \label{issi}
Let $\sigma: \H \rightarrow \H$ be such that $\sigma(T_w)=\varepsilon_w q^{-\l(w)}T_w$ and $\sigma(q)=q^{-1}$. Then $\sigma$ is an involution and $\iota\sigma=\sigma\iota$.
\end{pro}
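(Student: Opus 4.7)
The plan is to establish three things in order: that $\sigma$ is a ring homomorphism, that it squares to the identity, and finally that it commutes with $\iota$. The statement only says ``involution'' but since $\sigma$ is defined on generators of $\H(X)$ as an $\A$--algebra we really need to know it respects multiplication, otherwise the equation $\iota\sigma=\sigma\iota$ is hard to interpret.

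First I would verify that $\sigma$ is a ring homomorphism, imitating the argument given for $\iota$ in Proposition \ref{mapiota}. It suffices to check
$$\sigma(T_sT_w)=\sigma(T_s)\sigma(T_w)\qquad\text{for all }s\in S(X),\ w\in W(X),$$
and then extend to arbitrary $x\in W(X)$ by induction on $\ell(x)$ using a reduced factorisation $x=(xs)s$ with $\ell(xs)<\ell(x)$, exactly as in the proof of Proposition \ref{mapiota}. The base case splits into the two sub--cases of the multiplication rule (\ref{prod}). When $\ell(sw)>\ell(w)$, both sides of the identity reduce to $-\varepsilon_w q^{-\ell(w)-1}T_{sw}$. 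When $\ell(sw)<\ell(w)$, the left-hand side equals $q^{-1}\sigma(T_{sw})+(q^{-1}-1)\sigma(T_w)=-\varepsilon_w q^{-\ell(w)}T_{sw}+(q^{-1}-1)\varepsilon_w q^{-\ell(w)}T_w$, while the right-hand side is $-\varepsilon_w q^{-\ell(w)-1}(qT_{sw}+(q-1)T_w)$; a short manipulation using $-q^{-1}(q-1)=q^{-1}-1$ shows these coincide.

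Next I would confirm the involution property directly on the basis:
$$\sigma^2(T_w)=\sigma\bigl(\varepsilon_w q^{-\ell(w)}T_w\bigr)=\varepsilon_w q^{\ell(w)}\cdot\varepsilon_w q^{-\ell(w)}T_w=T_w,$$
and $\sigma^2(q)=\sigma(q^{-1})=q$. Since $\sigma$ is a ring homomorphism this shows $\sigma^2$ is the identity of $\H(X)$.

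Finally, for $\iota\sigma=\sigma\iota$, since both $\iota$ and $\sigma$ are ring homomorphisms of $\H(X)$, so are the two compositions; hence it is enough to check the equality on the $\A$--algebra generators $q$ and $T_s$, $s\in S(X)$. The equation on $q$ is immediate since both sides fix $q$. On $T_s$ I compute, using $\iota(T_s)=T_s^{-1}=q^{-1}(T_s-(q-1))$ from (\ref{invTs}):
$$\iota\sigma(T_s)=\iota(-q^{-1}T_s)=-q\cdot q^{-1}\bigl(T_s-(q-1)\bigr)=-T_s+q-1,$$
$$\sigma\iota(T_s)=\sigma\!\left(q^{-1}(T_s-(q-1))\right)=q\bigl(-q^{-1}T_s-(q^{-1}-1)\bigr)=-T_s+q-1,$$
and these agree.

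The main obstacle is really step one: the case $\ell(sw)<\ell(w)$ of the homomorphism check, where the sign from $\varepsilon_{sw}=-\varepsilon_w$ and the shift in the exponent of $q$ have to conspire correctly with the extra $(q-1)T_w$ term of (\ref{prod}). Everything else is bookkeeping, and the commutation $\iota\sigma=\sigma\iota$ reduces at once to a one-line verification on $T_s$ because both maps are $\A$--algebra involutions of $\H(X)$.
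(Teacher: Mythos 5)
Your argument is correct, and its first two steps (that $\sigma$ is a ring homomorphism and that $\sigma^2=\mathrm{id}$) match what the paper does, which simply declares this part ``similar'' to the corresponding argument for $\iota$ in Proposition \ref{mapiota}; you actually carry out the case analysis, and your computations check out. Where you genuinely diverge is in proving $\iota\sigma=\sigma\iota$. The paper verifies the identity on every basis element $T_w$: it expands $\iota\sigma(T_w)=\varepsilon_w q^{\l(w)}(T_{w^{-1}})^{-1}=\sum_{x\le w}\varepsilon_x R_{x,w}(q)T_x$ and $\sigma\iota(T_w)=\varepsilon_w q^{\l(w)}\sum_{x\le w}q^{-\l(x)}R_{x,w}(q^{-1})T_x$, and then invokes the symmetry $R_{x,w}(q^{-1})=\varepsilon_x\varepsilon_w q^{\l(x)-\l(w)}R_{x,w}(q)$ of Proposition \ref{rsymm}. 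You instead observe that both compositions are ring homomorphisms and therefore need only agree on ring generators, which reduces everything to a one-line check on $T_s$ via (\ref{invTs}). Your route is shorter and does not use the $R$--polynomials at all; the paper's route has the side benefit of exhibiting $\iota\sigma=\sigma\iota$ as essentially a repackaging of the $R$--polynomial symmetry (and conversely, your argument plus the paper's computation would reprove Proposition \ref{rsymm}). One small point of hygiene: since $\iota$ and $\sigma$ are only semilinear over $\A=\Z[q^{\frac{1}{2}},q^{-\frac{1}{2}}]$, you should say the compositions agree on the ring generators $q^{\pm\frac{1}{2}}$ and the $T_s$ (both maps restrict to the same automorphism of $\A$), rather than on ``$\A$--algebra generators''; this is a phrasing issue only and does not affect the validity of the argument.
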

\begin{proof}
The proof that $\sigma$ is a ring homomorphism of order $2$ is similar to the one given in Proposition \ref{invols} for $\iota$.\\
Let us now prove that $\iota\sigma = \sigma\iota$. By definition, $\sigma$ coincides with $\iota$ on $\Z[q^{\frac{1}{2}},q^{-\frac{1}{2}}]$. Moreover $$\iota \sigma(T_w)=\iota(\varepsilon_w q^{-\l(w)}T_w)=\varepsilon_w q^{\l(w)}(T_{w^{-1}})^{-1}=\sum_{x \leq w} \varepsilon_x R_{x,w}(q)T_x.$$
On the other hand, $$\sigma \iota(T_w)= \sigma(\varepsilon_wq^{-\l(w)}\sum_{x \leq w}\varepsilon_x R_{x,w}(q)T_x)=\varepsilon_w q^{\l(w)}\sum_{x \leq w} q^{-\l(x)}R_{x,w}(q^{-1})T_x,$$ and the statement follows by applying Proposition \ref{rsymm}.  
\end{proof}

The next property is needed in Proposition \ref{apollo}.
\begin{pro} \label{rpol prop}
Let $w \in W(X)$. Then $\sum_{x \leq w} R_{x,w}=q^{\l(w)}$.
\end{pro}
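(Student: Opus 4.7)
The plan is to apply a one--dimensional representation of $\H(X)$ to the defining equation (\ref{rpolinv}). Define an $\A$--algebra homomorphism $\chi:\H(X)\to \A$ by setting $\chi(T_s)=-1$ for every $s\in S(X)$ (and $\chi$ the identity on $\A$). I would verify this is well--defined: the quadratic Hecke relation $T_s^2=(q-1)T_s+qT_e$ is preserved since $1=-(q-1)+q$, and every braid relation $T_sT_tT_s\cdots=T_tT_sT_t\cdots$ of length $m(s,t)$ is respected because both sides evaluate to $(-1)^{m(s,t)}$. Consequently $\chi(T_w)=\varepsilon_w$ for every $w\in W(X)$, and in particular $\chi((T_{w^{-1}})^{-1})=(\varepsilon_{w^{-1}})^{-1}=\varepsilon_w$, using $\l(w^{-1})=\l(w)$ from the basic properties of the length function.

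Applying $\chi$ to both sides of (\ref{rpolinv}) then yields
$$\varepsilon_w \;=\; \varepsilon_w\,q^{-\l(w)}\sum_{x\leq w}\varepsilon_x R_{x,w}\,\varepsilon_x \;=\; \varepsilon_w\,q^{-\l(w)}\sum_{x\leq w}R_{x,w},$$
since $\varepsilon_x^2=1$, and cancelling the prefactor $\varepsilon_w q^{-\l(w)}$ gives the claim immediately.

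A self--contained alternative is induction on $\l(w)$. The base case $w=e$ reduces to $R_{e,e}=1$. For the inductive step, one chooses $s\in S(X)$ with $\l(sw)<\l(w)$, splits $\sum_{x\leq w}R_{x,w}$ according to whether $sx<x$ or $sx>x$, and substitutes the recursion of Corollary \ref{cor recr}. After reindexing by $y=sx$ on the piece arising from $sx<x$, and separately on the $qR_{sx,sw}$ portion of the piece arising from $sx>x$, the three resulting sums combine, via the left--multiplication analogue of the lifting property (obtained from Lemma \ref{lif lemma} by the Bruhat automorphism $u\mapsto u^{-1}$ of Proposition \ref{pro automorph}), into $q\sum_{y\leq sw}R_{y,sw}$, which by induction equals $q\cdot q^{\l(sw)}=q^{\l(w)}$.

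The only nontrivial step in the first approach is confirming the existence of the sign homomorphism $\chi$, which is a standard fact about Hecke algebras. In the inductive approach, the main bookkeeping obstacle is verifying that the map $x\mapsto sx$ establishes the required bijections between the relevant subsets of $[e,w]$ and that the vanishing of $R_{y,sw}$ outside $[e,sw]$ correctly restricts the summation ranges; once these are nailed down, the recombination is a routine calculation.
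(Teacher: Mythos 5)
Your primary argument is correct and takes a genuinely different route from the paper. The paper proves the identity by induction on $\l(w)$: it splits $\sum_{x\le w}R_{x,w}$ according to whether $sx<x$ or $sx>x$ for a chosen $s$ with $sw<w$, substitutes the recursion of Corollary \ref{cor recr}, reindexes via $x\mapsto sx$, and recombines the pieces into $q\sum_{y\le sw}R_{y,sw}$ using the lifting property --- which is exactly the ``self--contained alternative'' you sketch, down to the need for the left--handed version of Lemma \ref{lif lemma}. Your main approach instead evaluates the defining identity (\ref{rpolinv}) under the sign character $\chi(T_s)=-1$ of $\H(X)$; the verification that $\chi$ respects the quadratic and braid relations is correct (indeed one can even avoid invoking a presentation of $\H(X)$ by checking multiplicativity of $T_w\mapsto\varepsilon_w$ directly against (\ref{prod})), and then $\chi\bigl((T_{w^{-1}})^{-1}\bigr)=\varepsilon_w$ together with $\varepsilon_x^2=1$ gives the result in one line. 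What the character argument buys is brevity and conceptual clarity --- it exposes the identity as the statement that the sign representation sends $(T_{w^{-1}})^{-1}$ to $\varepsilon_w$ --- at the modest cost of introducing a one--dimensional representation the paper never sets up; the paper's induction is longer but stays entirely within the combinatorial recursion for the $R$--polynomials already established in Corollary \ref{cor recr}. Both arguments are sound.
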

\begin{proof}
We proceed by induction on $\l(w)$. The case $\l(w) \leq 1 $ is trivial. Hence, suppose $\l(w)>1$ and $s \in S(X)$ such that $sw<w$. Then, by Corollary \ref{cor recr} and Lemma \ref{lif lemma} we obtain 
\begin{eqnarray}
\sum_{x \leq w} R_{x,w} &=& \sum_{\substack{x \leq w \\ sx<x}} R_{x,w}+\sum_{\substack{x \leq w \\ sx>x}} R_{x,w}  \nonumber \\
                           &=& \sum_{\substack{x \leq w \\ sx<x}} R_{sx,sw}+ \sum_{\substack{x \leq w \\ sx>x}} qR_{sx,sw}+(q-1)R_{x,sw} \nonumber \\
                           &=& \sum_{\substack{x \leq w \\ sx<x}} R_{sx,sw}+ \sum_{\substack{y \leq w \\ y>sy}} qR_{y,sw}+(q-1)R_{sy,sw} \nonumber \\
                           &=& q\sum_{\substack{x \leq w \\ sx<x}} R_{sx,sw}+ q\sum_{\substack{y \leq w \\ y>sy}} R_{y,sw} \nonumber \\
                           &=& q\sum_{\substack{y \leq w \\ sy>y}} R_{y,sw}+q\sum_{\substack{y \leq w \\ y>sy}} R_{y,sw}   \nonumber\\ 
                           &=& q\sum_{y \leq w}R_{y,sw}   \nonumber \\ 
                           &=& q \cdot q^{\l(sw)}=q^{\l(w)}. \nonumber
\end{eqnarray}
\end{proof}

\section{Kazhdan--Lusztig polynomials}
In this section we prove the existence and uniqueness of an $\iota$--invariant basis for $\H(X)$. 
\begin{teo} \label{kl-pol}
There exists a unique basis $\{C_w:\, w \in W(X)\}$ for $\H(X)$ such that the following properties hold:
\begin{itemize}
\item[{\rm (i)}] $\iota(C_w)=C_w$,
\item[{\rm (ii)}] $C_w=\varepsilon_w q^{\frac{\l(w)}{2}}\sum_{x\leq w}\varepsilon_x q^{-\l(x)}P_{x,w}(q^{-1})T_x$,
\end{itemize}
where $\{P_{x,w}(q)\}\subseteq \Z[q]$, $P_{w,w}(q)=1$ and ${\rm deg} (P_{x,w}(q)) \leq \frac{1}{2}(\l(w)-\l(x)-1)$ if $x<w$. 
\end{teo}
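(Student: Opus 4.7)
The plan: prove uniqueness and existence separately, in both cases routing things through a functional equation that property (i) forces upon $\{P_{x,w}\}$.

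First, apply $\iota$ to the expansion in (ii); using $\iota(q^{1/2})=q^{-1/2}$ and $\iota(T_x)=(T_{x^{-1}})^{-1}$, then expanding $(T_{x^{-1}})^{-1}$ via (\ref{rpolinv}), and equating the coefficients of each $T_y$ in $\iota(C_w)=C_w$, one obtains the identity
$$ \sum_{y \leq x \leq w} P_{x,w}(q)\, R_{y,x}(q) \;=\; q^{\ell(w)-\ell(y)}\, P_{y,w}(q^{-1}). \qquad (\star) $$
Conversely, any family $\{P_{x,w}\}\subset\Z[q]$ satisfying $(\star)$ with $P_{w,w}=1$ yields an $\iota$-invariant $C_w$ via (ii). Thus Theorem \ref{kl-pol} reduces to solving $(\star)$ in $\Z[q]$, uniquely, under the given degree bound.

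For uniqueness, I would suppose two such families $\{P_{x,w}\},\{P'_{x,w}\}$ existed, set $\widetilde{P}_{x,w}=P_{x,w}-P'_{x,w}$, and prove $\widetilde{P}_{y,w}=0$ by descending induction on $\ell(y)$. Once $\widetilde{P}_{x,w}=0$ for $\ell(x)>\ell(y)$, $(\star)$ collapses to $\widetilde{P}_{y,w}(q) = q^{\ell(w)-\ell(y)}\widetilde{P}_{y,w}(q^{-1})$; the left side has degree $\leq (\ell(w)-\ell(y)-1)/2$, while the right side, when nonzero, has lowest power $\geq (\ell(w)-\ell(y)+1)/2$. The two support ranges are disjoint, so both sides vanish.

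For existence, I would construct $C_w$ inductively on $\ell(w)$: set $C_e=T_e$, and for $\ell(w)>0$ pick $s\in S(X)$ with $ws<w$. A direct computation with (\ref{invTs}) shows that $q^{-1/2}(T_s-qT_e)$ is $\iota$-invariant, so the product $\widetilde{C}_w \= C_{ws}\cdot q^{-1/2}(T_s-qT_e)$ is $\iota$-invariant. Using (\ref{prod}) to expand $\widetilde{C}_w$ in the $T$-basis, the coefficient of $T_w$ works out to $q^{-\ell(w)/2}$, consistent with $P_{w,w}=1$, but the coefficients at some $T_z$, $z<w$, may exceed the degree bound of (ii). For each offending $z$, subtract a scalar multiple of the previously built $C_z$ (whose $T_z$-coefficient is $q^{-\ell(z)/2}$) to cancel the offending top-degree piece; processing $z$ in order of decreasing $\ell(z)$ ensures termination and prevents re-contamination of corrected indices. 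The resulting corrected element is $\iota$-invariant of the required shape, and we declare it to be $C_w$.

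The principal obstacle sits in the existence step: the correction procedure must yield coefficients that are polynomials in $q$ with \emph{integer} coefficients (rather than arbitrary Laurent polynomials in $q^{\pm 1/2}$) and that actually respect the degree bound $(\ell(w)-\ell(x)-1)/2$. This hinges on an antisymmetry of the discrepancy between $\widetilde{C}_w$ and the target $C_w$ under $q \mapsto q^{-1}$ scaled by $q^{\ell(w)-\ell(x)}$, itself a consequence of $(\star)$ applied to the inductively constructed $\{P_{x,z}\}_{z<w}$; once this antisymmetry is verified, the uniqueness clause forces the corrected element to be the unique valid $C_w$ lying in $\Z[q]$.
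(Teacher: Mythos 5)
Your proposal is correct and follows essentially the same route as the paper: uniqueness via the functional equation $\sum_{y\le x\le w}R_{y,x}(q)P_{x,w}(q)=q^{\ell(w)-\ell(y)}P_{y,w}(q^{-1})$ (the paper's (\ref{mbs})--(\ref{recdaC})) together with the observation that the degree bound makes the two sides of the collapsed identity have disjoint support, and existence by inductively multiplying $C_{ws}$ by the $\iota$--invariant element $q^{-\frac{1}{2}}T_s-q^{\frac{1}{2}}T_e=C_s$ and subtracting integer multiples of previously built $C_z$'s. The only cosmetic difference is that the paper identifies the correction coefficients explicitly as $\mu(z,sw)$ for $sz<z$ while you determine them greedily, and both arguments leave the final integrality/degree--bound verification at the same level of detail (the paper's ``routine to check'' versus your appeal to the antisymmetry of the discrepancy).
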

\begin{proof}
First, we prove the existence of $C_w$ by induction on $\l(w)$. Assume $w \neq e$ and suppose that $C_x$ has already been constructed, for every $x < w$. Obviously, $C_e=T_e$. Let $s \in S(X)$ be such that $sw<w$ and set $w=sv$. We define 
\begin{equation} \label{prodcscv}
C_w\=C_sC_v- \sum_{sz<z}\mu(z,v)C_z,
\end{equation}
where $\mu(z,v)\=[q^{\frac{\l(v)-\l(z)-1}{2}}]P_{z,v}(q)$. Observe that (\ref{prodcscv}) implies $\iota(C_w)=C_w$. In fact, $$C_s=q^{-\frac{1}{2}}T_s-q^{\frac{1}{2}}T_e.$$ On the other hand,  
\begin{eqnarray}
\iota(C_s) & = & \iota(q^{-\frac{1}{2}}T_s-q^{\frac{1}{2}}T_e)  \nonumber \\ 
           & = & q^{\frac{1}{2}} (T_s)^{-1}-q^{-\frac{1}{2}}T_e  \nonumber \\
           & = & q^{\frac{1}{2}} (q^{-1}(T_s-(q-1)T_e))-q^{-\frac{1}{2}}T_e \nonumber \\
           & = & q^{-\frac{1}{2}}T_s-q^{\frac{1}{2}}T_e \nonumber 
\end{eqnarray}
and the invariance follows by induction on $\l(w)$. Next, extract and equate the coefficient of $T_x$ on both sides of (\ref{prodcscv}).
A careful analysis of the left hand side of (\ref{prodcscv}) shows that the coefficient of $T_x$ in $C_sC_v$ is 
\begin{equation} \nonumber
[T_x](C_sC_v) = \left\{ 
\begin{array}{ll} 
\varepsilon_x \varepsilon_w q^{-\l(x)}q^{\frac{\l(w)}{2}} (P_{sx,w}(q^{-1})+q^{-1}P_{x,w}(q^{-1})),  & \mbox{if $\l(sx)<\l(x)$,} \\ 
\varepsilon_x \varepsilon_w q^{-\l(x)}q^{\frac{\l(w)}{2}} (q^{-1}P_{sx,w}(q^{-1})+P_{x,w}(q^{-1})),  & \mbox{otherwise,} 
\end{array} \right.  
\end{equation} 
and that $$[T_x]\left(\sum_{sz<z}\mu(z,v)C_z\right)=\varepsilon_x \varepsilon_w q^{-\l(x)}q^{\frac{\l(w)}{2}} \sum_{sz<z}\mu(z,v) q^{\frac{-\l(z)}{2}}q^{-\frac{\l(w)}{2}}P_{x,z}(q^{-1}).$$
Combining these information we can express a basis element in the form 
\begin{equation} \label{Cwteo}
C_w=\varepsilon_w q^{\frac{\l(w)}{2}}\sum_{x\leq w}\varepsilon_x q^{-\l(x)}P_{x,w}(q^{-1})T_x,
\end{equation}
where we set
\begin{equation} \label{pxwc}
P_{x,w}(q) \= q^{1-c}P_{sx,sw}(q) + q^cP_{x,sw}(q)- \sum_{\{z:\,sz<z\}}q^{\frac{\l(w)-\l(z)}{2}}\mu(z,sw)P_{x,z}(q),
\end{equation}    
with $c=1$ if $sx<x$ and $c=0$ otherwise.
This is routine to check that the polynomials defined by (\ref{pxwc}) satisfy the degree bound stated in the theorem.\\
Now we deal with the uniqueness part, assuming the existence and the invariance of $C_w$. The uniqueness of the element in (\ref{Cwteo}) is equivalent to saying that there is a unique choice for polynomials $P_{x,w}$. We proceed by induction on $\l(w)-\l(x)$, where $w \in W(X)$ is fixed. Assume that the $P_{y,w}$ can be chosen in a unique way, for all $x<y \leq w$. We will show that $P_{x,w}$ is uniquely determined. Combining (\ref{Cwteo}) with (\ref{rpolinv}), we achieve
\begin{eqnarray}
\iota(C_w) & = & \varepsilon_w q^{\frac{-\l(w)}{2}}\sum_{y\leq w}\varepsilon_y q^{\l(y)}P_{y,w}(q)(T_{y^{-1}})^{-1} \nonumber \\
           & = & \varepsilon_w q^{\frac{-\l(w)}{2}}\sum_{y\leq w}\varepsilon_y q^{\l(y)}P_{y,w}(q)\left(\varepsilon_y q^{-\l(y)}\sum_{x \leq y} \varepsilon_x R_{x,y}T_x \right)  \nonumber \\
           & = & \varepsilon_w q^{\frac{-\l(w)}{2}}\sum_{x \leq w} \left(\sum_{x \leq y \leq w}\varepsilon_x R_{x,y}P_{y,w} \right) T_x.   \label{eqco}
\end{eqnarray}
On the other hand, $\iota(C_w)=C_w$. Therefore, equating the coefficient of $T_x$ in (\ref{eqco}) with the one in (\ref{Cwteo}) we obtain
\begin{equation} \label{mbs}
\varepsilon_x \varepsilon_w q^{-\l(x)}q^{\frac{\l(w)}{2}}P_{x,w}(q^{-1})= \varepsilon_w q^{-\frac{\l(w)}{2}} \sum_{x \leq y \leq w}\varepsilon_x R_{x,y}P_{y,w}.
\end{equation}
Multiplying both sides of (\ref{mbs}) by $q^{\frac{\l(x)}{2}}$ and moving the term for $y=x$ to the left, we get 
\begin{equation} \label{recdaC}
q^{\frac{-\l(x)}{2}}q^{\frac{\l(w)}{2}}P_{x,w}(q^{-1})-q^{\frac{\l(x)}{2}}q^{-\frac{\l(w)}{2}}P_{x,w}(q)=q^{\frac{\l(x)}{2}}q^{-\frac{\l(w)}{2}} \sum_{x < y \leq w} R_{x,y}P_{y,w}.
\end{equation}
The bound on the degree of $P_{x,w}$ implies that no cancellation occurs on the left hand side of (\ref{recdaC}). Hence, we conclude that the polynomial $P_{x,w}$ satisfying (\ref{recdaC}) is unique.
\end{proof}

The polynomials $\{P_{x,w}\}_{x,w \in W(X)}$ are the so--called \emph{Kazhdan--Lusztig polynomials} of $W(X)$. In \cite[\S 7.9]{Hum} it is shown that one can substitute the basis $\{C_w:\, w \in W(X)\}$ with the equivalent basis $\{C_w':\,w \in W(X)\}$, where 
\begin{equation}\label{C'w}
C_w'\=q^{-\frac{\l(w)}{2}}\sum_{x \leq w}P_{x,w}(q)T_x.
\end{equation}
We will refer to the latter basis as the \emph{Kazhdan--Lusztig basis} for $\H(X)$.\\
Observe that $C_w'=\varepsilon_w \sigma(C_w)$, so that, by Proposition \ref{issi}, we achieve 
\begin{equation} \label{icec}
\iota(C_w')=\varepsilon_w\iota\sigma(C_w)= \varepsilon_w \sigma \iota(C_w)=\varepsilon_w \sigma(C_w)=C_w'.  
\end{equation}
The Kazhdan--Lusztig polynomials can be computed inductively by appealing to the following result. 
\begin{pro} \label{indkl}
Let $x,w \in W(X)$ be such that $x \leq w$. Then $$q^{\l(w)-\l(x)}P_{x,w}(q^{-1})= \sum_{y \in [x,w]}R_{x,y}(q)P_{y,w}(q).$$
\end{pro}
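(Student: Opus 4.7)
The plan is to extract this identity from the $\iota$-invariance of $C_w$, which is exactly what was already done midway through the proof of Theorem \ref{kl-pol}. In fact, equation (\ref{mbs}) in that proof states
$$\varepsilon_x \varepsilon_w q^{-\l(x)} q^{\l(w)/2} P_{x,w}(q^{-1}) = \varepsilon_w q^{-\l(w)/2} \sum_{x \leq y \leq w} \varepsilon_x R_{x,y} P_{y,w},$$
and multiplying both sides by $\varepsilon_x \varepsilon_w q^{\l(w)/2}$ immediately yields the claim.

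So the concrete steps I would carry out are: first, recall from (\ref{Cwteo}) that
$$C_w = \varepsilon_w q^{\l(w)/2} \sum_{y \leq w} \varepsilon_y q^{-\l(y)} P_{y,w}(q^{-1}) T_y.$$
Second, apply $\iota$ to both sides. Since $\iota$ is a ring homomorphism sending $q$ to $q^{-1}$ and $T_y$ to $(T_{y^{-1}})^{-1}$, I can substitute the inversion formula (\ref{rpolinv}) from Theorem \ref{r-pol} to get
$$\iota(C_w) = \varepsilon_w q^{-\l(w)/2} \sum_{y \leq w} \varepsilon_y q^{\l(y)} P_{y,w}(q) \left( \varepsilon_y q^{-\l(y)} \sum_{x \leq y} \varepsilon_x R_{x,y}(q) T_x \right).$$
Third, swap the order of summation to collect the coefficient of each $T_x$, obtaining
$$\iota(C_w) = \varepsilon_w q^{-\l(w)/2} \sum_{x \leq w} \varepsilon_x \left( \sum_{x \leq y \leq w} R_{x,y}(q) P_{y,w}(q) \right) T_x.$$
Fourth, use $\iota(C_w) = C_w$ from Theorem \ref{kl-pol}(i) and equate coefficients of $T_x$ on both sides, which gives exactly (\ref{mbs}). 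Finally, clear signs and powers of $q$ by multiplying through by $\varepsilon_x \varepsilon_w q^{\l(w)/2}$ to arrive at
$$q^{\l(w)-\l(x)} P_{x,w}(q^{-1}) = \sum_{y \in [x,w]} R_{x,y}(q) P_{y,w}(q).$$

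There is no real obstacle here: everything reduces to bookkeeping of signs and powers of $q$. The content of the proposition was already implicitly derived during the existence/uniqueness argument for $C_w$; the statement merely isolates that identity for future reference as an inductive recipe for the Kazhdan--Lusztig polynomials (starting from $P_{w,w} = 1$, the degree bound ensures the left-hand side has no constant term, so the unknown $P_{x,w}(q)$ is determined by the $P_{y,w}$ with $y > x$).
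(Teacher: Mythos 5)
Your proof is correct, and it rests on the same core idea as the paper's: expand $\iota$ of a Kazhdan--Lusztig basis element via the $R$--polynomial inversion formula of Theorem \ref{r-pol}, then equate coefficients of the $T$--basis using $\iota$--invariance. The only difference is which basis element you use. You work with $C_w$ (equivalently, you just rescale equation (\ref{mbs}), which was already established inside the proof of Theorem \ref{kl-pol}); since the defining formula for $C_w$ carries $P_{y,w}(q^{-1})$, applying $\iota$ once lands you directly on the identity in the stated orientation. The paper instead applies $\iota$ to $C_w'$, whose coefficients involve $P_{x,w}(q)$, and therefore arrives at the mirror-image identity $q^{-\l(w)}P_{y,w}(q)=\sum_{x\in[y,w]}\varepsilon_x q^{-\l(x)}R_{y,x}(q)P_{x,w}(q^{-1})$; it then has to apply $\iota$ a second time and invoke the symmetry $R_{x,w}(q^{-1})=\varepsilon_x\varepsilon_w q^{\l(x)-\l(w)}R_{x,w}(q)$ of Proposition \ref{rsymm} to convert it into the stated form. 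Your route is marginally shorter and does not need Proposition \ref{rsymm}; the paper's has the small advantage of working directly with the basis $\{C_w'\}$ that the rest of the text actually uses. Your closing remark about how the identity determines $P_{x,w}$ inductively from the $P_{y,w}$ with $y>x$ (via the degree bound ruling out cancellation) is also accurate and consistent with the uniqueness argument of Theorem \ref{kl-pol}.
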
 
\begin{proof}
\begin{eqnarray}
\iota(C_w') & = & q^{\frac{\l(w)}{2}}\sum_{x \leq w}P_{x,w}(q^{-1})\left(T_{x^{-1}}\right)^{-1}    \nonumber \\
            & = & q^{\frac{\l(w)}{2}}\sum_{x \leq w}P_{x,w}(q^{-1})\left(\varepsilon_x q^{-\l(x)}\sum_{y \leq x} \varepsilon_y R_{y,x}(q)T_y   \right)    \nonumber \\
            & = & q^{\frac{\l(w)}{2}}\sum_{y \leq w} \left(\sum_{x \in [y,w]} \varepsilon_x q^{-\l(x)} R_{y,x}(q)P_{x,w}(q^{-1}) \right) T_y.    \label{prokl}
\end{eqnarray}
The relation (\ref{icec}) implies that the coefficient of $T_y$ in (\ref{prokl}) and that of $T_y$ in (\ref{C'w}) are the same, that is 
$$q^{-\l(w)}P_{y,w}(q)=\sum_{x \in [y,w]} \varepsilon_x q^{-\l(x)} R_{y,x}(q)P_{x,w}(q^{-1}).$$
Therefore, $$\iota(q^{-\l(w)}P_{y,w}(q))=q^{\l(w)}P_{y,w}(q^{-1})=\sum_{x \in [y,w]} \varepsilon_x q^{\l(x)} R_{y,x}(q^{-1})P_{x,w}(q),$$
and the statement follows by applying Proposition \ref{rsymm}. 
\end{proof}

It is a routine exercise to prove the following properties (see, \textrm{e.g.}, \cite[\S 5, Exercise 3 and Exercise 7(a)]{bb-ccg}).
\begin{lemma} \label{rp2}
Let $x,w \in W(X)$ be such that $x \leq w$. If $\ell(w)-\ell(x) \leq 2$ then 
\begin{itemize}
\item[{\rm (i)}] $R_{x,w}=(q-1)^{\ell(w)-\ell(x)}$;
\item[{\rm (ii)}] $P_{x,w}=1$.
\end{itemize} 
\end{lemma}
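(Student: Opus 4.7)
The plan is to handle (i) by induction on $\ell(w)$ using the recursion of Corollary \ref{cor recr}, and (ii) by a short computation based on Proposition \ref{indkl} together with the degree constraint from Theorem \ref{kl-pol}.

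For (i), the base case $w=e$ is trivial. Given $w$ with $\ell(w)\geq 1$, I pick $s\in S(X)$ with $sw<w$ and split according to whether $sx<x$ or $sx>x$. The Lifting Property (Lemma \ref{lif lemma} and its symmetric variants, cf.\ \cite[Proposition 2.2.7]{bb-ccg}) tells us exactly where $sx$ and $x$ sit relative to $sw$: in the case $sx<x$ one has $sx\leq sw$ with $\ell(sw)-\ell(sx)=\ell(w)-\ell(x)$, while in the case $sx>x$ one has $x\leq sw$ with $\ell(sw)-\ell(x)=\ell(w)-\ell(x)-1$ and $\ell(sx)>\ell(sw)$, which forces $R_{sx,sw}=0$. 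Substituting the inductive values of $R_{sx,sw}$ and $R_{x,sw}$ into the recurrence of Corollary \ref{cor recr} yields $R_{x,w}=(q-1)^{\ell(w)-\ell(x)}$ in every sub-case, for $\ell(w)-\ell(x)\in\{1,2\}$.

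For (ii), the case $\ell(w)=\ell(x)$ is immediate from $P_{w,w}=1$. When $\ell(w)-\ell(x)\in\{1,2\}$, the degree bound of Theorem \ref{kl-pol} gives $\deg P_{x,w}\leq 0$, so $P_{x,w}$ is a constant $c$. For length difference $1$, Proposition \ref{indkl} together with (i) reads $qc=c+(q-1)$, forcing $c=1$. For length difference $2$, the previous step yields $P_{y,w}=1$ for each of the atoms $y\in(x,w)$, and Proposition \ref{indkl} combined with (i) collapses to
\[
c(q^2-1)\;=\;(q-1)^2+n(q-1),
\]
where $n=|(x,w)|$ counts the atoms of the open Bruhat interval. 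Equating coefficients in $\mathbb{Z}[q]$ on both sides yields $c=1$ (and, as a by-product, $n=2$).

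The main obstacle I expect is the last identification. At first glance the appearance of the unknown $n$ makes the identity look under-determined, but the three-coefficient match is in fact over-determined and admits only $c=1$, $n=2$. If one prefers to sidestep this, the well-known diamond property of rank-$2$ Bruhat intervals (see \cite[\S 2.7]{bb-ccg}) fixes $n=2$ beforehand, after which $c=1$ falls out at once. In either form the argument closes with $P_{x,w}=1$ as required.
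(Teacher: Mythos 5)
Your argument is correct, but note that the paper does not actually prove Lemma \ref{rp2}: it is dispatched with a pointer to \cite[\S 5, Exercise 3 and Exercise 7(a)]{bb-ccg} as a ``routine exercise,'' so your write-up is a genuine self-contained proof rather than a reproduction of anything in the text. What you do is exactly what those exercises intend, using only tools already established here: (i) from the recurrence of Corollary \ref{cor recr} together with the Lifting Property (Lemma \ref{lif lemma}), and (ii) from the degree bound in Theorem \ref{kl-pol} combined with the inversion formula of Proposition \ref{indkl}. The coefficient comparison $c(q^2-1)=(q-1)^2+n(q-1)$ that you flagged as the main obstacle is indeed sound: the $q^2$-coefficient alone forces $c=1$, and $n=2$ drops out, recovering the diamond property of rank-two Bruhat intervals as a by-product rather than needing it as an input. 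One small repair in part (i): in the sub-case $sx>x$ with $\ell(w)-\ell(x)=2$ you assert $\ell(sx)>\ell(sw)$, but in fact $\ell(sx)=\ell(x)+1=\ell(w)-1=\ell(sw)$. The conclusion $R_{sx,sw}=0$ still holds, because $sx\leq sw$ together with equality of lengths would give $sx=sw$, hence $x=w$, contradicting $\ell(w)-\ell(x)=2$; with that justification substituted, every sub-case closes as you claim.
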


\begin{defi} \label{topcoeff}
Let $x,w \in W(X)$ such that $x < w$. Define $\mu(x,w) \in \Z$ to be the top coefficient of $P_{x,w}$, namely 
$$ \mu(x,w)\stackrel{\rm def}{=}[q^{\frac{\l(w)-\l(x)-1}{2}}]P_{x,w}.$$ 
If $\mu(x,w) \neq 0$ then we write $x \prec w$. 
\end{defi}
In the symmetric group $S_4$ there are exactly two pairs of element $(x,w)$ such that $x \prec w$. They are $([1,3,2,4],[3,4,1,2])$ and $([2,1,4,3],[4,2,3,1])$ (see \cite[\S 7.12]{Hum}). In both cases, by means of Proposition \ref{indkl}, one easily checks that $P_{x,w}=q+1$. 

The product of two Kazhdan--Lusztig basis elements may be computed by means of the following well--known formula, which is implicit in the proof of Theorem \ref{kl-pol}. 
\begin{pro}\label{pro klmolt}
Let $s,w \in W(X)$, with $s \in S(X)$. Then 
\[
C'_sC'_w= 
\begin{cases}
C'_{sw}+\sum_{\substack{x \prec w  \\ sx<x}}\mu(x,w)C'_x  & \mbox{if } \l(sw)>\l(w);  \\
(q^{\frac{1}{2}}+q^{-\frac{1}{2}})C'_w                    & \mbox{otherwise}.
\end{cases}   \nonumber
\]
\qed
\end{pro}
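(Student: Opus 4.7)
The plan is to transfer the statement from the basis $\{C_w\}$ of Theorem \ref{kl-pol}, for which a multiplication formula is implicit in the construction (\ref{prodcscv}), over to the Kazhdan--Lusztig basis $\{C'_w\}$. Since $C'_w = \varepsilon_w\sigma(C_w)$ and $\sigma$ is a ring homomorphism by Proposition \ref{issi}, one has
\[
C'_sC'_w = \varepsilon_s\varepsilon_w\,\sigma(C_sC_w),
\]
so the task reduces to expanding $C_sC_w$ in the $C$-basis in each of the two cases, applying $\varepsilon_s\varepsilon_w\sigma$, and matching signs.

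For the ascending case $\l(sw) > \l(w)$, equation (\ref{prodcscv}) (with $v = w$ and the roles of $w$ and $sv$ interchanged) rearranges directly to
\[
C_sC_w = C_{sw} + \sum_{sz<z}\mu(z,w)\,C_z,
\]
where the sum is automatically supported on $z \prec w$ because $\mu(z,w) = 0$ otherwise. I would then apply $\varepsilon_s\varepsilon_w\sigma$ term by term, using $\varepsilon_s = -1$, $\varepsilon_{sw} = -\varepsilon_w = \varepsilon_s\varepsilon_w$, and, for every surviving index $z$, $\varepsilon_z = -\varepsilon_w = \varepsilon_s\varepsilon_w$ (which follows because $\mu(z,w) \neq 0$ forces $\l(w)-\l(z)$ odd). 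All the signs then align so that each $\varepsilon_s\varepsilon_w\sigma(C_\bullet)$ converts to $C'_\bullet$, giving the first formula of the proposition.

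For the descending case $\l(sw) < \l(w)$ the construction (\ref{prodcscv}) does not apply directly, so I would induct on $\l(w)$. The base case $w = s$ is the identity $(C'_s)^2 = (q^{1/2}+q^{-1/2})C'_s$, which is a one-line check from $C'_s = q^{-1/2}(T_s+T_e)$ and $T_s^2 = (q-1)T_s + qT_e$. For the inductive step, set $v = sw$, so $\l(sv) > \l(v)$ and $\l(v) < \l(w)$; the ascending case just proved gives
\[
C'_sC'_v = C'_w + \sum_{\substack{z \prec v \\ sz<z}}\mu(z,v)\,C'_z.
\]
I would then multiply both sides on the left by $C'_s$: on the left-hand side, $(C'_s)^2 C'_v = (q^{1/2}+q^{-1/2})C'_sC'_v$ by the base case, while on the right every $z$ in the sum satisfies $sz<z$ and $\l(z) < \l(w)$, so the inductive hypothesis yields $C'_sC'_z = (q^{1/2}+q^{-1/2})C'_z$. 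Cancelling the common sum from both sides leaves $(q^{1/2}+q^{-1/2})C'_w = C'_sC'_w$, as required.

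The main obstacle is the descending case, since the recursion of Theorem \ref{kl-pol} supplies a formula for $C_sC_v$ only when $sv > v$. The trick that closes the argument is to apply the ascending formula to the shorter element $v = sw$ and then use the quadratic relation $(C'_s)^2 = (q^{1/2}+q^{-1/2})C'_s$ to bootstrap the induction downward.
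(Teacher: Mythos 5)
Your proposal is correct. Note that the paper itself offers no proof of this proposition: it is stated with a terminal $\qed$ and the remark that the formula is ``implicit in the proof of Theorem \ref{kl-pol}'', so there is nothing to match against line by line. Your argument is a faithful and complete unpacking of that implicit content. The ascending case is exactly the transfer of (\ref{prodcscv}) through $\varepsilon_s\varepsilon_w\sigma$ using $C'_u=\varepsilon_u\sigma(C_u)$ and Proposition \ref{issi}, and your sign bookkeeping is right: $\mu(z,w)\neq 0$ forces $\l(w)-\l(z)$ odd, so every surviving $\varepsilon_z$ equals $\varepsilon_s\varepsilon_w$. The one step worth making explicit is that (\ref{prodcscv}) is, a priori, the definition of $C_{sw}$ for one particular choice of descent; that it holds for \emph{every} $s$ with $s(sw)<sw$ is a consequence of the uniqueness half of Theorem \ref{kl-pol}, which you are silently invoking. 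For the descending case, your bootstrap -- apply the ascending formula to $v=sw$, left-multiply by $C'_s$, use $(C'_s)^2=(q^{\frac{1}{2}}+q^{-\frac{1}{2}})C'_s$ and induction on length to absorb the correction terms -- is valid (associativity does the rest). An alternative, slightly more direct route the paper's machinery also supports is to show $T_sC'_w=qC'_w$ when $sw<w$ by pairing $T_x$ with $T_{sx}$ in (\ref{C'w}) and using $P_{x,w}=P_{sx,w}$ (Corollary \ref{kl property}); combined with $C'_s=q^{-\frac{1}{2}}(T_s+T_e)$ this gives the second case in one computation and avoids the induction. Either way, the proof stands.
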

Moreover, the proof of Theorem \ref{kl-pol} results in the following interesting relation. 
\begin{teo} \label{teo klqc}
Let $x,w \in W(X)$ be such that $x \leq w$, and let $s \in S(X)$ be such that $sw < w$. Then,
\begin{equation} \label{recpxw}
P_{x,w} = q^{1-c}P_{sx,sw} + q^cP_{x,sw} - \sum_{\{z:\,sz<z\}}q^{\frac{\l(w)-\l(z)}{2}}\mu(z,sw)P_{x,z},
\end{equation}
with $c=1$ if $sx<x$ and $c=0$ otherwise. \qed
\end{teo}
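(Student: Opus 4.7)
The statement to be proved is exactly formula (\ref{pxwc}) that appeared during the construction of $C_w$ in the proof of Theorem \ref{kl-pol}; indeed, the theorem is marked $\qed$ in the paper precisely because its content is already embedded in that construction. My plan is therefore to give a clean, self-contained extraction of the identity by comparing the coefficient of $T_x$ on the two sides of the defining relation
\[
C_w = C_sC_v - \sum_{sz<z}\mu(z,v)C_z, \qquad v\=sw,
\]
which is (\ref{prodcscv}) applied with $v=sw$.

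First I would expand $C_s = q^{-1/2}T_s - q^{1/2}T_e$ and write $C_v$ via (\ref{Cwteo}). Using the Hecke multiplication law (\ref{prod}), for each $y \leq v$ the product $T_sT_y$ contributes to the coefficient of $T_x$ only from $y=x$ (contributing $q-1$ if $sx<x$, and nothing otherwise) and from $y=sx$ (contributing $1$ if $sx<x$, and $q$ if $sx>x$, via the second clause of (\ref{prod})). Note that when $sx<x$ we have $sx \le sv$ by the Lifting Lemma \ref{lif lemma}, so $sx \leq v$ is automatic; when $sx>x$ we have $x<sx$ and $x \leq sv<v$, so $sx \leq v$ again follows from the Lifting Lemma applied to $x<sx$, $sv<v$. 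Collecting these with the $-q^{1/2}T_e$ piece (which produces $-q^{1/2}T_x$ from $y=x$) yields exactly the two displayed expressions for $[T_x](C_sC_v)$ that appear inside the proof of Theorem \ref{kl-pol}. The coefficient of $T_x$ in the correction $\sum_{sz<z}\mu(z,v)C_z$ is read off directly from (\ref{Cwteo}) applied to each $C_z$.

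Equating with $[T_x]C_w = \varepsilon_x\varepsilon_w q^{\l(w)/2 - \l(x)}P_{x,w}(q^{-1})$ and cancelling the common factor $\varepsilon_x\varepsilon_w q^{\l(w)/2-\l(x)}$, one obtains an identity between $P_{x,w}(q^{-1})$, $P_{sx,sw}(q^{-1})$, $P_{x,sw}(q^{-1})$ and the values $P_{x,z}(q^{-1})$ for $x\leq z \leq sw$ with $sz<z$, with coefficients that are explicit powers of $q^{-1}$ depending on the case $sx\lessgtr x$. Substituting $q \mapsto q^{-1}$ in the resulting identity produces precisely (\ref{recpxw}).

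The main obstacle is purely bookkeeping: tracking the signs $\varepsilon_x, \varepsilon_y, \varepsilon_w, \varepsilon_z$ and the many half-integer powers of $q$ that are generated by the Hecke multiplication and by the normalisation factor $q^{\l(\cdot)/2}$ in (\ref{Cwteo}). No new ideas beyond those already used in the existence half of Theorem \ref{kl-pol} are required, which is why the paper leaves the statement immediate.
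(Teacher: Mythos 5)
Your proposal is correct and follows essentially the same route as the paper: the paper proves Theorem \ref{teo klqc} precisely by observing that the recursion is formula (\ref{pxwc}), obtained in the existence part of the proof of Theorem \ref{kl-pol} by extracting the coefficient of $T_x$ from $C_w=C_sC_v-\sum_{sz<z}\mu(z,v)C_z$, which is exactly the computation you describe. No further comment is needed.
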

An important consequence of Theorem \ref{teo klqc} follows. 
\begin{cor} \label{kl property}
Let $x,w \in W(X)$ be such that $x \leq w$, and let $s \in S(X)$ be such that $sw<w$. Then $P_{x,w}=P_{sx,w}$.
\end{cor}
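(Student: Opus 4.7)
The plan is to derive the identity from the product formula
\[
C'_s C'_w \;=\; (q^{1/2}+q^{-1/2})\,C'_w,
\]
which holds by Proposition~\ref{pro klmolt} because $\ell(sw)<\ell(w)$. Expanding both sides in the $T$-basis and comparing the coefficient of $T_x$ will produce a linear relation between $P_{x,w}$ and $P_{sx,w}$ that forces their equality, with no induction required.

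First I would write $C'_s = q^{-1/2}(T_e+T_s)$, which follows from $P_{e,s}=P_{s,s}=1$ (Lemma~\ref{rp2}). Combining with the definition (\ref{C'w}) of $C'_w$ gives
\[
C'_s C'_w \;=\; q^{-(\ell(w)+1)/2}\sum_{y\le w} P_{y,w}\bigl(T_y+T_s T_y\bigr).
\]
Using the left-handed analogue of the product rule (\ref{prod}), namely $T_s T_y=T_{sy}$ when $sy>y$ and $T_s T_y=qT_{sy}+(q-1)T_y$ when $sy<y$, the parenthesis becomes $T_y+T_{sy}$ in the first case and $q(T_y+T_{sy})$ in the second. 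Hence only the indices $y=x$ and $y=sx$ can contribute to the coefficient of $T_x$.

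Before extracting that coefficient, I need to know that $sx\le w$, so $P_{sx,w}$ is an honest polynomial rather than zero by convention. If $sx<x$ this is immediate from $sx<x\le w$. If $sx>x$, I invoke the left version of Lemma~\ref{lif lemma} (derived from the stated right-handed version via the inversion automorphism of Proposition~\ref{pro automorph}) applied to $x<w$ together with $sw<w$ and $sx>x$, which yields $sx\le w$. With this in place, a brief case split shows that the coefficient of $T_x$ in $C'_s C'_w$ equals $q^{-(\ell(w)+1)/2}(P_{x,w}+qP_{sx,w})$ when $sx>x$ and $q^{-(\ell(w)+1)/2}(qP_{x,w}+P_{sx,w})$ when $sx<x$. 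Equating with the coefficient $(q+1)\,q^{-(\ell(w)+1)/2}P_{x,w}$ arising from $(q^{1/2}+q^{-1/2})C'_w$ forces $P_{sx,w}=P_{x,w}$ in either case.

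The main obstacle is purely bookkeeping: tracking which pair $(y,\textrm{sub-sum})$ contributes to $T_x$ and verifying the Bruhat membership $sx\le w$ via the lifting argument so that no contribution is silently dropped. Once these are settled, the identity drops out of a single coefficient comparison.
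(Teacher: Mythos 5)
Your argument is correct, but it follows a genuinely different route from the paper. The paper proves the corollary by induction on $\ell(w)$, applying the recursion (\ref{recpxw}) of Theorem \ref{teo klqc} to the two pairs $(x,w)$ and $(sx,w)$ (which swap the roles of $c=0$ and $c=1$) and using the induction hypothesis $P_{sx,z}=P_{x,z}$ to identify the two correction sums over $\{z : sz<z\}$. You instead take the second case of Proposition \ref{pro klmolt}, namely $C'_sC'_w=(q^{1/2}+q^{-1/2})C'_w$ for $sw<w$, and extract the coefficient of $T_x$ on both sides; your bookkeeping is right (the parenthesis $T_y+T_sT_y$ collapses to $T_y+T_{sy}$ or $q(T_y+T_{sy})$, only $y=x$ and $y=sx$ contribute, and the two case computations each force $P_{sx,w}=P_{x,w}$), and your use of the left-handed lifting property to secure $sx\le w$ is sound --- in fact the coefficient comparison alone would already rule out $sx\not\le w$, since it would give $qP_{x,w}=0$. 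What each approach buys: yours is a one-shot, induction-free coefficient comparison, essentially the classical argument via $T_sC'_w=qC'_w$, but it leans on the second case of Proposition \ref{pro klmolt}, which the paper asserts with a \qed\ as ``implicit'' in the proof of Theorem \ref{kl-pol} and whose own derivation requires a small induction of the same flavour; the paper's proof uses only the recursion that is explicitly established there, at the cost of running the induction itself. There is no circularity in your version, since that case of Proposition \ref{pro klmolt} follows from the defining relation $C_w=C_sC_v-\sum\mu(z,v)C_z$ together with $(C'_s)^2=(q^{1/2}+q^{-1/2})C'_s$ without invoking the corollary being proved.
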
 
\begin{proof}
We proceed by induction on $\l(w)$. If $\l(w) \leq 1$ then $P_{x,w}=1$ by Lemma \ref{rp2}. Let $\l(w)>1$ and suppose that $sx<x$. Then, by (\ref{recpxw}), we get 
\begin{eqnarray}
P_{sx,w}    & = & qP_{x,sw} + P_{sx,sw}- \sum_{\{z:\,sz<z\}}q^{\frac{\l(w)-\l(z)}{2}}\mu(z,sw)P_{sx,z} \nonumber \\
            & = & qP_{x,sw} + P_{sx,sw}- \sum_{\{z:\,sz<z\}}q^{\frac{\l(w)-\l(z)}{2}}\mu(z,sw)P_{x,z}, \nonumber \\
            & = & P_{x,w} \nonumber 
\end{eqnarray}
since $sz<z \leq sw$. The case $x<sx$ follows similarly.  
\end{proof}

The following is needed in Proposition \ref{pro dsgn} (see \cite[\S 5, Exercise 17]{bb-ccg}).
\begin{pro} \label{pro pdpol}
Let $x, w \in W(X)$. Then $$\sum_{x \leq w} \varepsilon_x P_{x,w}=\delta_{e,w}.$$
\end{pro}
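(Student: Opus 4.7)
The plan is to give a very short \emph{sign-reversing involution} argument. When $w=e$ the sum reduces to $\varepsilon_e P_{e,e}=1$, which is the right value of $\delta_{e,w}$, so assume $w\ne e$ and pick any $s\in S(X)$ with $sw<w$. The key inputs are Corollary~\ref{kl property}, which gives $P_{sx,w}=P_{x,w}$, and the Lifting Property (Lemma~\ref{lif lemma}), which is exactly the tool needed to show that left multiplication by $s$ permutes $\{x\in W(X): x\le w\}$.

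First I would establish the following claim: the map $x\mapsto sx$ is an involution on the set $\{x:x\le w\}$. If $sx<x$, then $sx<x\le w$, so $sx\le w$. If $sx>x$, then applying Lemma~\ref{lif lemma} with the hypotheses $x\le w$, $sw<w$ and $sx>x$ yields $sx\le w$ as well. Since $s$ is an involution on $W(X)$, this map is a bijection. Now I would write
\begin{equation*}
\sum_{x\le w}\varepsilon_x P_{x,w}=\sum_{x\le w}\varepsilon_{sx} P_{sx,w}=-\sum_{x\le w}\varepsilon_x P_{x,w},
\end{equation*}
where in the middle we have reindexed $x\mapsto sx$, in the last step used that $\varepsilon_{sx}=-\varepsilon_x$ (because $\ell(sx)=\ell(x)\pm 1$) and $P_{sx,w}=P_{x,w}$ by Corollary~\ref{kl property}. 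Consequently $2\sum_{x\le w}\varepsilon_x P_{x,w}=0$, and the sum is $0=\delta_{e,w}$, as required.

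There is no real obstacle: the only subtlety is checking the bijection when $sx>x$, which is handled cleanly by the Lifting Property. One could alternatively try an inductive approach using the recursion of Theorem~\ref{teo klqc} and Proposition~\ref{rpol prop}, but that is notably heavier, and the pairing argument above keeps everything on one line.
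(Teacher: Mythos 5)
Your proof is correct and is essentially the same as the paper's: both arguments pair $x$ with $sx$ for a fixed $s$ with $sw<w$, use the Lifting Property to see that this is an involution on $\{x:x\le w\}$, and invoke Corollary~\ref{kl property} together with $\varepsilon_{sx}=-\varepsilon_x$ to make the terms cancel. The paper merely writes the cancellation by splitting the sum over $sx<x$ and $sx>x$ rather than reindexing the whole sum at once.
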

\begin{proof}
The case $w=e$ is trivial. Suppose $w \neq e$. Let $s \in S(X)$ such that $sw<w$. Combining Proposition \ref{kl property} and Lemma \ref{lif lemma}, we get
\begin{eqnarray}
\sum_{x \leq w} \varepsilon_x P_{x,w} &=& \sum_{\substack{x \leq w \\ sx<w}} \varepsilon_x P_{x,w} + \sum_{\substack{x \leq w \\ sx>x}} \varepsilon_x P_{x,w}  \nonumber \\
                                      &=& \sum_{\substack{x \leq w \\ sx<x}} \varepsilon_x P_{x,w} + \sum_{\substack{y \leq w \\ sy<y}} \varepsilon_{sy} P_{sy,w}  \nonumber \\
                                      &=& \sum_{\substack{x \leq w \\ sx<x}} \varepsilon_x P_{x,w} + \sum_{\substack{x \leq w \\ sy<y}} \varepsilon_{sy} P_{y,w}   \nonumber \\
                                      &=& \sum_{\substack{x \leq w \\ sx<x}} \varepsilon_x P_{x,w} - \sum_{\substack{y \leq w \\ sy<y}} \varepsilon_y P_{y,w}=0. \nonumber
\end{eqnarray}
\end{proof}


\chapter{The generalized Temperley--Lieb algebra $TL(X)$} \label{ch gtla}

\section{Definition of generalized Temperley--Lieb algebra}

Throughout this section we will denote by $X$ an arbitrary Coxeter graph. Let $s_i, s_j \in S(X)$ and denote by $\langle s_i,s_j \rangle$ the subgroup of $W(X)$ generated by $s_i$ and $s_j$. Following \cite{gr-phd}, we consider the two--sided ideal $J(X)$ generated by all elements of $\H(X)$ of the form $$\sum_{w \in \langle s_i,s_j \rangle}T_w,$$ where $(s_i,s_j)$ runs over all pairs of non--commuting elements in $S(X)$ such that the order of $s_is_j$ is finite.   
\begin{defi}
The generalized Temperley--Lieb algebra is $$TL(X)\=\H(X)/J(X).$$
\end{defi}
When $X$ is of type $A$, we refer to $TL(X)$ as the Temperley--Lieb algebra.\\ 
If we project the $T$--basis of $\H(X)$ to the quotient $\H(X)/J(X)$ we obtain a basis for $TL(X)$. Let $t_w=\sigma(T_w)$, where $\sigma: \H \rightarrow \H/J$ is the canonical projection. A proof of the following can be found in \cite{gl-cbh}.
\begin{teo}
The Temperley--Lieb algebra admits an $\A$--basis of the form $\{t_w:\, w \in W_c(X)\}$.
\end{teo}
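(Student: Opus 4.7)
The plan is to establish separately that $\{t_w : w \in W_c(X)\}$ spans $TL(X)$ over $\A$ and that these elements are $\A$--linearly independent; the spanning half is elementary by induction, while the independence half is the main technical obstacle.

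For spanning, I would argue by induction on $\l(w)$ that each $t_w$ lies in $\mathrm{span}_\A\{t_v : v \in W_c(X)\}$. If $w \in W_c(X)$ there is nothing to prove; otherwise, combining the characterization of fully commutative elements with the Word Property of Section \ref{word prop}, some reduced expression for $w$ contains a substring of the form $\alpha_{s_i,s_j}$ of length $m = m(s_i,s_j) \geq 3$. Writing $w = u \cdot \alpha_{s_i,s_j} \cdot v$ as a reduced factorization gives $T_w = T_u T_{\alpha_{s_i,s_j}} T_v$ in $\H(X)$. Since $\alpha_{s_i,s_j}$ is the unique longest element of the dihedral subgroup $\langle s_i,s_j\rangle$, the defining relation $\sum_{y \in \langle s_i,s_j\rangle} t_y = 0$ of $J(X)$ yields
\[
t_{\alpha_{s_i,s_j}} \;=\; -\sum_{\substack{y \in \langle s_i,s_j\rangle \\ \l(y) < m}} t_y
\]
in $TL(X)$. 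Substituting and expanding each product $t_u\, t_y\, t_v$ via the Hecke multiplication rule (\ref{prod}) produces a combination of $t_z$ with $\l(z) < \l(w)$, and the inductive hypothesis closes the step.

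For $\A$--linear independence the cleanest route is to exhibit a faithful $TL(X)$--module of rank $|W_c(X)|$. For $X = A_{n-1}$ this is the classical action on non--crossing pairings of $2n$ points, whose dimension is the Catalan number $C_n = |W_c(A_{n-1})|$ by Proposition \ref{321wc}. For the remaining finite and affine types one appeals to the analogous diagrammatic or cell--module constructions. An alternative, more intrinsic to $\H(X)$, is to verify directly that the two--sided ideal $J(X)$ intersects $\mathrm{span}_\A\{T_w : w \in W_c(X)\}$ trivially, by inspecting the highest--length $T$--term of each element of the form $T_x\,\bigl(\sum_{y \in \langle s_i,s_j\rangle} T_y\bigr)\,T_z$: that leading term is supported on a word containing the substring $\alpha_{s_i,s_j}$, and so cannot be fully commutative.

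The principal obstacle is making either argument uniform in the Coxeter graph $X$, since the dihedral generators of $J(X)$ interact in non--trivial ways after multiplication on either side. This global bookkeeping is precisely what is carried out in Graham's thesis \cite{gr-phd} and used to construct the IC basis in \cite{gl-cbh}; for the purposes of this chapter I would present the inductive spanning argument above in detail and cite those sources for the independence half.
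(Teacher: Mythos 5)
Your proposal is essentially sound, but it is worth noting that the paper does not prove this theorem at all: it simply cites \cite{gl-cbh} and moves on. So the fair comparison is between your sketch and the argument the paper deploys elsewhere. Your spanning induction is correct and is, almost verbatim, the device the paper uses to prove Proposition \ref{d-pol}: factor $w\not\in W_c(X)$ reducedly as $u\,w_0(s_i,s_j)\,v$ with $m(s_i,s_j)<\infty$, replace $t_{w_0(s_i,s_j)}$ by $-\sum_{y<w_0(s_i,s_j)}t_y$ using the defining relation of $J(X)$, and observe that every product $t_u t_y t_v$ is supported on elements of length at most $\l(u)+\l(y)+\l(v)<\l(w)$, so induction closes. (The one fact you should flag as an input is Stembridge's characterization of $W_c(X)$ via avoidance of the factors $\alpha_{s_i,s_j}$, which the paper only records in type $A$.) For linear independence, your first route (faithful diagram or cell modules) only works type by type, and your second route has a genuine hole as stated: a general element of the two--sided ideal $J(X)$ is a \emph{sum} of terms $h\bigl(\sum_{y\in\langle s_i,s_j\rangle}T_y\bigr)h'$, and the highest--length $T$--terms of distinct summands can cancel, so inspecting a single generator's leading term does not show $J(X)\cap\mathrm{span}_{\A}\{T_w:\,w\in W_c(X)\}=0$. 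You recognize this and defer to \cite{gr-phd} and \cite{gl-cbh}, which is exactly what the paper itself does; given that, your write--up actually supplies more detail (the spanning half) than the source text, at the cost of leaving the genuinely hard half to the literature.
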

We call $\{t_w:\, w \in W_c(X)\}$ the \emph{$t$--basis} of $TL(X)$. By (\ref{prod}), it satisfies 
\begin{equation} \label{mult-b}
t_{w}t_{s} = \left\{ \begin{array}{ll} 
t_{ws}             & \mbox{if $\l(ws)>\l(w)$,} \\
q t_{ws}+(q-1)t_w  & \mbox{if $\l(ws)<\l(w)$.}
\end{array} \right.
\end{equation} 
Observe that it may be the case that $ws \not \in W_c(X)$. We will see in Proposition \ref{d-pol} how to handle this case.\\
The next result appears in \cite[Lemma 1.4]{gl-cbh}.
\begin{lemma} \label{ijj}
The involution $\iota$ fixes the ideal $J(X)$. 
\end{lemma}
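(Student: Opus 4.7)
My plan is to reduce to showing that $\iota$ sends each generator of $J(X)$ back into $J(X)$. Since $\iota$ is a ring homomorphism by Proposition~\ref{mapiota}, this would force $\iota(J(X)) \subseteq J(X)$, and equality then follows from $\iota$ having order two. Fix a non--commuting pair $s_i, s_j \in S(X)$ with $m(s_i, s_j) < \infty$, put $G := \langle s_i, s_j \rangle$, and set $b_{ij} := \sum_{w \in G} T_w$. The problem reduces to proving that $\iota(b_{ij})$ lies in $\A \cdot b_{ij}$, which is contained in $J(X)$ by definition.

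First I would establish the identity $T_{s_k} b_{ij} = q\, b_{ij}$ for each $k \in \{i,j\}$ (and similarly on the right). Since $s_k^2 = e$, left multiplication by $s_k$ partitions $G$ into pairs $\{w, s_k w\}$ with one element longer than the other, and a direct application of the multiplication rule (\ref{prod}) shows that each such pair contributes $T_{s_k}(T_w + T_{s_k w}) = q(T_w + T_{s_k w})$; summing over pairs yields the claim. Applying $\iota$ to this identity and using $\iota(q) = q^{-1}$ together with $\iota(T_{s_k}) = T_{s_k}^{-1}$, one obtains $T_{s_k}^{-1} \iota(b_{ij}) = q^{-1} \iota(b_{ij})$, whence $T_{s_k} \iota(b_{ij}) = q\, \iota(b_{ij})$ for $k \in \{i,j\}$. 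Moreover $\iota(b_{ij})$ lies inside the sub--algebra $\H(G) \subseteq \H(X)$ generated by $T_{s_i}$ and $T_{s_j}$, because for any $w \in G$ the element $\iota(T_w) = (T_{w^{-1}})^{-1}$ is a product of $T_{s_i}^{-1}$'s and $T_{s_j}^{-1}$'s.

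It then suffices to show that any $x \in \H(G)$ simultaneously satisfying $T_{s_i} x = q x$ and $T_{s_j} x = q x$ must be of the form $c \cdot b_{ij}$ for some $c \in \A$. Writing $x = \sum_{w \in G} c_w T_w$ and equating the coefficient of $T_u$ on both sides of $T_{s_k} x = q x$ yields $c_{s_k u} = c_u$ for every $u \in G$ and every $k \in \{i,j\}$; since $\{s_i, s_j\}$ generates $G$, all coefficients $c_w$ are equal, so $x = c_e \cdot b_{ij}$. This coefficient--matching step is the only place where a genuine computation is required, and it is where I expect the main care to be needed: one must separately track the two cases $\l(s_k u) < \l(u)$ and $\l(s_k u) > \l(u)$ when unpacking the multiplication rule (\ref{prod}), and combine them uniformly to extract the single equation $c_{s_k u} = c_u$. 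Combining everything gives $\iota(b_{ij}) \in \A \cdot b_{ij} \subseteq J(X)$, which completes the plan.
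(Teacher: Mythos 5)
Your argument is correct, but it takes a genuinely different route from the paper's. The paper handles the generator $b_{ij}=\sum_{w\in\langle s_i,s_j\rangle}T_w$ directly: for each $w$ in the finite subgroup $\langle s_i,s_j\rangle$ one factors $w_0(s_i,s_j)=w^{-1}u$ with $\l(w_0(s_i,s_j))=\l(w^{-1})+\l(u)$, so that $(T_{w^{-1}})^{-1}=T_u\,(T_{w_0(s_i,s_j)})^{-1}$; summing over $w$ (the map $w^{-1}\mapsto u$ being a bijection of the subgroup) gives $\iota(b_{ij})=b_{ij}\cdot(T_{w_0(s_i,s_j)})^{-1}\in J(X)$ in essentially one line. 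You instead characterize $\A\cdot b_{ij}$ as the set of elements of the subalgebra $\H(\langle s_i,s_j\rangle)$ on which both $T_{s_i}$ and $T_{s_j}$ act on the left by the scalar $q$, verify that this eigenvalue condition is preserved by $\iota$ (the twist $q\mapsto q^{-1}$ cancels when you invert $T_{s_k}$), and check that $\iota(b_{ij})$ stays inside $\H(\langle s_i,s_j\rangle)$. Both proofs are complete: the reduction to generators via the homomorphism property and $\iota^2=\mathrm{id}$ is common to both; your coefficient computation $c_{s_ku}=c_u$ works out correctly in both cases $\l(s_ku)<\l(u)$ and $\l(s_ku)>\l(u)$. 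The paper's method is shorter and needs only the standard factorization property of the longest element of a finite parabolic subgroup, while yours is more conceptual --- it exhibits $b_{ij}$ as spanning a one--dimensional joint eigenspace, which is manifestly $\iota$--stable --- and it yields the slightly stronger (and not explicitly stated in the paper, though it follows from its formula) conclusion that $\iota(b_{ij})$ is an $\A$--multiple of $b_{ij}$ itself.
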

\begin{proof}
Let $w \in \langle s_i, s_j \rangle$, with $s_i, s_j \in S(X)$ such that $2 < m(s_i,s_j) < +\infty$. Therefore $w^{-1} \leq w_0(s_i,s_j)$, where $w_0(s_i, s_j)$ denotes the longest element in $\langle s_i, s_j \rangle$. Hence, there exists $u \in \langle s_i, s_j \rangle$ such that $w_0(s_i,s_j)=w^{-1}u$ and $\l(w_0(s_i,s_j))=\l(w^{-1})+\l(u)$. Observe that the map $w^{-1} \mapsto u$ is a bijection of $\langle s_i, s_j \rangle$. We have $T_{w_0(s_i,s_j)}=T_{w^{-1}}T_u$, that is $\left(T_{w^{-1}} \right)^{-1}=T_u \left(T_{w_0(s_i,s_j)}\right)^{-1}$. Then 
$$\iota\left(\sum_{w \in \langle s_i,s_j \rangle}T_w \right)=\sum_{w \in \langle s_i,s_j \rangle} (T_{w^{-1}})^{-1}=\left(\sum_{u \in \langle s_i,s_j \rangle}T_u \right) (T_{w_0}(s_i,s_j))^{-1} \in J(X).$$ 
Therefore, we conclude that $\iota(J(X)) \subseteq J(X)$ and the statement follows.
\end{proof}

From Lemma \ref{ijj} it follows that $\iota$ induces an involution on $TL(X)$, which we still denote by $\iota$, if there is no danger of confusion. 
\begin{pro}
The map $\iota$ is a ring homomorphism of order $2$ such that $\iota(t_w)=(t_{w^{-1}})^{-1}$ and $\iota(q)=q^{-1}$. \qed
\end{pro}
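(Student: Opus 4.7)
The plan is to descend the involution $\iota$ from $\H(X)$ to the quotient $TL(X) = \H(X)/J(X)$ and then read off the formula $\iota(t_w) = (t_{w^{-1}})^{-1}$ directly from the universal property of the projection. Let $\sigma : \H(X) \to TL(X)$ denote the canonical projection, as in the discussion preceding Lemma \ref{ijj}. First I would invoke Lemma \ref{ijj}, which gives $\iota(J(X)) \subseteq J(X)$; since Proposition \ref{mapiota} shows that $\iota$ has order $2$ on $\H(X)$, this inclusion automatically upgrades to the equality $\iota(J(X)) = J(X)$. Consequently there is a unique $\A$-linear map $\bar\iota : TL(X) \to TL(X)$ characterized by the intertwining relation $\bar\iota \circ \sigma = \sigma \circ \iota$, and the ring homomorphism property together with $\bar\iota^{\,2} = \mathrm{id}_{TL(X)}$ transfer for free from the corresponding statements about $\iota$ on $\H(X)$.

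It then remains to identify $\bar\iota(t_w)$ for $w \in W_c(X)$. The element $T_{w^{-1}}$ is invertible in $\H(X)$, as observed right after equation \eqref{prod}, and since $\sigma$ is a unital ring homomorphism it sends units to units with inverses preserved; hence $t_{w^{-1}} = \sigma(T_{w^{-1}})$ is invertible in $TL(X)$ with inverse $\sigma\bigl((T_{w^{-1}})^{-1}\bigr)$. Using the intertwining relation I would then compute
\[
\bar\iota(t_w) \;=\; \bar\iota(\sigma(T_w)) \;=\; \sigma(\iota(T_w)) \;=\; \sigma\bigl((T_{w^{-1}})^{-1}\bigr) \;=\; (t_{w^{-1}})^{-1},
\]
while on the scalar ring $\bar\iota(q) = \sigma(\iota(q)) = \sigma(q^{-1}) = q^{-1}$. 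Renaming $\bar\iota$ back to $\iota$, as the statement permits, completes the argument.

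I do not expect a genuine obstacle here: the substantive content of the proposition is already absorbed into Lemma \ref{ijj}, and everything else is a formal consequence of the universal property of quotient rings. The one small point I would double-check carefully is the invertibility of $t_{w^{-1}}$ in $TL(X)$, since this is not visible from the multiplicative rule \eqref{mult-b} on the $t$-basis alone; it must be deduced from invertibility of $T_{w^{-1}}$ in $\H(X)$ via the unital ring homomorphism $\sigma$, as above.
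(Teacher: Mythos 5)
Your proposal is correct and follows essentially the same route as the paper: the paper marks this proposition as immediate because the sentence preceding it already notes that Lemma \ref{ijj} lets $\iota$ descend to the quotient, and the identification $\iota(t_w)=\sigma\bigl((T_{w^{-1}})^{-1}\bigr)=(t_{w^{-1}})^{-1}$ is exactly the computation the paper itself carries out later in the proof of Proposition \ref{a-pol}. Your added care about why $t_{w^{-1}}$ is invertible in $TL(X)$ (units map to units under the unital projection $\sigma$) is the right way to fill in the one detail the paper leaves implicit.
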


\section{The polynomials $D_{x,w}$}
For the theory developed in this section we refer to \cite{gl-cbh} and to \cite{bre-algcomb}. \\

\begin{pro}\label{d-pol}
Let $w \in W(X)$. Then there exists a unique family of polynomials $\{D_{x,w}\}_{x \in W_c(X)} \subseteq \mathbb{Z}[q]$ such that 
$$ t_w=\sum_{\substack{x \in W_c(X) \\ x \leq w}} D_{x,w}t_x, $$
where $D_{w,w}=1$ if $w \in W_c(X)$. Furthermore, $D_{x,w}=0$ if $x \not \leq w$.
\end{pro}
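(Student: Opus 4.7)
The strategy is to deduce uniqueness from the basis property and to prove existence (together with the integrality $D_{x,w} \in \mathbb{Z}[q]$ and the Bruhat support condition) by induction on $\ell(w)$. Uniqueness is immediate: since $\{t_x : x \in W_c(X)\}$ is an $\A$-basis of $TL(X)$, the expansion of $t_w$ in this basis is unique, and when $w \in W_c(X)$ the element $t_w$ is itself a basis vector, which forces $D_{w,w} = 1$ and $D_{x,w} = 0$ for $x \neq w$.

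For existence, the induction starts at $w = e$ (trivial) and the case $w \in W_c(X)$ is handled by the remark above. When $w \notin W_c(X)$, I would invoke the characterization of non--fully--commutative elements: $w$ admits a reduced expression containing a substring $\alpha = \underbrace{s_i s_j s_i \cdots}_{m\text{ factors}}$ of length $m = m(s_i, s_j) \geq 3$. Writing $w = u\alpha v$ with $\ell(w) = \ell(u) + m + \ell(v)$, we have $T_w = T_u T_\alpha T_v$, hence $t_w = t_u t_\alpha t_v$. Because $J(X)$ is a two--sided ideal containing $\sum_{y \in \langle s_i, s_j \rangle} T_y$, we get $T_u\bigl(\sum_y T_y\bigr)T_v \in J(X)$, and projecting to $TL(X)$ yields $\sum_{y \in \langle s_i, s_j \rangle} t_u t_y t_v = 0$, whence
\[ t_w = -\sum_{y \in \langle s_i,s_j\rangle,\; y \neq \alpha} t_u t_y t_v. \]
For each $y \neq \alpha$ one has $\ell(y) \leq m - 1$, and expanding $T_u T_y T_v$ iteratively via (\ref{prod}) produces a $\mathbb{Z}[q]$--linear combination of basis elements $T_z$ with $\ell(z) \leq \ell(u) + \ell(y) + \ell(v) < \ell(w)$, since each Hecke multiplication step introduces coefficients in $\{1, q, q-1\}$. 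Applying the inductive hypothesis to each $t_z$ then expresses $t_w$ as a $\mathbb{Z}[q]$--linear combination of $t$--basis elements.

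The step I expect to require the most care is the Bruhat support condition $D_{x,w} = 0$ for $x \not\leq w$. For this I would appeal to the standard fact that every Hecke product $T_a T_b$ expands over $T_c$ with $c \leq a \star b$ in Bruhat order, where $\star$ denotes the Demazure (or $0$--Hecke) product; this bound is proved by iterating (\ref{prod}). Since $\star$ is monotone in both arguments and $y \leq \alpha$ (as $\alpha$ is the longest element of $\langle s_i, s_j \rangle$), we obtain $z \leq u \star y \star v \leq u \star \alpha \star v = u\alpha v = w$, the last equality holding because $u\alpha v$ is reduced. Consequently each $t_z$ above expands over $t_x$ with $x \leq z \leq w$, which closes the induction and completes the existence part.
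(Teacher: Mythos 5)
Your proposal is correct and follows essentially the same route as the paper: both identify a reduced expression $w = u\,w_0(s_i,s_j)\,v$ through the longest dihedral element, use the ideal relation to replace $t_{w_0(s_i,s_j)}$ by $-\sum_{y<w_0(s_i,s_j)}t_y$, and induct on $\ell(w)$. You are in fact somewhat more careful than the paper on two points it glosses over --- the length bound when expanding $t_u t_y t_v$ via the Hecke relations, and the Bruhat support condition $D_{x,w}=0$ for $x\not\leq w$, which you settle cleanly with the Demazure product.
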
 
\begin{proof}
We proceed by induction on $\l(w)$. First, observe that if $x,w \in W_c(X)$ then the statement is trivially true and this covers the case $\l(w) \leq 1$. Now, denote by $w_0(s_i,s_j)$ the longest element in $\langle s_i,s_j \rangle$, for all $s_i,s_j \in S(X)$ such that $m(s_i,s_j)< \infty$. 
Let $\l(w) \geq 2$ such that $w \not \in W_c(X)$. Then there exist $s_i,s_j \in S(X)$ and $u,v \in W(X)$ such that $m(s_i,s_j)< \infty$ and $w=uw_0(s_i,s_j)v$, with $\l(w)=\l(u)+\l(w_0(s_i,s_j))+\l(v)$. Hence 
\[
t_w = t_u t_{w_0} t_v =t_u \left( -\sum_{x<w_0}t_x \right) t_v = -\sum_{x<w_0} t_ut_xt_v.
\]
Hence, $t_w$ is a linear combination of element $t_y=t_ut_xt_v$, where $y<w$. By the induction hypotesis, each term $t_y$ can be expressed as a $\Z[q]$--linear combination of elements $t_z$, with $z<y,\, z \in W_c(X)$ and the statement follows. 
\end{proof}
To get a better feeling for how $D$--polynomials were defined, we compute $D_{x,w}$ step by step, in the case $w=s_1s_2s_3s_2s_1 \in W(A_3)$.  
\begin{ese}
Let $w=s_1s_2s_3s_2s_1=1,2,3,2,1 \in W(A_3)$. To compute $D_{x,w}(q)$ we need to combine (\ref{mult-b}) with the relation $$t_{s_is_{i+1}s_i}=-t_e-t_{s_i}-t_{s_{i+1}}-t_{s_i,s_{i+1}}-t_{s_{i+1},s_i}.$$
\begin{eqnarray}
t_{1,2,3,2,1} &=& t_1 \cdot t_{2,3,2} \cdot t_1                           \nonumber \\
              &=& t_1 \cdot (-t_e-t_{2}-t_{3}-t_{2,3}-t_{3,2}) \cdot t_1  \nonumber \\
              &=& -t_1 \cdot t_1-t_{1,2,1}-t_{1,3} \cdot t_1-t_{1,2,3,1}-t_{1,3,2,1}     \nonumber   \\
              &=& -(q t_e +(q-1) t_1)-t_{1,2,1}-(q t_{3} +(q-1) t_{1,3})-t_{1,2,1,3}-t_{3,1,2,1}  \nonumber  \\
              &=& -q t_e - (q-1)t_1+t_e+t_1+t_2+t_{1,2}+t_{2,1}-qt_3-(q-1)t_{1,3}+                \nonumber \\
              & & +\: t_3+t_{1,3}+t_{2,3}+t_{1,2,3}+t_{2,1,3}+t_3+t_{1,3}+t_{3,2}+t_{3,2,1}+t_{3,1,2}  \nonumber  \\
              &=& (1-q) t_e +(2-q) t_1 +t_2+(2-q)t_3+(3-q)t_{1,3}+t_{1,2}+  \nonumber \\
              & & +\: t_{2,1}+t_{2,3}+t_{3,2}+t_{1,2,3}+t_{3,2,1}+t_{1,3,2}+t_{2,1,3}.\nonumber 
\end{eqnarray}
Therefore we get $D_{e,w}=1-q$, $D_{s_1,w}=D_{s_3,w}=2-q$ and $D_{s_1s_3,w}=3-q$. Moreover, $D_{x,w}=1$ for the rest of the elements $x \leq w$. \qed 
\end{ese}
The following proposition mirrors Theorem \ref{r-pol} in the context of the Temperley--Lieb algebra.
\begin{pro}\label{a-pol}
Let $w \in W_c(X)$. Then there exists a unique family of polynomials $\{a_{y,w}\}_{y \in W_c(X)} \subseteq \Z[q]$ such that 
\[ (t_{w^{-1}})^{-1}=q^{-\l(w)}\sum_{\substack{y \in W_c(X) \\ y\leq w}}a_{y,w}t_y, \]
where $a_{w,w}=1$ and $a_{y,w}=0$ if $y \not \leq w$.
\end{pro}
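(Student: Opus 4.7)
The plan is to piggy-back on Theorem \ref{r-pol} together with Proposition \ref{d-pol}, rather than to reprove everything from scratch by induction. By Theorem \ref{r-pol} we already have, inside $\H(X)$,
$$
(T_{w^{-1}})^{-1}=\varepsilon_w q^{-\l(w)}\sum_{x\leq w}\varepsilon_x R_{x,w}\,T_x.
$$
Apply the canonical projection $\sigma:\H(X)\to TL(X)$ to both sides. Since $\sigma$ is a ring homomorphism, it commutes with taking inverses in the sense that $\sigma(T_{w^{-1}}^{-1})=(t_{w^{-1}})^{-1}$, so we obtain the identity
$$
(t_{w^{-1}})^{-1}=\varepsilon_w q^{-\l(w)}\sum_{x\leq w}\varepsilon_x R_{x,w}\,t_x
$$
holding in $TL(X)$, with the sum now running over all $x\leq w$ (not only fully commutative ones).

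The next step is to convert this expression into the $t$-basis $\{t_y:y\in W_c(X)\}$. For each $x\leq w$, Proposition \ref{d-pol} gives
$$
t_x=\sum_{\substack{y\in W_c(X)\\ y\leq x}}D_{y,x}\,t_y,
$$
so after substituting and swapping the order of summation we get
$$
(t_{w^{-1}})^{-1}=q^{-\l(w)}\sum_{\substack{y\in W_c(X)\\ y\leq w}}\Bigl(\varepsilon_w\!\!\sum_{y\leq x\leq w}\!\varepsilon_x R_{x,w}\,D_{y,x}\Bigr)t_y.
$$
Therefore, setting
$$
a_{y,w}\stackrel{\rm def}{=}\varepsilon_w\sum_{y\leq x\leq w}\varepsilon_x R_{x,w}\,D_{y,x}\qquad(y\in W_c(X),\ y\leq w)
$$
and $a_{y,w}\=0$ otherwise, defines a family of elements of $\Z[q]$ (each $R_{x,w}$ and each $D_{y,x}$ lies in $\Z[q]$) with the required expansion. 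The normalization $a_{w,w}=1$ is immediate from $R_{w,w}=D_{w,w}=1$: when $y=w$ the sum collapses to the single term $x=w$, giving $\varepsilon_w\varepsilon_w=1$.

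For uniqueness, one invokes the theorem (quoted just before Proposition \ref{d-pol}) that $\{t_y:y\in W_c(X)\}$ is an $\A$-basis of $TL(X)$; two different families of $a_{y,w}$ satisfying the stated expansion would produce two different ways of writing the same element in this basis, a contradiction. I expect the only point requiring slight care is the commutation of $\sigma$ with inversion, i.e. checking that $\sigma(T_w)$ is invertible in $TL(X)$ and that its inverse is $\sigma(T_w^{-1})$; this is a one-line verification using that $\sigma$ is a unital ring homomorphism and $T_w\,T_w^{-1}=T_e$ in $\H(X)$. Once this is in place, no further combinatorial argument is needed, since existence is reduced to the existence of the $R$- and $D$-polynomials, which have already been established.
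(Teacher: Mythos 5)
Your proposal is correct and follows essentially the same route as the paper: both expand $(T_{w^{-1}})^{-1}$ via Theorem \ref{r-pol}, project to $TL(X)$, rewrite each $t_x$ in the fully commutative $t$--basis via Proposition \ref{d-pol}, and read off $a_{y,w}=\varepsilon_w\sum_{y\leq x\leq w}\varepsilon_x R_{x,w}D_{y,x}$, with uniqueness coming from the basis property. The only cosmetic difference is that the paper justifies $(t_{w^{-1}})^{-1}=T_{w^{-1}}^{-1}+J$ by passing through the induced involution $\iota$ (Lemma \ref{ijj}), whereas you observe directly that the unital ring homomorphism $\sigma$ carries inverses to inverses; both are valid.
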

\begin{proof}
By Lemma \ref{ijj} we get $$ (t_{w^{-1}})^{-1} = \iota(t_w)= \iota(\sigma(T_w)) = \iota(T_w+J) = \iota(T_w)+J = T_{w^{-1}}^{-1}+J.$$
Therefore, from Theorem \ref{r-pol}, we achieve
\begin{eqnarray}
T_{w^{-1}}^{-1}+J & = &\varepsilon_wq^{-\l(w)}\sum_{x\leq w}\varepsilon_x R_{x,w}T_x+J \nonumber \\
                  & = &\varepsilon_wq^{-\l(w)}\sum_{x\leq w}\varepsilon_x R_{x,w}t_x \nonumber \\
                  & = &\varepsilon_wq^{-\l(w)}\sum_{x\leq w}\varepsilon_x R_{x,w}\sum_{\substack{y \in W_c(X) \\ y \leq x}}D_{y,x}t_y \nonumber \\
                  & = &\sum_{\substack{y\in W_c(X) \\ y\leq w}} q^{-\l(w)}\left(\sum_{y\leq x \leq w} \varepsilon_x \varepsilon_w R_{x,w}D_{y,x} \right)t_y, \nonumber \\
                  & = & q^{-\l(w)}\sum_{\substack{y\in W_c(X) \\ y \leq w}} a_{y,w} t_y, \nonumber
\end{eqnarray}
since the expression in the round brackets is a polynomial with integer coefficients, depending only on the elements $y$ and $w$. 
\end{proof}

The polynomials $\{a_{x,w}\}$ associated to $TL(X)$ play the same role as the polynomials $\{R_{x,w}\}$ associated to $\H(X)$. They both represent the coordinates of elements of the form $\iota(t_{w})$ (respectively $\iota(T_{w})$) with respect to the $t$--basis (respectively $T$--basis). The next result is the analogue of a well--known result for the $R$--polynomials (see \cite[\S 7.8]{Hum}). We follow the proof given in \cite{bre-algcomb}.

\begin{pro}\label{p-app}
Let $y,w \in W_c(X)$ be such that $y \leq w$. Then 
\[ \sum_{x \in [y,w]_c} q^{\l(w)-\l(x)}a_{y,x}(q)a_{x,w}(q^{-1})=\delta_{y,w}. \]
\end{pro}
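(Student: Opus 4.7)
The plan is to exploit that $\iota^2 = \mathrm{id}$ on $TL(X)$, exactly as the classical identity
$\sum_{x} q^{\ell(w)-\ell(x)} R_{y,x}(q) R_{x,w}(q^{-1}) = \delta_{y,w}$
in $\H(X)$ is obtained from $\iota^2 = \mathrm{id}$ on $\H(X)$. Proposition \ref{a-pol} may be rewritten, for $w \in W_c(X)$, as
$$\iota(t_w) \;=\; q^{-\ell(w)} \sum_{y \in [e,w]_c} a_{y,w}(q)\, t_y,$$
and applying $\iota$ a second time must return $t_w$ on the nose. My goal is to read off the desired identity by comparing coefficients in the $t$-basis after this double application.

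Concretely, I apply $\iota$ to both sides of the displayed formula. Since $\iota$ is a ring homomorphism with $\iota(q) = q^{-1}$ (and $\iota^2(t_w) = t_w$ by Lemma \ref{ijj} together with Proposition \ref{mapiota}), this yields
$$t_w \;=\; q^{\ell(w)} \sum_{y \in [e,w]_c} a_{y,w}(q^{-1})\, \iota(t_y).$$
Next, I substitute the defining formula of Proposition \ref{a-pol} once more to expand each $\iota(t_y)$ for $y \in W_c(X)$, obtaining after interchanging the order of summation (legitimate because the index set $\{(z,y) : z \leq y \leq w,\ z,y \in W_c(X)\}$ is finite)
$$t_w \;=\; \sum_{z \in [e,w]_c} \left( \sum_{y \in [z,w]_c} q^{\ell(w) - \ell(y)}\, a_{z,y}(q)\, a_{y,w}(q^{-1}) \right) t_z.$$
Since $\{t_z : z \in W_c(X)\}$ is a free $\A$-basis of $TL(X)$, comparing the coefficient of $t_z$ on both sides yields the stated identity, with $z$ playing the role of the statement's $y$.

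There is essentially no obstacle beyond this bookkeeping: the whole argument is formal once the involutivity of $\iota$ on $TL(X)$ is in hand. The one point worth verifying carefully is that the double-sum reindexing is harmless, which is automatic from the conventions $a_{z,y}=0$ for $z \not\leq y$ and $a_{y,w}=0$ for $y \not\leq w$, so that the support of the summand is exactly $\{(z,y) : z \leq y \leq w\}$ with all three elements in $W_c(X)$.
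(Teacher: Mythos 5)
Your proposal is correct and follows essentially the same route as the paper's own proof: both expand $t_w=\iota(\iota(t_w))$ twice via Proposition \ref{a-pol}, interchange the order of summation, and read off the identity by comparing coefficients in the $t$--basis, using that $\{t_z:\,z\in W_c(X)\}$ is a free $\A$--basis. The only cosmetic difference is that you invoke Lemma \ref{ijj} and Proposition \ref{mapiota} explicitly to justify $\iota^2=\mathrm{id}$ on $TL(X)$, whereas the paper takes this for granted from the involution statement following Lemma \ref{ijj}.
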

\begin{proof}
From Proposition \ref{a-pol} we get
\begin{eqnarray}
t_w & = & \iota(\iota(t_w)) = \iota \left(t_{w^{-1}}^{-1} \right)=\iota \left(q^{-\l(w)}\sum_{\substack{x \in W_c(X) \\ x\leq w}}a_{x,w}(q)t_x \right) \nonumber \\
    & = & q^{\l(w)}\sum_{\substack{x \in W_c(X) \\ x\leq w}}a_{x,w}(q^{-1})\iota(t_x) \nonumber \\
    & = & q^{\l(w)}\sum_{\substack{x \in W_c(X) \\ x\leq w}}a_{x,w}(q^{-1}) \cdot q^{-\l(x)} \sum_{\substack{y \in W_c(X) \\ y \leq x}} a_{y,x}(q)t_y \nonumber \\
    & = & \sum_{\substack{y \in W_c(X) \\ y \leq w}} \left(\sum_{x \in [y,w]_c}q^{\l(w)-\l(x)}a_{y,x}(q) a_{x,w}(q^{-1}) \right) t_y.\label{eqn apol}
\end{eqnarray}
Hence, the expression in the round brackets in (\ref{eqn apol}) is equal to $1$ if $y=w$ and $0$ otherwise. 
\end{proof}

The generalized Temperley--Lieb algebra admits a \emph{canonical basis} $\{c_w:\, w \in W_c(X)\}$ that is analogous to the Kazhdan--Lusztig basis $\{C_w':\, w \in W(X)\}$ of $\H(X)$. To introduce this new basis we need some definitions. Let $\L$ be the free $\Z[q^{-\frac{1}{2}}]$--module of $TL(X)$ with basis $\{q^{-\frac{\l(w)}{2}}t_w:\, w \in W_c(X)\}$ and let $\pi:\L\rightarrow \L/q^{-\frac{1}{2}}\L$ be the canonical projection. 
\begin{defi} \label{defici}
If there exists a unique basis $\{c_w:\,w \in W_c(X)\}$ for $\L$ such that $c_w$ is $\iota$--invariant and $\pi(c_w)=\pi(q^{-\frac{\l(w)}{2}}t_w)$, then the basis $\{c_w:\,w \in W_c(X)\}$ is called an IC basis for $TL(X)$ with respect to the triple $(\{q^{-\frac{\l(w)}{2}}t_w\},\, \iota,\, \L)$.
\end{defi}

The next results will enable us to state the existence of the IC basis for $TL(X)$, $X$ being any Coxeter graph. Observe that the existence of such a basis was established in \cite{gl-cbh}.\\
Here we introduce a family of polynomials in a purely combinatorial way, as explained in \cite{bre-algcomb}.
\begin{teo}\label{l-pol}
Let $w \in W_c(X)$. Then there exists a unique family of polynomials $\{L_{x,w}\}_{x \in W_c(X)} \subseteq q^{-\frac{1}{2}}\Z[q^{-\frac{1}{2}}]$ such that
\begin{itemize}
\item[{\rm (i)}] $L_{x,w}=0 \mbox{ if } x \not \leq w$;   
\item[{\rm (ii)}] $L_{x,x}=1$;
\item[{\rm (iii)}] $L_{x,w} \in q^{-\frac{1}{2}}\Z[q^{-\frac{1}{2}}] \mbox{ if } x<w$;
\item[{\rm (iv)}] $L_{x,w}(q^{-\frac{1}{2}})=\sum_{y \in [x,w]_c}q^{\frac{\l(x)-\l(y)}{2}}a_{x,y}(q)L_{y,w}(q^{\frac{1}{2}})$.  
\end{itemize}
\end{teo}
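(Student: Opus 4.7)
The plan is to establish existence and uniqueness simultaneously by downward induction on $\ell(x)$, with $w \in W_c(X)$ fixed. The base case $x = w$ is forced by (ii): set $L_{w,w} = 1$, and (iv) is then the tautology $a_{w,w}(q)\cdot 1 = 1$.

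For the inductive step, suppose $L_{y,w}$ has been uniquely determined and satisfies (iv) for every $y \in (x,w]_c$. Pulling the $y = x$ summand out of (iv) (using $a_{x,x} = 1$), the equation (iv) for $(x,w)$ becomes
\begin{equation*}
L_{x,w}(q^{-1/2}) - L_{x,w}(q^{1/2}) \;=\; F_w(x;q), \qquad F_w(x;q) \;:=\; \sum_{y \in (x,w]_c} q^{(\ell(x)-\ell(y))/2}\, a_{x,y}(q)\, L_{y,w}(q^{1/2}),
\end{equation*}
whose right-hand side is already known and lies in $\mathbb{Z}[q^{1/2},q^{-1/2}]$. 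By (i) and (iii), any candidate has the form $L_{x,w} = \sum_{k\geq 1} c_k\, q^{-k/2}$ (finite sum), so the left-hand side equals $\sum_{k\geq 1} c_k (q^{-k/2} - q^{k/2})$, which is antisymmetric under $q \mapsto q^{-1}$ and whose coefficients uniquely determine the integers $c_k$. Consequently, a (necessarily unique) $L_{x,w}$ satisfying (i)--(iv) exists if and only if
\begin{equation*}
F_w(x;q^{-1}) \;=\; -F_w(x;q).
\end{equation*}

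To verify this antisymmetry I would invoke the inductive hypothesis (property (iv) for each $y \in (x,w]_c$) to expand $L_{y,w}(q^{-1/2})$ inside $F_w(x;q^{-1})$, swap the order of the resulting double sum, and observe that for each fixed $z \in (x,w]_c$ the inner sum equals
\begin{equation*}
q^{-(\ell(x)+\ell(z))/2} \sum_{y \in (x,z]_c} q^{\ell(y)}\, a_{x,y}(q^{-1})\, a_{y,z}(q).
\end{equation*}
Proposition~\ref{p-app}, applied with $q$ replaced by $q^{-1}$ and $(y,x,w)$ renamed to $(x,y,z)$, gives $\sum_{y \in [x,z]_c} q^{\ell(y)}\, a_{x,y}(q^{-1})\, a_{y,z}(q) = q^{\ell(z)}\delta_{x,z}$, which vanishes since $z > x$. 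Subtracting the $y = x$ contribution $q^{\ell(x)} a_{x,z}(q)$ shows that the sum restricted to $(x,z]_c$ equals $-q^{\ell(x)}a_{x,z}(q)$, and reassembling yields $F_w(x;q^{-1}) = -F_w(x;q)$, completing the induction.

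The main obstacle is this antisymmetry check. Conceptually it reflects the fact that the matrix of $a$--polynomials is its own inverse under the bar involution $q \mapsto q^{-1}$ (Proposition~\ref{p-app}), but the bookkeeping of half-integer powers of $q$ and the careful separation of the diagonal term $y = x$ have to be tracked so that the identity drops out from a single application of the inversion formula.
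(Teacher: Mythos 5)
Your proposal is correct and follows essentially the same route as the paper: both isolate the diagonal term $y=x$ in (iv), reduce existence and uniqueness to the antisymmetry under $q \mapsto q^{-1}$ of the already-known right-hand side, and verify that antisymmetry by expanding with the inductive hypothesis and applying the inversion formula of Proposition~\ref{p-app}. If anything, your write-up makes explicit the step the paper leaves implicit, namely that an antisymmetric Laurent polynomial in $q^{\pm\frac{1}{2}}$ is uniquely of the form $f(q^{-\frac{1}{2}})-f(q^{\frac{1}{2}})$ with $f \in q^{-\frac{1}{2}}\Z[q^{-\frac{1}{2}}]$.
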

\begin{proof}
For the existence part, we proceed by induction on $\l(w)-\l(x)$. Let $\l(w)-\l(x)>0$ and define $p\=\iota\left(L_{x,w}-\iota(L_{x,w}) \right)$. Then
\begin{eqnarray}
p &=& \iota\left(\sum_{y \in (x,w]_c}q^{\frac{\l(x)-\l(y)}{2}}a_{x,y}(q)\iota(L_{y,w})\right) \nonumber \\
  &=& \sum_{y \in (x,w]_c}q^{\frac{\l(y)-\l(x)}{2}}a_{x,y}(q^{-1}) L_{y,w}  \nonumber \\
  &=& \sum_{y \in (x,w]_c}q^{\frac{\l(y)-\l(x)}{2}}a_{x,y}(q^{-1})\left( \sum_{z \in [y,w]_c} q^{\frac{\l(y)-\l(z)}{2}}a_{y,z}(q) \iota(L_{z,w}) \right) \nonumber \\
  &=& \sum_{z \in (x,w]_c} \iota(L_{z,w}) q^{\frac{\l(z)-\l(x)}{2}} \left( \sum_{y \in (x,z]_c}q^{\l(y)-\l(z)}a_{x,y}(q^{-1})a_{y,z}(q) \right)   \nonumber \\
  &=& \sum_{z \in (x,w]_c} \iota(L_{z,w}) q^{\frac{\l(z)-\l(x)}{2}} \left( -q^{\l(x)-\l(z)}a_{x,z}(q) \right)  \nonumber \\
  &=& -\sum_{z \in (x,w]_c}\iota(L_{z,w}q^{\frac{\l(x)-\l(z)}{2}}) a_{x,z}(q)=-\iota(p).  \nonumber
\end{eqnarray}
Hence, $p$ is antisymmetric and the existence statement follows.\\
Next we prove the uniqueness part, assuming the existence of $L_{x,w}$. We proceed by induction on $\l(w)-\l(x)$, where $w \in W(X)$ is fixed. Assume that the $L_{y,w}$ can be chosen in a unique way, for all $x<y \leq w$. We will show that $L_{x,w}$ is uniquely determined. Consider equation \textrm{(iv)} and move the term for $y=x$ to the left, so that
$$L_{x,w}(q^{-\frac{1}{2}})-L_{x,w}(q^{\frac{1}{2}})=\sum_{y \in (x,w]_c}q^{\frac{\l(x)-\l(y)}{2}}a_{x,y}(q)L_{y,w}(q^{\frac{1}{2}}).$$
Observe that no cancellation occurs on the left hand side of (\ref{recdaC}), since $L_{x,w}(q^{-\frac{1}{2}}) \in q^{-\frac{1}{2}}\Z[q^{-\frac{1}{2}}]$. Hence, we conclude that the polynomial $L_{x,w}$ satisfying \textrm{(iv)} is unique.  
\end{proof}

\begin{teo} \label{defic}
Let $X$ be an arbitrary Coxeter graph. Let $w \in W_c(X)$ and define 
$$c_w\=\sum_{\substack{x \in W_c(X) \\ x \leq w}}q^{-\frac{\l(x)}{2}}L_{x,w}(q^{-\frac{1}{2}})t_x.$$
Then $\{c_w:\, w \in W_c(X)\}$ is an IC basis for $TL(X)$. 
\end{teo}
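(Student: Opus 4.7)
The plan is to verify the three conditions of Definition \ref{defici}: that $\{c_w : w \in W_c(X)\}$ is an $\A$-basis of $\L$, that each $c_w$ is $\iota$-invariant, and that $\pi(c_w) = \pi(q^{-\l(w)/2}t_w)$, followed by the uniqueness part.

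First I would observe that since $L_{w,w} = 1$ by Theorem \ref{l-pol}(ii), we may rewrite
\[
c_w \;=\; q^{-\l(w)/2}\, t_w \;+\; \sum_{\substack{x \in W_c(X) \\ x<w}} q^{-\l(x)/2}\, L_{x,w}(q^{-1/2})\, t_x.
\]
This is an upper-unitriangular change of coordinates relative to the distinguished basis $\{q^{-\l(w)/2}t_w\}$ of $\L$, so $\{c_w\}$ is automatically an $\A$-basis of $\L$. Moreover, Theorem \ref{l-pol}(iii) tells us that $L_{x,w}(q^{-1/2}) \in q^{-1/2}\Z[q^{-1/2}]$ whenever $x<w$, so every non-diagonal term lies in $q^{-1/2}\L$; hence $\pi(c_w) = \pi(q^{-\l(w)/2}t_w)$, giving the second defining property of the IC basis.

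The main step is proving $\iota$-invariance, and this is where the defining identity (iv) of Theorem \ref{l-pol} is precisely what is needed. Using $\iota(t_x) = (t_{x^{-1}})^{-1}$ and the expansion from Proposition \ref{a-pol}, I compute
\[
\iota(c_w) \;=\; \sum_{\substack{x \in W_c(X) \\ x \leq w}} q^{\l(x)/2}\, L_{x,w}(q^{1/2})\, (t_{x^{-1}})^{-1}
\;=\; \sum_{\substack{x \in W_c(X) \\ x \leq w}} q^{-\l(x)/2}\, L_{x,w}(q^{1/2}) \sum_{\substack{y \in W_c(X) \\ y \leq x}} a_{y,x}(q)\, t_y.
\]
Swapping the order of summation, the coefficient of $t_y$ in $\iota(c_w)$ equals
\[
q^{-\l(y)/2}\sum_{x \in [y,w]_c} q^{(\l(y)-\l(x))/2}\, a_{y,x}(q)\, L_{x,w}(q^{1/2}),
\]
and by Theorem \ref{l-pol}(iv) this inner sum equals $L_{y,w}(q^{-1/2})$. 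Hence the coefficient of $t_y$ in $\iota(c_w)$ matches that in $c_w$, so $\iota(c_w) = c_w$.

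For uniqueness, suppose $\{c'_w\}$ is any other basis satisfying the two conditions of Definition \ref{defici}. Expand $c'_w = \sum_{x \leq w, x \in W_c(X)} q^{-\l(x)/2} L'_{x,w}(q^{-1/2}) t_x$ and read off the conditions: the congruence $\pi(c'_w) = \pi(q^{-\l(w)/2}t_w)$ forces $L'_{w,w}=1$ and $L'_{x,w} \in q^{-1/2}\Z[q^{-1/2}]$ for $x<w$, while running the above computation in reverse shows that $\iota$-invariance of $c'_w$ is equivalent to the identity (iv) of Theorem \ref{l-pol} for the family $\{L'_{x,w}\}$. The uniqueness clause of Theorem \ref{l-pol} then yields $L'_{x,w}=L_{x,w}$ for all $x,w$, hence $c'_w = c_w$. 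The main (and really only) substantive obstacle is the $\iota$-invariance computation, and condition (iv) has been tailored exactly for that purpose.
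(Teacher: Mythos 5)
Your proof is correct, and it differs from the paper's in two ways worth noting. For the $\iota$-invariance, the paper first substitutes identity (iv) of Theorem \ref{l-pol} (in the form $L_{x,w}(q^{\frac{1}{2}})=\sum_{y}q^{\frac{\l(y)-\l(x)}{2}}a_{x,y}(q^{-1})L_{y,w}(q^{-\frac{1}{2}})$) into the coefficients of $\iota(c_w)$, then expands $\iota(t_x)$ via Proposition \ref{a-pol}, and finally needs the orthogonality relation of Proposition \ref{p-app} to collapse the resulting double sum. You instead expand $\iota(t_x)$ immediately, swap the order of summation, and recognize the coefficient of $t_y$ as the right-hand side of (iv) verbatim; this applies (iv) exactly once and dispenses with Proposition \ref{p-app} altogether, which is a genuine simplification. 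Second, you supply a uniqueness argument (reducing it to the uniqueness clause of Theorem \ref{l-pol}), which Definition \ref{defici} strictly speaking demands but which the paper's proof omits; your reduction is valid, since the condition $\pi(c'_w)=\pi(q^{-\frac{\l(w)}{2}}t_w)$ forces conditions (ii) and (iii) on the family $\{L'_{x,w}\}$ and $\iota$-invariance is, by your coefficient computation run in reverse, equivalent to (iv). The only blemish is cosmetic: $\L$ is a $\Z[q^{-\frac{1}{2}}]$-module, so ``$\A$-basis of $\L$'' should read ``$\Z[q^{-\frac{1}{2}}]$-basis''; the unitriangularity argument itself is fine because $L_{x,w}(q^{-\frac{1}{2}})\in q^{-\frac{1}{2}}\Z[q^{-\frac{1}{2}}]$ for $x<w$.
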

\begin{proof}
We have to show that every basis element $c_w$ satisfies the conditions of Definition \ref{defici}. First,  $\pi(c_w)=\pi(q^{-\frac{\l(w)}{2}}t_w)$ since $L_{x,w}=1 \Leftrightarrow x=w$ and $L_{x,w}(q^{-\frac{1}{2}}) \in q^{-\frac{1}{2}}\Z[q^{-\frac{1}{2}}]$ if $x<w$ (see Theorem \ref{l-pol}). Second, we check that $c_w$ is $\iota$--invariant.   
\begin{eqnarray}
\iota(c_w) & = & \sum_{\substack{x \in W_c(X) \\ x \leq w}}q^{\frac{\l(x)}{2}} (L_{x,w}(q^{\frac{1}{2}})) \iota(t_x)  \nonumber \\
           & = & \sum_{\substack{x \in W_c(X) \\ x \leq w}}q^{\frac{\l(x)}{2}} \left(\sum_{y \in [x,w]_c} q^{\frac{\l(y)-\l(x)}{2}} a_{x,y}(q^{-1}) L_{y,w}(q^{-\frac{1}{2}})\right) \iota(t_x) \nonumber \\
           & = & \sum_{\substack{x \in W_c(X) \\ x \leq w}} \left( \sum_{y \in [x,w]_c} q^{\frac{\l(y)}{2}} a_{x,y}(q^{-1}) L_{y,w}(q^{-\frac{1}{2}})\right) q^{-\l(x)} \sum_{\substack{z \in W_c(X) \\ z \leq x}} a_{z,x}(q)t_z \nonumber \\
           & = & \sum_{z \in W_c(X)} \left(\sum_{\substack{y \in W_c(X) \\ y \leq w}} q^{-\frac{\l(y)}{2}} \left(\sum_{x \in [z,y]_c} q^{\l(y)-\l(x)} a_{z,x}(q) a_{x,y}(q^{-1})\right) L_{y,w}(q^{-\frac{1}{2}})  \right)t_z.  \nonumber
\end{eqnarray}
Finally, by applying Proposition \ref{p-app} we get 
$$ \sum_{z \in W_c(X)} \left(\sum_{\substack{y \in W_c(X) \\ y \leq w}} q^{-\frac{\l(y)}{2}} \delta_{z,y} L_{y,w}(q^{-\frac{1}{2}}) \right)t_z = \sum_{\substack{z \in W_c(X) \\ z \leq w}}q^{-\frac{\l(z)}{2}}L_{z,w}(q^{-\frac{1}{2}})t_z $$
as desired.
\end{proof}

\begin{cor}\label{ic basis}
There exists an IC basis for $TL(X)$ with respect to the triple $(\{q^{-\frac{\l(w)}{2}}t_w\},\, \iota,\, \L)$. \qed
\end{cor}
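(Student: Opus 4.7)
The plan is to read Corollary \ref{ic basis} as the combination of two ingredients: existence, which is essentially already supplied by Theorem \ref{defic}, and uniqueness, which is required by Definition \ref{defici} but was not explicitly discussed. So my proof would simply cite Theorem \ref{defic} for existence and then reduce uniqueness to the uniqueness part of Theorem \ref{l-pol}.

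For existence, I would just note that the basis $\{c_w : w \in W_c(X)\}$ constructed in Theorem \ref{defic} is $\iota$--invariant and satisfies $\pi(c_w)=\pi(q^{-\l(w)/2}t_w)$, both of which were verified in that proof. Since the set $\{q^{-\l(w)/2}t_w : w \in W_c(X)\}$ is a basis of $\L$, and $c_w - q^{-\l(w)/2}t_w \in q^{-1/2}\L$ for every $w$, a standard triangularity argument shows $\{c_w : w \in W_c(X)\}$ is itself a $\Z[q^{-1/2}]$--basis of $\L$.

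For uniqueness, suppose $\{\widetilde{c}_w : w \in W_c(X)\}$ is any basis of $\L$ satisfying the two conditions in Definition \ref{defici}. Expand
\[
\widetilde{c}_w = \sum_{\substack{x \in W_c(X) \\ x \leq w}} q^{-\frac{\l(x)}{2}} \widetilde{L}_{x,w}(q^{-\frac{1}{2}}) t_x,
\]
where the condition $\pi(\widetilde{c}_w)=\pi(q^{-\l(w)/2}t_w)$ forces $\widetilde{L}_{w,w}=1$ and $\widetilde{L}_{x,w}(q^{-\frac{1}{2}}) \in q^{-\frac{1}{2}}\Z[q^{-\frac{1}{2}}]$ for $x<w$. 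Applying $\iota$, using $\iota(t_x)=q^{-\l(x)}\sum_{y \leq x,\, y \in W_c(X)} a_{y,x}(q) t_y$ from Proposition \ref{a-pol}, and swapping summation order gives
\[
\iota(\widetilde{c}_w) = \sum_{\substack{y \in W_c(X) \\ y \leq w}} q^{-\frac{\l(y)}{2}} \left(\sum_{z \in [y,w]_c} q^{\frac{\l(y)-\l(z)}{2}} a_{y,z}(q) \widetilde{L}_{z,w}(q^{\frac{1}{2}})\right) t_y.
\]
Equating this with the original expansion of $\widetilde{c}_w$ (using $\iota(\widetilde{c}_w)=\widetilde{c}_w$) and reading off coefficients of $t_y$ shows that the family $\{\widetilde{L}_{x,w}\}$ satisfies precisely the defining relation (iv) of Theorem \ref{l-pol}, together with conditions (i)--(iii). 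The uniqueness part of that theorem then forces $\widetilde{L}_{x,w}=L_{x,w}$ for all $x \leq w$, so $\widetilde{c}_w = c_w$.

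The only real obstacle is bookkeeping in the computation of $\iota(\widetilde{c}_w)$: one has to invert the order of summation and recognise the inner sum as exactly the right--hand side of condition (iv) of Theorem \ref{l-pol}. Once that identification is made, the uniqueness of the $L$--polynomials transfers directly to uniqueness of the IC basis and the corollary follows.
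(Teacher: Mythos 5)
Your proposal is correct, and for the existence half it coincides with the paper's (implicit) proof: the corollary carries a \qed because the paper regards it as immediate from Theorem \ref{defic}, which constructs $\{c_w\}$ and verifies both conditions of Definition \ref{defici}. Where you genuinely go beyond the paper is in addressing uniqueness, which Definition \ref{defici} formally requires but which the paper never discusses; your reduction --- expand a competing basis $\widetilde{c}_w$ in the $t$--basis, apply $\iota$ via Proposition \ref{a-pol}, and observe that the resulting coefficient identity is exactly condition (iv) of Theorem \ref{l-pol}, so the uniqueness clause of that theorem forces $\widetilde{L}_{x,w}=L_{x,w}$ --- is the standard argument and works. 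The one point you should tighten: Definition \ref{defici} does not stipulate that $\widetilde{c}_w$ is supported on $\{x \in W_c(X) : x \leq w\}$, so you cannot write the expansion over $x \leq w$ at the outset. This is easily repaired: expand over all of $W_c(X)$, note that the coefficient identity for $t_y$ involves only $a_{y,x}$ with $x \geq y$, and argue by downward induction on a maximal element $y$ of the support with $y \not\leq w$ that $\widetilde{L}_{y,w}(q^{-\frac{1}{2}})=\widetilde{L}_{y,w}(q^{\frac{1}{2}})$, which together with $\widetilde{L}_{y,w} \in q^{-\frac{1}{2}}\Z[q^{-\frac{1}{2}}]$ forces $\widetilde{L}_{y,w}=0$; after that your reduction to Theorem \ref{l-pol} applies verbatim.
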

Comparing the definition of $c_w$ (see Theorem \ref{defic}) with that of $C_w'$ (see relation \ref{C'w}), we notice that the polynomials $L_{x,w}$ play the same role as $q^{\frac{\l(x)-\l(w)}{2}}P_{x,w}$, where $P_{x,w}$ are the Kazhdan--Lusztig polynomials defined in Theorem \ref{kl-pol}.

\section{The projection property}

Since the Kazhdan--Lusztig basis and the canonical basis are both $\iota$--invariant and since $\iota(J)=J$, it is natural to ask to what extent $\{\sigma(C_w'):\, w \in W(X) \}$ coincides with $\{c_w:\, w \in W_c(X)\}$. Denote by $\C$ the set $\{C_w':\, w \in W_c(X)\}$.
\begin{defi}
We say that a Coxeter graph $X$ satisfies the projection property if $$\sigma(\C)=\{c_w:\, w \in W_c(X)\}.$$ 
\end{defi}
A sufficient condition for a Coxeter graph to have the projection property is given in \cite[Proposition 1.2.3]{gl-ppk}.
\begin{pro} \label{p-pp}
Let $\sigma: \H(X) \rightarrow \H(X)/J(X)=TL(X)$ be the canonical projection. If $Ker(\sigma)$ is spanned by the basis element $C_w'$ that it contains, then $X$ satisfies the projection property.
\end{pro}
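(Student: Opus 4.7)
The plan is to verify that for each $w \in W_c(X)$ the projection $\sigma(C'_w)\in TL(X)$ satisfies the two defining conditions of the IC basis element $c_w$ from Definition \ref{defici}, so that the uniqueness clause forces $\sigma(C'_w)=c_w$ and hence $\sigma(\C)=\{c_w:w\in W_c(X)\}$. I will organize the argument into three steps: (a) pinning down which $C'_w$ lie in $\mathrm{Ker}(\sigma)$; (b) $\iota$-invariance of $\sigma(C'_w)$; (c) the membership $\sigma(C'_w)\in\L$ together with $\pi(\sigma(C'_w))=\pi(q^{-\l(w)/2}t_w)$.

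For (a), set $A\=\{w\in W(X):C'_w\in J(X)\}$. The hypothesis is equivalent to saying that $\{C'_w:w\in A\}$ is an $\A$-basis of $J(X)$, so $\{\sigma(C'_w):w\notin A\}$ is an $\A$-basis of $TL(X)$. For $w\in W_c(X)$ the coefficient of $t_w$ in the expansion $\sigma(C'_w)=q^{-\l(w)/2}\sum_{x\leq w}P_{x,w}(q)t_x$, after rewriting each $t_x$ with $x\notin W_c(X)$ by means of Proposition \ref{d-pol}, equals $q^{-\l(w)/2}\neq 0$, so $\sigma(C'_w)\neq 0$ and $w\notin A$. A comparison with the $t$-basis of $TL(X)$ then forces $W\setminus A=W_c(X)$. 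Step (b) is immediate: Lemma \ref{ijj} lets $\iota$ descend to $TL(X)$ with $\sigma\iota=\iota\sigma$, and combined with (\ref{icec}) this gives $\iota(\sigma(C'_w))=\sigma(\iota(C'_w))=\sigma(C'_w)$.

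The bulk of the proof lies in step (c). Expanding each $t_x$ via Proposition \ref{d-pol} yields
\[
\sigma(C'_w)=\sum_{\substack{y\in W_c(X)\\ y\leq w}}\alpha_y\cdot q^{-\l(y)/2}t_y,\qquad \alpha_y=q^{(\l(y)-\l(w))/2}\sum_{y\leq x\leq w}P_{x,w}(q)\,D_{y,x}(q),
\]
and we need $\alpha_w=1$ (immediate) and $\alpha_y\in q^{-1/2}\Z[q^{-1/2}]$ for $y<w$. Combined with the Kazhdan--Lusztig degree bound $\deg_q P_{x,w}\leq \tfrac{1}{2}(\l(w)-\l(x)-1)$ from Theorem \ref{kl-pol}, this reduces to proving $\deg_q D_{y,x}\leq \tfrac{1}{2}(\l(x)-\l(y)-1)$ for $y\in W_c(X)$ and $y<x$. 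This estimate is the main obstacle, and I would bootstrap it from step (a): for $w\notin W_c(X)$ the identity $\sigma(C'_w)=0$ in $TL(X)$ reads $t_w=-\sum_{x<w}P_{x,w}(q)\,t_x$, and the uniqueness part of Proposition \ref{d-pol} yields the recursion
\[
D_{y,w}(q)=-\sum_{y\leq x<w}P_{x,w}(q)\,D_{y,x}(q)
\]
for $y\in W_c(X)$ and $w\notin W_c(X)$. An induction on $\l(w)-\l(y)$, distinguishing the cases $x=y$ (where $D_{y,y}=1$), $x\in W_c(X)\setminus\{y\}$ (where $D_{y,x}=0$) and $x\notin W_c(X)$ (inductive hypothesis), propagates the required bound. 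With (a), (b), (c) in hand, the uniqueness clause of Definition \ref{defici} forces $\sigma(C'_w)=c_w$ for every $w\in W_c(X)$, which is precisely the projection property.
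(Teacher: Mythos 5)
Your argument is essentially correct, but note that the paper does not prove Proposition \ref{p-pp} at all: it is quoted from \cite[Proposition 1.2.3]{gl-ppk} without proof, so there is no in-text argument to compare against. What you have written is a self-contained reconstruction that anticipates machinery the paper only develops later, in Chapter \ref{comb prop}: your recursion $D_{y,w}=-\sum_{y\leq x<w}P_{x,w}D_{y,x}$ is exactly Lemma \ref{lemain} (there proved under the standing assumption (\ref{Cw-0}), which in your step (a) you correctly derive from the hypothesis of the proposition instead), and the degree bound $\deg D_{y,x}\leq\frac{1}{2}(\l(x)-\l(y)-1)$ is Proposition \ref{p-upb}, which the paper obtains via the chain formula of Theorem \ref{te1} whereas you get it by a direct induction on the recursion --- a shorter path that buys you independence from Theorem \ref{te1}. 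Two places deserve a word more than you give them. First, the claim that ``a comparison with the $t$-basis forces $W\setminus A=W_c(X)$'' should be justified: the coefficient computation shows the transition matrix from $\{t_w:w\in W_c(X)\}$ to $\{\sigma(C_w'):w\in W_c(X)\}$ is triangular with units $q^{-\l(w)/2}$ on the diagonal, so the latter set is already a basis of $TL(X)$, and a basis cannot be a proper subset of the linearly independent family $\{\sigma(C_w'):w\notin A\}$; this, together with $W_c(X)\subseteq W\setminus A$, gives equality. Second, the ``uniqueness clause of Definition \ref{defici}'' you invoke at the end is the uniqueness of an $\iota$-invariant element of $\L$ with prescribed image under $\pi$; this is not spelled out as a separate statement in the paper, but it follows from the antisymmetry argument used in the uniqueness part of Theorem \ref{l-pol}, so the appeal is legitimate once you say so. With those two points made explicit, the proof is complete and correct.
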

The kernel of the canonical projection $\sigma: \H(A_n)\rightarrow \H(A_n)/J(A_n)$ is spanned by all elements $C_w'$ such that $w \notin W_c(A_n)$ (see \cite[Proposition 3.1.1]{fg-mtl}).
Therefore, type $A$ has the projection property and the same argument holds for types $B$ and $I_2(m)$ (see \cite[Theorem 3.1.1]{gl-fck} and \cite[Proposition 6.14]{gre-gjt}). This fact was also verified for types $H_3$, $H_4$, and $F_4$, by means of computer calculations (see \cite[\S 3]{gl-fck}). The converse of Proposition \ref{p-pp} is not true in general. The following counterexample is given in \cite[Example 2.5]{los-klb}, where Losonczy shows that type $D_n$, with $n \geq 4$, has the projection property, but Proposition \ref{p-pp} does not apply. 

\begin{ese} \label{d4klcell}
Let $W=W(D_4)$ with set of generators $\{s_1,\,s_2,\,s_3,\,s_4\}$, where $s_3$ corresponds to the branch node, as shown in Figure \ref{fig d4}. On the one hand, if $s_2s_3s_4s_3s_1s_2s_3$ is a reduced expression of $w \not \in W_c(D_4)$ then $C'_w \in J$. Hence $C'_{s_1}C'_w \in J$. On the other hand, by Proposition \ref{pro klmolt} we get
$$C'_{s_1}C'_w=C'_{s_1s_2s_3s_4s_3s_1s_2s_3}+ C'_{s_1s_2s_4s_3}.$$ Therefore there exists a nonzero $\A$--linear combination of elements of $\C$ that belongs to $J$. But $\sigma(\C)=\{\sigma(C_w'):\, w \in W_c(X)\}$ is a basis for $TL(D_4)$, so we conclude that $J(D_4)$ is not spanned by the Kazhdan--Lusztig basis elements that it contains. \qed    
\end{ese}

\begin{figure}[!hbtp]
\[
\xymatrix @C=3pc {                & *=0{\bullet} \ar@{-}[d]^<{s_1}              &                                       \\
*=0{\bullet} \ar@{-}[r]^{}_<{s_2} & *=0{\bullet} \ar@{-}[r]^{}_<{s_3}_>{s_4}  & *=0{\bullet} }
\]
\caption{Coxeter graph $D_4$.} \label{fig d4}
\end{figure}
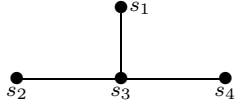

Similar problems arise whenever $X$ is a Coxeter graph that contains a vertex connected to at least three other vertices. Graphs having this property are sometimes called \emph{branching graphs}. Some examples of branching graphs are types $D$, $E_6$, $E_7$, $E_8$, and in these cases Proposition \ref{p-pp} does not apply (see \cite[Corollary 3.1.3]{gl-fck}). However, no example of a Coxeter group that fails to satisfy the projection property is known and even in type $E$ it is an open problem (see \cite[\S 2]{gl-fck}).\\
We remark that Example \ref{d4klcell} can be translated in the ``language'' of Kazhdan--Lusztig cells. We stick to the definitions given in \cite[Definition 1.2.2]{gl-fck}. 
\begin{defi} \label{klcell}
Let $x,w \in W(X)$. If there exists a chain $x=x_0,x_1,\cdots,x_k=w$, $k\geq 0$, such that for every $i<k$, $C_{x_i}'$ occurs with non--zero coefficient in the linear expansion of $C_s'C_{x_{i+1}}'$ for some $s \in S(X)$ such that $sx_{i+1}>x_{i+1}$, then we write $x \stackrel{L}{\leq} w$.  
\end{defi}
Define the equivalence relation $\stackrel{L}{\sim}$ as follows: $x \stackrel{L}{\sim} w \Leftrightarrow x \stackrel{L}{\leq} w \mbox{ and } w \stackrel{L}{\leq} x$.
The equivalence classes with respect to $\stackrel{L}{\sim}$ are called \emph{left cells} of $W(X)$. We write $x \stackrel{R}{\leq} w \Leftrightarrow x^{-1} \stackrel{L}{\leq} w^{-1}$. Finally, we set $x \stackrel{LR}{\leq} w \Leftrightarrow x \stackrel{L}{\sim} w \mbox{ and } x \stackrel{R}{\sim} w$. The equivalence classes with respect to the equivalence relation $\stackrel{R}{\sim}$ (respectively $\stackrel{LR}{\sim}$) are called \emph{right cells} (respectively \emph{two--sided cells}).\\
By Definition \ref{klcell}, we can restate Example \ref{d4klcell} as follows: $s_1s_2s_3s_4 \stackrel{L}{\leq} s_2s_3s_4s_3s_1s_2s_3$. Therefore, $W(D_4) \setminus W_c(D_4)$ is not closed under $\stackrel{L}{\leq}$.\\ 
Observe that Proposition \ref{p-pp} is equivalent to asking that $\sigma(C_w')=0$, for all elements $w\not \in W_c(X)$ (see \cite[Theorem 2.2.3]{gl-fck}). This is a key observation in order to study the $D$--polynomials introduced in Proposition \ref{d-pol}.
In particular, one may wonder whether the map $\sigma: \H(X)\rightarrow \H(X)/J(X)$ satisfies
\begin{equation} \label{Cw-0} 
\sigma(C_w') = \left\{ \begin{array}{ll} 
c_w  & \mbox{if $w \in W_c(X)$,} \\ 
0    & \mbox{if $w \not \in W_c(X)$.} 
\end{array} \right.
\end{equation}
The answer is affirmative in types $A$, $B$, $I_2(m)$, $F_4$, $H_3$ and $H_4$, and negative for types $D$, $E_6$, $E_7$ and $E_8$ (for a complete discussion of these results, see \cite{gre-gjt} and \cite{gl-fck}). More generally, if $X$ is a finite irreducible or affine Coxeter graph, relation (\ref{Cw-0}) holds if and only if $W_c(X)$ is closed under $\stackrel{LR}{\leq}$ or, equivalently, if and only if $W_c(X)$ is a union of two--sided Kazhdan--Lusztig cells (see \cite[Theorem 2.1]{shi-fceii} and \cite[Theorem 2.2.3]{gl-fck}). On the other hand, in \cite{shi-fce} it is shown that $W_c(X)$ is a union of two--sided Kazhdan--Lusztig cells if and only if $X$ is non--branching and $X \neq \widetilde{F_4}$. 

\begin{ese}
Let $W=W(\widetilde{F_4})$ with set of generators $\{s_0,\,s_1,\,s_2,\,s_3,\,s_4\}$, where $m(s_2,s_3)=4$, as shown in Figure \ref{fig f4t}. On the one hand, $s_0s_2s_4$ is a fully commutative element in $W(\widetilde{F_4})$ and $s_0s_1s_0 \not \in W_c(\widetilde{F_4})$. On the other hand, in \cite[\S 3.7]{shi-fce} Shi states that $s_0s_2s_4 \stackrel{LR}{\sim} s_0s_1s_0$. Therefore, $W_c(\widetilde{F_4})$ is not a union of two--sided Kazhdan--Lusztig cells. \qed   
\end{ese}

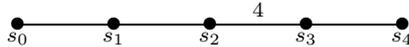
\begin{figure}[!hbtp]
\[
\xymatrix @C=3pc {*=0{\bullet} \ar@{-}[r]^{}_<{s_0}  & *=0{\bullet} \ar@{-}[r]^{}_<{s_1} & *=0{\bullet} \ar@{-}[r]^{4}_<{s_2} &*=0{\bullet} \ar@{-}[r]^{}_<{s_3}_>{s_4}  & *=0{\bullet} }
\]
\caption{Coxeter graph $\widetilde{F_4}$.} \label{fig f4t}
\end{figure}

\begin{teo} \label{teocw0}
Let $X$ be a finite irreducible or affine Coxeter graph. Then, relation (\ref{Cw-0}) holds if and only if $X$ is non--branching and $X \neq \widetilde{F_4}$. \qed
\end{teo}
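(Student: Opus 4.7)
The plan is to assemble the theorem from the two results cited in the discussion immediately preceding the statement, which together give the equivalence in one step each.

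First, I would establish that relation (\ref{Cw-0}) is equivalent to the condition that $W_c(X)$ be closed under the two--sided Kazhdan--Lusztig preorder $\stackrel{LR}{\leq}$. The forward direction is essentially structural: if $\sigma(C'_w)=0$ for every $w\notin W_c(X)$, then using the multiplication rule of Proposition \ref{pro klmolt} one sees that the $\A$--span of $\{C'_w:w\notin W_c(X)\}$ is a two--sided ideal of $\H(X)$, forcing $W(X)\setminus W_c(X)$ to be a union of two--sided cells; equivalently, $W_c(X)$ is itself such a union. Conversely, if $W_c(X)$ is a union of two--sided cells, then the span of $\{C'_w:w\notin W_c(X)\}$ is a two--sided ideal contained in $\ker(\sigma)$, and a dimension count using the fact that $\{\sigma(C'_w):w\in W_c(X)\}=\{c_w:w\in W_c(X)\}$ (which is a basis of $TL(X)$, at least in the projection--property case) shows the inclusion is an equality, hence $\sigma(C'_w)=0$ for $w\notin W_c(X)$ and $\sigma(C'_w)=c_w$ for $w\in W_c(X)$. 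For this step I would rely on \cite[Theorem 2.1]{shi-fceii} and \cite[Theorem 2.2.3]{gl-fck}, quoted in the paragraph preceding the theorem.

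Second, I would invoke Shi's classification \cite{shi-fce} of the finite irreducible and affine Coxeter graphs for which $W_c(X)$ is a union of two--sided Kazhdan--Lusztig cells: this property holds precisely when $X$ is non--branching and $X\neq\widetilde{F_4}$. Combining the two equivalences gives the theorem. Concretely, the ``if'' direction consists of noting that for each non--branching $X\neq\widetilde{F_4}$ listed by Shi, $W_c(X)$ is a union of two--sided cells and hence (\ref{Cw-0}) holds; the ``only if'' direction amounts to exhibiting, in each branching case and in $\widetilde{F_4}$, a pair $x\in W_c(X)$, $w\notin W_c(X)$ with $x\stackrel{LR}{\sim}w$, which prevents $\sigma(C'_w)$ from vanishing in the manner required by (\ref{Cw-0}). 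The $D_4$ computation in Example \ref{d4klcell} and the $\widetilde{F_4}$ example given just above the theorem provide the prototypes: the former handles the branching finite types (by extending the argument to $D_n$, $E_6$, $E_7$, $E_8$), and the latter handles the only remaining exceptional affine case.

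The main obstacle is not the logical structure of the argument, which is essentially a two--step citation, but verifying that the equivalence of (\ref{Cw-0}) with the cell--theoretic condition is stated in the form we need in \cite{shi-fceii} and \cite{gl-fck}: in particular, that the basis $\{c_w\}$ really does equal $\{\sigma(C'_w):w\in W_c(X)\}$ whenever the cell condition holds (i.e.\ the projection property is automatic in that case). Once this identification is in place, the theorem follows immediately, so I would spend the bulk of the write--up carefully quoting the two external results and verifying that their hypotheses are met in our setting, rather than reproducing any of Shi's combinatorial classification.
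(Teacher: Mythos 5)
Your proposal is correct and follows exactly the route the paper takes: the theorem is stated with a \qed precisely because it is the immediate conjunction of the equivalence of (\ref{Cw-0}) with ``$W_c(X)$ is a union of two--sided cells'' (\cite[Theorem 2.1]{shi-fceii}, \cite[Theorem 2.2.3]{gl-fck}) and Shi's classification in \cite{shi-fce}, both quoted in the paragraph preceding the statement. The extra detail you supply on why the cell condition is equivalent to (\ref{Cw-0}) is a reasonable gloss on the cited results but is not part of the paper's argument, which simply defers to those references.
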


\chapter{Combinatorial properties of $TL(X)$} \label{comb prop}
\section{Combinatorial properties of $D_{x,w}$} \label{s-dpol}

In the first part of this section we study the $D$--polynomials defined in Proposition \ref{d-pol}. More precisely, we obtain a recurrence relation for the polynomials $\{D_{x,w}\}_{x, \in W_c(X),\, w \in W(X)} \subseteq \Z[q]$, $X$ being an arbitrary Coxeter graph.
Then we will focus on the Coxeter graph satisfying equation (\ref{Cw-0}) and derive some results concerning symmetry properties, the value of the constant term and explicit formulas for $D_{x,w}$ when the Bruhat interval $[x,w]$ has a particular structure. Throughout this chapter $\l(x,w)$ will denote the difference $\l(w)-\l(x)$.\\

\begin{pro} \label{p-gc}
Let $X$ be an arbitrary Coxeter graph. Let $w \not \in W_c(X)$ and $s \in S(X)$ be such that $ws \not \in W_c(X)$, with $\ell(ws)<\ell(w)$. Then, for all $x \in W_c(X),\,x \leq w$, we have 
$$D_{x,w}=\widetilde{D_{x,w}} + \sum_{\substack{y \in W_c(X),\, ys \not \in W_c(X) \\ ys>y}}D_{x,ys}D_{y,ws}.$$
where
\[
\widetilde{D_{x,w}}=\left\{
\begin{array}{ll}
D_{xs,ws}+(q-1)D_{x,ws}       & \mbox{ if } xs<x,           \\
qD_{xs,ws}                    & \mbox{ if } x<xs \in W_c(X),   \\
0                             & \mbox{ if } x<xs \not \in W_c(X),
\end{array} \right.
\] 
\end{pro}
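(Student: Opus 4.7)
The plan is to exploit the factorization $t_w = t_{ws}\cdot t_s$, valid since $\ell(w) = \ell(ws)+1$. Expanding $t_{ws}$ via Proposition \ref{d-pol} gives
\[
t_w \;=\; \sum_{\substack{y \in W_c(X) \\ y \leq ws}} D_{y,ws}\, t_y\, t_s,
\]
so the task reduces to expanding each product $t_y t_s$ in the $t$--basis, collecting the coefficient of $t_x$ for a fixed $x \in W_c(X)$ with $x \leq w$, and invoking the uniqueness part of Proposition \ref{d-pol} to identify that coefficient with $D_{x,w}$.

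For each $y \in W_c(X)$, the multiplication rule (\ref{mult-b}) splits into three sub--cases. If $ys < y$, then automatically $ys \in W_c(X)$ (any prefix of a reduced word of an FC element is FC), and $t_y t_s = q\,t_{ys} + (q-1)\,t_y$. If $ys > y$ with $ys \in W_c(X)$, then $t_y t_s = t_{ys}$ is already in the $t$--basis. If $ys > y$ with $ys \notin W_c(X)$, then $t_{ys}$ is not a basis element and must itself be expanded by Proposition \ref{d-pol} as $\sum_{x \in W_c(X),\, x \leq ys} D_{x,ys}\, t_x$. The hypothesis $ws \notin W_c(X)$ forces $y < ws$ strictly whenever $y \in W_c(X),\,y\leq ws$, hence $\ell(ys) < \ell(w)$, which ensures the recursion is well--founded.

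To read off $[t_x]\,t_w$ one splits according to the relative length of $xs$ and its FC status. If $xs < x$, then $t_x$ arises via two routes: the ``$ys > y$ with $ys \in W_c(X)$'' case at $y = xs$ contributes $D_{xs,ws}$, while the ``$ys < y$'' case at $y = x$ contributes $(q-1)D_{x,ws}$. If $xs > x$ with $xs \in W_c(X)$, the only direct contribution comes from the ``$ys < y$'' case at $y = xs$, giving $q\,D_{xs,ws}$. If $xs > x$ with $xs \notin W_c(X)$, then $y = xs$ is excluded from both of the first two sub--cases and no direct contribution remains, so $\widetilde{D_{x,w}} = 0$. In every case the remaining contributions come from the third sub--case, each $y$ producing $D_{y,ws}\cdot D_{x,ys}$, which assembles into the displayed sum (the constraint $y \leq ws$ is automatic since $D_{y,ws}$ vanishes otherwise).

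The main bookkeeping obstacle is checking that the index sets are correct: when $xs < x$ one needs $xs \leq ws$ to make $D_{xs,ws}$ the actual coefficient that appears, and this follows from the Lifting Property (Lemma \ref{lif lemma}) applied to $x < w$ with right descent $s$ of $w$ and $xs < x$; an analogous verification ensures that each $y$ with $ys > y$, $ys \notin W_c(X)$ indeed satisfies $y \leq ws$ and $x \leq ys$. With these compatibilities in place, the uniqueness part of Proposition \ref{d-pol} equates the accumulated coefficient of $t_x$ with $D_{x,w}$, yielding the stated recurrence.
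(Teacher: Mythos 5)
Your proposal is correct and follows essentially the same route as the paper: factor $t_w=t_{ws}t_s$, expand $t_{ws}$ via Proposition \ref{d-pol}, apply the multiplication rule (\ref{mult-b}) case by case (re-expanding $t_{ys}$ through $D$--polynomials when $ys\notin W_c(X)$), and extract the coefficient of $t_x$. The only cosmetic difference is that you invoke the Lifting Property to justify index-set compatibilities that the paper absorbs into the convention $D_{x,w}=0$ for $x\not\leq w$.
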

\begin{proof}
On the one hand, by Proposition \ref{d-pol}, we have
\[ t_w=\sum_{\substack{x \in W_c(X) \\ x \leq w}} D_{x,w}t_x. \]
On the other hand, letting $v \stackrel{\rm def}{=} ws$, 
\begin{eqnarray}
t_w & = & t_vt_s= \left(\sum_{\substack{y \in W_c(X) \\ y\leq v}} D_{y,v}t_y \right) t_s \nonumber \\
    & = & \sum_{\substack{y \in W_c(X) \\ y\leq v,\,ys>y}} D_{y,v}t_{ys}+\sum_{\substack{y \in W_c(X) \\ y\leq v,\,ys<y}} D_{y,v}\left(qt_{ys}+(q-1)t_y \right)       \nonumber \\
    & = &\sum_{\substack{y \in W_c(X),\, ys \in W_c(X) \\ y\leq v,\,ys>y}} D_{y,v}t_{ys} + \sum_{\substack{y \in W_c(X),\, ys \not \in W_c(X) \\ y\leq v,\,ys>y}} D_{y,v}t_{ys} \nonumber \\ 
    && +\: \sum_{\substack{y \in W_c(X) \\ y\leq v,\,ys<y}} D_{y,v} qt_{ys} + \sum_{\substack{y \in W_c(X) \\ y\leq v,\,ys<y}} (q-1) D_{y,v} t_y      \nonumber \\
    & = &\sum_{\substack{y \in W_c(X),\, ys \in W_c(X) \\ y\leq v,\,ys>y}} D_{y,v}t_{ys} + \sum_{\substack{y \in W_c(X) \\ y\leq v,\,ys<y}} D_{y,v} qt_{ys} \nonumber \\ 
    && +\: \sum_{\substack{y \in W_c(X) \\ ys<y \leq v}} (q-1) D_{y,v} t_y + \sum_{\substack{y \in W_c(X),\, ys \not \in W_c(X) \\ y\leq v,\,ys>y}} D_{y,v} \left(\sum_{\substack{x \in W_c(X) \\ x \leq ys}}D_{x,ys} t_x  \right)    \nonumber \\
    & = & \sum_{\substack{x \in W_c(X) \\ xs<x \leq w}} D_{xs,ws} t_x + \sum_{\substack{xs \in W_c(X) \\ x \leq w,\,xs>x}}q D_{xs,ws} t_x \nonumber \\ 
    && +\: \sum_{\substack{x \in W_c(X) \\ xs<x \leq w}} (q-1) D_{x,ws} t_x + \sum_{x \in W_c(X)} \left(\sum_{\substack{y \in W_c(X),\, ys \not \in W_c(X) \\ y<ys}} D_{x,ys} D_{y,ws} \right) t_x     \nonumber
\end{eqnarray}
(note that $xs \in W_c(X),\, x<xs \Rightarrow x \in W_c(X)$). Extracting the coefficient of $t_x$ we get 
\[
D_{x,w}=\left\{
\begin{array}{ll}
D_{xs,ws}+(q-1)D_{x,ws}+ b(x,w)       & \mbox{ if } xs<x,           \\
qD_{xs,ws}+ b(x,w)                    & \mbox{ if } x<xs \in W_c(X),   \\
b(x,w)                                & \mbox{ if } x<xs \not \in W_c(X),
\end{array} \right.
\] 
where $$b(x,w)=\sum_{\substack{y \in W_c(X),\, ys \not \in W_c(X) \\ y<ys}}D_{x,ys}D_{y,ws},$$
as desired.
\end{proof}

It is interesting to note that the recursion in Proposition \ref{p-gc} is similar to the one for the parabolic Kazhdan--Lusztig polynomials (see \cite{deo-sga}).\\ 
The preceding recursion can sometimes be solved explicitly. In the proof of the next result we need the notion of Grassmannian and bi--Grassmannian elements (see, \textrm{e.g.}, \cite[\S 3]{ls-tbg} and \cite[\S 5, Exercise 39]{bb-ccg}).
\begin{defi}
Let $w \in W(A_{n-1})$ and define $D_R(w) \= |\{s \in S(X):\, ws<w\}|$. The permutation $w$ is called Grassmannian if $|D_R(w)| \leq 1$ and bi--Grassmannian if $|D_R(w)|=|D_R(w^{-1})|=1$. 
\end{defi}

As a consequence of \cite[Theorem 2.1]{bjs-cps}, if $w \in W(A_{n-1})$ is Grassmannian then $w \in W_c(A_{n-1})$.

\begin{cor}
Let $X$ be of type $A$ and let $x_0 \in W_c(X)$ be a bi--Grassmannian element. If $x_0$ is a maximal element in the Bruhat order of $W_c(X)$, then $D_{x_0,w}=\varepsilon_{x_0} \varepsilon_{w}$, for all elements $w \geq x_0$. 
\end{cor}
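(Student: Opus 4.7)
The plan is to induct on $k \stackrel{\rm def}{=} \l(w)-\l(x_0)$, with the base case $k=0$ immediate from $D_{x_0,x_0}=1$. Since $x_0$ is maximal in $W_c(X)$, every $w>x_0$ lies outside $W_c(X)$; working in type $A$, $w$ then contains the pattern $321$, and since Grassmannian permutations (at most one descent) are $321$-avoiding, $w$ must have at least two right descents. Using that $x_0$ is bi-Grassmannian, let $s_r$ be its unique right descent; the preceding observation then lets us pick $s\in D_R(w)$ with $s\neq s_r$, so that simultaneously $ws<w$ and $x_0 s>x_0$.

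For the generic step $k\geq 2$, the Lifting Lemma \ref{lif lemma} applied to $x_0<w$ (with $s$ as above) yields $x_0\leq ws$; since $\l(ws)=\l(w)-1\geq\l(x_0)+1$ this is strict, so $ws>x_0$ and, by maximality, $ws\notin W_c(X)$. Proposition \ref{p-gc} is therefore applicable. The piece $\widetilde{D_{x_0,w}}$ corresponds to the case $x_0<x_0 s\notin W_c(X)$ and hence vanishes. In the remaining sum $\sum_y D_{x_0,ys}D_{y,ws}$, any non-vanishing term has $x_0\leq ys$; a second application of the Lifting Lemma, this time to $x_0<ys$ using $(ys)s<ys$ and $x_0 s>x_0$, gives $x_0\leq y$, and combined with $y\in W_c(X)$ and the maximality of $x_0$ this forces $y=x_0$. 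The sum thus collapses to
\[
D_{x_0,w}=D_{x_0,x_0 s}\cdot D_{x_0,ws},
\]
and the inductive hypothesis turns the right-hand side into $(-1)\cdot(-1)^{\l(ws)-\l(x_0)}=(-1)^{\l(w)-\l(x_0)}=\varepsilon_{x_0}\varepsilon_w$.

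The cover case $k=1$ is outside the scope of Proposition \ref{p-gc} (there $ws=x_0\in W_c(X)$), and I would dispatch it using the projection property available in type $A$ (Theorem \ref{teocw0}). It gives $\sigma(C_w')=0$, which reads $\sum_{x\leq w}P_{x,w}(q)\,t_x=0$ in $TL(X)$; extracting the coefficient of $t_{x_0}$ produces the identity $\sum_{x\in[x_0,w]}P_{x,w}D_{x_0,x}=0$, and for a cover $[x_0,w]=\{x_0,w\}$, so it reduces to $P_{x_0,w}+D_{x_0,w}=0$. Lemma \ref{rp2} forces $P_{x_0,w}=1$, hence $D_{x_0,w}=-1=\varepsilon_{x_0}\varepsilon_w$, which is exactly the ingredient consumed by the generic step when it invokes the value of $D_{x_0,x_0 s}$.

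The main obstacle is the coordinated use, in the generic step, of the Grassmannian descent structure of $x_0$ (to extract a suitable $s$ from the additional descents of $w$) together with the maximality of $x_0$ in $W_c(X)$, which is what makes both the $\widetilde{D_{x_0,w}}$ piece and all but one term of the $y$-sum vanish. A secondary subtlety is that the cover case requires an independent tool, since the multiplicative recursion of Proposition \ref{p-gc} is vacuous when $ws\in W_c(X)$; this is the single place where the type-$A$-specific projection property of Theorem \ref{teocw0} is essential.
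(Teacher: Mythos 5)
Your argument is correct, and its engine is the same as the paper's: both proofs induct on $\ell(w)-\ell(x_0)$, pick $s$ with $ws<w$ and $x_0s>x_0$ by comparing the descent sets of the non-Grassmannian $w$ and the bi-Grassmannian $x_0$, and use the Lifting Lemma together with the maximality of $x_0$ in $W_c(X)$ to kill $\widetilde{D_{x_0,w}}$ and collapse the sum in Proposition \ref{p-gc} to the single term $y=x_0$, yielding $D_{x_0,w}=D_{x_0,x_0s}D_{x_0,ws}$. Your verification that $ws\notin W_c(X)$ (so that Proposition \ref{p-gc} is actually applicable) is in fact spelled out more carefully than in the paper.

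The one genuine divergence is the cover case $\ell(x_0,w)=1$. The paper stays inside the $t$--basis: it writes a reduced expression $w=x_1s_is_{i+1}s_ix_2$, expands $t_{s_is_{i+1}s_i}=-t_e-t_{s_i}-t_{s_{i+1}}-t_{s_is_{i+1}}-t_{s_{i+1}s_i}$ and reads off $D_{x_0,w}=-1$ from the multiplication rule; this keeps the corollary independent of the projection property. You instead invoke Theorem \ref{teocw0} to get $\sigma(C_w')=0$ and extract the coefficient of $t_{x_0}$, obtaining $P_{x_0,w}+D_{x_0,w}=0$ and hence $D_{x_0,w}=-1$ via Lemma \ref{rp2}. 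This is a clean and valid shortcut in type $A$ --- it is precisely the cover case of the identity $\sum_{x\leq y\leq w}D_{x,y}P_{y,w}=0$ that the paper only establishes afterwards as Lemma \ref{lemain} --- but it buys the simplicity at the cost of importing the projection property, which the paper's base-case computation avoids. Either route is fine; just be aware that your version of the corollary is logically tied to equation (\ref{Cw-0}), whereas the paper's is not.
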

\begin{proof}
If $w \in W_c(X)$ then the result is trivial. Suppose $w \not \in W_c(X)$. Observe that if $s \in S(X)$ is such that $x_0s>x_0$, then $x_0s \not \in W_c(X)$. Moreover, if $x_0s>x_0$ and $y \in W_c(X)$ is such that $ys>x_0$ then $y \geq x_0$ by Lemma \ref{lif lemma}, so $y=x_0$. Hence $$\{y \in W_c(X):\,ys>x_0 \}=\{x_0\},$$ for any $s \not \in D_R(x_0)$.
Choosing $s$ such that $x_0s>x_0,\,ws<w$ (there exists such an $s$ since $x_0$ is a bi-Grassmannian element, while $w \not \in W_c(X)$ is not Grassmannian), the third case of Theorem \ref{p-gc} applies, so 
$$D_{x_0,w}=D_{x_0,x_0s}D_{x_0,ws}.$$
Define $\ell(x,w) \= \ell(w)-\ell(x)$, and proceed by induction on $\ell(x_0,w)$. Suppose $\ell(x_0,w)=1$, with $w \not \in W_c(A_{n-1})$. By Proposition \ref{sub prop} it follows that $w$ admits a reduced expression of the form $x_1s_is_{i+1}s_ix_2$, with $x_1,x_2 \in W_c(A_{n-1})$,  $s_i,s_{i+1} \in S(A_{n-1})$, and $x_0$ admits a reduced expression of the form $x_1\widehat{s_i}s_{i+1}s_ix_2$ or $x_1s_is_{i+1}\widehat{s_i}x_2$, since $x_0 \in W_c(A_{n-1})$. Therefore $$t_w=t_{x_1}t_{s_is_{i+1}s_i} t_{x_2}=t_{x_1} (-t_{s_is_{i+1}}-t_{s_{i+1}s_i}-t_{s_{i+1}}-t_{s_i}-t_e) t_{x_2},$$
and the statement follows by applying (\ref{mult-b}).
If $\ell(x_0,w)>1$, then $$D_{x_0,w}=D_{x_0,x_0s}D_{x_0,ws}=-D_{x_0,ws}=-\varepsilon_{x_0} \varepsilon_{ws}=\varepsilon_{x_0} \varepsilon_{w}.$$     
\end{proof}

Observe that a maximal element in the Bruhat order of $W_c(A_{n-1})$ is an element whose one--line notation is of the form $[k+1,k+2, \cdots,n,1,2,\cdots,k]$, with $k \in [n-1]$. 

From here to the end of this section we will denote by $X$ a Coxeter graph satisfying (\ref{Cw-0}). Observe that $D_{x,w}=\delta_{x,w}$ if $x,w \in W_c(X)$.
\begin{lemma} \label{lemain}
For all $x \in W_c(X)$ and $w \not \in W_c(X)$, we have $$\sum_{x \leq y \leq w} D_{x,y}P_{y,w}=0.$$
\end{lemma}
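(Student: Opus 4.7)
The plan is to exploit relation (\ref{Cw-0}), which is exactly the hypothesis that encodes the assumption on $X$. Under this assumption $\sigma(C_w')=0$ whenever $w\not\in W_c(X)$, so I would simply expand this vanishing identity in the $t$-basis and read off the claimed equation as the coefficient of $t_x$.

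More concretely, I would first recall from (\ref{C'w}) that
\[
C_w' \;=\; q^{-\ell(w)/2}\sum_{y\leq w} P_{y,w}(q)\,T_y,
\]
and apply the canonical projection $\sigma\colon\H(X)\to TL(X)$ term by term, so that
\[
\sigma(C_w') \;=\; q^{-\ell(w)/2}\sum_{y\leq w} P_{y,w}(q)\,t_y.
\]
Next, I would substitute the definition of the $D$-polynomials from Proposition \ref{d-pol}, namely $t_y=\sum_{x\in W_c(X),\,x\leq y} D_{x,y}\,t_x$, and interchange the order of summation to get
\[
\sigma(C_w') \;=\; q^{-\ell(w)/2}\sum_{x\in W_c(X)}\left(\sum_{x\leq y\leq w} D_{x,y}\,P_{y,w}\right)t_x.
\]

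Since $w\not\in W_c(X)$ and $X$ satisfies the projection property (\ref{Cw-0}), the left-hand side vanishes. The $t$-basis $\{t_x:x\in W_c(X)\}$ of $TL(X)$ is linearly independent, so every coefficient on the right must be zero; in particular, for the fixed $x\in W_c(X)$ in the statement we obtain
\[
\sum_{x\leq y\leq w} D_{x,y}\,P_{y,w} \;=\; 0,
\]
which is exactly what is claimed. There is no real obstacle here: the whole argument is a bookkeeping consequence of (\ref{Cw-0}) combined with the defining expansions of $C_w'$ and of $t_y$ in terms of the $D$-polynomials. The only thing to verify carefully is that the assumption $w\not\in W_c(X)$ is used precisely to invoke (\ref{Cw-0}) and force the left-hand side to vanish, and that $D_{x,y}=\delta_{x,y}$ when $y\in W_c(X)$ does \emph{not} collapse the sum in any unwanted way (it merely means some of the intermediate terms are trivial).
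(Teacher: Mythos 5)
Your proposal is correct and is essentially identical to the paper's own proof: both expand $\sigma(C_w')$ via the $D$--polynomial expression for $t_y=\sigma(T_y)$, interchange summation, and conclude from $\sigma(C_w')=0$ (relation (\ref{Cw-0}), which is the standing hypothesis on $X$ in this section) together with the linear independence of the $t$--basis.
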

\begin{proof}
Let $w \in W(X)$. Then, by Proposition \ref{d-pol},
\begin{eqnarray}
\sigma(C_w') & = & q^{-\frac{\l(w)}{2}}\sum_{y \leq w}P_{y,w}\sigma(T_y) \nonumber \\
             & = & q^{-\frac{\l(w)}{2}}\sum_{y \leq w}P_{y,w} \left( \sum_{\substack{x \in W_c(X) \\ x \leq y}} D_{x,y}t_x \right) \nonumber \\
             & = & q^{-\frac{\l(w)}{2}}\sum_{\substack{x \in W_c(X) \\ x \leq w}}\left(\sum_{x \leq y \leq w} D_{x,y}P_{y,w}\right)t_x. \nonumber 
\end{eqnarray}
When $w \not \in W_c(X)$ we get $\sigma(C_w')=0$, so the expression in round brackets must vanish and the statement follows. 
\end{proof}
The next result will be useful in \S \ref{sec: a} and \S \ref{sec: l}.
\begin{lemma} \label{ddelta}
Let $x \in W_c(X)$ be such that $xs \not \in W_c(X)$ and let $w \not \in W_c(X)$ be such that $w>ws \in W_c(X)$. Then $$D_{x,w}=-\delta_{x,ws}.$$
\end{lemma}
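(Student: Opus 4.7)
The plan is to prove the lemma by strong induction on $\ell(w)$, with Lemma~\ref{lemain} as the main engine. Fix $x \in W_c(X)$ with $xs \notin W_c(X)$. Since $W_c(X)$ is a lower order ideal (a direct consequence of Proposition~\ref{sub prop}), the hypothesis $xs \notin W_c(X)$ forces $xs > x$. Lemma~\ref{lemain} says
\[
\sum_{x \leq y \leq w} D_{x,y}\,P_{y,w} = 0,
\]
and the proof plan is to pair each $y$ with $ys$ so that the sum collapses, then check that almost every contribution vanishes.

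For the pairing, I would invoke the right-hand version of Corollary~\ref{kl property}, namely $P_{y,w} = P_{ys,w}$, which is valid because $ws < w$. Using the Lifting Property (Lemma~\ref{lif lemma}), one checks that whenever $y \in [x,w]$, its partner $ys$ lies in $[x,w]$ as well: this handles both directions, since if $ys > y$ we apply Lifting with $(u,v) = (y,w)$, and if $ys < y$ we apply it with $(u,v) = (x,y)$. Writing each unordered pair as $\{z, zs\}$ where $z$ is the one with $zs < z$, the identity becomes
\[
\sum_{\substack{zs < z \leq w \\ x \leq z}} \bigl(D_{x,z} + D_{x,zs}\bigr)\,P_{z,w} \;=\; 0.
\]

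The next step is a case analysis on the membership of $z$ and $zs$ in $W_c(X)$. The term $z = w$ contributes exactly $D_{x,w} + D_{x,ws} = D_{x,w} + \delta_{x,ws}$, using $ws, x \in W_c(X)$. For $z < w$ there are formally four sub-cases, of which only two survive. If $z,zs \in W_c(X)$, the contribution is $\delta_{x,z} + \delta_{x,zs}$, and both Kroneckers vanish because $xs \notin W_c(X)$ while $zs \in W_c(X)$ forces $x \neq z$, and $z \in W_c(X)$ forces $x \neq zs$. If $z \notin W_c(X)$ and $zs \in W_c(X)$, the pair $(z,s)$ satisfies the hypotheses of the lemma itself with $\ell(z) < \ell(w)$, so the induction hypothesis gives $D_{x,z} = -\delta_{x,zs}$, which exactly cancels $D_{x,zs} = \delta_{x,zs}$.

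The main obstacle—or rather the key observation that makes the proof work—is showing that the remaining case $z,zs \notin W_c(X)$ never occurs. Here I would argue: from $zs < z \leq w$ and $ws < w$, the Lifting Property applied with $(u,v) = (zs,w)$ yields $zs \leq ws$; but $ws \in W_c(X)$ and $W_c(X)$ is closed downward, so $zs \in W_c(X)$, contradicting the assumption. The twin case $z \in W_c(X)$, $zs \notin W_c(X)$ is vacuous for the same closure reason since $zs < z$. Assembling everything, the only surviving term is $D_{x,w} + \delta_{x,ws} = 0$, giving $D_{x,w} = -\delta_{x,ws}$. The base case of the induction (smallest $\ell(w)$ for which the setup arises, which corresponds to $w$ being the longest element $w_0(s,s')$ of a rank-two parabolic) can be verified by applying the defining relation of $J(X)$ directly to $t_w$.
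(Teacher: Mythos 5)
Your overall strategy---feeding Lemma~\ref{lemain} with the right--hand version of $P_{y,w}=P_{ys,w}$, pairing $y$ with $ys$ via the Lifting Property, and killing the pairs case by case---is essentially the paper's own proof. The first three cases of your analysis are sound, and the places where you use that a Bruhat \emph{descent} of a fully commutative element is fully commutative (e.g.\ $xs<x$, $x\in W_c(X)$ $\Rightarrow$ $xs\in W_c(X)$) are fine, since a reduced word for $xs$ extends to one for $x$.

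The gap is in the fourth case. Your claim that $W_c(X)$ is a lower order ideal in the Bruhat order is false, and Proposition~\ref{sub prop} does not give it: the subword property only says \emph{some} reduced word of $u$ is a subword of a given reduced word of $v$, and full commutativity is not inherited along arbitrary Bruhat descents. Concretely, in $W(A_3)\simeq S_4$ one has $1432\lessdot 3412$ with $3412\in S_4(321)=W_c(A_3)$ but $1432\notin W_c(A_3)$. So from $zs\le ws\in W_c(X)$ (which you correctly derive from Lifting) you cannot conclude $zs\in W_c(X)$, and the case $z,zs\notin W_c(X)$ does occur. This is precisely the case the paper must work for: it shows $D_{x,zs}+D_{x,z}=0$ for such pairs by applying the recursion of Proposition~\ref{p-gc} to $D_{x,zs}$ (where $\widetilde{D_{x,zs}}=0$ because $x<xs\notin W_c(X)$) and then invoking the induction hypothesis on the resulting terms $D_{x,us}$, as in equation~(\ref{dxzs0}). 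Without this extra identity your sum does not collapse to the single term $D_{x,w}+\delta_{x,ws}$, so the argument as written does not close. (Your base case discussion is also slightly off---the natural base is $w=xs$, where Lemma~\ref{lemain} gives $D_{x,w}=-P_{x,w}=-1$ directly---but that is a cosmetic issue compared with the missing case.)
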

\begin{proof}
We proceed by induction on $\l(x,w)$. If $\l(x,w)=1$, then $D_{x,w}=D_{x,xs}=-1=-\delta_{x,ws}$. Suppose $\l(x,w)>1$. From Lemma \ref{lemain} and Corollary \ref{kl property} we get 
\begin{eqnarray}
D_{x,w}   & = & -P_{x,w} -\sum_{\substack{t \not \in W_c(X) \\ x < t < w }}D_{x,t}P_{t,w}  \nonumber \\
          & = & -P_{x,w} -D_{x,xs}P_{xs,w}-\sum_{\substack{t \not \in W_c(X), t \neq xs \\ x < t < w }}D_{x,t}P_{t,w}  \nonumber \\
          & = & -P_{x,w} -\underbrace{D_{x,xs}}_{-1}P_{x,w}-\sum_{\substack{t \not \in W_c(X), t \neq xs \\ x < t < w }}D_{x,t}P_{t,w}  \nonumber \\
          & = & -\sum_{\substack{t \not \in W_c(X), t \neq xs \\ x < t < w }}D_{x,t}P_{t,w}  \nonumber \\
          & = & -\sum_{\substack{t>ts \in W_c(X), t \neq xs \\ x < t < w}}D_{x,t}P_{t,w}-\sum_{\substack{t>ts \not \in W_c(X) \\ x < t < w }}D_{x,t}P_{t,w}-\sum_{\substack{t<ts \\ x < t < w }}D_{x,t}P_{t,w}.  \nonumber
\end{eqnarray}
By induction hypothesis, the term $D_{x,t}$ in the first sum is equal to $-\delta_{x,ts}$, since $\l(x,t)<\l(x,w)$. Therefore, the first sum is zero. On the other hand, the second and the third sum can be written as 
\begin{equation} \label{iszero}
-\sum_{\substack{z \not \in W_c(X) \\ x<z<zs<w}} P_{z,w} \left(D_{x,zs}+D_{x,z} \right),
\end{equation}
since $ts>t \not \in W_c(X)$ implies $ts \not \in W_c(X)$. To prove the statement we have to show that the term (\ref{iszero}) is zero. First, observe that $\l(x,z)<\l(x,w)$, since Lemma \ref{lif lemma} implies $z \leq w$, but $z \not \in W_c(X)$ and $w \in W_c(X)$. Moreover, by Proposition \ref{p-gc} and by induction hypothesis, we achieve
\begin{equation}\label{dxzs0}
D_{x,zs}=\sum_{\substack{u \in W_c(X) \\ u<us \not \in W_c(X)}} D_{x,us}D_{u,z}=\sum_{\substack{u \in W_c(X) \\ u<us \not \in W_c(X)}} (-\delta_{x,u})D_{u,z}=-D_{x,z}.
\end{equation}
We conclude that $D_{x,zs}+D_{x,z}=0$, for all $z \not \in W_c(X)$ such that $x<z<zs<ws$, so the sum in (\ref{iszero}) is zero.
\end{proof}
The following is the main result of this section.
\begin{teo} \label{te1}
Let $X$ be such that equation (\ref{Cw-0}) holds. For all $x \in W_c(X)$ and $w \not \in W_c(X)$ such that $x<w$, we have 
$$D_{x,w}=\sum \left((-1)^k \prod_{i=1}^{k}P_{x_{i-1},x_i}\right),$$
where the sum is taken over all the chains $x=x_0<x_1<\cdots <x_k=w$ such that $x_i \not \in W_c(X)$ if $i>0$, and $ 1\leq k \leq \l(x,w)$.
\end{teo}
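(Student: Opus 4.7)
The plan is to induct on $\ell(x,w)=\ell(w)-\ell(x)$, using Lemma \ref{lemain} as the central recursion. First I would recast that lemma into a solvable form. Since $t_y$ is itself a $t$-basis element whenever $y\in W_c(X)$, Proposition \ref{d-pol} forces $D_{x,y}=\delta_{x,y}$ for $x,y\in W_c(X)$. Substituting this into Lemma \ref{lemain} and isolating the $y=w$ term (where $D_{y,y}=1$, $P_{w,w}=1$) and the $y=x$ term (where $D_{x,x}=1$) yields the clean recursion
\begin{equation}\label{dxw-rec}
D_{x,w}=-P_{x,w}-\sum_{\substack{x<y<w \\ y\not\in W_c(X)}}D_{x,y}\,P_{y,w}.
\end{equation}

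For the base case $\ell(x,w)=1$ there is no intermediate $y$, so \eqref{dxw-rec} gives $D_{x,w}=-P_{x,w}$; this matches the statement, whose unique contributing chain is $x=x_0<x_1=w$ with $k=1$. For the inductive step, fix $x\in W_c(X)$ and $w\not\in W_c(X)$ with $\ell(x,w)>1$. For every $y$ appearing in the sum in \eqref{dxw-rec} we have $y\not\in W_c(X)$ and $\ell(x,y)<\ell(x,w)$, so the inductive hypothesis applies and expresses $D_{x,y}$ as a signed sum over chains $x=x_0<x_1<\cdots<x_{k'}=y$ with all $x_i\not\in W_c(X)$ for $i\geq 1$. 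Concatenating such a chain with the edge $y<w$ produces a chain $x=x_0<x_1<\cdots<x_{k'}=y<x_{k'+1}=w$, again with every $x_i\not\in W_c(X)$ for $i\geq 1$ (since $w\not\in W_c(X)$ by hypothesis). The factor $P_{y,w}$ from \eqref{dxw-rec} and the extra sign turn the contribution into $(-1)^{k'+1}\prod_{i=1}^{k'+1}P_{x_{i-1},x_i}$.

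Running over all admissible $y$ enumerates every chain from $x$ to $w$ of length $k\geq 2$ exactly once (the choice of $y$ corresponds to the unique $x_1$ of the chain, which must lie strictly between $x$ and $w$ and outside $W_c(X)$), while the isolated term $-P_{x,w}$ in \eqref{dxw-rec} accounts for the length-one chain $x<w$. Summing gives the claimed formula.

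The only delicate point, and the place where the global hypothesis (\ref{Cw-0}) on $X$ is really used, is justifying the recursion \eqref{dxw-rec}; once that is in place the rest of the argument is a clean chain-bookkeeping induction. In particular no appeal to Proposition \ref{p-gc} or Lemma \ref{ddelta} is needed, though both serve as sanity checks: for instance the $\ell(x,w)=1$ case with $w=xs$ where $xs\notin W_c(X)$ gives $D_{x,w}=-1=-\delta_{x,ws}$ in Lemma \ref{ddelta}, which is consistent with our $-P_{x,w}=-1$ from Lemma \ref{rp2}(ii).
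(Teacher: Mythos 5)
Your proposal is correct and follows essentially the same route as the paper: the paper also isolates the $y=w$ term of Lemma \ref{lemain} (using $D_{x,y}=\delta_{x,y}$ for $x,y\in W_c(X)$) to obtain the recursion $D_{x,w}=-P_{x,w}-\sum_{t\not\in W_c(X),\,x<t<w}D_{x,t}P_{t,w}$, and then performs the same chain-concatenation induction on $\l(x,w)$. Your observation that Proposition \ref{p-gc} and Lemma \ref{ddelta} are not needed here is also consistent with the paper's proof.
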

\begin{proof}
We proceed by induction on $\l(x,w)$.\\
If $\l(x,w)=1$ then, from Lemma \ref{lemain}, we get $D_{x,w} =-P_{x,w}$, proving the claim in this case. If $\l(x,w)>1$ then, from Lemma \ref{lemain} and our induction hypothesis, we have
\begin{eqnarray}
D_{x,w}  & = & -P_{x,w} -\sum_{\substack{t \not \in W_c(X) \\ x < t < w }}D_{x,t}P_{t,w}  \nonumber \\
         & = & -P_{x,w} -\sum_{\substack{t \not \in W_c(X) \\ x < t < w }}P_{t,w} \sum_{k=1}^{\l(x,t)}\left(\sum_{x=x_0<\cdots<x_k=t} (-1)^k\prod_{i=1}^{k}P_{x_{i-1},x_i}  \right) \nonumber \\
           & = & -P_{x,w} +\sum_{\substack{t \not \in W_c(X) \\ x < t < w }} \left(\sum_{k=1}^{\l(x,t)}\left(\sum_{\substack{x=x_0<\cdots<x_{k+1}=w \\ x_k=t}} (-1)^{k+1}\prod_{i=1}^{k+1}P_{x_{i-1},x_i}  \right) \right) \nonumber \\
           & = & \sum_{\substack{t \not \in W_c(X) \\ x < t < w }} \left(\sum_{k \geq 0}\left(\sum_{\substack{x=x_0< \cdots <x_{k+1}=w,\\ x_k=t \mbox{ if } k\not = 0}}(-1)^{k+1}\prod_{i=1}^{k+1}P_{x_{i-1},x_i}   \right) \right) \nonumber \\
           & = & \sum_{k \geq 0} \left(\sum_{\substack{t \not \in W_c(X) \\ x < t < w }} \left(\sum_{\substack{x=x_0< \cdots <x_{k+1}=w,\\ x_k=t \mbox{ if } k\not = 0}}(-1)^{k+1}\prod_{i=1}^{k+1}P_{x_{i-1},x_i}   \right) \right) \nonumber \\
           & = & \sum_{k \geq 0} \left(\sum_{\substack{x=x_0<\cdots \\ \cdots <x_{k+1}=w}} (-1)^{k+1} \prod_{i=1}^{k+1}P_{x_{i-1},x_i}   \right), \nonumber 
\end{eqnarray}   
as desired.
\end{proof}
Theorem \ref{te1} shows that the $D$--polynomials are intimately related to the Kazhdan--Lusztig polynomials, which is not at all obvious from their  definition.\\ 
We now derive some consequences of Theorem \ref{te1}. First we obtain some symmetry properties of the polynomials $\{D_{x,w}\}$. 
\begin{cor} \label{c2-dpol}
Let $x \in W_c(X)$, $w \not \in W_c(X)$ and $x<w$. Then
\begin{itemize}
\item[{\rm (i)}]  
$D_{x,w}=D_{x^{-1},w^{-1}}$;  
\item[{\rm (ii)}] 
$D_{x,w}=D_{w_0xw_0,w_0ww_0}$.
\end{itemize}
\end{cor}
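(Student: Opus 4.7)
The plan is to derive both symmetries directly from the chain formula of Theorem \ref{te1}, which expresses
\[
D_{x,w} \;=\; \sum_{x = x_0 < x_1 < \cdots < x_k = w} (-1)^k \prod_{i=1}^{k} P_{x_{i-1}, x_i},
\]
where the sum ranges over chains of length $k \in [1, \ell(x,w)]$ whose intermediate and final terms all lie outside $W_c(X)$. If a bijection $\varphi$ of $W(X)$ is simultaneously a Bruhat order automorphism, preserves $W_c(X)$, and satisfies $P_{u,v} = P_{\varphi(u), \varphi(v)}$ for all $u \leq v$, then applying $\varphi$ termwise to every chain identifies the right--hand side for $(x,w)$ with that for $(\varphi(x), \varphi(w))$. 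Both parts of the corollary are immediate instances of this principle.

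For part (i) the relevant map is $\phi \colon u \mapsto u^{-1}$. Proposition \ref{pro automorph} confirms that $\phi$ is a Bruhat automorphism, Lemma \ref{maps} (together with the elementary fact that reversing a reduced word produces a reduced word for the inverse) gives $\phi(W_c(X)) = W_c(X)$, and the Kazhdan--Lusztig identity $P_{u,v}(q) = P_{u^{-1},v^{-1}}(q)$ is classical, following from the $\A$--linear anti--automorphism $T_u \mapsto T_{u^{-1}}$ of $\H(X)$, which commutes with $\iota$ and therefore permutes the basis $\{C_w'\}$. Term--by--term comparison of the two chain sums then yields $D_{x,w} = D_{x^{-1},w^{-1}}$.

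Part (ii) is entirely analogous with $\psi \colon u \mapsto w_0 u w_0$. The Bruhat--automorphism property and the preservation of $W_c(X)$ are again supplied by Proposition \ref{pro automorph} and Lemma \ref{maps}, while the matching Kazhdan--Lusztig identity $P_{u,v}(q) = P_{w_0 u w_0, w_0 v w_0}(q)$ follows from the fact that conjugation by $w_0$ permutes $S(X)$ and preserves length, so it extends to an $\A$--algebra automorphism of $\H(X)$ that commutes with $\iota$ and sends $C_u'$ to $C_{w_0 u w_0}'$. The main step that warrants care is articulating these two Kazhdan--Lusztig symmetries cleanly; the chain bijection and term--by--term matching are purely formal once Theorem \ref{te1} is in hand, and the finiteness of $W(X)$ is used only in part (ii) to guarantee the existence of $w_0$.
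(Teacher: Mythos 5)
Your proposal is correct and follows essentially the same route as the paper: both apply the chain formula of Theorem \ref{te1} term by term under the Bruhat automorphisms $u \mapsto u^{-1}$ and $u \mapsto w_0uw_0$, using Lemma \ref{maps} to see that these maps preserve $W_c(X)$ (hence also its complement) and the classical Kazhdan--Lusztig symmetries $P_{u,v}=P_{u^{-1},v^{-1}}$ and $P_{u,v}=P_{w_0uw_0,w_0vw_0}$. The only difference is that you sketch proofs of those two Kazhdan--Lusztig identities, whereas the paper simply cites them as known exercises.
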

\begin{proof}
By Lemma \ref{maps}, $x^{-1} \in W_c(X)$ for every $x \in W_c(X)$. Therefore we get
\begin{eqnarray}
D_{x^{-1},w^{-1}} & = & \sum_{\substack{x^{-1}=x_0<x_1<\cdots \\ \cdots <x_k=w^{-1}}}\left((-1)^k \prod_{i=1}^{k}P_{x_{i-1},x_i} \right) \nonumber  \\
                     & = & \sum_{\substack{x=x_0^{-1}<x_1^{-1}<\cdots \\ \cdots <x_k^{-1}=w}}\left((-1)^k \prod_{i=1}^{k}P_{x_{i-1},x_i}\right) \nonumber \\
                     & = & \sum_{\substack{x=x_0^{-1}<x_1^{-1}<\cdots \\ \cdots <x_k^{-1}=w}}\left((-1)^k \prod_{i=1}^{k} P_{x_{i-1}^{-1},x_i^{-1}}\right) \nonumber \\
                     & = & \sum_{\substack{x=y_0<y_1<\cdots \\ \cdots <y_k=w}}\left((-1)^k \prod_{i=1}^{k}P_{y_{i-1},y_i} \right). \nonumber \\
                     & = & D_{x,w}, \nonumber
\end{eqnarray}
where we have used a well--known property of the Kazhdan--Lusztig polynomials (see, \textrm{e.g.}, \cite[\S 5, Exercise 12]{bb-ccg}). The same holds for $D_{w_0xw_0,w_0ww_0}$, using the properties in \cite[\S 5, Exercise 13(c)]{bb-ccg}.
\end{proof}

Next, we compute the constant term of the polynomials $D_{x,w}$.
\begin{cor} \label{c-d0}
For all $x \in W_c(X)$ and $w \not \in W_c(X)$ such that $x<w$, we have   
$$D_{x,w}(0)=\sum_{\substack{x=x_0<\cdots \\ \cdots <x_k=w}} (-1)^k,$$ where $x_i \not \in W_c(X)$ if $i>0$, and $ 1\leq k \leq \l(x,w)$. 
\end{cor}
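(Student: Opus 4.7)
The plan is to obtain this corollary as an immediate specialization of Theorem \ref{te1} at $q=0$. Theorem \ref{te1} expresses
$$D_{x,w}=\sum (-1)^k \prod_{i=1}^{k}P_{x_{i-1},x_i},$$
where the sum ranges over chains $x=x_0<x_1<\cdots<x_k=w$ with $x_i\not\in W_c(X)$ for $i\geq 1$. Evaluating at $q=0$, the right-hand side becomes $\sum (-1)^k \prod_{i=1}^{k}P_{x_{i-1},x_i}(0)$, so the corollary reduces to showing that every Kazhdan--Lusztig polynomial $P_{u,v}$ with $u\leq v$ has constant term equal to $1$.

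This last fact is standard, and I would derive it from the material already recalled in the excerpt. The base case $P_{u,v}=1$ when $\ell(v)-\ell(u)\leq 2$ is given by Lemma \ref{rp2}. For the inductive step, one uses the recursion of Theorem \ref{teo klqc}: picking $s\in S(X)$ with $sv<v$,
$$P_{u,v} = q^{1-c}P_{su,sv} + q^c P_{u,sv} - \sum_{\{z:\, sz<z\}}q^{\frac{\ell(v)-\ell(z)}{2}}\mu(z,sv)P_{u,z}.$$
Setting $q=0$, the subtracted sum contributes nothing because $\ell(v)-\ell(z)\geq 1$ for every $z$ appearing there, and among the first two summands exactly one has $q$-exponent $0$ (namely the one with $c=0$ if $su>u$ or the one with $1-c=0$ if $su<u$), whose value at $q=0$ is $1$ by induction. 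Thus $P_{u,v}(0)=1$ whenever $u\leq v$.

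Plugging $P_{x_{i-1},x_i}(0)=1$ into each factor of each product collapses the expression from Theorem \ref{te1} to the claimed sum of signs over admissible chains. The main obstacle is truly minor and is essentially confined to verifying the auxiliary fact $P_{u,v}(0)=1$; once this is in hand the corollary is a one-line substitution.
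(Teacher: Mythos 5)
Your proposal is correct and follows exactly the paper's route: specialize Theorem \ref{te1} at $q=0$ and use $P_{u,v}(0)=1$ for all $u\leq v$. The only difference is that the paper simply cites this last fact as well known (referring to \cite[Proposition 5.1.5]{bb-ccg}), whereas you supply a short inductive proof of it from the recursion in Theorem \ref{teo klqc}; that argument is sound and adds nothing problematic.
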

\begin{proof}
The statement follows immediately from Theorem \ref{te1} and the well--known fact that $P_{x,w}(0)=1$ for all $x,w \in W(X)$ such that $x \leq w$ (see, \textrm{e.g.}, \cite[Proposition 5.1.5]{bb-ccg}).   
\end{proof}

By \cite[Proposition 3.8.5]{sta-ec1}, Corollary \ref{c-d0} asserts that $D_{x,w}(0)$ equals the M\"{o}bius function $\mu(\widehat{0},w)$ in the poset $\{y \in W(X) \setminus W_c(X):\,y \in [x,w]\} \cup \{\widehat{0}\}$. This suggests the study of the partial order induced on $W(X)\setminus W_c(X)$ by the Bruhat order.\\ 
We now derive an interesting property for $D$--polynomials.
\begin{pro} \label{pro dsgn}
Let $w \in W(X)$. Then 
$$\sum_{\substack{x \in W_c(X) \\ x \leq w}}\varepsilon_x D_{x,w}=\varepsilon_w.$$
\end{pro}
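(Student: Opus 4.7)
My strategy is to construct a one-dimensional $\A$-algebra character on $TL(X)$ sending each $t_x$ to $\varepsilon_x$, and then compare with the expansion of Proposition \ref{d-pol}. First I would define $\eta' \colon \H(X) \to \A$ on the $T$-basis by $\eta'(T_w) \= \varepsilon_w$ and extend $\A$-linearly. To verify $\eta'$ is an $\A$-algebra homomorphism, it suffices, by induction on $\ell(y)$ and the associativity of $\H(X)$, to check that $\eta'(T_w T_s) = \eta'(T_w)\eta'(T_s) = -\varepsilon_w$ for every $s \in S(X)$ and $w \in W(X)$. Using (\ref{prod}): when $\ell(ws) > \ell(w)$ this reads $\varepsilon_{ws} = -\varepsilon_w$; when $\ell(ws) < \ell(w)$ it reads $q\varepsilon_{ws} + (q-1)\varepsilon_w = -q\varepsilon_w + (q-1)\varepsilon_w = -\varepsilon_w$.

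Next I would show that $\eta'$ vanishes on $J(X)$. Since $J(X)$ is two-sided and $\eta'$ is multiplicative, it is enough to check that $\eta'$ annihilates each generator $\sum_{w \in \langle s_i, s_j\rangle} T_w$. Letting $m \= m(s_i, s_j)$ and counting the elements of the dihedral group $\langle s_i, s_j \rangle$ by length (one at length $0$, one at length $m$, and two at each intermediate length), one gets
$$\sum_{w \in \langle s_i, s_j \rangle} \varepsilon_w = 1 + 2\sum_{k=1}^{m-1}(-1)^k + (-1)^m = 0$$
for every $m \geq 2$, as required.

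Consequently $\eta'$ factors through $\sigma \colon \H(X) \to TL(X)$ to give an $\A$-algebra homomorphism $\eta \colon TL(X) \to \A$ with $\eta(t_x) = \eta'(T_x) = \varepsilon_x$ for all $x \in W_c(X)$. For an arbitrary $w \in W(X)$, applying $\eta$ to the identity $t_w = \sigma(T_w)$ gives $\eta(t_w) = \varepsilon_w$, while Proposition \ref{d-pol} yields
$$\eta(t_w) = \sum_{\substack{x \in W_c(X) \\ x \leq w}} D_{x,w}\, \eta(t_x) = \sum_{\substack{x \in W_c(X) \\ x \leq w}} \varepsilon_x D_{x,w}.$$
Equating the two expressions gives the claim. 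The only delicate point is the dihedral cancellation of Step 2; the multiplicativity of $\eta'$ and the descent to $TL(X)$ are routine once that identity is established. Notice that this argument does not actually require the projection property (\ref{Cw-0}), so the statement holds for an arbitrary Coxeter graph $X$.
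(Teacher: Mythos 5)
Your proof is correct, but it takes a genuinely different route from the one in the text. The paper argues by induction on $\l(w)$: for $w \not\in W_c(X)$ it expands $D_{x,w}$ via Lemma \ref{lemain} (the identity $\sum_{x\le y\le w}D_{x,y}P_{y,w}=0$), interchanges the order of summation, applies the induction hypothesis to the inner sums over $t\not\in W_c(X)$, and finishes with the Kazhdan--Lusztig identity $\sum_{x\le w}\varepsilon_x P_{x,w}=\delta_{e,w}$ of Proposition \ref{pro pdpol}. Since Lemma \ref{lemain} rests on $\sigma(C_w')=0$ for $w\not\in W_c(X)$, that argument is confined to graphs satisfying (\ref{Cw-0}). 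You instead observe that the sign character of $\H(X)$ --- the one-dimensional representation $T_w\mapsto\varepsilon_w$, whose multiplicativity follows at once from (\ref{prod}) --- annihilates each generator $\sum_{w\in\langle s_i,s_j\rangle}T_w$ of $J(X)$ by the elementary dihedral count (one element at length $0$, two at each length $1,\dots,m-1$, one at length $m$), hence descends to a character $\eta$ of $TL(X)$ with $\eta(t_x)=\varepsilon_x$; applying $\eta$ to the expansion of Proposition \ref{d-pol} gives the identity immediately. All the steps check out, including the cancellation $1+2\sum_{k=1}^{m-1}(-1)^k+(-1)^m=0$ in both parities of $m$. What your approach buys is twofold: it avoids Kazhdan--Lusztig polynomials entirely, and, as you correctly note, it establishes the proposition for an arbitrary Coxeter graph $X$, not only for those satisfying (\ref{Cw-0}) as assumed throughout this section. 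What the paper's approach buys is coherence with the surrounding machinery: the Lemma \ref{lemain}/Proposition \ref{pro pdpol} pattern is the same one driving Theorem \ref{te1} and its corollaries, so the inductive proof comes essentially for free once those tools are in place.
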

\begin{proof}
We proceed by induction on $\l(w)$. The proposition is trivial if $w \in W_c(X)$, which covers the case $\l(w) \leq 2$. Suppose that $w \not \in W_c(X)$. Then, by Lemma \ref{lemain} we have
\begin{eqnarray}
\sum_{\substack{x \in W_c(X) \\ x \leq w}}\varepsilon_x D_{x,w} &=& \sum_{\substack{x \in W_c(X) \\ x < w}}\varepsilon_x(-P_{x,w}) + \sum_{\substack{x \in W_c(X) \\ x < w}}\varepsilon_x \left( -\sum_{\substack{t \not \in W_c(X) \\ x<t<w}} D_{x,t}P_{t,w} \right)   \nonumber \\
      &=& -\sum_{\substack{x \in W_c(X) \\ x<w}} \varepsilon_x P_{x,w} - \sum_{\substack{t \not \in W_c(X) \\ t<w}} P_{t,w} \left( \sum_{\substack{x \in W_c(X) \\ x<t}} \varepsilon_x D_{x,t} \right) \nonumber \\  
      &=& -\sum_{\substack{x \in W_c(X) \\ x<w}} \varepsilon_x P_{x,w} - \sum_{\substack{t \not \in W_c(X) \\ t<w}} P_{t,w} \varepsilon_t \nonumber \\  
      &=& -\sum_{x<w} \varepsilon_x P_{x,w}, \nonumber
\end{eqnarray} 
and the statement follows from Proposition \ref{pro pdpol}.
\end{proof}

\begin{lemma} \label{dpol2}
Let $x \in W_c(X), w \not \in W_c(X)$ be such that $x < w$. If $\ell(x,w)=1$ then $D_{x,w}=-1$. If $\ell(x,w)=2$, then
\[
D_{x,w}=\left\{ \begin{array}{rl}
1  & \mbox{ if $k=2$,} \\ 
0  & \mbox{ if $k=1$,} \\
-1 & \mbox{ if $k=0$,}
\end{array} \right. 
\]
with $k \= |\{y \not \in W_c(X):\, x<y<w\}|$.
\end{lemma}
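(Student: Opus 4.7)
The strategy is to apply Theorem \ref{te1} directly, observing that when $\ell(x,w)$ is very small the Kazhdan--Lusztig polynomials appearing in the chain formula are forced by Lemma \ref{rp2} to equal $1$, so the formula reduces to a simple count of chains.

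First consider $\ell(x,w)=1$. Any chain $x=x_0<x_1<\cdots<x_k=w$ in the bracket of Theorem \ref{te1} must have $k=1$, since the length difference is only $1$, so the unique chain is $x<w$ itself. Then $D_{x,w}=(-1)^1 P_{x,w}$, and Lemma \ref{rp2}(ii) gives $P_{x,w}=1$, yielding $D_{x,w}=-1$.

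Next consider $\ell(x,w)=2$. Now the chains $x=x_0<\cdots<x_k=w$ with $x_i\notin W_c(X)$ for $i>0$ come in only two shapes:
\begin{itemize}
\item[\rm (a)] the length-one chain $x<w$ with $k=1$;
\item[\rm (b)] chains $x<y<w$ with $k=2$, where $y\notin W_c(X)$.
\end{itemize}
There is exactly one chain of type (a), contributing $(-1)^1 P_{x,w}=-1$ by Lemma \ref{rp2}(ii). Each chain of type (b) contributes $(-1)^2 P_{x,y}P_{y,w}=1\cdot 1=1$ by the same lemma, since $\ell(x,y)=\ell(y,w)=1$. The number of such chains is exactly $k=|\{y\notin W_c(X):\,x<y<w\}|$. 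Summing via Theorem \ref{te1} gives
$$D_{x,w}=-1+k,$$
which is $1$, $0$, $-1$ according to whether $k=2,1,0$, matching the three cases of the statement.

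No step looks genuinely hard: the whole argument is essentially bookkeeping for chains of length $\leq 2$, and the only input beyond Theorem \ref{te1} is the short-interval identity $P_{x,w}=1$ of Lemma \ref{rp2}. The only subtlety worth flagging is that the chain formula in Theorem \ref{te1} requires $x_i\notin W_c(X)$ for all $i>0$, which is precisely why the count in case (b) is restricted to intermediate elements outside $W_c(X)$, reproducing exactly the quantity $k$ defined in the lemma.
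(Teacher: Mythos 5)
Your proof is correct and follows essentially the same route as the paper's: apply Theorem \ref{te1}, use Lemma \ref{rp2} to reduce every Kazhdan--Lusztig polynomial on these short intervals to $1$, and count the chains, obtaining $D_{x,w}=-1+k$. The only cosmetic difference is that the paper also remarks that $[x,w]\cong B_2$ to justify $k\in\{0,1,2\}$, a point your argument does not need since the formula $-1+k$ already covers all three cases.
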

\begin{proof}
By Lemma \ref{rp2}, $P_{x,w}=1$ for all $x,w \in W(X)$ such that $\ell(x,w)\leq 2$. If $\ell(x,w)=1$, then Theorem \ref{te1} implies $D_{x,w}=-P_{x,w}=-1$. If $\ell(x,w)=2$, then the interval $[x,w]$ is isomorphic to the boolean lattice $B_2$ (see, \textrm{e.g.}, \cite[Lemma 2.7.3]{bb-ccg}), so $k \in \{0,1,2\}$. Moreover, $P_{x,y}=P_{y,w}=1$ for all $y \not \in W_c(X)$ such that $x<y<w$, since $\ell(x,y)=\ell(y,w)=1$. From Theorem \ref{te1} we get
\begin{eqnarray}
D_{x,w}    & = & \sum \left((-1)^k \prod_{i=1}^{k} P_{x_{i-1},x_i} \right) \nonumber \\
           & = & -P_{x,w} + \sum_{\substack{y \not \in W_c(X) \\ x<y<w}} P_{x,y}P_{y,w} \nonumber \\           
           & = & -1 + \sum_{\substack{y \not \in W_c(X) \\ x<y<w}} 1 \nonumber \\
           & = & -1+k, \nonumber
\end{eqnarray}
and the statement follows.
\end{proof}

Using Theorem \ref{te1} we obtain an upper bound for the degree of $D_{x,w}$ that is also used in \S \ref{sec: l}.
\begin{pro}\label{p-upb}
For all $x \in W_c(X)$, $w \not \in W_c(X)$ such that $x<w$, we have $deg(D_{x,w}) \leq \frac{1}{2}(\l(w)-\l(x)-1)$.
\end{pro}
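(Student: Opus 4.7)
The plan is to deduce the bound directly from the explicit formula given in Theorem \ref{te1}. By that theorem, $D_{x,w}$ is a signed sum, indexed by chains $x = x_0 < x_1 < \cdots < x_k = w$ (with $x_i \notin W_c(X)$ for $i>0$ and $1 \le k \le \ell(x,w)$), of products $\prod_{i=1}^k P_{x_{i-1},x_i}$. So it suffices to bound the degree of each such product uniformly.

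For each factor, since $x_{i-1} < x_i$, the degree bound from Theorem \ref{kl-pol} gives
\[
\deg P_{x_{i-1},x_i} \;\le\; \tfrac{1}{2}\bigl(\ell(x_i) - \ell(x_{i-1}) - 1\bigr).
\]
Summing over $i = 1,\dots,k$ and telescoping the lengths, I obtain
\[
\deg\!\left(\prod_{i=1}^k P_{x_{i-1},x_i}\right) \;\le\; \sum_{i=1}^k \tfrac{1}{2}\bigl(\ell(x_i)-\ell(x_{i-1})-1\bigr) \;=\; \tfrac{1}{2}\bigl(\ell(w) - \ell(x) - k\bigr).
\]
Since $k \ge 1$ on every chain appearing in the sum, this is at most $\tfrac{1}{2}(\ell(w)-\ell(x)-1)$. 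As $D_{x,w}$ is a $\mathbb{Z}$-linear combination of such products, its degree is bounded by the maximum among them, yielding the claim.

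There is essentially no obstacle: the only subtlety is that Theorem \ref{te1} requires $X$ to satisfy \eqref{Cw-0} (which is the standing hypothesis in this portion of \S\ref{s-dpol}), and one must check that the strict chain condition $x_{i-1} < x_i$ ensures the ``strict'' form of the Kazhdan--Lusztig degree bound applies to every factor. Both conditions are immediate from the setup, so the estimate above completes the proof.
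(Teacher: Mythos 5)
Your argument is correct and is essentially identical to the paper's own proof: both invoke Theorem \ref{te1}, apply the Kazhdan--Lusztig degree bound $\deg P_{x_{i-1},x_i}\leq\frac{1}{2}(\ell(x_i)-\ell(x_{i-1})-1)$ to each factor, telescope the sum to get $\frac{1}{2}(\ell(w)-\ell(x)-k)$, and conclude from $k\geq 1$. No differences worth noting.
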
 
\begin{proof}
Recall from \cite[Theorem 1.1]{kl-rcg} that $deg(P_{x,w}) \leq \frac{1}{2}(\l(w)-\l(x)-1)$ if $x<w$. By Theorem \ref{te1} we know that 
$$D_{x,w}=\sum \left((-1)^k \prod_{i=1}^{k}P_{x_{i-1},x_i}\right),$$ where the sum runs over all the chains $x=x_0<x_1<\cdots <x_k=w$ such that $x_i \not \in W_c(X)$ if $i>0$, and $ 1\leq k \leq \l(x,w)$. Each term $\prod_{i=1}^{k}P_{x_{i-1},x_i}$ has degree
$$\sum_{i=1}^{k} deg(P_{x_{i-1},x_i}) \leq \sum_{i=1}^{k}\frac{1}{2}(\l(x_i)-\l(x_{i-1})-1)=\frac{1}{2}(\l(x_k)-\l(x_0)-k).$$
Since $k \geq 1$, the statement follows.   
\end{proof}

We end this section by deriving from Theorem \ref{te1} a closed formula for the polynomials $D_{x,w}$ indexed by elements $x \in W_c(X)$ and $w \not \in W_c(X)$ such that $([x,w]\cap (W(X)\setminus W_c(X))) \cup \{x\} = [x,w] \cong B_{l(x,w)}$. In type $A$ it is easy to realize this case.
Let $x \in W(A_n)$. Recall that $x$ is said to be a Coxeter element if $s_{\sigma(1)} \cdots s_{\sigma(n)}$ is a reduced expression for $x$, for some $\sigma \in S_n$. 
It is clear that a Coxeter element is always a fully commutative element.

\begin{teo} \label{l-bool}
Let $s_1 s_2 \cdots s_n \cdots s_2 s_1$ be a reduced expression for $w \in W(A_n)$ and let $x \in W(A_n)$ be a Coxeter element. Then the following hold:
\begin{itemize}
\item[{\rm (i)}]  
$x \leq w$;  
\item[{\rm (ii)}]
$[x,w] \cong B_{\ell(x,w)}$;
\item[{\rm (iii)}] 
$([x,w]\cap (W(A_n)\setminus W_c(A_n))) \cup \{x\} = [x,w]$.
\end{itemize}
\end{teo}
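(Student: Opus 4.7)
My plan exploits the explicit reduced expression of $w$ together with the cycle characterization of Coxeter elements in $S_{n+1}$: every Coxeter element is an $(n+1)$-cycle of the form $(1, a_1, \ldots, a_k, n+1, b_l, \ldots, b_1)$ where $a_1 < \cdots < a_k$ and $b_1 < \cdots < b_l$ partition $\{2, \ldots, n\}$, which accounts for exactly the $2^{n-1}$ Coxeter elements. For (i), this cycle data yields the reduced expression
\[
x = s_{a_1-1}\, s_{a_2-1}\, \cdots\, s_{a_k-1}\, s_n\, s_{b_l-1}\, s_{b_{l-1}-1}\, \cdots\, s_{b_1-1},
\]
which is visibly a subword of $s_1 s_2 \cdots s_n s_{n-1} \cdots s_1 = w$ via position $a_i-1$ for $s_{a_i-1}$, position $n$ for $s_n$, and position $2n-b_j+1$ for $s_{b_j-1}$. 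Hence $x \leq w$ by the Subword Property (Proposition \ref{sub prop}).

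For (ii) I would parametrize $[x,w]$ by subsets $T \subseteq \{1, \ldots, n-1\}$. The subword above uses $n$ of the $2n-1$ positions of $w$, and the complementary $n-1$ positions supply precisely one ``unused'' occurrence of each $s_i$, $i<n$: at position $2n-i$ if $s_i$ sat on the left in $x$'s subword, and at position $i$ otherwise. For $T \subseteq \{1,\ldots,n-1\}$ let $y_T$ be the product of the subword of $w$ obtained by adjoining to $x$'s positions the unused positions for $i \in T$. I would verify that $y_T$ is reduced of length $n+|T|$ (any repeated $s_i$ has a non-commuting $s_{i+1}$ strictly between the two copies, blocking any cancellation), and that $T \mapsto y_T$ is an injective order-preserving map of $(B_{n-1}, \subseteq)$ into $[x, w]$; the isomorphism $[x, w] \cong B_{n-1}$ then follows from surjectivity.

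For (iii) fix $T \neq \emptyset$ and set $i = \max T$. Both copies of $s_i$ in $w$ (at positions $i$ and $2n-i$) lie in the subword for $y_T$. Since $j \notin T$ for every $j > i$, the generator $s_{i+1}$ appears exactly once strictly between the two $s_i$'s in that subword, and every other generator appearing there has index $\geq i+2$ and therefore commutes with $s_i$. Sliding the leftmost $s_i$ rightward and the rightmost $s_i$ leftward across these commuting generators yields a reduced expression of $y_T$ containing $s_i s_{i+1} s_i$ as a contiguous substring; by the characterization of $W_c(A_n)$ through avoidance of substrings of the form $s_j s_{j \pm 1} s_j$ (stated in the paragraph preceding Proposition \ref{321wc}), $y_T \notin W_c(A_n)$.

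The main obstacle is surjectivity of $T \mapsto y_T$ in (ii). While injectivity and order-preservation are immediate from the subword construction, establishing that every $y \in [x, w]$ is of the form $y_T$ requires additional work. A likely approach is an induction on $\ell(y) - \ell(x)$ using the Lifting Property (Lemma \ref{lif lemma}) to show that every Bruhat cover of an element in the image of $T \mapsto y_T$ extends the subword by exactly one unused position; as the base of this induction, one directly enumerates the $n-1$ covers of $x$ inside $[x, w]$ and matches them bijectively to the singleton subsets of $\{1,\ldots,n-1\}$.
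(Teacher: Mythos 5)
Your strategy is essentially the paper's: produce an explicit reduced subword of $s_1\cdots s_n\cdots s_1$ representing $x$ (your cycle description of Coxeter elements of $S_{n+1}$ encodes the same data as the acyclic orientation of $A_n$ that the paper uses), parametrize $[x,w]$ by which of $s_1,\dots,s_{n-1}$ occur twice in such a subword, and for (iii) slide commuting letters to expose the braid $s_is_{i+1}s_i$ at $i=\max T$. Parts (i) and (iii) are correct as written; (iii) is in fact spelled out more carefully than in the paper, which asserts the presence of the braid without the commutation argument (though, like the paper's, your (iii) presupposes that (ii) has been fully established, since it only treats elements of the form $y_T$).

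The gap is in (ii), and it is wider than you indicate. The claim that injectivity of $T\mapsto y_T$ is ``immediate from the subword construction'' is not justified: distinct subwords of $w$ can represent the same permutation, and to separate $y_T$ from $y_{T'}$ when $|T|=|T'|$ you need to know that the number of occurrences of each $s_i$ is the same in every reduced subword of $w$ representing a given element. This is false for arbitrary reduced words (a braid move $s_is_{i+1}s_i\to s_{i+1}s_is_{i+1}$ changes the counts), and the paper has to import it from \cite[Corollary 3.3]{m-bek}. Your self-identified ``main obstacle,'' surjectivity of $T\mapsto y_T$, is precisely the injectivity of the resulting occurrence-count map $[x,w]\to\{1,2\}^{n-1}$; the paper proves it not by the cover-counting induction you gesture at (which would itself require the same occurrence-count invariance to show that each Bruhat cover adjoins exactly one unused position), but by a contradiction argument exploiting the fact that $x$ determines a \emph{unique} left/right placement of each singly-occurring generator, so two elements with the same occurrence vector but different placements cannot both lie above $x$. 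Until these two points are supplied, (ii) is unproved. Finally, your reducedness claim for $y_T$ needs more than ``blocking any cancellation'': $s_i u s_i$ with a non-commuting letter inside $u$ is not automatically reduced, and the paper derives reducedness from the Lifting Property together with \cite[Lemma 3.1]{m-bek}.
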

\begin{proof}
\textrm{(i)} We find a reduced expression for $x$ that is a subexpression of $$s_1 s_2 \cdots s_n \cdots s_2 s_1.$$ From \cite[Theorem 1.5]{shi-ece} there is a bijection between the set of Coxeter elements and the acyclic orientations of the Coxeter graph $A_n$. Let $A_n^x$ be the acyclic orientation of the graph $A_n$ associated to $x$. We say that $s_i$ is on the left (respectively on the right) of $s_{i+1}$ in $x=s_{\sigma(1)} \cdots s_{\sigma(n)}$ if $s_i \longrightarrow s_{i+1}$ (respectively $s_i \longleftarrow s_{i+1}$) in $A_n^x$. Therefore we are able to produce a reduced expression for $x$ from $A_n^x$ in the following way: set $x_n:=s_n$ and juxtapose $s_{n-1}$ to the left (respectively to the right) of $x_n$ if $s_{n-1} \longrightarrow s_n$ (respectively $s_{n-1} \longleftarrow s_n$). Set $x_{n-1}:= s_{n-1}x_n$ (respectively $x_n s_{n-1}$). Repeat the same process with $x_{n-1}$ and $s_{n-2}$, and so on. The process ends when we get $x_1$. In fact, $x_1$ is a reduced expression for $x$ and $x_1$ is, by construction, a subexpression of $w$. Hence \textrm{(i)} follows from Theorem \ref{sub prop}.

\textrm{(ii)} By Theorem \ref{sub prop}, every element $y \in [x,w]$ admits (at least) one reduced expression that is a subexpression of $s_1 s_2 \cdots s_n \cdots s_2 s_1$. Let $r(y)$ be one of these reduced expressions. Observe that the reduced expression $x_1$ obtained in \textrm{(i)} is a possible choice for $r(x)$. Consider the map $\phi : [x,w] \rightarrow \mathcal{A}$, with $\mathcal{A} = \{(\alpha_1,\cdots,\alpha_{n-1}):\, \alpha_i \in \{1,2\} \}$, such that $\phi(y)=(\alpha_1,\cdots,\alpha_{n-1})$ if and only if $r(y)$ has $\alpha_i$ occurences of the generator $s_i$. By \cite[Corollary 3.3]{m-bek}, the map $\phi$ is well-defined. We claim that $\phi$ is a bijection.\\ 
First, we prove the surjectivity. Fix $(\alpha_1,\cdots,\alpha_{n-1}) \in \mathcal{A}$. We describe an algorithm to construct (a reduced expression $r(y)$ for) an element $y \in W(A_n)$ such that $y \in [x,w]$ and $\phi(y)=(\alpha_1,\cdots,\alpha_{n-1})$ in the following way: set $y_n:=s_n$. If $\alpha_{n-1}=2$ then set $y_{n-1}:=s_{n-1}y_ns_{n-1}$. Otherwise, proceed as in the proof of point \textrm{(i)}, that is, juxtapose $s_{n-1}$ to the left (respectively, to the right) of $y_n$ if $s_{n-1} \longrightarrow s_n$ (respectively, $s_{n-1} \longleftarrow s_n$) and set $y_{n-1}:= s_{n-1}y_n$ (respectively, $y_n s_{n-1}$). Repeat the same process with $y_{n-1}$ and $\alpha_{n-2}$, and so on. The process ends when we get $y_1$. In fact, $r(x)=x_1$ is a subexpression of $y_1$ by construction. Next, we show that $y_1$ is a reduced expression. Observe that if $y_j$ is reduced then $y_js_{j-1}>y_j$ and $s_{j-1}y_j>y_j$, since there is no occurence of $s_{j-1}$ in $y_j$. Now, we proceed by contradiction to prove that $s_{j-1}y_js_{j-1}$ is reduced. Suppose $\ell(s_{j-1}y_js_{j-1}) < \ell(y_j)+2$, \textrm{i.e.}, $\ell(s_{j-1}y_js_{j-1}) \leq \ell(y_j)$. By applying Lemma \ref{lif lemma} with $x=s_{j-1}y_js_{j-1}$ and $w=y_js_{j-1}$ we get $s_{j-1}y_j = y_j s_{j-1}$, and $s_{j-1}y_j,\, y_j s_{j-1}$ are both reduced. Hence, \cite[Lemma 3.1]{m-bek} implies that $s_{j-1}$ commutes with each generator in $y_j$, which is absurd, since $s_j \leq y_j$ by construction. Therefore $\ell(s_{j-1}y_js_{j-1})=\ell(y_j)+2$. We conclude that $y_1$ is a reduced expression by induction on $n-i$, with $i=0 \cdots n-1$.\\
Denote by $y \in  W(A_n)$ the element that admits $y_1$ as a reduced expression. Then $y$ has the desired properties.\\
For the injectivity we proceed by contradiction. Suppose that $u,v \in [x,w]$ are such that $\phi(u)=\phi(v)=(\alpha_1,\cdots,\alpha_{n-1})$, with $u \neq v$. Denote by $r(u)$ (respectively, $r(v)$) the reduced expression of $u$ (respectively, $v$) obtained by applying the algorithm described above. Then $u \neq v$ implies $r(u) \neq r(v)$, that is, there exists an index $i \in [n-1]$ such that $\alpha_i=1$ and the position of the factor $s_{i}$ in $r(u)$ and $r(v)$ is different. Denote by $j$ be the minimum among these indices. Therefore, for every $h < j$ such that $\alpha_{h}=1$, $s_{h}$ appears on the same side in $r(u)$ as $r(v)$. Suppose that $\alpha_{j+1}=1$ and, for instance, that $$ r(u) = y_1 s_{j} \widehat{s_{j+1}} \cdots s_n \cdots s_{j+1}\widehat{s_{j}} y_2,$$ where $y_1 \leq s_1s_2 \cdots s_{j-1}$ and $y_2 \leq s_{j-1} s_{j-2} \cdots s_1$. Then $$r(v) = y_1 \widehat{s_{j}} s_{j+1} \cdots s_n \cdots \widehat{s_{j+1}} s_{j} y_2 \, \mbox{ or } \, r(v) = y_1 \widehat{s_{j}} \widehat{s_{j+1}} \cdots s_n \cdots s_{j+1} s_{j} y_2.$$ In both cases, $r(v)$ is a reduced expression such that $s_{j}$ is on the right of  $s_{j+1}$. On the other hand, $r(u)$ is a reduced expression of $u$ such that $s_{j}$ is on the left of $s_{j+1}$. Hence, Theorem \ref{sub prop} implies that $x$ admits a reduced expression that is a subexpression of $r(u)$ and a (possibly different) reduced expression that is a subexpression of $r(v)$. This is a contradiction, since $x$ is uniquely determined by the relations $s_i \longrightarrow s_{i+1}$ or $s_i \longleftarrow s_{i+1}$. The same conclusion holds if we consider different deletions of $s_j$ and $s_{j+1}$. In the case $\alpha_{j+1}=2$, we may assume that $$ r(u) = y_1 s_{j} s_{j+1} \cdots s_n \cdots s_{j+1}\widehat{s_{j}} y_2 \mbox{ and } r(v) = y_1 \widehat{s_{j}} s_{j+1} \cdots s_n \cdots s_{j+1} s_{j} y_2.$$ Observe that $r(v)$ (respectively, $r(u)$) is a reduced expression such that $s_{j}$ is on the right (respectively, on the left) of  $s_{j+1}$. Therefore, we reach the same contradiction that we obtained in the previous case.
               
\textrm{(iii)} Let $y \in (x,w]$ and $\phi(y)=(\alpha_1,\cdots,\alpha_{n-1})$. Let $j$ be the maximum of the  $i \in [n-1]$ such that $\alpha_i=2$. Then $r(y)$ contains the braid $s_js_{j+1}s_j$, so $y \not \in W_c(X)$.
\end{proof}

\begin{cor}\label{boolint}
Let $s_1 s_2 \cdots s_n \cdots s_2 s_1$ be a reduced expression for $w \in W(A_n)$ and let $x \in W(A_n)$ be a Coxeter element. Then $D_{x,w}=\varepsilon_x \varepsilon_w$.
\end{cor}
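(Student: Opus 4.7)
The plan is to combine Theorem \ref{te1} with the structural information packaged in Theorem \ref{l-bool}, together with the classical triviality of Kazhdan--Lusztig polynomials on Boolean Bruhat intervals. If $n=1$, then $w=s_1=x\in W_c(A_1)$ and $D_{x,w}=1=\varepsilon_x\varepsilon_w$, so I may assume $n\geq 2$. In this case any reduced expression for $w$ contains the braid $s_1s_2s_1$, hence $w\notin W_c(A_n)$. By Theorem \ref{l-bool}, one then has $x\leq w$, the interval $[x,w]$ is isomorphic to the Boolean lattice $B_m$ with $m=\ell(x,w)=(2n-1)-n=n-1$, and every element of $(x,w]$ lies outside $W_c(A_n)$.

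The next step is to apply Theorem \ref{te1}. Thanks to Theorem \ref{l-bool}(iii), the restriction ``$x_i\notin W_c(A_n)$ for $i>0$'' is automatic, so
$$D_{x,w}=\sum_{k=1}^{m}\;\sum_{x=x_0<x_1<\cdots<x_k=w}(-1)^{k}\prod_{i=1}^{k}P_{x_{i-1},x_i}.$$
Every sub-interval $[x_{i-1},x_i]$ sits inside $[x,w]\cong B_m$, and an interval of a Boolean lattice is again Boolean. I would then invoke the classical fact that the Kazhdan--Lusztig polynomial of any Bruhat interval isomorphic to a Boolean lattice is equal to $1$ (a consequence of the rational smoothness of the associated Schubert variety). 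Hence each factor $P_{x_{i-1},x_i}$ collapses to $1$, and the formula reduces to $D_{x,w}=\sum_{k=1}^{m}(-1)^{k}c_k$, where $c_k$ counts the chains of length $k$ from $\hat{0}$ to $\hat{1}$ in $B_m$.

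Finally, by Philip Hall's theorem \cite[Proposition 3.8.5]{sta-ec1} this alternating sum equals the M\"obius value $\mu_{B_m}(\hat{0},\hat{1})=(-1)^m=(-1)^{n-1}$. A direct parity check gives $\varepsilon_x\varepsilon_w=(-1)^{n}(-1)^{2n-1}=(-1)^{n-1}$, and the corollary follows.

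The main obstacle in this plan is the input that $P_{u,v}=1$ whenever $[u,v]$ is Boolean in Bruhat order; everything else is bookkeeping of poset data already recorded in Theorems \ref{te1} and \ref{l-bool}. In a fully self-contained treatment one would have to derive this Boolean triviality from the Kazhdan--Lusztig recurrence of Theorem \ref{teo klqc} by induction on $\ell(u,v)$, using that on such an interval the lower-degree bound on $P$ together with $\iota$-invariance leaves no room for non-constant contributions.
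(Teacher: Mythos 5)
Your proof is correct and follows essentially the same route as the paper's: Theorem \ref{te1} plus Theorem \ref{l-bool}, the fact that Kazhdan--Lusztig polynomials are trivial on Boolean Bruhat intervals (the paper cites \cite[Corollary 4.12]{bre-cpkl}), Philip Hall's theorem \cite[Proposition 3.8.5]{sta-ec1}, and the M\"obius function of $B_m$. The only (immaterial) slip is your justification that $w \notin W_c(A_n)$: the given reduced expression need not contain $s_1s_2s_1$, but it does contain the braid $s_{n-1}s_ns_{n-1}$, which suffices by Stembridge's criterion.
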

\begin{proof}
If $n=1$ then $x=w$ and the statement follows trivially. Suppose $n>1$. If $u,v \in W(A_n)$ are such that $[u,v] \simeq B_{\ell(u,v)}$, then $P_{u,v}=1$ (see \cite[Corollary 4.12]{bre-cpkl}). Therefore, Theorem \ref{l-bool} implies that $P_{u,v}=1$, for all $u,v \in [x,w]$. Hence, from Theorem \ref{te1} and Theorem \ref{l-bool} we achieve $$D_{x,w} = \sum \left((-1)^k \prod_{i=1}^{k}P_{x_{i-1},x_i}\right)= \sum (-1)^k,$$ where the sum runs over all the chains $x=x_0<x_1<\cdots <x_k=w$ such that $1\leq k \leq \ell(x,w)$. Therefore, $D_{x,w}$ equals the alternating sum $$\sum_{k=1}^{\ell(x,w)}(-1)^k c_k,$$ where $c_k$ denotes the number of chains $x=x_0<x_1<\cdots <x_k=w$. By \cite[Proposition 3.8.5]{sta-ec1}, we get $$\sum_{k=1}^{\ell(x,w)}(-1)^kc_k=\mu(x,w),$$ where $\mu$ denotes the M\"{o}bius function on the poset induced by the Bruhat order on $[x,w]$. On the other hand, if $\mathcal{P}=(\mathcal{P}, \leq)$ is a boolean poset and $U,V \in \mathcal{P}$ are such that $U \leq V$, then $\mu_{\mathcal{P}}(U,V)=(-1)^{|V-U|}$, where $|V-U|$ denotes the length of the interval $[U,V]$ (see, \textrm{e.g.}, \cite[Example 3.8.3]{sta-ec1}). Finally, observe that the length of a Bruhat interval $[x,w]$ is $\ell(x,w)$ (see, \textrm{e.g.}, \cite[Theorem 2.2.6]{bb-ccg}) and the statement follows.
\end{proof}

For instance, in Example \ref{ese dpol} we showed that $D_{x,w}=\varepsilon_x \varepsilon_w=1$, for every $x \in \{s_1s_2s_3,\, s_1s_3s_2,\,s_2s_1s_3,\,s_3s_2s_1\}$, with $w=s_1s_2s_3s_2s_1$.

\newpage

\section{Combinatorial properties of $a_{x,w}$}\label{sec: a}
In this section we study the family of polynomials $\{a_{x,w}\}_{x,w \in W_c(X)} \subseteq \Z[q]$, which express the involution $\iota$ in terms of the $t$--basis (see Proposition \ref{a-pol}). More precisely, we obtain a recurrence relation for $a_{x,w}$, $X$ being an arbitrary Coxeter graph. Then we will focus on the Coxeter graph satisfying equation (\ref{Cw-0}) and derive some results concerning symmetry properties, and the value of the constant term of $a_{x,w}$. 

\begin{pro} \label{proarec}
Let $X$ be an arbitrary Coxeter graph. Let $w \in W_c(X)$ and $s \in S(X)$ be such that $w>ws \in W_c(X)$. Then, for all $x \in W_c(X),\,x \leq w$, we have 
$$a_{x,w}=\widetilde{a_{x,w}} + \sum_{\substack{y \in W_c(X),\, ys \not \in W_c(X) \\ ys>y}}D_{x,ys}a_{y,ws},$$
where
\[
\widetilde{a_{x,w}}\=\left\{
\begin{array}{ll}
a_{xs,ws}                        & \mbox{ if } x>xs,           \\
qa_{xs,ws}+(1-q)a_{x,ws}         & \mbox{ if } x<xs \in W_c(X),   \\
(1-q)a_{x,ws}                    & \mbox{ if } x<xs \not \in W_c(X).
\end{array} \right.
\] 
\end{pro}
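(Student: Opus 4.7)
My plan is to mimic the proof of Proposition \ref{p-gc}, but starting from the defining identity of the $a$--polynomials (Proposition \ref{a-pol}) rather than from the $t$--basis expansion of $t_w$. Write $v\=ws$, so that $\ell(w)=\ell(v)+1$ and $w^{-1}=sv^{-1}$ with $\ell(w^{-1})=1+\ell(v^{-1})$. In $\H(X)$ this gives $T_{w^{-1}}=T_sT_{v^{-1}}$, hence $(T_{w^{-1}})^{-1}=(T_{v^{-1}})^{-1}(T_s)^{-1}$. Projecting via $\sigma$ and using that $\iota$ is compatible with $\sigma$ (Lemma \ref{ijj}), the same identity holds in $TL(X)$:
\[
(t_{w^{-1}})^{-1}=(t_{v^{-1}})^{-1}(t_s)^{-1}=(t_{v^{-1}})^{-1}\cdot q^{-1}\bigl(t_s-(q-1)t_e\bigr),
\]
since the Hecke identity $T_s^2=(q-1)T_s+qT_e$ descends to $TL(X)$.

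Next I would substitute the expansion $(t_{v^{-1}})^{-1}=q^{-\ell(v)}\sum_{y\in W_c(X),\,y\leq v}a_{y,v}\,t_y$ from Proposition \ref{a-pol} and multiply on the right by $q^{-1}(t_s-(q-1)t_e)$, obtaining
\[
(t_{w^{-1}})^{-1}=q^{-\ell(w)}\sum_{\substack{y\in W_c(X)\\ y\leq v}}a_{y,v}\Bigl(t_yt_s-(q-1)t_y\Bigr).
\]
To handle $t_yt_s$ I would split according to the three possibilities for $y$: if $ys<y$ use (\ref{mult-b}) to get $qt_{ys}+(q-1)t_y$; if $ys>y$ and $ys\in W_c(X)$ just get $t_{ys}$; and if $ys>y$ with $ys\notin W_c(X)$, apply Proposition \ref{d-pol} to rewrite $t_{ys}=\sum_{x\in W_c(X),\,x\leq ys}D_{x,ys}t_x$. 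The contributions from $ys\notin W_c(X)$ immediately produce the sum $\sum D_{x,ys}\,a_{y,ws}$ appearing in the statement.

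Finally, I would extract the coefficient of $t_x$ on both sides (compared with $q^{-\ell(w)}a_{x,w}$ from Proposition \ref{a-pol}) and split into the three cases of $\widetilde{a_{x,w}}$. In case $xs<x$ only the summand ``$y=xs$'' from $ys>y$, $ys\in W_c(X)$ contributes $a_{xs,v}$, while the two occurrences of $(q-1)a_{x,v}$ (from (\ref{mult-b}) and from the $-(q-1)t_y$ term) cancel. In case $x<xs\in W_c(X)$ the summand $y=xs$ with $ys<y$ contributes $qa_{xs,v}$, and only the single $-(q-1)a_{x,v}$ survives, giving $qa_{xs,ws}+(1-q)a_{x,ws}$. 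In case $x<xs\notin W_c(X)$ no $t_x$ arises from the ``easy'' terms and only the $-(q-1)a_{x,v}$ contribution plus the $D$--polynomial sum remain.

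The main obstacle is purely bookkeeping: tracking which of the five pieces $qa_{y,v}t_{ys}$, $(q-1)a_{y,v}t_y$, $a_{y,v}t_{ys}$, $a_{y,v}\sum_xD_{x,ys}t_x$, and $-(q-1)a_{y,v}t_y$ actually produce a $t_x$ in each of the three parity/commutativity cases, and verifying that the $(q-1)a_{x,v}$ terms cancel precisely when $xs<x$ (which is the analogue of the phenomenon in Proposition \ref{p-gc}). No new idea is needed beyond (\ref{mult-b}), Proposition \ref{d-pol}, and the formula $(t_s)^{-1}=q^{-1}(t_s-(q-1)t_e)$.
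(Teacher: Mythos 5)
Your proposal is correct and follows essentially the same route as the paper: write $(t_{w^{-1}})^{-1}=(t_{ (ws)^{-1}})^{-1}(t_s)^{-1}$ with $(t_s)^{-1}=q^{-1}(t_s-(q-1)t_e)$, expand via Proposition \ref{a-pol}, split $t_yt_s$ into the three cases (using Proposition \ref{d-pol} when $ys\notin W_c(X)$), and extract the coefficient of $t_x$; the cancellation of the two $(q-1)a_{x,ws}$ terms when $xs<x$ is exactly the simplification the paper performs by combining the $ys<y$ terms with the $-(q-1)\sum a_{y,ws}t_y$ sum. The only cosmetic difference is that you invoke Lemma \ref{ijj} to justify the product identity in $TL(X)$, whereas it already follows from $\sigma$ being an algebra homomorphism.
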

\begin{proof}
On the one hand, by Proposition \ref{a-pol}, we have 
\[ (t_{w^{-1}})^{-1}=q^{-\l(w)}\sum_{\substack{y \in W_c(X) \\ y\leq w}}a_{y,w} t_y. \]
On the other hand, letting $v \stackrel{\rm def}{=} ws$, we get
\begin{eqnarray}
(t_{w^{-1}})^{-1} & = & (t_{v^{-1}})^{-1}(t_s)^{-1} \nonumber \\
                & = & q^{-\ell(v)}\sum_{\substack{y \in W_c(X) \\ y\leq v}}a_{y,v}t_y \cdot q^{-1}(t_s-(q-1)t_e)    \nonumber \\
                & = & q^{-\ell(w)} \left(\sum_{\substack{y \in W_c(X) \\ y\leq v}}a_{y,v}t_yt_s -(q-1)\sum_{\substack{y \in W_c(X) \\ y\leq v}}a_{y,v}t_y  \right)   \nonumber \\
                & = & q^{-\ell(w)} \left(\sum_{\substack{y \in W_c(X),\, ys \in W_c(X) \\ y\leq v,\,ys>y}} a_{y,v} t_{ys} + \sum_{\substack{y \in W_c(X),\, ys \not \in W_c(X) \\ y\leq v,\,ys>y}} a_{y,v} t_{ys} \right)  \nonumber \\
                &   & +\: q^{-\ell(w)} \left(\sum_{\substack{y \in W_c(X) \\ y\leq v,\,ys<y}} a_{y,v} t_{ys}-(q-1)\sum_{\substack{y \in W_c(X) \\ y\leq v}}a_{y,v}t_y \right) \nonumber \\
                & = & q^{-\ell(w)} \left(\sum_{\substack{y \in W_c(X),\, ys \in W_c(X) \\ y\leq v,\,ys>y}} a_{y,v} t_{ys} + \sum_{\substack{y \in W_c(X),\, ys \not \in W_c(X) \\ y\leq v,\,ys>y}} a_{y,v} \left(\sum_{\substack{z \in W_c(X) \\ z<sy}}D_{z,ys}t_z \right) \right) \nonumber \\
                &   & +\: q^{-\ell(w)} \left(\sum_{\substack{y \in W_c(X) \\ y\leq v,\,ys<y}} a_{y,v} (qt_{ys}+(q-1)t_y)-(q-1)\sum_{\substack{y \in W_c(X) \\ y\leq v}}a_{y,v}t_y \right) \nonumber \\                 
                & = & q^{-\ell(w)} \left(\sum_{\substack{z \in W_c(X) \\ z\leq v,\,z>zs}} a_{zs,v} t_{z} + \sum_{\substack{z \in W_c(X) \\ z<vs}} \left(\sum_{\substack{y \in W_c(X),\, ys \not \in W_c(X) \\ y \leq v,\,ys>y}} D_{z,ys}a_{y,v} \right)t_z \right)\nonumber \\
                &   & +\: q^{-\ell(w)} \left(\sum_{\substack{y \in W_c(X) \\ y\leq v,\,ys<y}} a_{y,v} qt_{ys}+ (q-1)\sum_{\substack{y \in W_c(X) \\ y\leq v,\,ys<y}}  a_{y,v} t_{y}-(q-1)\sum_{\substack{y \in W_c(X) \\ y\leq v}}a_{y,v}t_y \right). \nonumber
\end{eqnarray}
Observe that $$(q-1)\sum_{\substack{y \in W_c(X) \\ y\leq v,\,ys<y}}  a_{y,v} t_{y}-(q-1)\sum_{\substack{y \in W_c(X) \\ y\leq v}}a_{y,v}t_y=(1-q)\sum_{\substack{y \in W_c(X) \\ y\leq v,\,y<ys}} a_{y,v} t_{y}.$$ To sum up,
\begin{eqnarray}
(t_{w^{-1}})^{-1} & = & q^{-\ell(w)} \left(\sum_{\substack{x \in W_c(X) \\ x\leq v,\,x>xs}} a_{xs,v} t_{x} + \sum_{\substack{x \in W_c(X) \\ x<vs}} \left(\sum_{\substack{y \in W_c(X),\, ys \not \in W_c(X) \\ y \leq v,\,ys>y}} D_{x,ys}a_{y,v} \right)t_x \right)\nonumber \\
                  &   & +\: q^{-\ell(w)} \left(q \sum_{\substack{xs \in W_c(X) \\ xs \leq v,\,x<xs}} a_{xs,v} t_{x} + (1-q)\sum_{\substack{x \in W_c(X) \\ x \leq v,\,x<xs}} a_{x,v} t_{x}  \right), \nonumber
\end{eqnarray}
and the statement follows by extracting the coefficient of $t_x$.
\end{proof}
From now on, we assume $X$ to be such that equation (\ref{Cw-0}) holds.
\begin{cor} \label{aqa}
Let $x,w \in W_c(X)$. If there exists $s \in S(X)$ such that $ws<w$ and $x<xs \not \in W_c(X)$, then $$a_{x,w}=-qa_{x,ws}.$$
\end{cor}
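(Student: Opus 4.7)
The plan is to apply the recurrence for $a_{x,w}$ from Proposition \ref{proarec} in the appropriate branch, and then simplify the auxiliary $D$-polynomials that appear using Lemma \ref{ddelta}.

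First, I would verify the hypotheses of Proposition \ref{proarec}. Since $w \in W_c(X)$ and $ws < w$, the subword property (Proposition \ref{sub prop}) guarantees $ws \in W_c(X)$, so the recurrence applies. The hypothesis $x < xs \not\in W_c(X)$ puts us in the third case of the formula for $\widetilde{a_{x,w}}$, giving
$$a_{x,w} = (1-q)\,a_{x,ws} + \sum_{\substack{y \in W_c(X),\, ys \not\in W_c(X) \\ ys > y}} D_{x,ys}\, a_{y,ws}.$$

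Next, I would simplify each $D$-polynomial in the sum. For every index $y$ in the summation, the pair $(x, ys)$ satisfies the hypotheses of Lemma \ref{ddelta}: $x \in W_c(X)$ with $xs \not\in W_c(X)$, while $ys \not\in W_c(X)$ with $(ys)s = y \in W_c(X)$. Therefore $D_{x,ys} = -\delta_{x,y}$. Since $x$ itself meets the summation conditions ($x \in W_c(X)$, $xs \not\in W_c(X)$, $xs > x$), exactly one term survives the Kronecker collapse, contributing $-a_{x,ws}$.

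Finally, combining the two contributions yields
$$a_{x,w} = (1-q)\,a_{x,ws} - a_{x,ws} = -q\,a_{x,ws},$$
which is the claimed identity. I do not expect any real obstacle: the essential combinatorial content has been packaged into Lemma \ref{ddelta}, and the corollary reduces to bookkeeping of which term of the recurrence survives.
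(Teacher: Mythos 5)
Your argument is correct and is essentially identical to the paper's proof: both apply the third case of Proposition \ref{proarec} and then collapse the remaining sum via Lemma \ref{ddelta}, which gives $D_{x,ys}=-\delta_{x,y}$ so that only the $y=x$ term survives, yielding $(1-q)a_{x,ws}-a_{x,ws}=-qa_{x,ws}$. One small caveat on your preliminary check that $ws\in W_c(X)$: the subword property alone does not give this (an element Bruhat-below a fully commutative element need not be fully commutative), but the conclusion is still true because $ws$ is a length-additive factor of $w$ (that is, $w=(ws)s$ with $\ell(w)=\ell(ws)+1$), and such factors of fully commutative elements are fully commutative.
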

\begin{proof}
By applying Proposition \ref{proarec} and Lemma \ref{ddelta}, we have 
\begin{eqnarray}
a_{x,w} &=& (1-q)a_{x,ws}+\sum_{\substack{y \in W_c(X),\, ys \not \in W_c(X) \\ ys>y}}D_{x,ys}a_{y,ws} \nonumber \\
        &=& (1-q)a_{x,ws}+\sum_{\substack{y \in W_c(X),\, ys \not \in W_c(X) \\ ys>y}}(-\delta_{x,y})a_{y,ws} \nonumber \\
        &=& (1-q)a_{x,ws}-a_{x,ws},  \nonumber
\end{eqnarray}
and the statement follows.
\end{proof}
Next we obtain, using the results in \S \ref{s-dpol}, a non--recursive formula for polynomials $\{a_{x,w}\}$, an expression for their constant term, and symmetry properties.   
\begin{pro}\label{c-apol}
Let $x,w \in W_c(X)$ be such that $x \leq w$. Then
\[ a_{x,w}=\varepsilon_x \varepsilon_w R_{x,w}+\sum_{\substack{y \not \in W_c(X) \\ x < y < w}} \varepsilon_y \varepsilon_w R_{y,w} \left(\sum (-1)^k\prod_{i=1}^{k}P_{x_{i-1},x_i} \right), \]
where the second sum runs over all the chains $x=x_0< \cdots <x_k=y$ such that $x_i \not \in W_c(X)$ if $i>0$.
\end{pro}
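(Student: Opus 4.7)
The starting point will be an explicit formula for $a_{x,w}$ that is essentially extracted from the proof of Proposition \ref{a-pol}. In that proof, one computes $(t_{w^{-1}})^{-1}$ by applying $\iota$ together with Theorem \ref{r-pol}, and then re-expressing each $t_z$ via Proposition \ref{d-pol} to obtain an expansion in the $t$-basis. Matching the resulting coefficient of $t_x$ against the one given by Proposition \ref{a-pol} yields the identity
\[ a_{x,w} \;=\; \sum_{x \leq y \leq w} \varepsilon_y \varepsilon_w R_{y,w}\, D_{x,y}. \]
I would first state this identity as a lemma (or simply cite the proof of Proposition \ref{a-pol}, where it already appears).

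Next I would split the sum according to whether $y \in W_c(X)$ or $y \notin W_c(X)$. When $y \in W_c(X)$, Proposition \ref{d-pol} gives $D_{x,y} = \delta_{x,y}$ (both $x$ and $y$ lie in $W_c(X)$), so those terms collapse to the single contribution $\varepsilon_x \varepsilon_w R_{x,w}$. Moreover, since $w \in W_c(X)$, every non--fully commutative $y \leq w$ automatically satisfies $y < w$. Hence the remaining terms are indexed by $y \notin W_c(X)$ with $x < y < w$, and we get
\[ a_{x,w} \;=\; \varepsilon_x \varepsilon_w R_{x,w} \;+\; \sum_{\substack{y \notin W_c(X) \\ x < y < w}} \varepsilon_y \varepsilon_w R_{y,w}\, D_{x,y}. \]

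Finally I would invoke Theorem \ref{te1} on each $D_{x,y}$ appearing in this sum (valid since $x \in W_c(X)$, $y \notin W_c(X)$, $x < y$), which expresses
\[ D_{x,y} \;=\; \sum \left((-1)^k \prod_{i=1}^{k} P_{x_{i-1},x_i}\right) \]
as a signed sum over chains $x = x_0 < x_1 < \cdots < x_k = y$ with $x_i \notin W_c(X)$ for $i > 0$. Substituting this into the displayed identity produces exactly the formula in the statement.

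There is no genuine obstacle here: the argument is pure assembly from Proposition \ref{a-pol}, Proposition \ref{d-pol}, and Theorem \ref{te1}. The only minor points needing verification are bookkeeping -- that $y = w$ is excluded from the second sum because $w$ is fully commutative, and that the chain conditions match those in Theorem \ref{te1} -- so the main interest of the result is not its proof but the fact that it expresses $a_{x,w}$ in closed form via $R$- and Kazhdan--Lusztig polynomials, providing an explicit bridge to the Hecke-algebra side.
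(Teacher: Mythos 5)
Your proposal is correct and follows essentially the same route as the paper: the paper's proof likewise starts from the identity $a_{x,w}=\sum_{x\leq y \leq w}\varepsilon_y\varepsilon_w R_{y,w}D_{x,y}$ read off from the proof of Proposition \ref{a-pol}, collapses the fully commutative terms via $D_{x,y}=\delta_{x,y}$, and then substitutes Theorem \ref{te1}. The bookkeeping points you flag (excluding $y=w$, matching the chain conditions) are exactly the ones the paper handles implicitly.
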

\begin{proof}
From Proposition \ref{a-pol} we get 
\begin{equation}\label{apolforma}
a_{x,w}=\sum_{x\leq y \leq w} \varepsilon_y \varepsilon_w R_{y,w}D_{x,y},
\end{equation}
for all $x,w \in W_c(X)$ such that $x \leq w$. Since  $D_{x,w}=\delta_{x,w}$ if $x,w \in W_c(X)$, we have
\begin{equation} \label{apolform}
a_{x,w}=\varepsilon_x \varepsilon_w R_{x,w}+\sum_{\substack{y \not \in W_c(X) \\ x < y < w}} \varepsilon_y \varepsilon_w R_{y,w}D_{x,y},
\end{equation}
so the statement follows immediately from Theorem \ref{te1}. 
\end{proof}

The recursion given in Corollary \ref{aqa} can sometimes be solved explicitly.
\begin{pro} \label{explapol}
Let $w=s_is_{i+1}\cdots s_{i+k}s_{i-j} s_{i-j+1}\cdots s_i \cdots s_{i+k-1} \in W(A_n)$ and let $x=s_is_{i+1}\cdots s_{i+k} \in W(A_n)$, with $i \in [2,n],\,k \in [1,n-i],\,j \in [1,i-1]$. Then $$a_{x,w}=(-q)^k(1-q)^j.$$
\end{pro}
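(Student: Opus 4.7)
The plan is to compute $a_{x,w}$ by iteratively peeling letters from the right end of $w$ via Corollary~\ref{aqa}, until the computation collapses to a single $R$-polynomial on a Bruhat interval that lies entirely inside $W_c(A_n)$.

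First, after checking that $w\in W_c(A_n)$ (a prerequisite for $a_{x,w}$ to be defined; this can be done either by commuting letters to a fully commutative normal form, or by Proposition~\ref{321wc}), I would set $w_0\=w$ and $w_{l+1}\=w_l\,s_{i+k-1-l}$ for $l=0,1,\ldots,k-1$. At each such step the last letter of the displayed reduced expression of $w_l$ is $s_{i+k-1-l}$, so $w_ls_{i+k-1-l}<w_l$; the crucial observation is that $xs_{i+k-1-l}$ is reduced but \emph{not} fully commutative, because commuting the appended generator past $s_{i+k-l+1},\ldots,s_{i+k}$ (all letters at distance $\geq 2$ from $s_{i+k-1-l}$) exposes the braid $s_{i+k-1-l}\,s_{i+k-l}\,s_{i+k-1-l}$. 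Corollary~\ref{aqa} then yields $a_{x,w_l}=-q\,a_{x,w_{l+1}}$, and $k$ such peelings produce
\[
a_{x,w}=(-q)^k\,a_{x,w'},\qquad w'\=s_is_{i+1}\cdots s_{i+k}\,s_{i-j}s_{i-j+1}\cdots s_{i-1}.
\]

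Second, I would describe $[x,w']$ via the subword property (Proposition~\ref{sub prop}): since $x=s_i\cdots s_{i+k}$ has a unique reduced expression, every $y\in[x,w']$ must retain all the letters $s_i,\ldots,s_{i+k}$ and may select any subset of $\{s_{i-j},\ldots,s_{i-1}\}$ to append at the end. No generator can occur twice in such a subword, so no braid forms and the whole interval $[x,w']$ lies in $W_c(A_n)$. Consequently the correction sum in Proposition~\ref{c-apol} is empty, giving $a_{x,w'}=\varepsilon_x\varepsilon_{w'}R_{x,w'}$.

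Third, I would evaluate $R_{x,w'}$ by iterating the right-hand version of Corollary~\ref{cor recr}, peeling $s_{i-1},s_{i-2},\ldots,s_{i-j}$ in turn from the right end of $w'$. At each step the peeled letter $s_{i-l}$ no longer appears in the shrunken element; since every reduced expression of $xs_{i-l}$ must contain $s_{i-l}$, the subword property forces $xs_{i-l}\not\leq$ that shrunken element, so the ``$q$-term'' of the recursion vanishes and only a factor $(q-1)$ survives. After $j$ iterations one reaches $R_{x,x}=1$, so $R_{x,w'}=(q-1)^j$. Since $\ell(x,w')=j$ gives $\varepsilon_x\varepsilon_{w'}=(-1)^j$, this yields $a_{x,w'}=(1-q)^j$ and hence $a_{x,w}=(-q)^k(1-q)^j$. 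The main obstacle will be the commutation bookkeeping in the first step: one must carefully justify, for every $l$, that appending $s_{i+k-1-l}$ to $x$ is length-increasing and produces a braid, so that Corollary~\ref{aqa} is applicable. The second and third steps are essentially mechanical once the structure of $[x,w']$ has been pinned down.
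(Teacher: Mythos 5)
Your proposal is correct and follows essentially the same route as the paper's proof: both iterate Corollary~\ref{aqa} to peel $s_{i+k-1},\ldots,s_i$ off the right of $w$ (using that $xs_{i+h}\not\in W_c$ for $h\in[0,k-1]$), reaching $w'=s_i\cdots s_{i+k}s_{i-j}\cdots s_{i-1}$, and then observe that $[x,w']$ contains no non--fully--commutative elements so that Proposition~\ref{c-apol} collapses to $a_{x,w'}=\varepsilon_x\varepsilon_{w'}R_{x,w'}$. The only (immaterial) divergence is at the very end: the paper gets $R_{x,w'}=(q-1)^j$ by noting $[x,w']\cong B_j$ and citing a known formula for $R$--polynomials of boolean intervals, whereas you derive the same value directly from the right-hand recursion of Corollary~\ref{cor recr}.
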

\begin{proof}
Note that the one--line notation for $w$, as a permutation in $S_{n+1}$, is $$\underline{w}=[w_1,w_2, \cdots, w_{i+k-1}, i-j, i, w_{i+k+2}, \cdots, w_{n+1}],$$ with $w_1<w_2<\cdots<w_{i+k-1}<w_{i+k+2}<\cdots<w_{n+1}$. Hence, $\underline{w}$ avoids the pattern $321$, that is $w \in W_c(A_n)$ (see Proposition \ref{321wc}). Moreover $inv(\underline{w})=2k+j+1$. On the other hand, $\ell(w)=inv(\underline{w})$ (see Proposition \ref{linv}), so $w$ is reduced.\\    
Observe that $x<xs_{i+h} \not \in W_c(X)$, for every $h \in [0,k-1]$ and that $ws_{i+k-1}<w$. By applying Corollary \ref{aqa} to the triple $(x,w,s_{i+k-1})$ we get $a_{x,w}=-qa_{x,ws_{i+k-1}}$. Repeat the same process with the triple $(x,ws_{i+k-1},s_{i+k-2})$, and so on. After $k$ iteration of the process we get $a_{x,w}(q)=(-q)^k a_{x,ws_{i+k-1} \cdots s_{i}}=(-q)^k a_{x,w'}(q)$, where we set $$w'=s_i s_{i+1} \cdots s_{i+k} s_{i-j} s_{i-j+1} \cdots s_{i-1}.$$ To conclude, we show that $a_{x,w'}=(1-q)^j$. Observe that $[x,w'] \simeq B_{\ell(w')-\ell(x)}$, so $R_{x,w'}=(q-1)^{\ell(w')-\ell(x)}$ (see \cite[Corollary 4.10]{bre-cpkl}). On the other hand, Proposition \ref{c-apol} implies  $a_{x,w'}=\varepsilon_x \varepsilon_{w'} R_{x,w'}$, since $\{y \in [x,w']:\,y \not \in W_c(X)\}=\emptyset$. Therefore $a_{x,w'}=\varepsilon_x \varepsilon_{w'}(q-1)^{\ell(w')-\ell(x)}=(1-q)^j$, as desired.  
\end{proof}

\begin{ese}
Let $w=s_6s_7s_8s_9s_4s_5s_6s_7s_8 \in W(A_{10})$ and $x=s_6s_7s_8s_9 \in W(A_{10})$. Then $n=10, i=6, k=3, j=2$ and  $\underline{w}=[1,2,3,5,7,8,9,10,4,6,11]$. By Proposition \ref{explapol}, $a_{x,w}=-q^3(q^2-2q+1)$. 
\end{ese}

Proposition \ref{c-apol} allows us to compute the constant term of the polynomials $\{a_{x,w}\}_{x,w \in W_c(X)}$.
\begin{cor} \label{c-apol1}
For all $x,w \in W_c(X)$ such that $x < w$ we have   
\begin{itemize}
\item[{\rm (i)}]  
$a_{x,w}(1)=0$;  
\item[{\rm (ii)}] 
$a_{x,w}(0)=\sum (-1)^k$,
\end{itemize} 
where the sum runs over all the chains $x=x_0<x_1<\cdots <x_{k+1}=w$ such that $x_i \not \in W_c(X)$ if $1 \leq i \leq k$, and $0\leq k \leq \l(x,w)-1$.
\end{cor}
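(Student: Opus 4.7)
The plan is to derive both parts directly from the non--recursive expression given in Proposition \ref{c-apol}, namely
\[
a_{x,w} \;=\; \varepsilon_x\varepsilon_w R_{x,w} \;+\; \sum_{\substack{y\not\in W_c(X)\\ x<y<w}} \varepsilon_y\varepsilon_w R_{y,w}\, D_{x,y},
\]
combined with the evaluations of $R_{u,v}$ and $D_{x,y}$ at $q=1$ and $q=0$.

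For part (i), I would simply specialize the identity above at $q=1$ and invoke Corollary \ref{val0}, which gives $R_{u,v}(1)=\delta_{u,v}$. Since $x<w$ we have $R_{x,w}(1)=0$, and every $y$ appearing in the sum satisfies $y<w$, so each $R_{y,w}(1)$ vanishes as well. Hence $a_{x,w}(1)=0$.

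For part (ii), I would specialize at $q=0$. Again by Corollary \ref{val0}, $R_{u,v}(0)=\varepsilon_u\varepsilon_v$, so $\varepsilon_x\varepsilon_w R_{x,w}(0)=1$ and $\varepsilon_y\varepsilon_w R_{y,w}(0)=1$ for every $y$ in the sum. Hence
\[
a_{x,w}(0) \;=\; 1 \;+\; \sum_{\substack{y\not\in W_c(X)\\ x<y<w}} D_{x,y}(0).
\]
Now apply Corollary \ref{c-d0}, which states that
\[
D_{x,y}(0) \;=\; \sum_{\substack{x=x_0<\cdots<x_m=y\\ x_i\not\in W_c(X)\text{ if }i>0}}(-1)^m,
\]
with $1\le m\le\ell(x,y)$. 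Substituting, every chain $x=x_0<x_1<\cdots<x_m=y<w$ appearing in the double sum corresponds bijectively to a chain $x=x_0<x_1<\cdots<x_m<x_{m+1}=w$ in which the $m$ intermediate elements $x_1,\dots,x_m$ are not fully commutative, contributing $(-1)^m$. Setting $k=m$, these are exactly the chains of length $k+1\ge 2$ described in the statement (the intermediate indices $1\le i\le k$ ranging over non--fully--commutative elements), each contributing $(-1)^k$.

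Finally, the isolated term $1=\varepsilon_x\varepsilon_w R_{x,w}(0)$ accounts for the single chain $x=x_0<x_1=w$ corresponding to $k=0$, giving $(-1)^0=1$. The bound $k\le\ell(x,w)-1$ follows because a chain in the Bruhat order from $x$ to $w$ has at most $\ell(x,w)$ strict steps, i.e. $k+1\le\ell(x,w)$. Assembling all these chains yields the claimed formula. The only delicate point is the bookkeeping of how the trivial chain from the $R_{x,w}$--term fuses with the chains coming from the $D_{x,y}$--terms, but this is merely a reindexing and presents no real obstacle.
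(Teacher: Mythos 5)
Your argument is correct and is exactly the paper's proof: the paper derives both parts from equation (\ref{apolform}) by applying Corollary \ref{val0} (the evaluations $R_{u,v}(1)=\delta_{u,v}$ and $R_{u,v}(0)=\varepsilon_u\varepsilon_v$) and Corollary \ref{c-d0}, which is precisely your route. You have merely made explicit the chain reindexing that the paper leaves to the reader, and that bookkeeping is handled correctly.
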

\begin{proof}
The statement follows from $(\ref{apolform})$ by applying Corollary \ref{val0} and Corollary \ref{c-d0}.
\end{proof}
Again, we deduce from Proposition \ref{c-apol} the following symmetry properties of the polynomials $\{a_{x,w}\}_{x,w \in W_c(X)}$.

\begin{cor}
Let $x,w \in W_c(X)$. Then
\begin{itemize}
\item[{\rm (i)}]  
$a_{x,w}=a_{x^{-1},w^{-1}}$;  
\item[{\rm (ii)}] 
$a_{x,w}=a_{w_0xw_0,w_0ww_0}$.
\end{itemize}
\end{cor}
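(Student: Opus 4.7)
The plan is to read off both symmetries from equation (\ref{apolforma}) in the proof of Proposition \ref{c-apol}, namely
\[
a_{x,w}=\sum_{x\leq y \leq w} \varepsilon_y \varepsilon_w R_{y,w}\,D_{x,y},
\]
by applying the two Bruhat--order automorphisms $\phi:u\mapsto u^{-1}$ and $\psi:u\mapsto w_0uw_0$ to the summation index $y$ and checking that every factor is invariant.

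First I would assemble the four ingredients that make the sum transform well under $\phi$ and $\psi$. Both maps preserve the length function (Proposition \ref{pro automorph} and \cite[Corollary 2.3.3]{bb-ccg}), hence the signs $\varepsilon_y,\varepsilon_w$ are unchanged. They are Bruhat--order automorphisms, so $y\mapsto y^{-1}$ (resp. $y\mapsto w_0yw_0$) is a bijection from $[x,w]$ onto $[x^{-1},w^{-1}]$ (resp. $[w_0xw_0,w_0ww_0]$). By Lemma \ref{maps}, $W_c(X)$ is stable under both maps, so the summation splits compatibly into the part with $y\in W_c(X)$ (where $D_{x,y}=\delta_{x,y}$ on both sides) and the part with $y\notin W_c(X)$. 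Finally, the $R$--polynomials and the $D$--polynomials enjoy the required symmetries: $R_{y,w}=R_{y^{-1},w^{-1}}=R_{w_0yw_0,w_0ww_0}$ (standard, cf.\ \cite[\S 5, Exercises 12 and 13(c)]{bb-ccg}), and $D_{x,y}=D_{x^{-1},y^{-1}}=D_{w_0xw_0,w_0yw_0}$ by Corollary \ref{c2-dpol} when $y\notin W_c(X)$, while the case $y\in W_c(X)$ gives only the diagonal term.

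Combining these, reindexing (\ref{apolforma}) via $y\mapsto y^{-1}$ yields
\[
a_{x,w}=\sum_{x^{-1}\leq z\leq w^{-1}}\varepsilon_z\varepsilon_{w^{-1}}R_{z,w^{-1}}\,D_{x^{-1},z}=a_{x^{-1},w^{-1}},
\]
and the same computation with $y\mapsto w_0yw_0$ gives (ii). No cases outside $x\le w$ need to be discussed, since $a_{x,w}=0$ unless $x\le w$, and $x\le w\iff x^{-1}\le w^{-1}\iff w_0xw_0\le w_0ww_0$.

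There is no real obstacle: the only point that might look delicate is ensuring the symmetry of $D_{x,y}$ applies exactly where it is needed, but Corollary \ref{c2-dpol} covers the case $y\notin W_c(X)$, which is precisely the range over which $D_{x,y}$ can fail to be a Kronecker delta. Hence the result is a short formal consequence of Proposition \ref{c-apol} together with Lemma \ref{maps} and the known invariance properties of $R$-- and $P$--polynomials.
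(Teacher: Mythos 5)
Your proof is correct and follows essentially the same route as the paper: both start from the expansion $a_{x,w}=\sum_{x\leq y\leq w}\varepsilon_y\varepsilon_w R_{y,w}D_{x,y}$ of Proposition \ref{c-apol}, reindex the sum via the Bruhat--order automorphisms $u\mapsto u^{-1}$ and $u\mapsto w_0uw_0$, and invoke Lemma \ref{maps}, the standard symmetries of the $R$--polynomials, and Corollary \ref{c2-dpol} for the $D$--polynomials. The only quibble is a citation slip (the $R$--polynomial symmetries are \cite[\S 5, Exercise 10]{bb-ccg}, not Exercises 12 and 13(c), which concern the Kazhdan--Lusztig polynomials), but this does not affect the argument.
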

\begin{proof}
By Lemma \ref{maps} and by (\ref{apolforma}) we get
\begin{eqnarray}
a_{x^{-1},w^{-1}}    & = & \sum_{x^{-1} \leq y \leq w^{-1}} \varepsilon_y \varepsilon_{w^{-1}} R_{y,w^{-1}} D_{x^{-1},y} \nonumber \\
                     & = & \sum_{x^{-1} \leq z^{-1} \leq w^{-1}} \varepsilon_{z^{-1}} \varepsilon_{w^{-1}} R_{z^{-1},w^{-1}} D_{x^{-1},z^{-1}} \nonumber \\     
                     & = & \sum_{x\leq z \leq w} \varepsilon_z \varepsilon_w R_{z,w} D_{x,z} \nonumber \\
                     & = & a_{x,w}, \nonumber
\end{eqnarray}
where we used Corollary \ref{c2-dpol} \textrm{(i)} and the property $R_{x,w}=R_{x^{-1},w^{-1}}$, for all $x,w \in W(X)$ (see, \textrm{e.g.}, \cite[\S 5, Exercise 10(a)]{bb-ccg}). The same holds for $a_{w_0xw_0,w_0ww_0}$, using Corollary \ref{c2-dpol} \textrm{(ii)} and \cite[\S 5, Exercise 10(b)]{bb-ccg}.
\end{proof}

\begin{cor} \label{leadinga}
Let $x,w \in W_c(X)$ and $x \leq w$. Then $a_{x,w}$ has degree $\l(x,w)$ and leading term $\varepsilon_x \varepsilon_w$.
\end{cor}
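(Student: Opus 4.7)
The plan is to read off the leading term from the explicit formula of Proposition \ref{c-apol}, which writes
\[ a_{x,w}=\varepsilon_x \varepsilon_w R_{x,w}+\sum_{\substack{y \not \in W_c(X) \\ x < y < w}} \varepsilon_y \varepsilon_w R_{y,w}\, D_{x,y}. \]
The strategy is to argue that the first summand already has degree $\ell(x,w)$ with the claimed leading coefficient, and that every term in the sum has strictly smaller degree. This trivially handles the case $x=w$ (where the sum is empty and $a_{w,w}=R_{w,w}=1$), so I would focus on $x<w$.

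For the main term, Corollary \ref{val0} gives that $R_{x,w}$ is monic of degree $\ell(x,w)$, so $\varepsilon_x \varepsilon_w R_{x,w}$ contributes a term of degree $\ell(x,w)$ with leading coefficient exactly $\varepsilon_x \varepsilon_w$. It therefore suffices to show the strict inequality
\[ \deg\!\bigl(R_{y,w}\, D_{x,y}\bigr) < \ell(x,w) \]
for every $y \in (x,w)$ with $y \notin W_c(X)$.

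To get this bound, I combine the degree of $R_{y,w}$ (which is exactly $\ell(w)-\ell(y)$ by Corollary \ref{val0}) with the key upper bound on the $D$--polynomials supplied by Proposition \ref{p-upb}, namely $\deg(D_{x,y})\le \tfrac{1}{2}(\ell(y)-\ell(x)-1)$ for $x<y$ and $y\notin W_c(X)$. Adding these yields
\[ \deg\!\bigl(R_{y,w}\, D_{x,y}\bigr) \le \ell(w)-\ell(y)+\tfrac{1}{2}\bigl(\ell(y)-\ell(x)-1\bigr) = \ell(x,w) - \tfrac{1}{2}\bigl(\ell(y)-\ell(x)+1\bigr). \]
Since $y>x$ gives $\ell(y)-\ell(x)\ge 1$, the right hand side is at most $\ell(x,w)-1$, which is the desired strict inequality. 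Adding the degree bounds across the finite sum then shows that no cancellation can reach degree $\ell(x,w)$, so $\varepsilon_x\varepsilon_w R_{x,w}$ alone determines both the degree and the leading coefficient of $a_{x,w}$.

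I do not expect a real obstacle here: the proof is a direct two--line degree count once Proposition \ref{c-apol} and the bound in Proposition \ref{p-upb} are in place. The only point needing minor care is verifying that the $y\notin W_c(X)$ constraint in the sum of Proposition \ref{c-apol} is exactly the hypothesis under which the bound of Proposition \ref{p-upb} applies, which is immediate.
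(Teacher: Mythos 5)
Your proposal is correct and follows exactly the paper's own argument: the paper's proof is precisely ``combine Proposition \ref{c-apol} with the degree bound of Proposition \ref{p-upb} and the monicity statement of Corollary \ref{val0},'' and your degree count makes explicit the strict inequality that the paper leaves implicit. No gaps.
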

\begin{proof}
The statement follows from Proposition \ref{c-apol}, combining Proposition \ref{p-upb} with Corollary \ref{val0}.
\end{proof}

Next, we obtain a property for polynomials $\{a_{x,w}\}$ that will be required in Section \ref{sec: l}.
\begin{pro} \label{apollo}
Let $w \in W_c(X)$. Then 
$$\sum_{\substack{x \in W_c(X) \\ x \leq w}} \varepsilon_x \varepsilon_w a_{x,w}=q^{\l(w)}.$$
\end{pro}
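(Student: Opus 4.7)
The plan is to reduce the statement to two already established sums: Proposition \ref{pro dsgn}, which evaluates the signed sum of $D$--polynomials, and Proposition \ref{rpol prop}, which evaluates the sum of $R$--polynomials. The bridge between the $a$--polynomials and these two families is the formula (\ref{apolforma}) from the proof of Proposition \ref{c-apol}, namely
\[ a_{x,w} = \sum_{x \leq y \leq w} \varepsilon_y \varepsilon_w R_{y,w} D_{x,y}, \]
valid for $x, w \in W_c(X)$ with $x \leq w$.

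First I would substitute this expression into the left hand side, which yields
\[ \sum_{\substack{x \in W_c(X) \\ x \leq w}} \varepsilon_x \varepsilon_w a_{x,w} = \sum_{\substack{x \in W_c(X) \\ x \leq w}} \sum_{x \leq y \leq w} \varepsilon_x \varepsilon_y R_{y,w} D_{x,y}, \]
using that $\varepsilon_w^2 = 1$. Next I would swap the order of summation, grouping terms by $y$:
\[ = \sum_{y \leq w} \varepsilon_y R_{y,w} \Biggl( \sum_{\substack{x \in W_c(X) \\ x \leq y}} \varepsilon_x D_{x,y} \Biggr). \]
Here I use the convention $D_{x,y} = 0$ whenever $x \not\leq y$ (Proposition \ref{d-pol}), which is what lets me re--index so that the outer sum runs over all $y \leq w$ in $W(X)$.

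At this point Proposition \ref{pro dsgn} identifies the inner parenthesis as $\varepsilon_y$, so the expression collapses to
\[ \sum_{y \leq w} \varepsilon_y \cdot \varepsilon_y R_{y,w} = \sum_{y \leq w} R_{y,w}, \]
and a final application of Proposition \ref{rpol prop} gives $q^{\ell(w)}$, as required. No part of this argument is really an obstacle; the only subtlety worth highlighting is making sure the reindexing in the Fubini step is legitimate, which is why the vanishing of $D_{x,y}$ outside $[e,y]$ is invoked explicitly, and the fact that Proposition \ref{pro dsgn} applies to every $y \in W(X)$ (not only to fully commutative $y$) is what allows the inner sum to be evaluated uniformly.
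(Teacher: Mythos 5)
Your proposal is correct and is essentially the paper's own argument: both rest on the expansion $a_{x,w}=\sum_{x\leq y\leq w}\varepsilon_y\varepsilon_w R_{y,w}D_{x,y}$, an exchange of the order of summation, Proposition \ref{pro dsgn} to collapse the inner sum to $\varepsilon_y$, and Proposition \ref{rpol prop} to conclude. The only cosmetic difference is that the paper first splits off the terms with $y\in W_c(X)$ (where $D_{x,y}=\delta_{x,y}$) before swapping sums, whereas you apply Proposition \ref{pro dsgn} uniformly to every $y\leq w$; both are valid.
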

\begin{proof}
By combining equation (\ref{apolform}) with Proposition \ref{pro dsgn} we get 
\begin{eqnarray}
\sum_{\substack{x \in W_c(X) \\ x \leq w}} \varepsilon_x \varepsilon_w a_{x,w}  
      &=& \sum_{\substack{x \in W_c(X) \\ x \leq w}} \varepsilon_x \varepsilon_w \left( \varepsilon_x \varepsilon_w R_{x,w}+\sum_{\substack{y \not \in W_c(X) \\ x< y <w}} \varepsilon_y \varepsilon_w R_{y,w} D_{x,y}  \right)   \nonumber \\
      &=& \sum_{\substack{x \in W_c(X) \\ x \leq w}} R_{x,w}  + \sum_{\substack{x \in W_c(X) \\ x \leq w}} \varepsilon_x \left( \sum_{\substack{y \not \in W_c(X) \\ x< y <w}} \varepsilon_y  R_{y,w} D_{x,y} \right)   \nonumber \\
      &=& \sum_{\substack{x \in W_c(X) \\ x \leq w}} R_{x,w}  + \sum_{\substack{y \not \in W_c(X) \\ y \leq w}} \varepsilon_y R_{y,w} \left( \sum_{\substack{x \in W_c(X) \\ x \leq y}} \varepsilon_x  D_{x,y}  \right)   \nonumber \\
      &=& \sum_{\substack{x \in W_c(X) \\ x \leq w}} R_{x,w}  + \sum_{\substack{y \not \in W_c(X) \\ y \leq w}} \varepsilon_y R_{y,w} \varepsilon_y   \nonumber \\
      &=& \sum_{x \leq w} R_{x,w} \nonumber
\end{eqnarray}
and the statement follows from Proposition \ref{rpol prop}.
\end{proof}
We refer to $\{y \in [x,w] :\, \l(y)=\l(x)+1\}$ as the set of \emph{atoms} in $[x,w]$ and denote by $a(x,w)$ the number of atoms in $[x,w]$. 
\begin{cor}
Let $w \in W_c(X)$ be such that $\l(w)>1$. Then, $$[q^{\l(w)-1}]a_{e,w}=-\varepsilon_w a(e,w).$$
\end{cor}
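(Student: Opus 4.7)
The plan is to extract the coefficient of $q^{\ell(w)-1}$ from both sides of the identity in Proposition \ref{apollo}, namely
\[
\sum_{\substack{x \in W_c(X) \\ x \leq w}} \varepsilon_x \varepsilon_w\, a_{x,w} \;=\; q^{\ell(w)},
\]
and read off $[q^{\ell(w)-1}]\,a_{e,w}$ from what remains. Since $\ell(w) > 1$, the right--hand side contributes $0$ to the coefficient of $q^{\ell(w)-1}$, so the alternating sum on the left must vanish in that degree.

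To evaluate the left--hand side, I will use Corollary \ref{leadinga}, which says that for $x \leq w$ in $W_c(X)$ the polynomial $a_{x,w}$ has degree $\ell(x,w) = \ell(w)-\ell(x)$ with leading coefficient $\varepsilon_x\varepsilon_w$. Stratifying the sum by $\ell(x)$:
\begin{itemize}
\item[{\rm (i)}] For $x = e$, the term $\varepsilon_e\varepsilon_w a_{e,w} = \varepsilon_w a_{e,w}$ has degree exactly $\ell(w)$, and its coefficient of $q^{\ell(w)-1}$ is $\varepsilon_w\,[q^{\ell(w)-1}]\,a_{e,w}$.
\item[{\rm (ii)}] For $x$ with $\ell(x)=1$, the term $\varepsilon_x\varepsilon_w\, a_{x,w}$ has degree $\ell(w)-1$ with leading coefficient $(\varepsilon_x\varepsilon_w)(\varepsilon_x\varepsilon_w) = 1$; hence it contributes exactly $1$ to $[q^{\ell(w)-1}]$.
\item[{\rm (iii)}] For $x$ with $\ell(x)\geq 2$, the degree of $\varepsilon_x\varepsilon_w a_{x,w}$ is at most $\ell(w)-2$, so the contribution to $[q^{\ell(w)-1}]$ is zero.
\end{itemize}

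The elements $x$ with $\ell(x)=1$ and $x\leq w$ are precisely the generators $s\in S(X)$ with $s\leq w$; these lie in $W_c(X)$ automatically (length one elements are trivially fully commutative) and are exactly the atoms of $[e,w]$, so there are $a(e,w)$ of them. Combining (i)--(iii) yields
\[
\varepsilon_w\,[q^{\ell(w)-1}]\,a_{e,w} \;+\; a(e,w) \;=\; 0,
\]
and since $\varepsilon_w^{-1}=\varepsilon_w$, multiplying through by $\varepsilon_w$ gives the desired identity $[q^{\ell(w)-1}]\,a_{e,w} = -\varepsilon_w\, a(e,w)$. No step is really an obstacle here; the only point to check is the elementary observation that all atoms of $[e,w]$ automatically sit inside $W_c(X)$, which is why they all appear in the sum of Proposition \ref{apollo}.
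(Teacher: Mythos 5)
Your proposal is correct and follows essentially the same route as the paper: both extract the coefficient of $q^{\ell(w)-1}$ from the identity of Proposition \ref{apollo} and use the degree and leading-term information of Corollary \ref{leadinga} to see that only the atoms contribute. Your explicit remark that length-one elements automatically lie in $W_c(X)$ is a small point the paper leaves implicit, but otherwise the two arguments coincide.
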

\begin{proof}
By Proposition \ref{apollo}, we get $\varepsilon_w a_{e,w}=q^{\l(w)}-\sum_{e < y \leq w} \varepsilon_y \varepsilon_w a_{y,w}$. Therefore, by Corollary \ref{leadinga}, we achieve 
$$[q^{\l(w)-1}]a_{e,w}=-\varepsilon_w \sum_{\substack{y \in (e,w] \\ \l(y)=1}} 1=-\varepsilon_w a(e,w).$$ 
\end{proof}

\begin{lemma} \label{apol2}
Let $x,w \in W_c(X)$ be such that $x < w$. If $\ell(x,w)=1$ then $a_{x,w}=1-q$. If $\ell(x,w)=2$, then
\[
a_{x,w}=\left\{ \begin{array}{ll}
  q^2-1       & \mbox{ if $k=2$,} \\ 
  q^2-q       & \mbox{ if $k=1$,} \\
  q^2-2q+1    & \mbox{ if $k=0$,}
\end{array} \right.
\]
with $k = |\{y \not \in W_c(X): \, x < y < w \}|$.
\end{lemma}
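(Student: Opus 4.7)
The plan is to apply the closed-form expression for $a_{x,w}$ given in Proposition \ref{c-apol}, namely
\[ a_{x,w}=\varepsilon_x \varepsilon_w R_{x,w}+\sum_{\substack{y \not \in W_c(X) \\ x < y < w}} \varepsilon_y \varepsilon_w R_{y,w} D_{x,y}, \]
and then feed in the explicit values of the relevant $R$-- and $D$--polynomials in low codimension, which are all already available in the excerpt.

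First I would handle the case $\ell(x,w)=1$. Here the sum over $y$ with $x<y<w$ is empty, so only the first summand survives. Since $\ell(x,w)=1$, Lemma \ref{rp2} gives $R_{x,w}=q-1$, and clearly $\varepsilon_x\varepsilon_w=-1$, whence $a_{x,w}=-(q-1)=1-q$.

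Next, for $\ell(x,w)=2$: by Lemma \ref{rp2} we have $R_{x,w}=(q-1)^2$, and since the length parity of $x$ and $w$ agree, $\varepsilon_x\varepsilon_w=1$, contributing $q^2-2q+1$. For each $y$ with $x<y<w$ we have $\ell(x,y)=\ell(y,w)=1$, so Lemma \ref{rp2} again gives $R_{y,w}=q-1$, and $\varepsilon_y\varepsilon_w=-1$. When such a $y$ also lies outside $W_c(X)$, Lemma \ref{dpol2} (the case $\ell(x,y)=1$) yields $D_{x,y}=-1$. Putting these together, the contribution of each of the $k$ elements $y\notin W_c(X)$ in the open interval $(x,w)$ is $(-1)(q-1)(-1)=q-1$, so
\[ a_{x,w}=(q^2-2q+1)+k(q-1). \]
Evaluating this at $k=0,1,2$ gives exactly $q^2-2q+1,\ q^2-q,\ q^2-1$, matching the three cases of the statement.

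The main (minor) obstacle is simply making sure that the contribution of the middle elements is correctly identified: one has to remember that elements $y \in W_c(X)$ strictly between $x$ and $w$ do not appear in the sum of Proposition \ref{c-apol} at all, so only the $k$ non--fully--commutative atoms of $[x,w]$ above $x$ contribute, and the sign bookkeeping from $\varepsilon_y\varepsilon_w$ and from $D_{x,y}=-1$ must be carried out carefully; once that is done the computation is immediate.
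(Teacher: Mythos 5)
Your proposal is correct and follows essentially the same route as the paper: both start from the closed form $a_{x,w}=\varepsilon_x\varepsilon_w R_{x,w}+\sum_{y\notin W_c(X),\,x<y<w}\varepsilon_y\varepsilon_w R_{y,w}D_{x,y}$ of Proposition \ref{c-apol}, then substitute $R_{x,w}=(q-1)^{\ell(x,w)}$ from Lemma \ref{rp2} and $D_{x,y}=-1$ from Lemma \ref{dpol2} to arrive at $a_{x,w}=(q-1)^2+k(q-1)$ in the codimension--two case. The sign bookkeeping you emphasize is exactly the computation the paper carries out.
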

\begin{proof}
By Lemma \ref{rp2}, $R_{x,w}=(q-1)^{\ell(x,w)}$ for all $x,w \in W(X),\, x \leq w$ such that $\ell(x,w) \leq 2$. If $\ell(x,w)=1$, then Equation (\ref{apolform}) implies $a_{x,w}=\varepsilon_x \varepsilon_w R_{x,w}=1-q$. If $\ell(x,w)=2$, then the interval $[x,w]$ is isomorphic to the boolean lattice $B_2$ (see, \textrm{e.g.}, \cite[Lemma 2.7.3]{bb-ccg}). Moreover $R_{y,w}=q-1$ and $D_{x,y}=-1$ for all $y \not \in W_c(X)$ such that $x < y <w$, since $\ell(x,y)=\ell(y,w)=1$ (see Lemma \ref{dpol2}). From Equation (\ref{apolform}) we get
\begin{eqnarray}
a_{x,w}    & = &\varepsilon_x \varepsilon_w R_{x,w}+\sum_{\substack{y \not \in W_c(X) \\ x < y < w}} \varepsilon_y \varepsilon_w R_{y,w} D_{x,y} \nonumber \\
           & = & (q-1)^2+ \sum_{\substack{y \not \in W_c(X) \\ x < y < w}} (q-1) \nonumber \\
           & = & (q-1)^2+k(q-1), \nonumber 
\end{eqnarray}
and the statement follows. 
\end{proof}

\section{Combinatorial properties of $L_{x,w}$}\label{sec: l}
In this section we study the polynomials $\{L_{x,w}\}_{x,w \in W_c(X)}$ which play the same role, in $TL(X)$, as the Kazhdan--Lusztig polynomials play in $\H(X)$. First, we derive a recursive formula for $L_{x,w}$ by means of some results in \cite{gre-gjt}. Then, using the results in Section \ref{s-dpol}, we obtain a non--recursive formula, symmetry properties, expressions for the constant term, and bounds for the degrees, for these polynomials. All the results stated in this section hold for every Coxeter graph $X$ satisfying (\ref{Cw-0}).

It is known that the terms of maximum possible degree in the $L$--polynomials and in the Kazhdan--Lusztig polynomials coincide (see \cite[Theorem 5.13]{gre-gjt}) . 
\begin{pro} \label{muu}
For $x,w \in W_c(X)$ let $M(x,w)$ be the coefficient of $q^{-\frac{1}{2}}$ in $L_{x,w}$ and let $\mu(x,w)$ be the coefficient of $q^{\frac{\l(w)-\l(x)-1}{2}}$ in $P_{x,w}$. Then $M(x,w) = \mu(x,w)$. 
\end{pro}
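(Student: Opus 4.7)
The plan is to establish an explicit, non-recursive expansion of $L_{y,w}$ in terms of Kazhdan--Lusztig polynomials and $D$-polynomials, and then read off the coefficient of $q^{-1/2}$ using the degree bound of Proposition \ref{p-upb}.

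First I would exploit the assumption that $X$ satisfies $(\ref{Cw-0})$: for $w \in W_c(X)$ this gives $\sigma(C'_w) = c_w$. Expanding the left-hand side via $(\ref{C'w})$, applying $\sigma$ term-by-term (so that $\sigma(T_x) = t_x$), and then re-expressing each $t_x$ in the $t$-basis of $TL(X)$ via Proposition \ref{d-pol}, yields
\[
\sigma(C'_w) = q^{-\ell(w)/2} \sum_{\substack{y \in W_c(X) \\ y \leq w}} \left( \sum_{y \leq x \leq w} P_{x,w}(q)\, D_{y,x}(q) \right) t_y.
\]
Equating the coefficient of $t_y$ on the two sides of $\sigma(C'_w)=c_w$, using the definition of $c_w$ from Theorem \ref{defic}, gives the key identity
\[
L_{y,w}(q^{-1/2}) = q^{(\ell(y)-\ell(w))/2} \sum_{y \leq x \leq w} P_{x,w}(q)\, D_{y,x}(q).
\]

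Next, setting $t = q^{-1/2}$, the right-hand side becomes $t^{\ell(w)-\ell(y)} \sum_{x} P_{x,w}(t^{-2})\, D_{y,x}(t^{-2})$, and $M(y,w)$ is, by definition, the coefficient of $t$ in this expression. I would split the sum according to whether $x \in W_c(X)$ or not. For $x \in W_c(X)$, Proposition \ref{d-pol} gives $D_{y,x} = \delta_{y,x}$, so the only surviving term is $x=y$, contributing $t^{\ell(w)-\ell(y)} P_{y,w}(t^{-2})$, whose coefficient of $t$ is exactly $[q^{(\ell(w)-\ell(y)-1)/2}]\,P_{y,w} = \mu(y,w)$. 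For $x \not\in W_c(X)$ (so necessarily $y < x < w$), combining the standard Kazhdan--Lusztig bound $\deg P_{x,w} \leq (\ell(w)-\ell(x)-1)/2$ with Proposition \ref{p-upb} $\deg D_{y,x} \leq (\ell(x)-\ell(y)-1)/2$ shows that $t^{\ell(w)-\ell(y)} P_{x,w}(t^{-2}) D_{y,x}(t^{-2})$ contains no power of $t$ strictly smaller than $t^2$, so such terms contribute nothing to the coefficient of $t$.

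The conclusion $M(y,w) = \mu(y,w)$ then follows. The main point is the degree bookkeeping in the second case; everything else is a formal manipulation of the defining identities. I expect the precise invocation of Proposition \ref{p-upb} to be the only non-routine ingredient.
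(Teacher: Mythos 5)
Your proof is correct, but it is worth noting that the paper does not actually prove Proposition \ref{muu}: it is quoted from \cite[Theorem 5.13]{gre-gjt} without argument, so there is no internal proof to compare against. What you have done is supply a self-contained derivation inside the paper's own framework, and it is sound. Your key identity $L_{y,w}=q^{(\ell(y)-\ell(w))/2}\sum_{y\leq x\leq w}P_{x,w}D_{y,x}$ is exactly equation (\ref{e-pol}), which the paper only establishes later, in the proof of Theorem \ref{lpoch}, by the same computation you sketch (expand $\sigma(C'_w)$ through Proposition \ref{d-pol} and match coefficients against $c_w$ using (\ref{Cw-0})); since that derivation uses nothing from Section \ref{sec: l}, your argument is not circular despite Proposition \ref{muu} being stated first. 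The degree bookkeeping is also right: for $x\notin W_c(X)$ with $y<x<w$ one has $\deg(P_{x,w}D_{y,x})\leq\frac{1}{2}(\ell(w)-\ell(x)-1)+\frac{1}{2}(\ell(x)-\ell(y)-1)=\frac{1}{2}(\ell(w)-\ell(y))-1$ by the Kazhdan--Lusztig bound and Proposition \ref{p-upb}, so after multiplying by $q^{(\ell(y)-\ell(w))/2}$ every exponent is an integer $\leq -1$ and the coefficient of $q^{-1/2}$ receives no contribution; the diagonal term $x=y$ (the only surviving term with $x\in W_c(X)$, since $D_{y,x}=\delta_{y,x}$ there) contributes exactly $\mu(y,w)$. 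Incidentally, this is plausibly the intended use of Proposition \ref{p-upb}, which the paper announces as ``also used in \S \ref{sec: l}'' but never explicitly invokes there. What your route buys is a proof that stays entirely within the $D$--polynomial machinery of the thesis rather than importing the result from the diagram-calculus arguments of \cite{gre-gjt}; the only caveat is that it is therefore valid only under the standing hypothesis (\ref{Cw-0}), which is the blanket assumption of that section, whereas the cited theorem of Green is proved in a somewhat different setting.
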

The product of two IC basis elements can be computed by means of the following formula (see \cite[Theorem 5.13]{gre-gjt}). Recall that if $\mu(x,w) \neq 0$ then we write $x \prec w$ (see Definition \ref{topcoeff}).
\begin{pro}\label{icprod}
Let $s \in S(X)$ and $w \in W_c(X)$. Then 
\[
c_sc_w= 
\begin{cases}
c_{sw}+\sum_{\substack{x \prec w  \\ sx<x}}\mu(x,w)c_x    & \mbox{if } \l(sw)>\l(w);  \\
(q^{\frac{1}{2}}+q^{-\frac{1}{2}})c_w                     & \mbox{otherwise},
\end{cases}   \nonumber
\]
where $c_x \= 0$ for every $x \not \in W_c(X)$.
\end{pro}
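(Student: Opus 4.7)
The plan is to deduce Proposition \ref{icprod} directly from the analogous statement for the Hecke algebra (Proposition \ref{pro klmolt}) by applying the canonical projection $\sigma: \H(X) \to TL(X)$ and using the compatibility relation (\ref{Cw-0}). Since the whole section operates under the assumption that $X$ satisfies (\ref{Cw-0}), this machinery is available.

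First, I would recall that $\sigma$ is an $\A$--algebra homomorphism, so from Proposition \ref{pro klmolt} we get
\[
\sigma(C'_s)\sigma(C'_w) \;=\;
\begin{cases}
\sigma(C'_{sw})+\displaystyle\sum_{\substack{x \prec w \\ sx<x}}\mu(x,w)\sigma(C'_x) & \mbox{if } \l(sw)>\l(w), \\[4pt]
(q^{\frac{1}{2}}+q^{-\frac{1}{2}})\sigma(C'_w) & \mbox{otherwise}.
\end{cases}
\]
Since $s \in S(X) \subseteq W_c(X)$ and $w \in W_c(X)$ by hypothesis, relation (\ref{Cw-0}) gives $\sigma(C'_s) = c_s$ and $\sigma(C'_w) = c_w$, so the left hand side equals $c_sc_w$. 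The case $\l(sw)<\l(w)$ is then immediate.

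For the case $\l(sw) > \l(w)$, I would process the right hand side term by term. By (\ref{Cw-0}), $\sigma(C'_{sw}) = c_{sw}$ when $sw \in W_c(X)$ and $\sigma(C'_{sw}) = 0$ when $sw \not\in W_c(X)$; under the convention $c_y \= 0$ for $y \not\in W_c(X)$ stated in the proposition, both cases are captured uniformly by the symbol $c_{sw}$. The same dichotomy applies to each $\sigma(C'_x)$ appearing in the sum, so every term transforms consistently to $\mu(x,w)c_x$. This yields exactly
\[
c_sc_w \;=\; c_{sw}+\sum_{\substack{x \prec w \\ sx<x}}\mu(x,w)c_x,
\]
completing the proof.

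The only real obstacle is verifying that this ``collapsing'' of the non--fully--commutative terms to zero is legitimate; but that is precisely the content of Theorem \ref{teocw0}, which guarantees (\ref{Cw-0}) under the standing hypothesis of the section. So the argument amounts to nothing more than pushing the Hecke algebra identity through $\sigma$ and reading off the result in the $c$--basis.
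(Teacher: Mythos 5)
Your argument is correct. Note, however, that the paper does not actually prove Proposition \ref{icprod}: it simply quotes it from \cite[Theorem 5.13]{gre-gjt}, so there is no in--text proof to compare against. Your derivation --- push the Hecke--algebra identity of Proposition \ref{pro klmolt} through the algebra homomorphism $\sigma$ and use relation (\ref{Cw-0}) to identify $\sigma(C'_y)$ with $c_y$ for $y \in W_c(X)$ and with $0$ otherwise --- is sound and is the natural route under the standing hypothesis of the section (every result there is asserted only for graphs satisfying (\ref{Cw-0}), equivalently, by Theorem \ref{teocw0}, the non--branching finite or affine graphs other than $\widetilde{F_4}$). The one thing worth being explicit about is the scope: your proof establishes the formula only for such $X$, because it is exactly relation (\ref{Cw-0}) that collapses the terms $\sigma(C'_x)$ with $x \not\in W_c(X)$ to zero; the cited reference works intrinsically in $TL(X)$ with the IC basis, which is what one would need if the statement were to be asserted for a graph where (\ref{Cw-0}) fails and $\sigma(C'_x)$ need not vanish. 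Within the paper's framework your shortcut loses nothing, and it has the advantage of making the dependence on Theorem \ref{teocw0} transparent.
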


\begin{cor} \label{txc} 
Let $s \in S(X)$ and $w \in W_c(X)$. Then 
\[
t_sc_w= 
\begin{cases}
-c_w+q^{\frac{1}{2}}\left(c_{sw} + \sum_{\substack{x \prec w  \\ sx<x}} \mu(x,w) c_x \right)    & \mbox{if } \l(sw)>\l(w);  \\
qc_w                                                                                            & \mbox{otherwise}.
\end{cases}   \nonumber
\]
\end{cor}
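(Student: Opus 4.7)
The plan is to reduce the computation of $t_s c_w$ to a computation of $c_s c_w$, and then invoke Proposition \ref{icprod}. The bridge is the identity
$$c_s = q^{-\frac{1}{2}}(t_s + t_e),$$
which I would establish first. To see this, note that since $s \in W_c(X)$ and (\ref{Cw-0}) holds, we have $c_s = \sigma(C'_s)$. Using $P_{e,s} = P_{s,s} = 1$ (Lemma \ref{rp2}) and the definition (\ref{C'w}), we get $C'_s = q^{-\frac{1}{2}}(T_e + T_s)$, so applying $\sigma$ yields the claimed formula. Equivalently, one could compute $L_{e,s}$ directly from Theorem \ref{l-pol}(iv): the recursion gives $L_{e,s}(q^{-\frac{1}{2}}) - L_{e,s}(q^{\frac{1}{2}}) = q^{-\frac{1}{2}} - q^{\frac{1}{2}}$, which together with $L_{e,s} \in q^{-\frac{1}{2}}\Z[q^{-\frac{1}{2}}]$ forces $L_{e,s}(q^{-\frac{1}{2}}) = q^{-\frac{1}{2}}$, and then the definition in Theorem \ref{defic} gives the same expression for $c_s$.

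Once this is in hand, since $c_e = t_e$ (trivially from Theorem \ref{defic}), we can solve for $t_s$ to obtain
$$t_s = q^{\frac{1}{2}} c_s - c_e,$$
and therefore $t_s c_w = q^{\frac{1}{2}} c_s c_w - c_w$ for every $w \in W_c(X)$.

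It now suffices to plug in Proposition \ref{icprod}. In the case $\ell(sw) > \ell(w)$, we substitute $c_s c_w = c_{sw} + \sum_{x \prec w,\, sx < x} \mu(x,w) c_x$ to obtain exactly the first branch of the claimed formula. In the case $\ell(sw) < \ell(w)$, we substitute $c_s c_w = (q^{\frac{1}{2}} + q^{-\frac{1}{2}}) c_w$, so
$$t_s c_w = q^{\frac{1}{2}}(q^{\frac{1}{2}} + q^{-\frac{1}{2}}) c_w - c_w = (q+1)c_w - c_w = q c_w,$$
which is the second branch.

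The whole argument is essentially bookkeeping once the identity $c_s = q^{-\frac{1}{2}}(t_s + t_e)$ is in place, so the only real step is that initial computation. I expect no serious obstacle: the projection property (\ref{Cw-0}), which is the standing hypothesis for this section, is exactly what allows us to pass from $C'_s$ to $c_s$, so the hardest part is merely remembering to invoke it cleanly.
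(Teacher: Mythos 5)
Your proof is correct and follows essentially the same route as the paper, which simply observes $t_s = q^{\frac{1}{2}}c_s - c_e$ and then applies Proposition \ref{icprod}; your extra justification of the identity $c_s = q^{-\frac{1}{2}}(t_s+t_e)$ is sound but the paper takes it as an observation.
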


\begin{proof}
Observe that $t_s=q^{\frac{1}{2}}c_s-c_e$. So $t_sc_w=q^{\frac{1}{2}}c_sc_w-c_w$ and the statement follows by applying Proposition \ref{icprod}.
\end{proof}

\begin{teo}
Let $x,w \in W_c(X)$ be such that $sx \in W_c(X)$ and $sw<w$. Then
\begin{eqnarray}
L_{x,w}                   & = & L_{sx,sw}+q^{c-\frac{1}{2}}L_{x,sw}-\sum_{\substack{sz<z \\ z \in [sx,sw]_c}} \mu(z,sw)L_{x,z} \nonumber \\
                          &   & +\: q^{-\frac{1}{2}}\sum_{\substack{sz \not \in W_c(X) \\z \in [x,w]_c}} q^{\frac{\l(x)-\l(z)}{2}}D_{x,sz}L_{z,sw}, \nonumber
\end{eqnarray}
where $c=1$ if $sx<x$ and $0$ otherwise. 
\end{teo}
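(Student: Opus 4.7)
The strategy mirrors the classical derivation of the Kazhdan--Lusztig recursion (cf.\ the proof of Theorem \ref{teo klqc}), transposed to $TL(X)$. Since $sw<w$, Corollary \ref{txc} (applied with $sw$ in place of $w$) gives
\[
 t_s c_{sw} = -c_{sw} + q^{\frac{1}{2}}\,c_w + q^{\frac{1}{2}}\sum_{\substack{z\prec sw\\ sz<z}}\mu(z,sw)\,c_z,
\]
so
\[
 c_w = q^{-\frac{1}{2}}(t_s c_{sw}+c_{sw}) - \sum_{\substack{z\prec sw\\ sz<z}}\mu(z,sw)\,c_z.
\]
The plan is to extract the coefficient of $t_x$ from this identity, using the defining expansion $c_v=\sum_{y\le v}q^{-\ell(y)/2}L_{y,v}(q^{-1/2})t_y$ from Theorem \ref{defic}, and match the result with the claim.

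Expanding $c_{sw}$ in the $t$--basis and applying (\ref{mult-b}) together with Proposition \ref{d-pol}, each product $t_st_y$ splits into three cases: \textrm{(a)} $sy>y$ with $sy\in W_c(X)$, giving $t_{sy}$; \textrm{(b)} $sy>y$ with $sy\notin W_c(X)$, giving $\sum_{\xi\in W_c(X),\,\xi\le sy} D_{\xi,sy}\,t_\xi$; \textrm{(c)} $sy<y$, giving $qt_{sy}+(q-1)t_y$. Collecting the coefficient of $t_x$: the ``diagonal'' contributions $y=sx$ coming from either \textrm{(a)} (when $sx<x$) or \textrm{(c)} (when $sx>x$) both produce $L_{sx,sw}(q^{-1/2})$ after simplification of $q$--powers (legitimate thanks to the hypothesis $sx\in W_c(X)$); the term $y=x$ in case \textrm{(c)}, occurring only when $sx<x$, combines with the $c_{sw}$ summand and yields $q^{c-1/2}L_{x,sw}(q^{-1/2})$ once one simplifies $q^{-1/2}[(q-1)+1]=q^{1/2}$ for $sx<x$ and $q^{-1/2}$ for $sx>x$; and the contribution of case \textrm{(b)}, after reindexing $y\mapsto z$, becomes the $D$--polynomial sum $q^{-1/2}\sum_z q^{(\ell(x)-\ell(z))/2}D_{x,sz}L_{z,sw}(q^{-1/2})$. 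The remaining $-\sum\mu(z,sw)c_z$ piece of the identity furnishes the $\mu$--term $-\sum_{sz<z,\,z\in[sx,sw]_c}\mu(z,sw)L_{x,z}(q^{-1/2})$ directly. Dividing through by $q^{-\ell(x)/2}$ and equating with $q^{-\ell(x)/2}L_{x,w}(q^{-1/2})$ on the left yields the stated formula.

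The main obstacle is the bookkeeping of the powers of $q^{\pm\frac{1}{2}}$ across the three subcases of $t_st_y$ (the $q^{-\ell(y)/2}$ normalization, the overall $q^{-1/2}$, and the internal $q$'s from the multiplication rule), and verifying that they assemble into the uniform exponents $c-\tfrac12$ and $(\ell(x)-\ell(z))/2$ demanded by the claim. The range of summation in the case \textrm{(b)} contribution is reconciled with $\{z\in[x,w]_c:sz\notin W_c(X)\}$ using the left form of the lifting property (Lemma \ref{lif lemma}): the natural bound $y\le sw$ together with $sy>y$ and $sw<w$ is equivalent to $y\le w$, while the vanishing conventions $D_{x,sz}=0$ for $x\not\le sz$ and $L_{z,sw}=0$ for $z\not\le sw$ dispose of any superfluous terms.
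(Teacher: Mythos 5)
Your proposal is correct and follows essentially the same route as the paper: the identity $c_w = q^{-\frac{1}{2}}(t_sc_{sw}+c_{sw}) - \sum_{sz<z}\mu(z,sw)c_z$ you obtain from Corollary \ref{txc} is precisely the paper's starting point $c_w = c_sc_{sw}-\sum_{sz<z}\mu(z,sw)c_z$ from Proposition \ref{icprod}, since $c_s=q^{-\frac{1}{2}}(t_s+t_e)$. The subsequent expansion of $t_sc_{sw}$ in the $t$--basis, the three-way case split with the $D$--polynomial substitution for $sy\notin W_c(X)$, and the extraction of the coefficient of $t_x$ (the paper extracts the coefficient of $t_{su}$ and then sets $x=su$, which is the same computation) all match the paper's argument, including the bookkeeping that produces the exponents $c-\tfrac{1}{2}$ and $\tfrac{\l(x)-\l(z)}{2}$.
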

\begin{proof}
Let $w=sv$. By Proposition \ref{icprod}, we have
\begin{equation} \label{equazio}
c_w=c_{sv}=c_sc_v-\sum_{sz<z} \mu(z,sw)c_z.
\end{equation}
Recall that $c_s=q^{-\frac{1}{2}}(t_s+t_e)$, hence we get
\begin{eqnarray}
c_sc_v & = & q^{-\frac{1}{2}} c_v + q^{-\frac{1}{2}} t_s c_v  \nonumber \\
       & = & q^{-\frac{1}{2}} c_v + \sum_{\substack{x \in W_c(X) \\ x \leq sw}}q^{-\frac{\l(x)}{2}}L_{x,sw} t_s t_x  \nonumber \\
       & = & q^{-\frac{1}{2}} \left(c_v + \sum_{\substack{sx \in W_c(X) \\ x<sx}} q^{-\frac{\l(x)}{2}}L_{x,sw} t_{sx}+\sum_{sx<x} q^{-\frac{\l(x)}{2}}L_{x,sw} (qt_{sx}+(q-1)t_x) \right) \nonumber\\
       &   & +\: q^{-\frac{1}{2}}\left( \sum_{\substack{sx \not \in W_c(X) \\ x<sx}} q^{-\frac{\l(x)}{2}}L_{x,sw} \left(\sum_{\substack{y \in W_c \\ y<sx}} D_{y,sx} t_y  \right)    \right) \nonumber \\
       & = & q^{-\frac{1}{2}} \left(c_v + \sum_{\substack{sx \in W_c(X) \\ x<sx}} q^{-\frac{\l(x)}{2}}L_{x,sw} t_{sx}+\sum_{sx<x} q^{-\frac{\l(x)}{2}}L_{x,sw} (qt_{sx}+(q-1)t_x)\right) \nonumber\\
       &   & +\: q^{-\frac{1}{2}} \left( \sum_{\substack{y \in W_c(X) \\ y \leq w}} \left(\sum_{\substack{sx \not \in W_c(X) \\ x<sx}} q^{-\frac{\l(x)}{2}}D_{y,sx} L_{x,sw} \right) t_y    \right). \nonumber     
\end{eqnarray} 
Suppose that $su>u$ and extract the coefficient of $t_{su}$ on both sides of (\ref{equazio}). It follows that 
$$L_{su,w}=L_{u,sw}+q^{\frac{1}{2}}L_{su,sw}+\sum_{\substack{sz \not \in W_c(X) \\ z<sz}} q^{\frac{\l(u)-\l(z)}{2}}D_{su,sz}L_{z,sw}-\sum_{\substack{z \in [u,w]_c \\ sz<z}}\mu(z,sw) L_{su,z}.$$
Otherwise, if $su<u$ then
$$L_{su,w}=L_{u,sw}+q^{-\frac{1}{2}}L_{su,sw}+q^{-1}\sum_{\substack{sz \not \in W_c(X) \\ z<sz}} q^{\frac{\l(u)-\l(z)}{2}}D_{su,sz}L_{z,sw}-\sum_{\substack{z \in [u,w]_c \\ sz<z}}\mu(z,sw) L_{su,z},$$
and the statement follows by applying the substitution $x=su$.
\end{proof}

\begin{teo} \label{lrecurs}
Let $x,w \in W_c(X)$ be such that $x<w$. If there exists $s \in S(X)$ such that $sw<w$ and $x<sx \in W_c(X)$, then 
$$L_{x,w}=q^{-\frac{1}{2}}L_{sx,w}-\sum_{\substack{sz \not \in W_c(X) \\ z \in (x,w)_c}}q^{\frac{\l(x)-\l(z)}{2}} D_{sx,sz} L_{z,w}.$$  
\end{teo}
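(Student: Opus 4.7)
The plan is to apply Corollary \ref{txc}, which under the hypothesis $sw<w$ asserts $t_s c_w = q c_w$. Writing
$$c_w = \sum_{\substack{y \in W_c(X)\\ y\leq w}} q^{-\ell(y)/2}\, L_{y,w}(q^{-1/2})\, t_y$$
and expanding $t_s c_w$ via this formula, I would extract the coefficient of $t_{sx}$ on both sides. The extraction is legitimate because the hypothesis $sx \in W_c(X)$ guarantees that $t_{sx}$ is a genuine $t$-basis element of $TL(X)$.

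The core computational step is a three-case analysis of $t_s t_y$ as $y$ ranges over $W_c(X)$. When $\ell(sy)>\ell(y)$ and $sy \in W_c(X)$, $t_s t_y = t_{sy}$ contributes to $t_{sx}$ only if $y = x$. When $\ell(sy) > \ell(y)$ but $sy \notin W_c(X)$, Proposition \ref{d-pol} gives $t_s t_y = t_{sy} = \sum_{z \in W_c(X)} D_{z,sy}\, t_z$, contributing the weight $D_{sx,sy}$. When $\ell(sy) < \ell(y)$, the formula $t_s t_y = q\,t_{sy} + (q-1)\,t_y$ offers two possibilities, but the $q\,t_{sy}$ piece would force $y = x$ and is ruled out by $sx>x$, while the $(q-1)\,t_y$ piece contributes exactly when $y = sx$. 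Collecting the three contributions and equating with the coefficient of $t_{sx}$ in $q c_w$ yields
$$q^{-\ell(x)/2} L_{x,w}(q^{-1/2}) + \sum_{y} q^{-\ell(y)/2} D_{sx,sy}\, L_{y,w}(q^{-1/2}) + (q-1) q^{-\ell(sx)/2} L_{sx,w}(q^{-1/2}) = q\cdot q^{-\ell(sx)/2} L_{sx,w}(q^{-1/2}).$$
Using $q - (q-1) = 1$ and $\ell(sx)=\ell(x)+1$, one divides by $q^{-\ell(x)/2}$ and rearranges to recover the stated identity.

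It remains to identify the summation range with $\{z \in (x,w)_c : sz \notin W_c(X)\}$. The index $y=x$ is excluded immediately because $sx \in W_c(X)$ contradicts $sy \notin W_c(X)$. For any other contributing $y$, $D_{sx,sy}\ne 0$ forces $sx \leq sy$, and since $sx>x$ and $sy>y$ the left-multiplication-by-$s$ map is an order-isomorphism between the two halves of Bruhat order cut out by $s$ (a consequence of Lemma \ref{lif lemma}), so $sx \leq sy$ is equivalent to $x \leq y$, hence $x<y$. Similarly $y<w$ is forced: if $y=w$ then $sy=sw$ would lie in $W_c(X)$ (because $sw<w$ and prefixes of fully commutative elements are fully commutative), again contradicting $sy \notin W_c(X)$. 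The main obstacle I anticipate is bookkeeping inside the three-case analysis, in particular isolating the two $L_{sx,w}$-contributions so that the cancellation $q-(q-1)=1$ simplifies them, and checking that the factor $D_{sx,sy}$ (rather than $D_{x,sy}$) is indeed the one that emerges when one extracts the coefficient of $t_{sx}$ rather than the coefficient of $t_x$.
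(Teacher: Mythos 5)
Your proposal is correct and follows essentially the same route as the paper's proof: both apply Corollary \ref{txc} to get $t_sc_w=qc_w$, expand $c_w$ in the $t$--basis via Theorem \ref{defic}, carry out the same three--case analysis of $t_st_y$ (reinserting $D$--polynomials when $sy\not\in W_c(X)$), and extract the coefficient of $t_{sx}$. The bookkeeping you flag resolves exactly as you predict, including the factor $D_{sx,sz}$ and the cancellation $q-(q-1)=1$.
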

\begin{proof}
By Corollary \ref{txc} we get $t_sc_w=qc_w$, since $\l(sw)<\l(w)$ by hypothesis. Furthermore, if $x<sx \in W_c(X)$ then $[t_{sx}](qc_w)=q\cdot q^{-\frac{\l(xs)}{2}}L_{sx,w}$. On the other hand, by Theorem \ref{defic}, we get
\begin{eqnarray}
t_sc_w & = & \sum_{\substack{x \in W_c(X) \\ x \leq w}}q^{-\frac{\l(x)}{2}}L_{x,w} t_s t_x              \nonumber \\
       & = & \sum_{\substack{sx \in W_c(X) \\ sx>x}}q^{-\frac{\l(x)}{2}}L_{x,w} t_{sx} + \sum_{\substack{sx \not \in W_c(X) \\ sx>x}}q^{-\frac{\l(x)}{2}}L_{x,w} t_{sx}+               \nonumber \\
       &   & +\: \sum_{\substack{x \in W_c(X) \\ sx<x}}q^{-\frac{\l(x)}{2}}L_{x,w} (qt_{sx} + (q-1) t_x)  \nonumber \\       
       & = & \sum_{\substack{sx \in W_c(X) \\ sx>x}}q^{-\frac{\l(x)}{2}}L_{x,w} t_{sx} + \sum_{\substack{sx \not \in W_c(X) \\ sx>x}}q^{-\frac{\l(x)}{2}}L_{x,w} \left( \sum_{\substack{y \in W_c(X) \\ y<sx}} D_{y,sx}t_y   \right) +               \nonumber \\
       &   & +\: q\sum_{\substack{sz \in W_c(X) \\ z<sz}}q^{-\frac{\l(sz)}{2}}L_{sz,w} t_z + (q-1)\sum_{\substack{sz \in W_c(X) \\ z<sz}}q^{-\frac{\l(sz)}{2}}L_{sz,w} t_{sz}  \nonumber \\
       & = & \sum_{\substack{sx \in W_c(X) \\ sx>x}}q^{-\frac{\l(x)}{2}}L_{x,w} t_{sx} +  q^{\frac{1}{2}} q^{-\frac{\l(x)}{2}}L_{sx,w} t_x + q^{\frac{1}{2}} q^{-\frac{\l(x)}{2}}L_{sx,w} t_{sx} +    \nonumber \\       
       &   & -\: q^{-\frac{1}{2}}q^{-\frac{\l(x)}{2}}L_{sx,w} t_{sx} + \sum_{\substack{x \in W_c(X) \\ x \leq w}} \left(   \sum_{\substack{sz \not \in W_c(X) \\ z \in (x,w)_c}} q^{-\frac{\l(z)}{2}}D_{x,sz} L_{z,w} \right)t_x.            \label{lxs}
\end{eqnarray} 
By extracting the coefficient of $t_{sx}$ in (\ref{lxs}) we obtain
\begin{eqnarray}
q^{\frac{1}{2}}q^{-\frac{\l(x)}{2}}L_{sx,w} & = & q^{-\frac{\l(x)}{2}}L_{x,w} + q^{\frac{1}{2}}q^{-\frac{\l(x)}{2}} L_{sx,w} - q^{-\frac{1}{2}} q^{-\frac{\l(x)}{2}}L_{sx,w} +          \nonumber \\
                                            &   & +\: \sum_{\substack{sz \not \in W_c(X) \\ z \in (x,w)_c}} q^{-\frac{\l(z)}{2}}D_{sx,sz} L_{z,w}.       \nonumber
\end{eqnarray}
and the statement follows.
\end{proof} 

The following result was inspired by a similar property for the Kazhdan--Lusztig polynomials (see, \textrm{e.g.}, \cite[\S5, Exercises 16]{bb-ccg}). 
\begin{pro} \label{prop-l}
Let $w \in W_c(X)$ and define $$F_w(q^{-\frac{1}{2}})\stackrel{\rm def}{=}\sum_{\substack{x \in W_c(X) \\ x \leq w}}\varepsilon_x q^{-\frac{\ell(x)}{2}}L_{x,w}(q^{-\frac{1}{2}}).$$  Then $F_w(q^{-\frac{1}{2}})=\delta_{e,w}$.
\end{pro}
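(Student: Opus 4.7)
The plan is to view the expression $F_w(v) \= \sum_{x \in W_c(X),\, x\leq w} \varepsilon_x v^{\ell(x)} L_{x,w}(v)$ as a polynomial in a formal variable $v$, and to pin it down by combining a trivial degree count with an internal symmetry $F_w(q^{-\frac{1}{2}}) = F_w(q^{\frac{1}{2}})$ obtained from Proposition \ref{apollo}.

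For the degree count, Theorem \ref{l-pol}(iii) tells us that the summand $\varepsilon_x v^{\ell(x)} L_{x,w}(v)$ lies in $v^{\ell(x)+1}\Z[v]$ whenever $x<w$, while the summand for $x=w$ equals $\varepsilon_w v^{\ell(w)}$. The only index $x$ capable of contributing to the $v^0$ coefficient of $F_w(v)$ is therefore $x=w=e$, so $F_w(v) \in \Z[v]$ has constant term exactly $\delta_{e,w}$.

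For the symmetry, I would substitute Theorem \ref{l-pol}(iv) into the definition of $F_w(q^{-\frac{1}{2}})$ and swap the order of summation to obtain
\begin{equation}\nonumber
F_w(q^{-\frac{1}{2}}) = \sum_{\substack{y \in W_c(X) \\ y \leq w}} q^{-\frac{\ell(y)}{2}} L_{y,w}(q^{\frac{1}{2}}) \left( \sum_{\substack{x \in W_c(X) \\ x \leq y}} \varepsilon_x\, a_{x,y}(q) \right).
\end{equation}
By Proposition \ref{apollo} the bracketed inner sum equals $\varepsilon_y q^{\ell(y)}$, and the right--hand side collapses to $\sum_y \varepsilon_y q^{\ell(y)/2} L_{y,w}(q^{\frac{1}{2}}) = F_w(q^{\frac{1}{2}})$.

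Writing $F_w(v) = \sum_{k \geq 0} c_k v^k$, the identity $F_w(q^{-\frac{1}{2}}) = F_w(q^{\frac{1}{2}})$ in $\A$ forces $c_k = 0$ for every $k \geq 1$, since the monomials $\{q^{k/2}\}_{k \in \Z}$ are $\Z$--linearly independent in $\A$. Hence $F_w(v) = c_0 = \delta_{e,w}$ identically, and in particular $F_w(q^{-\frac{1}{2}}) = \delta_{e,w}$. The only mildly delicate point is the bookkeeping: one must regard $L_{x,w}$ as a polynomial in a single formal variable that can be evaluated at either $q^{-\frac{1}{2}}$ or $q^{\frac{1}{2}}$, so that Theorem \ref{l-pol}(iv) and the symmetry step are statements about the same underlying polynomial; once this is acknowledged, Proposition \ref{apollo} is what does all the work.
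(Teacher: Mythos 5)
Your proposal is correct and follows essentially the same route as the paper: both establish the symmetry $F_w(q^{-\frac{1}{2}})=F_w(q^{\frac{1}{2}})$ by substituting Theorem \ref{l-pol}(iv) and invoking Proposition \ref{apollo}, then conclude that $F_w$ is constant and read off the constant term from $L_{x,w}\in q^{-\frac{1}{2}}\Z[q^{-\frac{1}{2}}]$ for $x<w$. The only cosmetic difference is that you identify the constant term as $\delta_{e,w}$ uniformly, whereas the paper disposes of the case $w=e$ separately at the outset.
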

\begin{proof}
The case $w=e$ is trivial. Suppose $w \not = e$. Combining Theorem \ref{l-pol} \textrm{(iv)} with Proposition \ref{apollo} we have
\begin{eqnarray}
F_w(q^{-\frac{1}{2}}) &=& \sum_{\substack{u \in W_c(X) \\ u \leq w}}\varepsilon_u q^{-\frac{\ell(u)}{2}} \left( \sum_{\substack{x \in W_c(X) \\ u \leq x \leq w}} q^{\frac{\ell(u)-\ell(x)}{2}} a_{u,x}(q) L_{x,w}(q^{\frac{1}{2}})  \right) \nonumber \\  
                      &=& \sum_{\substack{x \in W_c(X) \\ x \leq w}} \left( \sum_{\substack{u \in W_c(X) \\ u \leq x}} \varepsilon_u q^{-\frac{\ell(x)}{2}} a_{u,x}(q) L_{x,w}(q^{\frac{1}{2}})  \right) \nonumber \\
                      &=& \sum_{\substack{x \in W_c(X) \\ x \leq w}} \varepsilon_x q^{-\frac{\ell(x)}{2}} L_{x,w}(q^{\frac{1}{2}}) \left( \sum_{\substack{u \in W_c(X) \\ u \leq x}} \varepsilon_x \varepsilon_u a_{u,x}(q) \right) \nonumber \\
                      &=& \sum_{\substack{x \in W_c(X) \\ x \leq w}} \varepsilon_x q^{-\frac{\ell(x)}{2}} L_{x,w}(q^{\frac{1}{2}}) q^{\ell(x)} \nonumber \\
                      &=& \sum_{\substack{x \in W_c(X) \\ x \leq w}} \varepsilon_x q^{\frac{\ell(x)}{2}} L_{x,w}(q^{\frac{1}{2}})  \nonumber \\ 
                      &=& F_w(q^{\frac{1}{2}}). \nonumber 
\end{eqnarray}
This implies that $F_w(q^{-\frac{1}{2}})$ is constant. On the other hand, the constant term in $F_w(q^{-\frac{1}{2}})$ is zero since $L_{x,w} \in q^{-\frac{1}{2}}\Z[q^{-\frac{1}{2}}]$ by Theorem \ref{l-pol}, and the statement follows.
\end{proof}

The next result is a restatement of the well--known property $\mu(e,w)=0$, for every $w \in W(X)$ such that $\l(e,w)>1$ (see, \textrm{e.g.}, \cite[Proposition 5.1.9]{bb-ccg}).
\begin{cor} \label{coeffel}
Let $w \in W_c(X)$. Then 
\[
[q^{-\frac{1}{2}}]L_{e,w}=\left\{ \begin{array}{ll}
1   & \mbox{ if } \l(w)=1, \\
0   & \mbox{ if } \l(w) \neq 1.
\end{array} \right.
\]
\end{cor}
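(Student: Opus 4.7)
The plan is to deduce the result directly from Proposition \ref{muu}, which identifies $[q^{-1/2}]L_{x,w}$ with the coefficient $\mu(x,w)$ of the top-degree monomial in the Kazhdan--Lusztig polynomial $P_{x,w}$. Specializing $x = e$, the statement reduces to computing $\mu(e,w)$ in the three length regimes.

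First I would dispose of the trivial case $\ell(w) = 0$: here $w = e$, and Theorem \ref{l-pol}(ii) gives $L_{e,e} = 1$, so the coefficient of $q^{-1/2}$ is $0$, in agreement with the claim. Next, for $\ell(w) = 1$ we have $w = s$ for some $s \in S(X)$, so $\ell(w) - \ell(e) = 1$ and Lemma \ref{rp2}(ii) yields $P_{e,s} = 1$; by Definition \ref{topcoeff},
\[
\mu(e,s) = [q^{\frac{\ell(s)-\ell(e)-1}{2}}]P_{e,s} = [q^0]\,1 = 1,
\]
which combined with Proposition \ref{muu} gives $[q^{-1/2}]L_{e,s} = 1$.

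Finally, for $\ell(w) > 1$ I would invoke the cited well-known property $\mu(e,w) = 0$ (see \cite[Proposition 5.1.9]{bb-ccg}). By Proposition \ref{muu} this forces $[q^{-1/2}]L_{e,w} = 0$, completing the case analysis.

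There is essentially no obstacle here: the corollary is a direct translation via Proposition \ref{muu} of the classical vanishing of $\mu(e,w)$ for $\ell(w) > 1$, together with the easy base cases $\ell(w) \in \{0,1\}$. The only subtlety worth noting is making sure that the identification in Proposition \ref{muu} is applied in the correct direction (from $L$-polynomials back to Kazhdan--Lusztig $\mu$-values), which is immediate from the stated equality $M(x,w) = \mu(x,w)$.
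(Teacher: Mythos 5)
Your proof is correct and follows exactly the route the paper intends: the paper presents this corollary as a restatement of the classical fact $\mu(e,w)=0$ for $\ell(w)>1$ via the identification $M(x,w)=\mu(x,w)$ of Proposition \ref{muu}, which is precisely your argument, with the base cases $\ell(w)\in\{0,1\}$ handled by $L_{e,e}=1$ and $P_{e,s}=1$ respectively. Nothing is missing.
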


\begin{cor}
Let $w \in W_c(X)$. Then 
\[
[q^{-1}]L_{e,w}=\left\{ \begin{array}{ll}
0                                                                                     & \mbox{ if } \l(w)<2, \\
1                                                                                     & \mbox{ if } \l(w)=2, \\
\sum_{\substack{s \in S(X) \\ s \leq w}} \mu(s,w)                                     & \mbox{ if } \l(w)>2.
\end{array} \right.
\]
\end{cor}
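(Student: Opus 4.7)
My plan is to use Proposition \ref{prop-l} to express the constant term of $L_{e,w}$ in terms of the other $L_{x,w}$, and then extract the coefficient of $q^{-1}$ carefully.

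Specifically, for $w \neq e$, Proposition \ref{prop-l} gives
$$L_{e,w}(q^{-1/2}) = -\sum_{\substack{x \in W_c(X) \\ e < x \leq w}} \varepsilon_x \, q^{-\ell(x)/2} L_{x,w}(q^{-1/2}).$$
The strategy is to identify which $x$ in this sum can possibly contribute a $q^{-1}$ term. Since $L_{x,w} \in q^{-1/2}\mathbb{Z}[q^{-1/2}]$ for $x < w$, the polynomial $q^{-\ell(x)/2}L_{x,w}(q^{-1/2})$ lives in $q^{-(\ell(x)+1)/2}\mathbb{Z}[q^{-1/2}]$, so for $x < w$ with $\ell(x) \geq 2$ every monomial has exponent strictly less than $-1$ and contributes nothing. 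The only surviving contributions to $[q^{-1}]$ are therefore (a) the term $x = w$, which contributes $-\varepsilon_w$ precisely when $\ell(w) = 2$ (and vanishes otherwise since then $-\ell(w)/2 \neq -1$), and (b) the atoms $x = s \in S(X)$ with $s \leq w$, $s \neq w$, where $q^{-1/2}\cdot [q^{-1/2}]L_{s,w}$ produces a $q^{-1}$ term.

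Using Proposition \ref{muu}, $[q^{-1/2}]L_{s,w} = M(s,w) = \mu(s,w)$, and since $\varepsilon_s = -1$ this yields
$$[q^{-1}]L_{e,w} = -\varepsilon_w\, [\ell(w)=2] + \sum_{\substack{s \in S(X),\, s \leq w \\ s \neq w}} \mu(s,w).$$
The four cases of the corollary then fall out. When $\ell(w)=0$ the identity $L_{e,e}=1$ gives $[q^{-1}]L_{e,e}=0$. When $\ell(w)=1$, say $w = s$, the sum is empty (there is no $s' \neq w$ of length $1$ below $w$) and the indicator vanishes. When $\ell(w)=2$, writing a reduced expression $w = s's''$ with $s' \neq s''$, the two atoms $s',s''$ satisfy $\mu(\cdot,w) = 1$ by Lemma \ref{rp2}(ii) (so $[q^0]P_{s',w} = 1$), hence $[q^{-1}]L_{e,w} = -1 + 1 + 1 = 1$. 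When $\ell(w)>2$ the indicator vanishes, and the condition $s \neq w$ becomes vacuous, giving the stated sum.

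The only step requiring real care is the bookkeeping in (a)--(b): checking that no intermediate $x$ with $\ell(x) \geq 2$, $x < w$ can generate a $q^{-1}$ term. This is immediate from the degree bound $L_{x,w} \in q^{-1/2}\mathbb{Z}[q^{-1/2}]$ in Theorem \ref{l-pol}(iii), so no real obstacle should arise.
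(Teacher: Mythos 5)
Your proposal is correct and follows essentially the same route as the paper: both extract $[q^{-1}]$ from the identity of Proposition \ref{prop-l}, observe that only the terms with $\l(x)=1$ (and, in your uniform treatment, the term $x=w$ when $\l(w)=2$) can contribute because $L_{x,w}\in q^{-\frac{1}{2}}\Z[q^{-\frac{1}{2}}]$ for $x<w$, and then invoke Proposition \ref{muu} to identify $[q^{-\frac{1}{2}}]L_{s,w}$ with $\mu(s,w)$. The only cosmetic difference is that the paper disposes of the case $\l(w)=2$ by citing the separately computed value $L_{e,w}=q^{-1}$, whereas you derive it from the same bookkeeping via the $x=w$ term and the two atoms; both are fine.
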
	
\begin{proof}
The case $\l(w) \leq 1$ is trivial. If $\l(w)=2$ then $L_{e,w}=q^{-1}$ as explained at the end of this section. Suppose $\l(w)>2$. Then 
\begin{eqnarray}
[q^{-1}](L_{e,w}) &=& -\sum_{\substack{x \in W_c(X) \\ x \leq w, \: \l(x)=1}} \varepsilon_x [q^{-\frac{1}{2}}]L_{x,w}-\sum_{\substack{x \in W_c(X) \\ x \leq w,  \: \l(x)=2}} \varepsilon_x \underbrace{[q^0]L_{x,w}}_{0} \nonumber \\
                  &=& \sum_{\substack{s \in S(X) \\ s \leq w}} [q^{-\frac{1}{2}}]L_{s,w}, \nonumber
\end{eqnarray}
an the statement follows by Proposition \ref{muu}.
\end{proof}
The previous results would suggest that $[q^k]L_{e,w} \geq 0$, for every $k \in \Q$. However, this is not true even in $W(A_3)$. For instance, computer calculations show that $L_{e,s_2s_1s_3s_2}=q^{-1}-q^{-2}$.
  
\begin{teo}\label{lpoch}
For all elements $x,w \in W_c(X)$ such that $x < w$ we have
$$L_{x,w}=q^{\frac{\l(x)-\l(w)}{2}}\sum \left((-1)^k \prod_{i=1}^{k+1}P_{x_{i-1},x_i}\right),$$
where the sum runs over all the chains $x=x_0<x_1<\cdots <x_{k+1}=w$ such that $x_i \not \in W_c(X)$ if $1\leq i \leq k$, and $ 0\leq k \leq \l(x,w)-1$.
\end{teo}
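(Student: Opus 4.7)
The idea is to exploit the projection property (\ref{Cw-0}): since $X$ is assumed to satisfy it and $w \in W_c(X)$, Theorem \ref{teocw0} gives $\sigma(C_w') = c_w$. I will expand both sides in the $t$-basis, compare coefficients of $t_x$, and then rewrite the resulting closed form via Theorem \ref{te1}.

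Using (\ref{C'w}) and Proposition \ref{d-pol},
\begin{equation*}
\sigma(C_w') \;=\; q^{-\ell(w)/2}\sum_{y \le w} P_{y,w}(q)\, t_y \;=\; \sum_{\substack{x \in W_c(X) \\ x \le w}} q^{-\ell(w)/2}\Bigl(\sum_{x \le y \le w} P_{y,w}(q)\, D_{x,y}(q)\Bigr) t_x,
\end{equation*}
while Theorem \ref{defic} gives $c_w = \sum_{x \in W_c(X),\, x \le w} q^{-\ell(x)/2} L_{x,w}(q^{-1/2})\, t_x$. Equating coefficients of $t_x$ in $\sigma(C_w') = c_w$ yields
\begin{equation*}
L_{x,w}(q^{-1/2}) \;=\; q^{(\ell(x)-\ell(w))/2}\sum_{x \le y \le w} D_{x,y}(q)\, P_{y,w}(q).
\end{equation*}
Next I split the sum according to whether $y$ is fully commutative. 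If $y \in W_c(X)$ then $D_{x,y} = \delta_{x,y}$; since $x<w$ and $w \in W_c(X)$, only $y = x$ survives, contributing $P_{x,w}(q)$. For $y \notin W_c(X)$ (necessarily with $x<y<w$), I substitute the chain expansion of Theorem \ref{te1} for $D_{x,y}$: a signed sum over chains $x = x_0 < \cdots < x_k = y$ with $k \ge 1$ and $x_1,\dots,x_k \notin W_c(X)$. Appending the edge $y<w$ and multiplying by $P_{y,w}(q)$ turns each such chain into a chain $x = x_0 < \cdots < x_{k+1} = w$ with $x_1,\dots,x_k \notin W_c(X)$, carrying sign $(-1)^k$. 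Together with the $P_{x,w}(q)$ term (which is precisely the $k=0$ case, i.e.\ the chain $x<w$ with no intermediate vertices), this reproduces the chain sum in the statement.

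The main obstacle is purely combinatorial bookkeeping: one must check that the ``diagonal'' term $P_{x,w}(q)$ arising from $y = x$ in the split exactly matches the $k = 0$ contribution in the chain-sum formula, and that the index shift $k \mapsto k+1$ induced by appending the terminal edge preserves the sign $(-1)^k$ from Theorem \ref{te1}. No further ingredient is required beyond the identity $\sigma(C_w') = c_w$ and the chain expansion of $D_{x,y}$.
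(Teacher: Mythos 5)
Your proposal is correct and follows essentially the same route as the paper's proof: equate the $t_x$-coefficients of $\sigma(C_w')=c_w$ to obtain $L_{x,w}=q^{\frac{\l(x)-\l(w)}{2}}\bigl(P_{x,w}+\sum_{y\notin W_c(X),\,x<y<w}D_{x,y}P_{y,w}\bigr)$, then substitute the chain expansion of Theorem \ref{te1} for $D_{x,y}$ and absorb the terminal factor $P_{y,w}$ into the product. The bookkeeping you flag (the $y=x$ term being the $k=0$ chain, and the sign surviving the index shift) works out exactly as you describe.
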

\begin{proof}
On the one hand, from Proposition \ref{d-pol} and the definition of the $t$--basis we get 
\begin{eqnarray}
\sigma(C_w') & = & q^{-\frac{\l(w)}{2}}\sum_{y \leq w}P_{y,w}\sigma(T_y) \nonumber \\
             & = & q^{-\frac{\l(w)}{2}}\sum_{y \leq w}P_{y,w} \left( \sum_{\substack{x \in W_c(X) \\ x \leq y}} D_{x,y}t_x \right) \nonumber \\
             & = & q^{-\frac{\l(w)}{2}}\sum_{\substack{x \in W_c(X) \\ x \leq w}}\left(\sum_{x \leq y \leq w} D_{x,y}P_{y,w}\right)t_x.\label{sigc} 
\end{eqnarray}
On the other hand, by Theorem \ref{defic}, 
\begin{equation} \label{ridefic}
c_w=\sum_{\substack{x \in W_c(X) \\ x \leq w}} q^{-\frac{\l(x)}{2}}L_{x,w}t_x.
\end{equation}
Therefore, equation (\ref{Cw-0}) implies that the coefficient of $t_x$ in (\ref{ridefic}) and in (\ref{sigc}) are equal, that is $$q^{-\frac{\l(x)}{2}} L_{x,w}=q^{-\frac{\l(w)}{2}}\sum_{x \leq y \leq w} D_{x,y}P_{y,w}.$$ 
Since $D_{x,y} = \delta_{x,y}$ if $y \in W_c(X)$ and $x \leq y$, we achieve
\begin{equation} \label{e-pol}
L_{x,w}=q^{\frac{\l(x)-\l(w)}{2}}\left(P_{x,w}+\sum_{\substack{y \not \in W_c(X) \\ x < y < w}} D_{x,y}P_{y,w}\right). 
\end{equation}
Combining (\ref{e-pol}) and Theorem \ref{te1} we get 
\begin{eqnarray}
L_{x,w} & = & q^{\frac{\l(x)-\l(w)}{2}} \left( P_{x,w} + \sum_{\substack{y \not \in W_c(X) \\ x < y < w}} \sum_{\substack{x=x_0<\cdots \\ \cdots<x_k=y}} \left((-1)^k \prod_{i=1}^{k}P_{x_{i-1},x_i}\right) P_{y,w} \right) \nonumber \\ 
        & = & q^{\frac{\l(x)-\l(w)}{2}} \left(\sum_{\substack{x=x_0<\cdots \\ \cdots <x_{k+1}=w}}(-1)^k \prod_{i=1}^{k+1}P_{x_{i-1},x_i} \right), \nonumber
\end{eqnarray} 
where $x_i \not \in W_c(X)$ if $1\leq i \leq k$.
\end{proof}
The previous theorem shows that the $L$--polynomials depend only on the Kazhdan--Lusztig polynomials and the poset structure induced by the Bruhat order on $\{x,w\} \cup ((x,w) \setminus(x,w)_c)$, where $(x,w)_c=\{y \in (x,w):\, y \in W_c(X) \}$.\\

\begin{lemma} \label{lzw0}
Let $x,w \in W_c(X)$. If there exists $s \in S(X)$ such that $sw<w$ and $x<sx \not \in W_c(X)$, then $L_{x,w}=0$.
\end{lemma}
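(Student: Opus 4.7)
The strategy is to exploit the closed formula for $L_{x,w}$ derived in the proof of Theorem \ref{lpoch}, namely equation (\ref{e-pol}):
\[ L_{x,w} = q^{\frac{\ell(x)-\ell(w)}{2}} \left( P_{x,w} + \sum_{\substack{y \not\in W_c(X) \\ x < y < w}} D_{x,y} P_{y,w} \right). \]
Since $D_{x,x}=1$, $D_{x,y}=\delta_{x,y}$ for $y\in W_c(X)$ with $x\leq y$, and $D_{x,w}=0$ (as $w\in W_c(X)$ and $x<w$), the bracketed expression equals $\sum_{x\leq y\leq w} D_{x,y}P_{y,w}$, and the goal becomes to show this sum vanishes.

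The plan is to pair each $y$ in the sum with $sy$, mimicking the argument in the proof of Lemma \ref{ddelta}. Splitting according to the sign of $\ell(sy)-\ell(y)$ and substituting $z=sy$ in the ``$sy>y$'' piece, the Lifting Property (Lemma \ref{lif lemma}) together with our hypotheses $sx>x$ and $sw<w$ shows that every $y$ in the sum contributes to exactly one pair $(z,sz)$ with $sz<z\leq w$ and $x\leq sz$. Corollary \ref{kl property} gives $P_{z,w}=P_{sz,w}$, so the sum rewrites as
\[ \sum_{\substack{x \leq sz \\ sz < z \leq w}} (D_{x, z} + D_{x, sz})\, P_{z, w}. \]
Hence it suffices to prove that each bracket $D_{x,z}+D_{x,sz}$ vanishes under these constraints.

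I would then split into three cases according to the membership of $z$ and $sz$ in $W_c(X)$; note that $z\in W_c(X)$ together with $sz<z$ forces $sz\in W_c(X)$ (a standard consequence of the braid-avoidance characterization of fully commutative elements), so the configuration ``$z\in W_c(X),\, sz\not\in W_c(X)$ with $sz<z$'' never arises. When both $z,sz\in W_c(X)$, we have $D_{x,z}+D_{x,sz}=\delta_{x,z}+\delta_{x,sz}$, and each delta must vanish since $x=z$ or $x=sz$ would force $sx$ (equal to $sz$ or $z$ respectively) to lie in $W_c(X)$, contradicting $sx\not\in W_c(X)$. When $z\not\in W_c(X)$ but $sz\in W_c(X)$, the left-handed version of Lemma \ref{ddelta} gives $D_{x,z}=-\delta_{x,sz}$, which cancels $D_{x,sz}=\delta_{x,sz}$. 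Finally, when both $z,sz\not\in W_c(X)$, the left-handed version of Proposition \ref{p-gc} applied with $w:=z$ kills the ``$\widetilde{D_{x,z}}$'' term (precisely because $x<sx\not\in W_c(X)$), leaving
\[ D_{x,z} = \sum_{\substack{y\in W_c(X),\, sy\not\in W_c(X) \\ sy>y}} D_{x,sy}\, D_{y,sz}; \]
the left Lemma \ref{ddelta} then collapses $D_{x,sy}=-\delta_{x,y}$, so only the summand $y=x$ survives and yields $D_{x,z}=-D_{x,sz}$.

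The main technical obstacle is to invoke rigorously the ``left'' analogues of Proposition \ref{p-gc} and Lemma \ref{ddelta}, which follow from the (right-handed) statements in the paper by conjugating with the anti-automorphism $u\mapsto u^{-1}$ and using Lemma \ref{maps} to ensure $W_c(X)$ is preserved, together with the folklore implication $z\in W_c(X),\, sz<z\Rightarrow sz\in W_c(X)$ alluded to above. Everything else amounts to careful bookkeeping with the Bruhat order and the Lifting Property.
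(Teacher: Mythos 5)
Your proposal is correct and follows essentially the same route as the paper: both start from equation (\ref{e-pol}), use Corollary \ref{kl property} to pair $y$ with $sy$, and kill each pair via the (left-handed) Lemma \ref{ddelta} and the identity $D_{x,z}+D_{x,sz}=0$ obtained from Proposition \ref{p-gc} (the paper's relation (\ref{dxzs0})); your uniform pairing over all of $[x,w]$ versus the paper's separate cancellation of $P_{x,w}$ against $D_{x,sx}P_{sx,w}$ is only a cosmetic reorganization. Your explicit attention to transporting the right-handed statements to their left-handed analogues via $u\mapsto u^{-1}$ is a point the paper glosses over, and is welcome.
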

\begin{proof}
By (\ref{e-pol}) we get 
\begin{eqnarray}
L_{x,w} &=& q^{\frac{\l(x)-\l(w)}{2}}\left(P_{x,w}+\sum_{\substack{y \not \in W_c(X) \\ x < y < w}} D_{x,y}P_{y,w}\right) \nonumber \\
        &=& q^{\frac{\l(x)-\l(w)}{2}}\left(P_{x,w}+D_{x,sx}P_{sx,w}+\sum_{\substack{y \not \in W_c(X), y \neq sx \\ x < y < w}} D_{x,y}P_{y,w}\right)\nonumber \\
        &=& q^{\frac{\l(x)-\l(w)}{2}} \left(\sum_{\substack{y \not \in W_c(X), y \neq sx \\ x < y < w}} D_{x,y}P_{y,w}\right). \label{star}
\end{eqnarray}
Denote by $(*)$ the expression in round brackets in (\ref{star}). We show that $(*)$ is zero and the statement follows. In fact, by applying  relation (\ref{dxzs0}) and Lemma \ref{ddelta}, we get
\begin{eqnarray}
    (*) &=& \sum_{\substack{y \not \in W_c(X) \\ y < sy }} D_{x,y}P_{y,w}+\sum_{\substack{y \not \in W_c(X), y \neq sx \\ y>sy}} D_{x,y}P_{y,w}\nonumber \\
        &=& \sum_{\substack{y \not \in W_c(X) \\ y < sy \not \in W_c(X)}} D_{x,y}P_{y,w} + \sum_{\substack{y \not \in W_c(X) \\ y>sy \not \in W_c(X)}} D_{x,y}P_{y,w} + \sum_{\substack{y \not \in W_c(X), y \neq sx \\ y>sy \in W_c(X)}} D_{x,y}P_{y,w}\nonumber \\
        &=& \sum_{\substack{y \not \in W_c(X) \\ y>sy \not \in W_c(X)}} (\underbrace{D_{x,sy}+D_{x,y}}_{0})P_{y,w}+\sum_{\substack{y \not \in W_c(X), y \neq sx \\ y>sy \in W_c(X)}} D_{x,y}P_{y,w} \nonumber \\
        &=& \sum_{\substack{y \not \in W_c(X), y \neq sx \\ y>sy \in W_c(X)}} (-\delta_{x,sy})P_{y,w}=0, \nonumber    
\end{eqnarray}
as desired.
\end{proof}

The next result mirrors a well--known property of the Kazhdan--Lusztig polynomials (see, \textrm{e.g.}, \cite[Proposition 5.1.8]{bb-ccg}).
\begin{cor}
Let $x,w \in W_c(X)$ be such that $x<w$. If there exists $s \in S(X)$ such that $sw<w$ and $x<sx \in W_c(X)$, then $L_{x,w}=q^{-\frac{1}{2}}L_{sx,w}$.
\end{cor}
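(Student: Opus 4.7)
The plan is to derive this corollary by directly invoking Theorem \ref{lrecurs} and then showing that every term in the sum on the right-hand side vanishes, via Lemma \ref{lzw0}.

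First I would check that the hypotheses of Theorem \ref{lrecurs} are satisfied: we are given $x, w \in W_c(X)$ with $x < w$, and an $s \in S(X)$ with $sw < w$ and $x < sx \in W_c(X)$. Applying the theorem yields
$$L_{x,w} = q^{-\frac{1}{2}} L_{sx,w} - \sum_{\substack{sz \notin W_c(X) \\ z \in (x,w)_c}} q^{\frac{\ell(x)-\ell(z)}{2}} D_{sx,sz} L_{z,w}.$$
So it suffices to prove that $L_{z,w} = 0$ for every $z$ appearing in this sum.

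Fix such a $z$. By definition of the index set, $z \in W_c(X)$ and $sz \notin W_c(X)$. I claim that this forces $z < sz$. Indeed, the set $W_c(X)$ has the following standard hereditary property with respect to one-sided descents: if $y \in W_c(X)$ and $sy < y$, then $sy \in W_c(X)$ as well (any reduced expression of $sy$ preceded by $s$ produces a reduced expression of $y$, and commutation moves among reduced expressions of $y$ restrict to commutation moves on those of $sy$). Taking the contrapositive with $y = z$ gives $z < sz$. Now Lemma \ref{lzw0} applied to the triple $(z, w, s)$ has all its hypotheses met: $z, w \in W_c(X)$, $sw < w$ by assumption, and $z < sz \notin W_c(X)$ just established. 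Its conclusion is $L_{z,w} = 0$.

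Consequently every summand in the displayed identity vanishes, and we obtain $L_{x,w} = q^{-\frac{1}{2}} L_{sx,w}$, as required. There is no genuine obstacle here since all the heavy lifting has already been done in Theorem \ref{lrecurs} and Lemma \ref{lzw0}; the only subtle point worth highlighting is the hereditary property of fully commutative elements used to guarantee $z < sz$, without which the application of Lemma \ref{lzw0} could not be made.
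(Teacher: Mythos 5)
Your proposal is correct and follows exactly the paper's (very terse) proof, which simply combines Theorem \ref{lrecurs} with Lemma \ref{lzw0}; you have usefully filled in the one detail the paper leaves implicit, namely that $z \in W_c(X)$ with $sz \notin W_c(X)$ forces $z < sz$ so that Lemma \ref{lzw0} applies. No issues.
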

\begin{proof}
The result follows by combining Theorem \ref{lrecurs} with Lemma \ref{lzw0}. 
\end{proof}

In the same way that Corollary \ref{c2-dpol} follows from Theorem \ref{te1} we deduce from Theorem \ref{lpoch} the following symmetry properties of the $L$--polynomials, whose proof we omit.  
\begin{cor}
Let $x,w \in W_c(X)$. Then   
\begin{itemize}
\item[{\rm (i)}]  
$L_{x,w}=L_{x^{-1},w^{-1}}$;  
\item[{\rm (ii)}] 
$L_{x,w}=L_{w_0xw_0,w_0ww_0}$.
\end{itemize}
\qed
\end{cor}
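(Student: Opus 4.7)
The argument mirrors the proof of Corollary \ref{c2-dpol}. By Theorem \ref{lpoch} we have
$$L_{x,w}=q^{\frac{\l(x)-\l(w)}{2}}\sum \left((-1)^k \prod_{i=1}^{k+1}P_{x_{i-1},x_i}\right),$$
where the sum is over chains $x=x_0<x_1<\cdots<x_{k+1}=w$ with $x_i\notin W_c(X)$ for $1\le i\le k$.

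For (i), the map $u\mapsto u^{-1}$ is a Bruhat order automorphism (Proposition \ref{pro automorph}), preserves the length function, and preserves $W_c(X)$ (Lemma \ref{maps}); hence it induces a bijection between the chains from $x^{-1}$ to $w^{-1}$ (satisfying the condition that intermediate terms avoid $W_c(X)$) and the chains from $x$ to $w$ with the analogous condition, via $y_i\leftrightarrow x_i^{-1}$. Using the well--known symmetry $P_{u,v}=P_{u^{-1},v^{-1}}$ (see \cite[\S 5, Exercise 12]{bb-ccg}) and the fact that $\ell(x^{-1})=\ell(x)$, $\ell(w^{-1})=\ell(w)$, the prefactor $q^{(\ell(x)-\ell(w))/2}$ and each product $\prod P_{x_{i-1},x_i}$ are unchanged, giving $L_{x^{-1},w^{-1}}=L_{x,w}$.

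For (ii), one argues identically, replacing the map $u\mapsto u^{-1}$ by $u\mapsto w_0 u w_0$: this is again a Bruhat automorphism (Proposition \ref{pro automorph}) that preserves both length and $W_c(X)$ (Lemma \ref{maps}), and the Kazhdan--Lusztig polynomials satisfy $P_{u,v}=P_{w_0 u w_0,w_0 v w_0}$ (see \cite[\S 5, Exercise 13(c)]{bb-ccg}). The term--by--term bijection between chains again preserves the sum in Theorem \ref{lpoch}, yielding $L_{w_0 x w_0,w_0 w w_0}=L_{x,w}$.
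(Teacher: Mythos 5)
Your proof is correct and is exactly the argument the paper intends: it omits the proof precisely because, as you do, one transports the chain formula of Theorem \ref{lpoch} through the Bruhat automorphisms $u\mapsto u^{-1}$ and $u\mapsto w_0uw_0$ (which preserve length and $W_c(X)$ by Lemma \ref{maps}) and invokes the corresponding Kazhdan--Lusztig symmetries, just as in Corollary \ref{c2-dpol}. The only addition over that earlier argument is your (correct) observation that the prefactor $q^{\frac{\l(x)-\l(w)}{2}}$ is also invariant.
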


\begin{lemma}
Let $x,w \in W_c(X)$ be such that $x < w$. If $\ell(x,w)=1$ then $L_{x,w}=q^{-\frac{1}{2}}$. If $\ell(x,w)=2$, then

\[
L_{x,w}=\left\{ \begin{array}{rl}
-q^{-1}   & \mbox{ if $k=2$,} \\ 
 0        & \mbox{ if $k=1$,} \\
 q^{-1}   & \mbox{ if $k=0$,}
\end{array} \right.
\]
with $k = |\{ y \not \in W_c(X):\, x < y < w  \}|$.
\end{lemma}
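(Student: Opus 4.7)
The plan is to apply formula (\ref{e-pol}) directly, reducing the computation to knowing the Kazhdan--Lusztig polynomials and the $D$--polynomials on a very short interval, both of which are handled by lemmas already established in the text.

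First, for $\ell(x,w)=1$, note that there are no elements strictly between $x$ and $w$, so the sum in (\ref{e-pol}) is empty and $L_{x,w}=q^{-1/2}P_{x,w}$. By Lemma \ref{rp2}(ii), $P_{x,w}=1$, giving $L_{x,w}=q^{-1/2}$.

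Next, for $\ell(x,w)=2$, I would use the fact (cited in the text, Lemma 2.7.3 of \cite{bb-ccg}) that $[x,w]\cong B_2$, so there are exactly two elements $y_1,y_2$ with $x<y_i<w$. By Lemma \ref{rp2}(ii), $P_{x,w}=P_{y_i,w}=1$ for $i=1,2$, since each relevant interval has length at most $2$. Therefore equation (\ref{e-pol}) simplifies to
\[
L_{x,w}=q^{-1}\left(1+\sum_{\substack{y\not\in W_c(X)\\ x<y<w}} D_{x,y}\right).
\]
For each $y\not\in W_c(X)$ with $x<y<w$ one has $\ell(x,y)=1$, and Lemma \ref{dpol2} (the case $\ell(x,y)=1$) yields $D_{x,y}=-1$. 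Hence the bracketed quantity equals $1-k$ where $k=|\{y\not\in W_c(X):x<y<w\}|$, and the three announced values $q^{-1}$, $0$, $-q^{-1}$ for $k=0,1,2$ drop out immediately.

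There is essentially no obstacle: the whole argument is an assembly of Lemma \ref{rp2}, Lemma \ref{dpol2}, the boolean interval description of rank-$2$ Bruhat intervals, and the explicit formula (\ref{e-pol}) derived from Theorem \ref{lpoch}. The only point requiring any attention is verifying that all the $P$-polynomials appearing in (\ref{e-pol}) have indices of length difference at most $2$, so that Lemma \ref{rp2}(ii) applies uniformly; this is immediate since $y\in[x,w]$ forces $\ell(y,w)\leq 2$.
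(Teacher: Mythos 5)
Your proof is correct and follows essentially the same route as the paper's: both arguments apply equation (\ref{e-pol}), use Lemma \ref{rp2} to conclude that all relevant Kazhdan--Lusztig polynomials equal $1$, and invoke Lemma \ref{dpol2} to get $D_{x,y}=-1$ for the length-one subintervals, yielding $L_{x,w}=q^{-1}(1-k)$. The extra remarks about $[x,w]\cong B_2$ and the uniform applicability of Lemma \ref{rp2} are harmless elaborations of what the paper leaves implicit.
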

\begin{proof}
If $\ell(x,w)=1$, then $P_{x,w}=1$ (see Lemma \ref{rp2}). Equation (\ref{e-pol}) then implies that $L_{x,w}=q^{-\frac{1}{2}}P_{x,w}=q^{-\frac{1}{2}}$. If $\ell(x,w)=2$, then from Equation (\ref{e-pol}) and Lemma \ref{dpol2} we get 
\begin{eqnarray}
L_{x,w} & = & q^{\frac{\ell(x)-\ell(w)}{2}}\left(P_{x,w}+\sum_{\substack{y \not \in W_c(X) \\ x < y < w}} D_{x,y}P_{y,w}\right) \nonumber \\
                          & = & q^{-1} \left(1 + \sum_{\substack{y \not \in W_c(X) \\ x < y < w}} (-1)  \right) \nonumber \\                          
                          & = & q^{-1} \left(1 - k \right), \nonumber 
\end{eqnarray}
and the statement follows.
\end{proof}


\appendix

\chapter{Coxeter systems}  \label{fiacg}

\section{Finite irreducible Coxeter systems}

\begin{figure}[!hbtp] 
\[
\xymatrix @C=3pc { *=0{\bullet} \ar@{-}[r]^{}_<{s_1} & *=0{\bullet} \ar@{-}[r]^{}_<{s_2}& *=0{\bullet} \ar@{--}[r]^{}_<{s_3} & *=0{\bullet} \ar@{-}[r]^{}_<{s_{n-1}}_>{s_{n}}  & *=0{\bullet} }
\]
$$A_n \quad (n \geq 1)$$

\[
\xymatrix @C=3pc { *=0{\bullet} \ar@{-}[r]^{4}_<{s_0} & *=0{\bullet} \ar@{-}[r]^{}_<{s_1}& *=0{\bullet} \ar@{--}[r]^{}_<{s_2} & *=0{\bullet} \ar@{-}[r]^{}_<{s_{n-2}}_>{s_{n-1}}  & *=0{\bullet} }
\]
$$B_n \quad (n \geq 2)$$

\[
\xymatrix @C=3pc {                & *=0{\bullet} \ar@{-}[d]^<{s_1}   &         &         & \\
*=0{\bullet} \ar@{-}[r]^{}_<{s_2} & *=0{\bullet} \ar@{-}[r]^{}_<{s_3}& *=0{\bullet} \ar@{--}[r]^{}_<{s_4} & *=0{\bullet} \ar@{-}[r]^{}_<{s_{n-1}}_>{s_{n}}  & *=0{\bullet} }
\]
$$D_n \quad (n \geq 4)$$

\caption{Finite irreducible Coxeter systems (part I).} \label{fig ficgI}
\end{figure}

\newpage

\begin{figure}[!hbtp]

\[
\xymatrix @C=3pc {              &                                & *=0{\bullet} \ar@{-}[d]^<{}    &               &               \\
*=0{\bullet} \ar@{-}[r]^{}_<{}  & *=0{\bullet} \ar@{-}[r]^{}_<{} & *=0{\bullet} \ar@{-}[r]^{}_<{} & *=0{\bullet} \ar@{-}[r]^{}_<{}_>{}  & *=0{\bullet} }
\]
$$E_6$$

\[
\xymatrix @C=3pc {              &                                & *=0{\bullet} \ar@{-}[d]^<{}    &     &     &               \\
*=0{\bullet} \ar@{-}[r]^{}_<{}  & *=0{\bullet} \ar@{-}[r]^{}_<{} & *=0{\bullet} \ar@{-}[r]^{}_<{} & *=0{\bullet} \ar@{-}[r]^{}_<{} & *=0{\bullet} \ar@{-}[r]^{}_<{}_>{}  & *=0{\bullet} }
\]
$$E_7$$

\[
\xymatrix @C=3pc {              &                                & *=0{\bullet} \ar@{-}[d]^<{}    &     &     &               \\
*=0{\bullet} \ar@{-}[r]^{}_<{}  & *=0{\bullet} \ar@{-}[r]^{}_<{} & *=0{\bullet} \ar@{-}[r]^{}_<{} & *=0{\bullet} \ar@{-}[r]^{}_<{} & *=0{\bullet} \ar@{-}[r]^{}_<{} & *=0{\bullet} \ar@{-}[r]^{}_<{}_>{}  & *=0{\bullet} }
\]
$$E_8$$

\[
\xymatrix @C=3pc { *=0{\bullet} \ar@{-}[r]^{}_<{} & *=0{\bullet} \ar@{-}[r]^{4}_<{} &*=0{\bullet} \ar@{-}[r]^{}_<{}_>{}  & *=0{\bullet} } 
\]
$$F_4$$

\[
\xymatrix @C=3pc { *=0{\bullet} \ar@{-}[r]^{5}_<{} & *=0{\bullet} \ar@{-}[r]^{}_<{}_>{}  & *=0{\bullet} }
\]
$$H_3$$

\[
\xymatrix @C=3pc { *=0{\bullet} \ar@{-}[r]^{5}_<{} & *=0{\bullet} \ar@{-}[r]^{}_<{} &*=0{\bullet} \ar@{-}[r]^{}_<{}_>{}  & *=0{\bullet} }
\]
$$H_4$$

\[ 
\xymatrix @C=3pc { *=0{\bullet} \ar@{-}[r]^{m}_<{} & *=0{\bullet} } 
\]
$$I_2(m) \quad (m \geq 3)$$
\caption{Finite irreducible Coxeter systems (part II).} \label{fig ficgII}
\end{figure}

\newpage

\section{Affine Coxeter systems}

\begin{figure}[!htbp]
\[ 
\xymatrix @C=3pc { *=0{\bullet} \ar@{-}[r]^{\infty}_<{} & *=0{\bullet} } 
\]
$$\widetilde{A_1}$$

\[
\xymatrix @C=3pc {                &                                  & *=0{\bullet} \ar@{-}[dll]^<{s_0}     &                  &    \\
*=0{\bullet} \ar@{-}[r]^{}_<{s_1} & *=0{\bullet} \ar@{-}[r]^{}_<{s_2}& *=0{\bullet} \ar@{--}[r]^{}_<{s_3} & *=0{\bullet} \ar@{-}[r]^{}_<{s_{n-2}}_>{s_{n-1}}  & *=0{\bullet} \ar@{-}[ull] }
\]
$$\widetilde{A_{n-1}} \quad (n \geq 3)$$

\[
\xymatrix @C=3pc {                 &                                  &                                    & *=0{\bullet} \ar@{-}[d]^<{s_n}                                                                                &    \\
*=0{\bullet} \ar@{-}[r]^{4}_<{s_0} & *=0{\bullet} \ar@{-}[r]^{}_<{s_1}& *=0{\bullet} \ar@{--}[r]^{}_<{s_2} & *=0{\bullet} \ar@{-}[r]^{}_<{s_{n-1}}_>{s_{n}}  & *=0{\bullet} }
\]
$$\widetilde{B_n} \quad (n \geq 3)$$

\[
\xymatrix @C=3pc { *=0{\bullet} \ar@{-}[r]^{4}_<{s_0} & *=0{\bullet} \ar@{-}[r]^{}_<{s_1}& *=0{\bullet} \ar@{--}[r]^{}_<{s_2} & *=0{\bullet} \ar@{-}[r]^{4}_<{s_{n-1}}_>{s_{n}}  & *=0{\bullet} }
\]
$$\widetilde{C_n} \quad (n \geq 2)$$

\[
\xymatrix @C=3pc {                 & *=0{\bullet} \ar@{-}[d]^<{s_0}   &                                    & *=0{\bullet} \ar@{-}[d]^<{s_n}                                                                                &    \\
*=0{\bullet} \ar@{-}[r]^{}_<{s_1} & *=0{\bullet} \ar@{-}[r]^{}_<{s_2}& *=0{\bullet} \ar@{--}[r]^{}_<{s_3} & *=0{\bullet} \ar@{-}[r]^{}_<{s_{n-2}}_>{s_{n-1}}  & *=0{\bullet} }
\]
$$\widetilde{D_n} \quad (n \geq 4)$$

\caption{Affine Coxeter systems (part I).}

\end{figure}
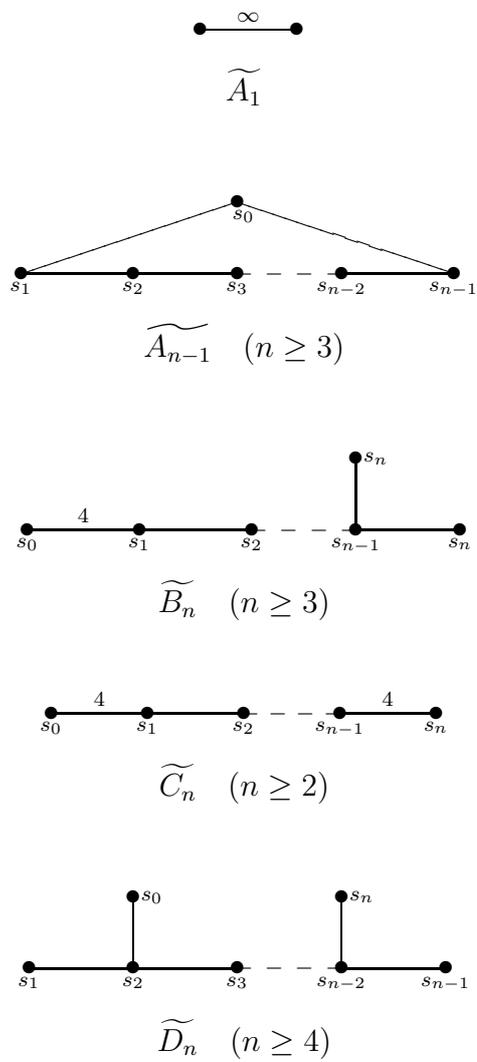

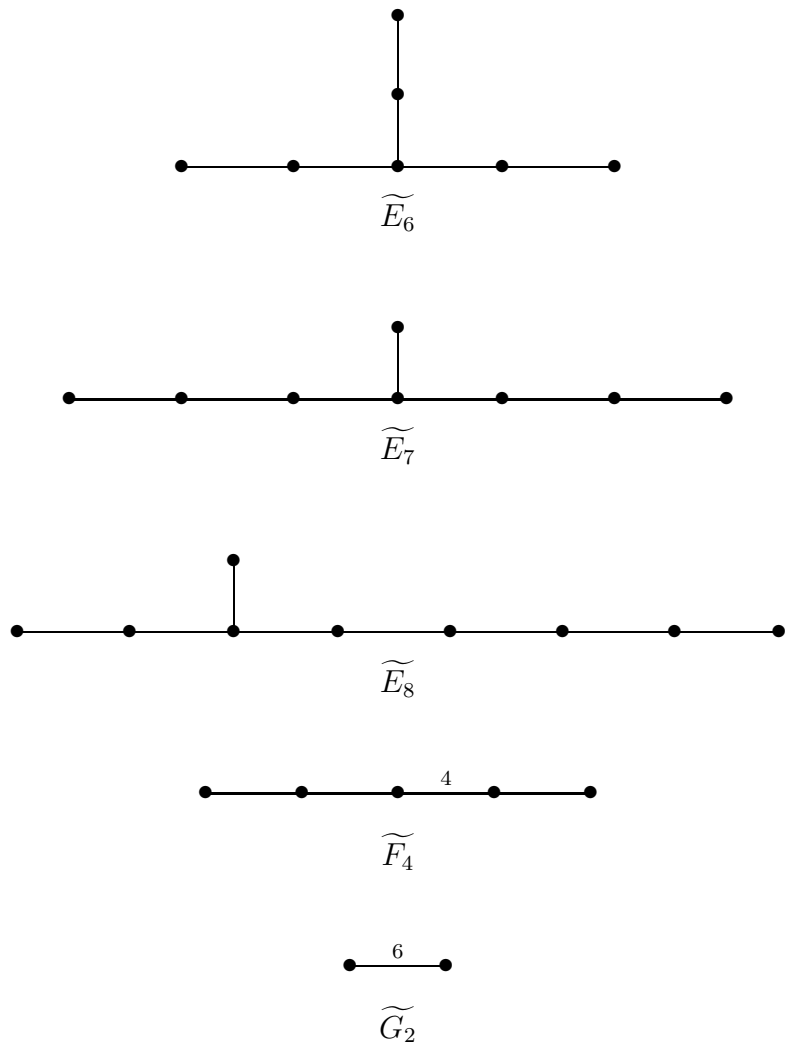
\begin{figure}[!hbtp]

\[
\xymatrix @C=3pc {              &                                & *=0{\bullet} \ar@{-}[d]^<{}    &               &               \\
                                &                                & *=0{\bullet} \ar@{-}[d]^<{}    &               &               \\
*=0{\bullet} \ar@{-}[r]^{}_<{}  & *=0{\bullet} \ar@{-}[r]^{}_<{} & *=0{\bullet} \ar@{-}[r]^{}_<{} & *=0{\bullet} \ar@{-}[r]^{}_<{}_>{}  & *=0{\bullet} }
\]
$$\widetilde{E_6}$$

\[
\xymatrix @C=3pc {              &                                &                                & *=0{\bullet} \ar@{-}[d]^<{}    &     &     &               \\
*=0{\bullet} \ar@{-}[r]^{}_<{}  & *=0{\bullet} \ar@{-}[r]^{}_<{} & *=0{\bullet} \ar@{-}[r]^{}_<{} & *=0{\bullet} \ar@{-}[r]^{}_<{} & *=0{\bullet} \ar@{-}[r]^{}_<{} & *=0{\bullet} \ar@{-}[r]^{}_<{}_>{}  & *=0{\bullet} }
\]
$$\widetilde{E_7}$$

\[
\xymatrix @C=3pc {              &                                & *=0{\bullet} \ar@{-}[d]^<{}    &     &     &    &           \\
*=0{\bullet} \ar@{-}[r]^{}_<{}  & *=0{\bullet} \ar@{-}[r]^{}_<{} & *=0{\bullet} \ar@{-}[r]^{}_<{} & *=0{\bullet} \ar@{-}[r]^{}_<{}& *=0{\bullet} \ar@{-}[r]^{}_<{} & *=0{\bullet} \ar@{-}[r]^{}_<{} & *=0{\bullet} \ar@{-}[r]^{}_<{}_>{}  & *=0{\bullet} }
\]
$$\widetilde{E_8}$$

\[
\xymatrix @C=3pc {*=0{\bullet} \ar@{-}[r]^{}_<{}  & *=0{\bullet} \ar@{-}[r]^{}_<{} & *=0{\bullet} \ar@{-}[r]^{4}_<{} &*=0{\bullet} \ar@{-}[r]^{}_<{}_>{}  & *=0{\bullet} }
\]
$$\widetilde{F_4}$$

\[
\xymatrix @C=3pc { *=0{\bullet} \ar@{-}[r]^{6}_<{}_>{}  & *=0{\bullet} }
\]
$$\widetilde{G_2}$$

\caption{Affine Coxeter systems (part II).}
\end{figure}

\listoffigures

\addcontentsline{toc}{chapter}{Bibliography}
\bibliographystyle{alpha}
\nocite{*}
\bibliography{bibliography}

\end{document}